\numberwithin{equation}{section}
\newcommand{\qtq}[1]{\quad\text{#1}\quad}
\let\Re=\undefined\DeclareMathOperator*{\Re}{Re}
\let\Im=\undefined\DeclareMathOperator*{\Im}{Im}
\DeclareMathOperator{\Id}{Id}
\newcommand{\R}{\mathbb{R}}
\newcommand{\C}{\mathbb{C}}
\newcommand{\wh}[1]{\widehat{#1}}
\newcommand{\eps}{\varepsilon}
\newcommand{\supp}{\text{supp}}
\newcommand{\cS}{\mathcal{S}}
\newcommand{\Z}{\mathbb{Z}}
\renewcommand{\H}{\mathcal{H}}
\newtheorem{theorem}{Theorem}[section]
\newtheorem*{theorem_sans_no}{Theorem}
\newtheorem{prop}[theorem]{Proposition}
\newtheorem{lemma}[theorem]{Lemma}
\newtheorem{corollary}[theorem]{Corollary}
\newtheorem{conjecture}[theorem]{Conjecture}
\newtheorem{proposition}[theorem]{Proposition}
\theoremstyle{definition}
\newtheorem{definition}[theorem]{Definition}
\newtheorem{remark}[theorem]{Remark}
\theoremstyle{remark}
\newcommand{\HS}{\mathfrak{I}_2}
\let\tr=\undefined\DeclareMathOperator*{\tr}{tr}
\DeclareMathOperator{\E}{\mathbb{E}}
\DeclareMathOperator{\PP}{\mathbb{P}}
\DeclareMathOperator{\Ai}{Ai}
\newcommand{\vk}{\varkappa}
\begin{document}

\title[Invariance of white noise]{Invariance of white noise for KdV on the line}

\author[R. Killip]{Rowan Killip}
\email{killip.r@gmail.com}
\address{Department of Mathematics, University of California, Los Angeles}

\author[J. Murphy]{Jason Murphy}
\email{jason.murphy@mst.edu}
\address{Department of Mathematics and Statistics, Missouri University of Science and Technology}

\author[M. Visan]{Monica Visan}
\email{visan.m@gmail.com}
\address{Department of Mathematics, University of California, Los Angeles}

\begin{abstract}
We consider the Korteweg--de Vries equation with white noise initial data, posed on the whole real line, and prove the almost sure existence of solutions.  Moreover, we show that the solutions obey the group property and follow a white noise law at all times, past or future.

As an offshoot of our methods, we also obtain a new proof of the existence of solutions and the invariance of white noise measure in the torus setting.
\end{abstract}

\maketitle

\section{Introduction}\label{S:introduction}

The Korteweg-de Vries equation
\begin{equation}\label{KdV}
\tfrac{d\,}{dt} q = - q''' + 6qq'
\end{equation}
takes its name from the paper \cite{KdV1895} where it is derived as a model for long waves of small amplitude in shallow water.  Since that time, it has grown to be one of the central models in mathematical physics because it sits at the nexus of numerous strands of research, both pure and applied.  It is one of the simplest models synthesizing nonlinear and dispersive effects, it is Hamiltonian, it supports solitons, and it is completely integrable.

In seeking to consider statistical ensembles of initial data for a mechanical system, one is naturally led to Gibbs measures, which model such a system in thermal equilibrium.  Such measures are constructed directly from the Hamiltonian structure and the temperature.  (More commonly, the temperature is expressed through the inverse temperature $\beta=\tfrac1{kT}$ where $T$ is the temperature and $k$ is the Boltzmann constant.)

The KdV equation admits multiple Hamiltonian descriptions; this is a common symptom of being completely integrable.  The best known of these descriptions is the following  (cf. \cite{Gardner}): The Hamiltonian
\begin{equation}\label{E:Hamil0}
H_\text{KdV}^\text{G}(q) := \int \tfrac12 q'(x)^2 + q(x)^3\,dx,
\end{equation}
generates the dynamics \eqref{KdV} via the Poisson bracket
\begin{equation}\label{Gardner}
\{ F, G \}_0 = \int  \frac{\delta F}{\delta q}(x) \biggl(\frac{\partial\ }{\partial x}\frac{\delta G}{\delta q}\biggr) (x) \,dx.
\end{equation}
(Regarding our notation for functional derivatives, see \eqref{derivative}.)   

The second description is based on the Magri--Lenard bracket (cf. \cite{Magri}):
\begin{equation}\label{Magri}
\{ F, G \}_1 = \int  \frac{\delta F}{\delta q}(x) \biggl(\biggl[ - \frac{\partial^3\ }{\partial x^3} + 2\frac{\partial\ }{\partial x} q(x) + 2q(x) \frac{\partial\ }{\partial x} \biggr]\frac{\delta G}{\delta q}\biggr) (x) \,dx,
\end{equation}
under which \eqref{KdV} is generated by
\begin{equation}\label{E:Hamil1}
H_\text{KdV}^\text{M}(q) := \int \tfrac12 q(x)^2 \,dx.
\end{equation}

Evidently, \eqref{E:Hamil0} and \eqref{E:Hamil1} both define conserved quantities for the KdV flow.  We should also mention the simplest of all the conserved quantities:
\begin{equation}\label{E:matter}
 M(q) = \int q(x) \,dx,
\end{equation}
which in the water-wave setting represents any surplus/deficit of water relative to equilibrium and thereby conservation of matter.

These three quantities, $M$, $H_\text{KdV}^\text{M}$, and $H_\text{KdV}^\text{G}$, are merely the first members of an infinite hierarchy of such conservation laws; see \cite{MR0252826}.  In fact, the full list can be reconstructed via the Lenard recursion: the flow generated by \eqref{Magri} for one entry in the list agrees with the flow generated by \eqref{Gardner} for the next entry in the list.  In particular, $M(q)$ is a Casimir for \eqref{Gardner}, but generates translations under \eqref{Magri};  $H_\text{KdV}^\text{M}$ generates translations under \eqref{Gardner}, but KdV under \eqref{Magri}; lastly,  $H_\text{KdV}^\text{G}$ generates KdV under \eqref{Gardner}, but 5th order KdV under \eqref{Magri}.

One is naturally lead to ask which Poisson structure is the `physical' one.  The answer depends on which physical system one is modeling.  In fact, Olver \cite{MR0925382} has shown that even in the case of waves in a shallow channel, the answer depends on which approach one takes to deriving KdV from the full water waves system.  Far from creating a paradox, his arguments actually present a compelling resolution of one, namely, why KdV is completely integrable.

Thus, in place of discussing which Hamiltonian structure is more physical, we should really be focusing our attention on the question of which Gibbs measure is most satisfactory.  Here the answer is more clear cut.  Because the highest order term (in powers of $q$) in $H_\text{KdV}^\text{G}$ has an indefinite sign, the corresponding Gibbs measure is unnormalizable, even in finite volume; moreover, this cannot be remedied by passing to a free energy of the form $F_\mu= H_\text{KdV}^\text{G} + \mu H_\text{KdV}^\text{M}$.

On the other hand, if one adopts $H_\text{KdV}^\text{M}$ as the Hamiltonian, then the proper meaning of the Gibbs measure is trivial: it is the Gaussian process with covariance given by the identity operator!  This process is more popularly referred to as \emph{white noise}.  The `white' property of this process is that it inhabits all frequencies to an equal degree.  (For a more rigorous definition of white noise, see subsection~\ref{S:white}; for the equipartition property, see \eqref{ONB rep}.)  We set the temperature parameter $\beta=1$ in our definition of white noise, since other temperatures can be recovered by scaling (which also respects the dynamics).

It is our belief that white noise provides the best realization of a `soliton soup' available at this time.  As we will describe shortly, a number of measures have been constructed in finite volume that compete for this moniker; however, none seem to survive in the thermodynamic limit.  On the other hand, since the measure itself captures none of the soliton's behavior, it is essential to consider the dynamics.  That is precisely the ambition of this paper: \emph{to construct dynamics for the soliton soup in the thermodynamic limit.}  More precisely, we will show the following:

\begin{theorem_sans_no}
The KdV dynamics generates a measure-preserving one-parameter group of transformations on the space of tempered distributions on $\R$ endowed with white noise measure.
\end{theorem_sans_no}

Further clarification of the meaning of this assertion will be provided as we proceed.  The impatient reader may also refer directly to Section~\ref{S:KdV}, with particular attention to Theorem~\ref{T:ktoinfinity} and Corollaries~\ref{C:intrinsic} and~\ref{C:KdV group}.

It is natural to ask if white noise measure is, in fact, supported on solitons as opposed to radiational waves.  The only way that seems reasonable to distinguish these possibilities is to look at the spectrum of the associated Lax operator, that is, the one-dimensional Schr\"odinger operator with white noise potential.  In the case of KdV with smooth localized data, we see that solitons accompany eigenvalues of the Lax operator, while dispersive wave phenomena are captured by the absolutely continuous portion of the spectrum.  For stationary (i.e. translation invariant) random potentials, the phenomenon of Anderson localization (i.e. pure point spectrum with localized eigenfunctions) has proven to be essentially universal in one spatial dimension; for the case of white noise potential, it was proved by Minami in \cite{MR1003604}. This suggests that our `soup' is indeed comprised solely of solitons, undiluted by radiation.

We turn now to a brief discussion regarding other constructions of invariant measures.  This subject has grown explosively in recent years and so we will need to curtail our discussion rather sharply.  We would like to focus our attention on two concrete scenarios: focusing equations and the thermodynamic limit.  

The dominant model in the study of invariant measures for focusing equations (essentially those supporting solitons) has been the nonlinear Schr\"odinger equation, stimulated by the pioneering work \cite{MR0939505}.  In this setting, the physical field is complex-valued and the Hamiltonian takes the form
\begin{equation}\label{E:Hamil NLS}
H_\text{NLS}(\psi) := \int \tfrac12 |\nabla \psi(x)|^2  - \tfrac{1}{p+2} |\psi(x)|^{p+2}\,dx,
\end{equation}
As with $H_\text{KdV}^\text{G}$, this Hamiltonian is not coercive and the usual Gibbs measure is unnormalizable.  Working on the torus and with $p\leq 4$, Lebowitz, Rose, and Speer constructed a modified Gibbs measure by incorporating a sharp (i.e. compactly supported) mass cutoff.  Here `mass' refers to $\int |\psi|^2$, which is invariant under the NLS flow.  The almost sure existence of solutions for such initial data was shown by Bourgain \cite{MR1309539}.  In the same paper, Bourgain also shows the invariance of the $L^2$-truncated $H_\text{KdV}^\text{G}$-Gibbs measure under the KdV flow on the torus.

Two ways of modifying the sharp mass cutoff are suggested in \cite{MR0939505} and analyzed more thoroughly  in \cite{MR3455152,MR3034397}, namely, (i) introducing a super-exponentially decaying weight in the mass and (ii) restricting to a constant mass sphere.  Once again, this is set on the one-dimensional torus.  For the two-dimensional torus, see \cite{MR1447302}.   The existence and invariance of analogous cutoff Gibbs measures in finite volume has also been actively pursued for other focusing equations,\cite{MR3900225,MR3385985,MR3869074,MR3518561,MR2574736,MR3520810,MR2532115,MR2579711,MR2928851,MR2727169,MR3346690,MR3257548,MR3281949}, with particular attention paid to Benjamin--Ono and Derivative NLS, both of which are completely integrable.

We turn now to the question of taking the infinite-volume limit of these measures.  This has been investigated by Rider \cite{MR1912096,MR2013697}, who considers the thermodynamic limit of NLS in one dimension, and by Chatterjee \cite{MR3263670}, who considers a simultaneous continuum and infinite-volume limit of discretized NLS in general dimension at fixed mass and energy.  In both cases, the limit was proven \emph{not} to exist.  More precisely, the statistical ensembles of initial data were shown to increasingly collapse upon a single solitary wave (randomly placed) and background noise of negligible magnitude.  We believe that these important works illuminate a universal truth, namely, that no such cutoff statistical ensembles for focusing equations admit a thermodynamic limit.  

By contrast, the construction of Gibbs measures in the thermodynamic limit and of corresponding almost-sure dynamics has been very successful for \emph{defocusing} Hamiltonian PDE.  We note, in particular, the work on nonlinear wave equations in one dimension by McKean and Vaninsky \cite{McKean,MR1277197}, by Xu \cite{XuPreprint} in three dimensions with spherical symmetry, and the work by Bourgain \cite{BourgainCMP} on NLS in one dimension.  Of these, the work of Bourgain is most pertinent to the problem discussed herein, because unlike NLS and KdV, the wave equation enjoys finite speed of propagation.

As is natural for any process describing thermal equilibrium, white noise is invariant under translation (more formally, it is a stationary process).  In fact, the process is not only ergodic under translation but (strong) mixing.  (This is easily deduced from independence on disjoint intervals, cf. Remark~\ref{R:cov}.)

As we will briefly discuss, there has been a recent surge in activity regarding KdV with initial data sampled from almost periodic processes \cite{MR3859361,MR3486173,MR3894927,Kotani}.  Recall that \emph{almost periodicity} means that translates of the initial data form a precompact set in $L^\infty(\R)$.  This should be regarded as antithetical to mixing; indeed, representative examples are periodic and quasi-periodic functions.  A major goal of these investigations of almost periodic initial data is to resolve a conjecture of Deift \cite{MR2411922,MR3622647}, namely, that such initial data leads to almost periodic flows in time.  By comparison, white noise is mixing under translations and it is natural to believe that it is also mixing under the KdV flow.  By constructing almost-sure solutions in this paper, we are able to make this assertion precise; see Conjecture~\ref{mixing conjecture}.  This seems a very challenging problem.  For a typical linear flow, one can prove that the evolution of white noise is mixing directly from the Fourier transform.  The only nonlinear result we know of in this direction is that of McKean \cite{McKean} regarding the $\sinh$-Gordon equation on the line with Gibbs-distributed initial data, which relies heavily on both the complete integrability and the causal structure/Lorentz invariance of that model.

The state of affairs laid out above explains our great enthusiasm for studying the KdV equation with white noise initial data. Indeed, we are not the first to be so intrigued.  In particular, invariance of white noise for KdV on the torus has been resolved (in several different ways) in a series of papers by Oh, Quastel, and Valk\'o \cite{Oh,Oh2,OhQuastelValko,QuastelValko}.  The existence of global dynamics for KdV on the torus with white noise initial data predates these works, since Kappeler and Topalov \cite{MR2267286} show that KdV is globally well-posed in $H^{-1}(\R/\Z)$.  This result is used crucially in the approach of \cite{QuastelValko}, but not in the other papers.  We shall present one further solution of this problem in Section~\ref{S:commute} as an offshoot of the main thrust of our argument.

Incidentally, while it is now known \cite{KV} that KdV is also globally wellposed in $H^{-1}(\R)$, this is of little direct relevance to the question of white noise initial data on the line, which has no decay at infinity.  Indeed, periodic data is a better proxy for white noise, since both are ergodic under translations; no law on $H^{-1}(\R)$ has this property.  This ergodicity also forbids any local smoothing effect, which is one of the key tools usually employed in the study of dispersive equations in infinite volume.

While it is tempting to try to take the infinite-volume limit of the torus situation (which is completely settled), this seems absolutely hopeless at this time.  We have no control on the transportation of norm from one spatial location to another.  Moreover, the central trick of exploiting Fubini to convert statistical conservation into space-time bounds is useless if one cannot make sense of the nonlinearity pointwise in time, as is the case here --- one simply cannot square white noise (with or without Wick ordering).

For the state of the art in this approach, see \cite{BourgainCMP}, in which Bourgain treats Gibbs measures for one-dimensional defocusing NLS (with nonlinearity not exceeding cubic) by taking the infinite-volume limit of torus dynamics.  Note that sample paths from the Gibbs measure considered in \cite{BourgainCMP} are essentially bounded; indeed, the defocusing nonlinearity confines the paths no less tightly than for the Ornstein--Uhlenbeck process, for which one already has at most logarithmic growth. 

We should also note that in treating such infinite-volume problems, taking a limit of some cutoff model is one of the few methods that have any chance of success.  Direct local wellposedness arguments are hopeless: if the length of the time interval (on which the solution is constructed) needs to depend on any facet of the initial data, then by ergodicity, it  must be zero.  Or, to put it more colloquially, for an ergodic process, anything bad that can happen, will happen (and with positive density in space). 

By this reasoning, the first step in our analysis must be to find the right cutoff model.  It is in this regard (and almost no other) that what we do here builds fundamentally on the work \cite{KV}.  We will use (a modification of) the commuting flows we introduced in \cite{KV} as our cutoff model.  The fundamental obstruction to be overcome in \cite{KV} is very low regularity, common to both line and circle, and (by scaling) it suffices to consider initial data of small norm.   On the other hand, white noise is rather more regular (it belongs to $H^{-1/2-}_\text{loc}$), but has no decay and cannot be considered small.  (For any threshold and norm, there is \emph{somewhere} in space for which the norm exceeds that threshold.)  In light of these distinctions, we view the problem of white noise on the whole line as an independent test of the methodology based on commuting flows.

The commuting flows employed in \cite{KV} were defined through the Gardner bracket \eqref{Gardner} and their Hamiltonians:
\begin{align*}
\H_\kappa(q) := \int - 16 \kappa^5 \rho(x;\kappa,q) + 2 \kappa^2 q(x)^2\,dx,
\end{align*}
where the parameter $\kappa>0$ and the integral is taken over one period in the periodic case or over the whole real line.  Here,
\begin{align*}
\rho(x;q,\kappa) := \kappa - \tfrac{1}{2g(x;\kappa)} + \tfrac12\int_{\R} e^{-2\kappa|x-y|} q(y)\,dy,
\end{align*}
where $g(x;\kappa)$ denotes the diagonal Green's function at energy $-\kappa^2$:
\begin{align*}
g(x;q,\kappa) := \langle \delta_x, (-\partial_x^2 + q + \kappa^2)^{-1} \delta_x\rangle.
\end{align*}

It is not difficult to see that the Hamiltonians are well defined for small data in $L^2$ and that the resulting flows are well defined for small data in $H^{-1}$; see \cite{KV} for details.  The smallness assumption here depends on $\kappa$ and is necessary; indeed, for large potentials (of indefinite sign) the spectrum of the Schr\"odinger operator can collide with $-\kappa^2$ and it will not be possible to define $\H_\kappa(q)$ nor the resulting flow.  In the case of white noise, for example,  it is relatively easy to show that the spectrum of the whole-line Schr\"odinger operator is the entire real axis!

Our remedy here is not terribly surprising: We move $\kappa$ off the real axis, calling the new parameter $k$; see \eqref{complex H_k} for the precise Hamiltonian and \eqref{H_k flow q} for the resulting dynamics.  On the other hand, the underlying size problem (that forced us to change the flows) also breaks all the technology developed in \cite{KV}, which was based on a perturbative analysis about $q\equiv 0$.  Setting aside for the moment how this is to be resolved, let us continue with the overarching plan of this paper. 

We begin with the $\H_k$ flow on the torus $\R/L\Z$ of circumference $L$.  The well-posedness of the flow is elementary since the nonlinearity is Lipschitz on $H^{-1}(\R/L\Z)$; see Proposition~\ref{P:H kappa}.  We show that this flow preserves (periodized) white noise measure in Theorem~\ref{T:Hktorus}.  While this could also be proved by the (now standard) method of finite-dimensional approximation, it is more efficient to prove it directly using integration by parts in Gauss space. 

The last result of Section~\ref{S:commute} is a new proof of the invariance of white noise under the KdV flow on the torus; see Theorem~\ref{T:KdV-torus}.  The argument is rather elementary; we simply exploit the fact that the $\H_k$ flows conserve white noise and converge to the KdV flow as $k\to\infty$.

The key strategic decision in treating white noise initial data on the whole line is to first send the volume to infinity (at fixed $k$) and then to send $k\to\infty$ to recover the KdV flow.   The virtue of this choice is that the $\H_k$ flow effectively has finite speed of propagation; the effective speed limit is of order $\Re k^2$. This allows us to minimize the effect of the discrepancy between the two initial data, namely, periodic white noise and infinite-volume white noise.  To prove almost sure convergence, it is necessary to couple the finite-volume and infinite-volume initial data together in an appropriate way.  We do this by starting with a sample of white-noise in infinite volume, truncating it to the interval $[-L,L]$, and then extending it periodically thereafter.

A concrete manifestation of finite speed of propagation for the $\H_k$ flows can be found, for example, in the proof of Lemma~\ref{T:L0-to-infinity}.  There we see that the exponentially weighted $H^{-1}$ norm grows at most exponentially in time with a rate that is bounded by $|k|^2$.  We have no such control on the transport of local $H^{-1}$ norm in the KdV setting.  Indeed, as shown in \cite[\S7]{KV}, the transport of such a quantity is mediated by the local $L^2_{t,x}$ norm of the solution, which is guaranteed to be infinite in the white noise setting.

To prove convergence, we first need estimates.  Indeed, here lies the fundamental challenge in sending $L\to\infty$ for the $\H_k$ flow. The price to pay for limiting the propagation speed and for improving the regularity of the nonlinearity is that the evolution equation has become nonlocal. This is apparent in \eqref{H_k flow q} through the appearance of the diagonal Green's function.  In this way, we see that the spectre of far-away, low-probability, bad events ruining everything remains; nevertheless, we claim it has been ameliorated (and this is why we will ultimately succeed).
 
Sections~\ref{S:singlescale} and~\ref{S:multiscale} are devoted to obtaining the bounds we need for the diagonal Green's function, both to send $L\to\infty$ at fixed $k$ in Section~\ref{S:L} and then to send $k\to\infty$ in Section~\ref{S:KdV}.  The key phenomenon to leverage in controlling bad events is the exponential decay of the Green's function (away from the diagonal) --- this is the backbone of Fr\"ohlich--Spencer \cite{MR0696803} multiscale analysis, which we will adapt to our needs.  Various forms of multiscale analysis have grown to be central tools in the study of the Anderson model; see \cite{MR2509111} for a survey.  While is true that the Schr\"odinger operator with white noise potential is a natural form of Anderson model, the needs of our dispersive analysis are rather different from those of proving Anderson localization.  First, we work with energies at a positive distance from the real axis; thus, we do not need the potential to generate the exponential decay, we only have to prevent the potential from destroying the decay already present in the free resolvent.  This distinction manifests, for example, in the development of our initial length scale estimates Propositions~\ref{P:4} and~\ref{P:alpha}.

On the other hand, for our purposes, the mapping properties of the resolvent (particularly $H^{-1}\to H^{+1}$) are far more important than the mere decay of the Green's function.  This is vital, for example for controlling the dependence of the resolvent upon the potential, due to the extreme irregularity of our potentials.  The need for such operator bounds is also a reason to prefer multiscale analysis of the resolvents over transfer matrix techniques (commonly used in 1D models), which work at the level of the Green's function.  The desire for clean resolvent bounds also influences our decision to eschew the traditional technique of Dirichlet decoupling in our treatment of the multiscale analysis.

A second (and more decisive) reason for wishing to avoid Dirichlet decoupling is the need (in Section~\ref{S:KdV}) for \emph{lower} bounds on the diagonal Green's function.  Evidently, a Dirichlet boundary condition will drive $g(x)$ to zero.  In the proof of Proposition~\ref{P:L56}, we see that even obtaining pointwise lower bounds on the diagonal Green's function in expectation requires significant control on the resolvents (including Lemma~\ref{L:18}), as well as considerable additional gymnastics. 

Our approach to multiscale analysis is to work consistently on the whole real line and to successively reveal the potential $q$ in dyadic windows centered on the origin.  This leads to the following expansion of resolvents:
\begin{equation}\label{E:intro MSA}
R_{L} = R_1 - \sum_{\ell=1}^{L/2} R_{2\ell}(q_{2\ell}-q_\ell)R_\ell \qtq{where} q_\ell(x)=q(x)\chi_{[-\ell,\ell]}(x)
\end{equation}
and $R_\ell$ denotes the resolvent with potential $q_\ell$.   We then obtain our infinite-volume bounds (Proposition~\ref{P:8} and~\ref{P:9'}) by combining this expansion (both at finite and infinite $L$) with the initial length scale estimates of Section~\ref{S:singlescale}.  Although the estimates of Section~\ref{S:singlescale} demonstrate good exponential decay, they contain prefactors depending (weakly but unfavorably) on $L$.  Removing these prefactors is essential for taking the $L\to\infty$ limit and this is what multiscale analysis achieves.   

There is one further wrinkle in this story that we have omitted, namely, that in order to prove the convergence of the $\H_k$ flows, we need bounds not only in infinite volume, but also on finite tori, but with bounds independent of the circumference.  To do this, we need a two-parameter version of the argument outlined above, where $L$ denotes the length of potential revealed near the origin (as in \eqref{E:intro MSA}) and then this section of potential is repeated in each period cell, which has length $2L_0$; see \eqref{qL} for the precise formula.   

Let us now turn our attention to Section~\ref{S:L}, which considers the $\H_k$ flow on the whole line.  The main result here is Theorem~\ref{T:invariance-Hk-line}, which summarizes the results of Propositions~\ref{P:666} and~\ref{P:uniq}.  The former proposition constructs solutions of the $\H_k$ flow obeying certain additional bounds (that hold almost surely); the latter shows that solutions obeying such bounds are unique.  One important consequence of the uniqueness statement is that it allows us to ensure that almost surely the full trajectory avoids the null set of initial data on which we do not construct solutions.  (This would be trivial for a discrete dynamical system, but our set of times is uncountable.)  In particular, it is meaningful to ask if our flows have the group property.  They do!  See Corollary~\ref{C:semigroup} for details. 

There are two main components to our construction of solutions to the $\H_k$ flows in infinite volume as limits of the corresponding flows on ever-larger tori.  The first is to obtain bounds on the finite-volume solutions that are uniform on compact time intervals.  This is the content of Lemmas~\ref{T:L0-to-infinity} and~\ref{L:66}, which rely crucially on the effective speed limit discussed above.  The second component is to demonstrate convergence of the dynamics by controlling the time derivative \eqref{H_k flow q}, which ultimately means showing convergence of the diagonal Green's functions.  The first step in this direction is Lemma~\ref{L:6.4?}; however, this only provides information pointwise in time.  The reason is that the `Lipschitz constant' of the mapping $q\mapsto g$ involves the norms of resolvents, which (since we need volume-independent bounds) can only be provided through the multiscale analysis, which in turn yields bounds in expectation only.  The remedy is to prove equicontinuity in time, for which we employ methods closely connected with the Kolmogorov continuity theorem; see Lemmas~\ref{L:KCT} and~\ref{L:KCT'} for the underlying idea and Lemma~\ref{L:gL dif} for the specific implementation.

In Section~\ref{S:KdV}, we show solutions to the $\H_k$ flows on the whole line with white noise initial data converge uniformly on compact time intervals (almost surely) as $k\to\infty$; see Theorem~\ref{T:ktoinfinity}.  In view of existing deterministic results, it is natural to simply declare that these limits are the solutions to KdV with white noise initial data.  However, in Corollary~\ref{C:intrinsic} we go one step further and verify that the limit $q(t)$ and its associated diagonal Green's function $g(t)$ obey the same integral equation as in the deterministic case, namely, \eqref{green solution}.  Thus $q(t)$ is a solution of KdV in the only intrinsic sense that we currently know makes sense.  One further virtue of our flows is that they obey the group property; see Corollary~\ref{C:KdV group}.  This is a further indication of uniqueness --- the evolution of a state is independent of how (or when) it appears in the limiting process.  

Needless to say, the difficulty in making direct sense of the KdV flow for highly irregular data also makes it difficult to show convergence of the $\H_k$ flows.  We adopt here an idea from \cite{KV}, namely, to use the diagonal Green's function.  A key difference between $q(t)$ and $g(t)$ is that all the terms in the differential equation obeyed by $1/g(t)$ under the KdV flow make sense as tempered distributions pointwise in time.  By employing all the estimates proved in the preceding sections, we are able to show that the diagonal Green's functions associated to the $\H_k$ flows converge as $k\to\infty$.  Initially (see Proposition~\ref{P:12main}), this convergence is only in expectation and in a weighed Sobolev space with $H^{-2}$ regularity.  These defects are removed by demonstrating equicontinuity in time (in the manner used in Section~\ref{S:L}) and by employing the extra spatial regularity proved in Proposition~\ref{P:L56}.  In this way, we are able to upgrade the convergence to a weighted $H^1$ norm in space, uniformly on compact time intervals.

In the setting of \cite{KV}, the convergence of the Green's functions guaranteed the convergence of the solutions $q(t)$ themselves by virtue of the diffeomorphism property; see \cite[Proposition~2.2]{KV}.  That result is proved by the inverse function theorem and as such, is confined to small initial data.  Thus, we need here a new way of connecting the diagonal Green's function back to the potential.  As $k$ is now complex, we no longer can expect a diffeomorphism ($g$ is complex-valued and $q$ real-valued); at best one may hope for an embedding.  This is what we prove in Lemma~\ref{L:diffeo}, albeit in the limited setting of $q\in H^{-1}$.  The key new relation is \eqref{E:q from g}; indeed, it is this relation that we show is retained as we send $L\to\infty$ and then $k\to\infty$.  And it is this relation that ultimately allows us to prove Theorem~\ref{T:ktoinfinity} by deducing convergence of the solutions $q(t)$ (uniformly on compact time intervals in weighted $H^{-1}(\R)$) from the behavior of their diagonal Green's functions.

The remainder of Section~\ref{S:KdV} is devoted to proving the auxiliary properties of our solutions outlined above and to describing two interesting (but challenging) directions for further investigation.  First, we highlight open questions related to uniqueness for KdV at low regularity. This is open both in the deterministic and random settings.  In particular, it is currently unknown whether Kappeler--Topalov \cite{MR2267286} or Killip--Visan \cite{KV} solutions are unique in some intrinsic sense, independent of their appearance as the unique limit of Schwartz-class solutions.  The second major question we raise is whether the KdV flow with white noise initial data is mixing (in time).  We refer here specifically to the whole-line case. This clearly fails on the torus; known conservation laws show that the torus flow is not even ergodic.

\subsection*{Acknowledgements} R. K. was supported by NSF grant DMS-1600942.  J.~M. was supported by a Simons Collaboration grant.  M. V. was supported by NSF grants DMS-1500707 and DMS-1763074.

The authors are also grateful to Changxing Miao for hosting us at the Institute of Applied Physics and Computational Mathematics in Beijing, where part of this work was completed.

\section{Notation and preliminaries}\label{S:notation}

We write $A\lesssim B$ or $B\gtrsim A$ to denote $A\leq CB$ for some $C>0$.  Dependence on various parameters will be indicated by subscripts.  If $A\lesssim B$ and $B\lesssim A$, we write $A\approx B$.  The other common notation for this relation, namely $\sim$, is used in this paper to indicate the law of a random variable.  We employ the Japanese bracket notation $\langle x\rangle=\sqrt{1+x^2}$.  We write $\mathbb{N}=\{1,2,3,\dots\}$ and $\mathbb{N}_0=\mathbb{N}\cup\{0\}.$ 

Throughout, primes will represent derivatives with respect to the spatial variable~$x$. 

Our convention for the Fourier transform is 
\[
\hat f(\xi)=\tfrac{1}{\sqrt{2\pi}}\int_\R e^{-ix\xi}f(x)\,dx,\qtq{so that} f(x)=\tfrac{1}{\sqrt{2\pi}}\int_\R e^{ix\xi}\hat f(\xi)\,d\xi.
\]

For $s\in\R$ and $\kappa>0$, we define 
$$
\|f\|_{H^s(\R)}^2 = \int |\hat f(\xi)|^2(4+\xi^2)^s\,d\xi \quad\text{and} \quad \|f\|_{H^s_\kappa(\R)}^2 = \int |\hat f(\xi)|^2(4\kappa^2+\xi^2)^s\,d\xi. 
$$
We have the simple but useful estimate
\begin{equation}\label{829-gain}
\|1\|_{H_{\kappa}^1\to H_{\kappa}^{-1}} \lesssim \tfrac{1}{\kappa^2}. 
\end{equation}

For $p\geq 1$, we write $\mathfrak{I}_p$ for the Schatten class of compact operators whose singular values lie in $\ell^p$.  We will primarily use the Hilbert--Schmidt class $\mathfrak{I}_2$.  Recall that an operator $A$ on $L^2(\R)$ is Hilbert--Schmidt if and only if it admits an integral kernel $a(x,y)\in L^2(\R\times\R)$; moreover,
\begin{equation}\label{I2vsL2}
\|A\|_{L^2\to L^2}^2 \leq \|A\|_{\mathfrak{I}_2}^2 = \tr\{A^*A\} = \iint |a(x,y)|^2\,dx\,dy. 
\end{equation}
The product of two Hilbert--Schmidt operators $A$ and $B$ is in the trace class $\mathfrak{I}_1$; moreover,
\[
\tr\{AB\} = \iint a(x,y)b(y,x)\,dy\,dx = \tr\{BA\}\qtq{and} |\tr\{AB\}| \leq \|A\|_{\mathfrak{I}_2}\|B\|_{\mathfrak{I}_2}. 
\]
Hilbert--Schmidt operators form a two-sided ideal in the algebra of bounded operators; concretely,
\begin{equation}\label{ideal}
\|BAC\|_{\mathfrak{I}_2} \leq \|B\|_{L^2\to L^2}\|A\|_{\mathfrak{I}_2}\|C\|_{L^2\to L^2}. 
\end{equation}
We refer the reader to \cite{MR2154153} for more information.

Given a function $m$, the corresponding multiplication operator satisfies \begin{equation}\label{multiplier1}
\|m\|_{H_{\kappa}^1\to H_{\kappa}^1}=\|m\|_{H_{\kappa}^{-1}\to H_{\kappa}^{-1}} \lesssim \|m\|_{L^\infty}+\|m'\|_{L^\infty}.
\end{equation}

We will employ the $L^2$ pairing throughout the paper.  This also informs our notation for functional derivatives:
\begin{equation}\label{derivative}
\frac{d\ }{ds}\biggr|_{s=0} F(q+sf) = dF\bigl|_q (f) = \int \frac{\delta F}{\delta q}(x) f(x)\,dx .
\end{equation}

\subsection{Probabilistic preliminaries} 
Throughout the paper we fix a probability space with probability measure $\PP$.  Expectation (i.e. integration with respect to $d\PP$) is denoted by $\E$.  

The hypercontractive property of polynomials in multivariate Gaussian random variables is well understood thanks to the definitive work of Nelson.  We need here only the most basic manifestation of this phenomenon, whose proof we include for completeness.

\begin{lemma}\label{L3} Let $Q$ be a positive semidefinite quadratic form in jointly Gaussian random variables. Then
\begin{align}\label{equad}
\E\bigl\{ \exp\bigl[\tfrac{\theta Q}{\E\{Q\}}\bigr]\bigr\} \leq (1-2\theta)^{-\frac12} \qtq{for any}0\leq\theta<\tfrac12. 
\end{align}
Consequently, for $1\leq p<\infty$ we have
\begin{align}\label{pquad}
\E\bigl\{ |Q|^p\bigr\} \lesssim (2p\E\{Q\})^p.
\end{align}
\end{lemma}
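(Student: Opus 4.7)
\emph{Proof plan.} The key step is to reduce $Q$ to a weighted sum of independent chi-squared variables. Writing the underlying (centered) Gaussian vector as $Y = BZ$ with $Z$ having independent standard Gaussian entries, the quadratic form $Q = Y^T A Y$ becomes $Z^T (B^T A B) Z$, and spectrally diagonalizing the positive semidefinite matrix $B^T A B$ by an orthogonal transformation $U$ yields
\begin{equation*}
Q = \sum_{j} \lambda_j X_j^2, \qtq{with} X_j = (UZ)_j \text{ iid } N(0,1), \ \lambda_j \geq 0,
\end{equation*}
and in particular $\E\{Q\} = \sum_j \lambda_j$. The infinite-dimensional case will be handled by first truncating to finitely many coordinates and passing to the limit via monotone convergence; this diagonalization is the only mildly delicate ingredient in the argument.

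Once in diagonal form, the one-dimensional chi-squared MGF $\E e^{sX^2} = (1-2s)^{-1/2}$ (valid for $s < \tfrac12$) combined with independence gives $\E e^{sQ} = \prod_j (1-2s\lambda_j)^{-1/2}$ whenever $2s\lambda_j < 1$ for all $j$. Taking $s = \theta / \E\{Q\}$, the numbers $a_j := 2s\lambda_j$ satisfy $a_j \in [0, 2\theta]$ and $\sum_j a_j = 2\theta < 1$, and the elementary inequality $\prod_j (1-a_j) \geq 1 - \sum_j a_j$ (proved by induction on finite products and extended by continuity) then produces \eqref{equad}:
\begin{equation*}
\E\bigl\{ \exp\bigl[\tfrac{\theta Q}{\E\{Q\}}\bigr]\bigr\} = \prod_j (1-a_j)^{-1/2} \leq \bigl(1 - \textstyle\sum_j a_j\bigr)^{-1/2} = (1-2\theta)^{-1/2}.
\end{equation*}

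For the polynomial moment bound \eqref{pquad}, I will insert the pointwise inequality $x^p \leq (p/s)^p e^{-p} e^{sx}$ (obtained by maximizing $x^p e^{-sx}$ in $x\geq 0$) and use \eqref{equad} with $s = \theta/\E\{Q\}$ to obtain
\begin{equation*}
\E\{Q^p\} \leq \bigl(p\E\{Q\}/\theta\bigr)^p e^{-p} (1-2\theta)^{-1/2}.
\end{equation*}
Optimizing in $\theta \in (0,\tfrac12)$ yields $\theta = p/(2p+1)$, so the right-hand side becomes $(\E\{Q\})^p (2p+1)^p e^{-p} \sqrt{2p+1}$. Since $(1+\tfrac{1}{2p})^p \leq \sqrt{e}$ implies $(2p+1)^p \leq \sqrt{e}\,(2p)^p$, and $e^{-p}\sqrt{2p+1}$ is bounded for $p \geq 1$, these together yield $\E\{Q^p\} \lesssim (2p\E\{Q\})^p$ with a universal constant, completing the proof.
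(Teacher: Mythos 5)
Your proof is correct and departs from the paper's argument at two places, while sharing the same starting point: the spectral reduction $Q=\sum_j\lambda_j Z_j^2$ with $Z_j$ iid $N(0,1)$ and $\E\{Q\}=\sum_j\lambda_j$. For \eqref{equad}, the paper writes $e^{\theta Q/\E\{Q\}}=\prod_j e^{\theta c_j Z_j^2}$ with $c_j=\lambda_j/\sum_l\lambda_l$ and applies H\"older's inequality with exponents $1/c_j$, giving $\prod_j\bigl[\E\{e^{\theta Z_j^2}\}\bigr]^{c_j}=(1-2\theta)^{-1/2}$ in one stroke; you instead compute the exact moment generating function $\prod_j(1-2\theta c_j)^{-1/2}$ by independence and then invoke the elementary inequality $\prod_j(1-a_j)\geq 1-\sum_j a_j$. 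Both routes are valid; the paper's H\"older trick is slightly cleaner in that it lands directly on the desired bound with no further inequality needed, whereas yours makes the dependence on independence explicit and spells out where the constant $(1-2\theta)^{-1/2}$ really comes from. For \eqref{pquad}, the paper fixes $\theta=\tfrac14$, derives a tail bound by Tchebychev, and integrates; you use the pointwise estimate $x^p\leq(p/s)^pe^{-p}e^{sx}$ and optimize over $\theta$, obtaining $\theta=p/(2p+1)$. These are the same exponential-moment method in two standard guises, and both produce the $\lesssim(2p\E\{Q\})^p$ bound with a universal constant. One further virtue of your write-up: you explicitly flag the passage from finitely many Gaussians to an infinite family via truncation and monotone convergence, which the paper leaves implicit; for that limiting step you should apply the finite-dimensional estimate with the fixed exponent $\theta/\E\{Q\}$ (rather than $\theta/\E\{Q_N\}$) so that both sides are monotone in $N$, but this is only a matter of bookkeeping.
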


\begin{proof}  Let us write $Q=\vec X^T A\vec X$, where $\vec X\sim N(0,\Sigma)$ and $A=A^T\neq 0$ is positive semidefinite.  We choose $B$ so that $\Sigma=BB^T$ and let $O$ be an orthogonal matrix diagonalizing $B^T AB$. We write $\lambda_k\geq 0$ for the eigenvalues (repeated with multiplicity) of $B^TAB$. 

If $\vec Z\sim N(0,Id)$, then (computing the characteristic function, say) one finds that $BO\vec Z \sim N(0,\Sigma)$ (the law of $\vec X$).  Moreover,
\[
(BO\vec Z)^T A(BO\vec Z)=\vec Z^T O^T B^T A B O \vec Z = \sum_k \lambda_k Z_k^2. 
\]
Thus the left-hand side of the stated inequality can be estimated as
\begin{align*}
\text{LHS\eqref{equad}}& =  \E\biggl\{\prod_k e^{\theta c_k Z_k^2}\biggr\}\qtq{where} c_k=\frac{\lambda_k}{\sum \lambda_l},\qtq{and so}\sum_k c_k =1 \\
& \leq \prod_k \bigl[\E\bigl\{ e^{\theta Z_k^2}\bigr\}\bigr]^{c_k}\quad\text{by H\"older's inequality} \\
& = \prod_k \bigl\{(1-2\theta)^{-\frac12}\bigr\}^{c_k} = (1-2\theta)^{-\frac12}. 
\end{align*}
Here we have used $\E(e^{\theta Z_k^2})=(1-2\theta)^{-\frac12}$ for $Z_k\sim N(0,1)$, which follows from computing the Gaussian integral. 

From \eqref{equad}, Tchebychev's inequality yields
\[
\PP\bigl\{ |Q|>\lambda\bigr\} \leq (1-2\theta)^{-\frac12} \exp\bigl\{-\tfrac{\theta \lambda}{\E(Q)}\bigr\}\qtq{for any}\theta\in[0,\tfrac12)\qtq{and}\lambda>0.
\]
Choosing $\theta=\frac14$, we then deduce \eqref{pquad} in the usual way.\end{proof}

\begin{lemma}\label{L:sup} Let $\{X_n\}_{n=1}^N$ be a collection of random variables satisfying
\[
\sup_{n} \PP\{|X_n|>\lambda\} \leq C e^{-c\lambda}
\]
for some $c,C>0$. Then
\begin{align}\label{1051}
\PP\bigl\{\sup_{n}|X_n|>\lambda\bigr\} \leq C e^{-c\lambda/2} \quad \text{for all $\lambda\geq 2c^{-1}\log{N}$}.
\end{align}

\end{lemma}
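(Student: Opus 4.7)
The plan is to prove this by a straightforward union bound, together with a threshold check that converts the extra factor of $N$ into the loss of half the exponential rate. The constraint $\lambda \geq 2c^{-1}\log N$ is specifically tailored so that this conversion works.

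First I would write
\[
\PP\bigl\{\sup_{n}|X_n|>\lambda\bigr\} \leq \sum_{n=1}^N \PP\{|X_n|>\lambda\} \leq NCe^{-c\lambda},
\]
using subadditivity of $\PP$ and the stated pointwise tail bound. Next, I would rewrite this as $Ce^{-c\lambda/2} \cdot Ne^{-c\lambda/2}$ and observe that the hypothesis $\lambda \geq 2c^{-1}\log N$ is equivalent to $c\lambda/2 \geq \log N$, which is in turn equivalent to $Ne^{-c\lambda/2} \leq 1$. Combining these two observations yields exactly \eqref{1051}.

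There is no real obstacle here; the argument is one line of subadditivity followed by a one-line inequality manipulation. The only mild subtlety is recognizing why the prefactor on the right-hand side of \eqref{1051} is still $C$ (rather than being multiplied by a factor absorbing $N$): this is precisely because the threshold $\lambda \geq 2c^{-1}\log N$ is strong enough to absorb the union-bound factor of $N$ into half of the exponential decay, leaving the other half to produce the claimed bound with the same constant $C$.
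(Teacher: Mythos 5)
Your proposal is correct and follows exactly the same approach as the paper's proof: a union bound giving $NCe^{-c\lambda}$, followed by the observation that $\lambda \geq 2c^{-1}\log N$ forces $Ne^{-c\lambda/2}\leq 1$, so the factor of $N$ is absorbed into half the exponential. The paper's version is just a one-line telescoped inequality; you have spelled out the same steps in more detail.
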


\begin{proof} For $\lambda$ as given, $\text{LHS}\eqref{1051}\leq \sum_n \PP\{|X_n|>\lambda\} \leq NCe^{-c\lambda}\leq \text{RHS}\eqref{1051}$.
\end{proof}

Our next lemma follows from what is shown in any elementary probability text under the rubric of uniform integrability.  However, as there is no catchy name for this result, we write it out in full here:

\begin{lemma}\label{L:same conv}
If $\sup_n \E |X_n|^p <\infty$ for every $p<\infty$ and $X_n\to X$ in probability, then $X_n\to X$ also in $L^p(d\PP)$ for each $p<\infty$.
\end{lemma}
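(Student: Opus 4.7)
\smallskip

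The plan is to use the moment bounds to promote convergence in probability to $L^p$ convergence via a standard truncation argument. First, I would fix the target exponent $p<\infty$ and choose an auxiliary exponent $q>p$. By hypothesis $\sup_n \E|X_n|^q < \infty$; moreover, since $X_n\to X$ in probability, passing to an almost-surely convergent subsequence and applying Fatou's lemma yields $\E|X|^q \leq \liminf_n \E|X_n|^q < \infty$. The triangle inequality in $L^q$ then gives
\[
C := \sup_n \E|X_n - X|^q < \infty.
\]

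Set $Y_n := |X_n - X|^p$, which tends to $0$ in probability. For any $\varepsilon>0$, split
\[
\E Y_n = \E\bigl(Y_n \mathbf{1}_{\{|X_n - X|\leq \varepsilon\}}\bigr) + \E\bigl(Y_n \mathbf{1}_{\{|X_n - X|>\varepsilon\}}\bigr).
\]
The first term is bounded by $\varepsilon^p$. For the second term, apply H\"older's inequality with exponents $q/p$ and $q/(q-p)$ to obtain
\[
\E\bigl(Y_n \mathbf{1}_{\{|X_n - X|>\varepsilon\}}\bigr) \leq \bigl(\E|X_n - X|^q\bigr)^{p/q}\, \PP\bigl\{|X_n - X|>\varepsilon\bigr\}^{1-p/q} \leq C^{p/q}\, \PP\bigl\{|X_n - X|>\varepsilon\bigr\}^{1-p/q}.
\]
Since $X_n \to X$ in probability, the probability on the right tends to zero as $n\to\infty$ for each fixed $\varepsilon$. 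Thus $\limsup_{n\to\infty} \E|X_n - X|^p \leq \varepsilon^p$; letting $\varepsilon\to 0$ completes the proof.

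There is no genuine obstacle here — the only subtle point is ensuring that $X\in L^q$ so that $\sup_n \E|X_n - X|^q$ is finite, which is handled by extracting an a.s. convergent subsequence and invoking Fatou. The choice of auxiliary exponent $q>p$ is what converts the (uniform) higher-moment control into uniform integrability of $|X_n - X|^p$.
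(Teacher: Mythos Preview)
Your proof is correct and is precisely the standard uniform integrability argument the paper alludes to (it does not give a proof, merely remarking that the result ``follows from what is shown in any elementary probability text under the rubric of uniform integrability''). The use of Fatou along an a.s.\ convergent subsequence to place $X$ in $L^q$, followed by the H\"older splitting with an auxiliary exponent $q>p$, is exactly the textbook route.
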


The next lemma encapsulates the idea of the Kolmogorov Continuity Theorem (cf. \cite[\S2.1]{StroockVaradhan}) in a form that will be useful to us.

\begin{lemma}\label{L:KCT}
Given $T,\alpha, \eps>0$, $1\leq p<\infty$, a Banach space $X$, and a process $F:[-T,T]\to X$ that is almost surely continuous, 
\begin{align*}
\E\Bigl\{ \| F \|_{C_t^\alpha X}^p \Bigr\} \lesssim_{p,\eps,T} \E\Bigl\{ \| F(0) \|_{X}^p \Bigr\}
	+ \sup_{-T\leq s<t\leq T} \E\biggl\{ \frac{\| F(t) -F(s) \|_{X}^p}{|t-s|^{1+\alpha p + \eps}} \biggr\}.
\end{align*}
\end{lemma}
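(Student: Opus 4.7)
The plan is the classical Kolmogorov continuity argument via dyadic chaining, adapted to the Banach-valued $L^p$ setting. Writing $K$ for the second term on the right-hand side of the stated inequality, I would first split
\[
\|F\|_{C_t^\alpha X} = \sup_{t\in[-T,T]} \|F(t)\|_X + [F]_{C_t^\alpha X}
\]
into its sup and H\"older-seminorm pieces. Since the triangle inequality gives $\sup_t\|F(t)\|_X \leq \|F(0)\|_X + (2T)^\alpha [F]_{C_t^\alpha X}$, it suffices to bound $\E[F]_{C_t^\alpha X}^p$ by a constant multiple of $K$; moreover, by almost sure continuity of $F$, the seminorm equals its restriction to dyadic rationals almost surely.

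For each $n\geq 0$ let $D_n\subset[-T,T]$ be the dyadic partition of mesh $2T\cdot 2^{-n}$, and set $M_n := \max_j\|F(t_j)-F(t_{j-1})\|_X$, taken over consecutive points of $D_n$. Bounding the maximum by the sum in $L^p$ and using the definition of $K$ gives
\[
\E M_n^p \leq \sum_j \E\|F(t_j)-F(t_{j-1})\|_X^p \leq 2^n K\, (2T\cdot 2^{-n})^{1+\alpha p+\eps} \lesssim_T K\,2^{-n(\alpha p+\eps)}.
\]
The standard dyadic chaining argument (telescoping $F(s)$ and $F(t)$ separately through their nested dyadic approximants) then yields, for any dyadic rationals $s<t$ in $\bigcup_n D_n$ and with $n_0$ the smallest integer satisfying $2T\cdot 2^{-n_0}\leq t-s$,
\[
\|F(t)-F(s)\|_X \leq M_{n_0} + 2\sum_{n>n_0}M_n \lesssim \sum_{n\geq n_0}M_n.
\]
Since $|t-s|^{-\alpha}\leq (2T)^{-\alpha}2^{n_0\alpha}$, dividing and taking the sup over dyadic $s<t$ produces
\[
[F]_{C_t^\alpha X} \lesssim_T \sup_{n_0\geq 0}2^{n_0\alpha}\sum_{n\geq n_0} M_n \leq \sum_{n\geq 0}2^{n\alpha}M_n.
\]

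To finish, I would apply Minkowski's inequality in $L^p(d\PP)$ together with the bound on $\|M_n\|_{L^p(d\PP)}$ derived above; the resulting series is geometric:
\[
\bigl\|[F]_{C_t^\alpha X}\bigr\|_{L^p(d\PP)} \lesssim_T \sum_{n\geq 0}2^{n\alpha}K^{1/p}2^{-n(\alpha+\eps/p)} \lesssim_{p,\eps}K^{1/p}.
\]
There is no serious obstacle in this argument; the one step that requires a little care is the chaining, where $s$ and $t$ must be simultaneously approximated through nested dyadic points and telescoped along each side. This is classical (cf.\ \cite[\S2.1]{StroockVaradhan}) and works verbatim in the Banach-valued setting because only the triangle inequality in $X$ is invoked.
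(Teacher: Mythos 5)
Your proof is correct and follows essentially the same dyadic-chaining strategy as the paper's argument. The only cosmetic difference is that you apply Minkowski's inequality in $L^p(d\PP)$ to handle the sum over scales, whereas the paper pushes the $p$-th power inside the deterministic sum via H\"older before taking expectations; both absorb the geometric decay using the $\eps$ room.
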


\begin{proof}
We provide details for $T=1$; this can then be iterated to yield the result for larger $T$.  If $F$ is continuous and $2^{-n}\leq |t-s|<2^{1-n}$ then
\begin{align*}
\| F(t) - F(s)  \|_{X} &\leq  2 \sum_{m=n}^\infty \sup_{|\ell|\leq 2^{m}} \| F(\ell 2^{-m}) - F([\ell+1] 2^{-m})\|_X
\end{align*}
and consequently,
\begin{align*}
\sup_{s<t} \frac{\| F(t) - F(s)  \|_{X}}{|t-s|^\alpha} &\leq  2 \sum_{m=0}^\infty 2^{\alpha m} \sup_{|\ell|\leq 2^{m}} \| F(\ell 2^{-m}) - F([\ell+1] 2^{-m})\|_X .
\end{align*}
Thus, (using H\"older's inequality on the sum in $m$),
\begin{align*}
\| F \|_{C_t^\alpha X}^p &\lesssim_{p, \eps}   \| F(0) \|_{X}^p
	+ \sum_{m=0}^\infty \;2^{(\alpha p +\frac{\eps}2)m} \! \sum_{|\ell|\leq 2^{m}} \| F(\ell 2^{-m}) - F([\ell+1] 2^{-m})\|_X^p .
\end{align*}
The result now follows by taking expectations and exploiting its linearity.
\end{proof}

While the estimates we prove in Section~7 do yield H\"older continuity in time of solutions to KdV (in some weighted Sobolev spaces) via this lemma, our principal reason for including it here is to provide the requisite equicontinuity to upgrade convergence in probability at each time to $L^p(d\PP;C_t X)$ convergence.  We may encapsulate what we need in the following way:

\begin{lemma}\label{L:KCT'}
Given $T, \alpha>0$, a Banach space $X$, and a sequence of processes $F_n:[-T,T]\to X$ satisfying
\begin{align*}
\sup_n \,\E\Bigl\{ \| F_n \|_{C_t^\alpha X}^p \Bigr\} <\infty\qtq{and}
\lim_{n\to\infty} \sup_{|t|\leq T} \PP\Bigl\{ \sup_{m>n} \| F_n(t)-F_m(t)\|_X > \eps \Bigr\} =0
\end{align*}
for all $1\leq p < \infty$ and all $\eps>0$, there is a limit process $F\in C_t X$ with
\begin{align*}
\lim_{n\to\infty} \E\Bigl\{ \| F_n - F \|_{C_t X}^p \Bigr\} = 0, \quad\text{for all $1\leq p<\infty$}. 
\end{align*}
\end{lemma}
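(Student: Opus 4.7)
The plan is to leverage the uniform Hölder moment bound to convert the pointwise-in-$t$ Cauchy-in-probability condition into a uniform-in-$t$ Cauchy-in-probability condition for the $C_tX$-valued processes, extract a continuous limit, and then promote convergence in probability to convergence in $L^p(d\PP; C_t X)$ via Lemma~\ref{L:same conv}.

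First, the second hypothesis immediately gives, for each fixed $t \in [-T,T]$, a limit in probability $F(t) \in X$. To promote this to a limit in $C([-T,T];X)$, I would chain. Fix $\delta>0$ and let $\{t_j\}_{j=0}^J$ be a $\delta$-net of $[-T,T]$, so $J+1 \lesssim T/\delta$. Since the $C_t^\alpha X$ norm dominates the $C_t X$ norm,
\[
\|F_n - F_m\|_{C_t X} \leq \max_{0\leq j\leq J}\|F_n(t_j)-F_m(t_j)\|_X + \delta^\alpha\bigl(\|F_n\|_{C_t^\alpha X}+\|F_m\|_{C_t^\alpha X}\bigr).
\]
Given $\eps,\eta>0$, the first hypothesis together with Markov's inequality furnishes an $R$ with $\PP\{\|F_n\|_{C_t^\alpha X}>R\}<\eta/3$ for all $n$, and we then pick $\delta$ so small that $2R\delta^\alpha<\eps/2$. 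A union bound over the finite net reduces the remaining maximum to an estimate of the form
\[
(J+1)\sup_{|t|\leq T}\PP\Bigl\{\sup_{m>n}\|F_n(t)-F_m(t)\|_X>\tfrac{\eps}{2}\Bigr\},
\]
which tends to $0$ as $n\to\infty$ by the second hypothesis (the inner $\sup_{m>n}$ is exactly what lets us obtain a Cauchy, not just two-index, statement). Combining these bounds shows that $\{F_n\}$ is Cauchy in probability in the complete space $C([-T,T];X)$, and hence converges in probability to some $F \in C_t X$ that must agree almost surely at each $t$ with the pointwise limits constructed at the start (by uniqueness of limits in probability).

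For the $L^p$ upgrade, the moment hypothesis plus the domination of the sup norm by the Hölder norm yield $\sup_n \E\|F_n\|_{C_t X}^p<\infty$ for every $p<\infty$; Fatou's lemma extends this bound to $F$. Thus the scalar random variables $\|F_n-F\|_{C_t X}$ are uniformly bounded in $L^q(d\PP)$ for every $q<\infty$ and converge to $0$ in probability, so Lemma~\ref{L:same conv} delivers the required convergence in $L^p(d\PP)$ for every finite $p$. The main obstacle is the chaining step in the middle paragraph: one must arrange for the Hölder equicontinuity (controlled only in expectation) to interact correctly with the uniform-in-$t$ Cauchy-in-probability property, which is precisely what allows one to compress the supremum over the uncountable set $[-T,T]$ into a tractable maximum over a finite net.
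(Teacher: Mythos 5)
Your proof is correct and uses the same chaining-over-a-finite-net idea as the paper. The only difference is a reordering: you establish Cauchy-in-probability for the $C_t X$-valued processes first and invoke Lemma~\ref{L:same conv} at the end, whereas the paper applies Lemma~\ref{L:same conv} first (at each fixed time) and then chains entirely within $L^p_\omega C_t X$; both orders work.
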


\begin{remark}
In view of the $L^p_\omega C_t^\alpha X$ bound, one may trivially upgrade the convergence to $L^p_\omega C_t^\beta X$ for any $0\leq\beta<\alpha$.  On the other hand, convergence in $C^\alpha$ may fail even for deterministic sequences.  
\end{remark}

\begin{proof}
For $t$ fixed, we have that $F_n(t)$ are bounded in $L^p$ for any $1\leq p<\infty$ and converge in probability (because they are Cauchy in this sense).  Thus, by Lemma~\ref{L:same conv},  these random variables converge in $L^p$ sense for any $1\leq p<\infty$ and
$$
\lim_{n\to\infty} \,\sup_{m>n} \E\Bigl\{ \| F_n(t) - F_m(t) \|_{X}^p \Bigr\} =0.
$$

On the other hand, given natural numbers $n,m,N$, 
\begin{align*}
\| F_n - F_m \|_{C_t X} 
&\leq \sum_{|\ell|\leq N} \| F_n(\ell T/N) - F_m(\ell T/N) \|_{X} + N^{-\alpha} \bigl[\| F_n \|_{C_t^\alpha X} + \| F_m\|_{C_t^\alpha X} \bigr].
\end{align*}
Thus, taking $N$ large and then $n$ large (depending on $N$), we obtain
\begin{align*}
\lim_{n\to\infty} \sup_{m>n} \E\Bigl\{ \| F_n - F_m \|_{C_t X}^p \Bigr\} =0.
\end{align*}
This shows that $F_n$ is Cauchy in $L^p_\omega C_t X$ and so convergent there.
\end{proof}

\subsection{White noise}\label{S:white}

For concreteness, we record here some well-known basic properties of white noise.  Note that we consider here only real-valued white noise.

\begin{theorem}
A random variable $q$ taking values in $\mathcal S'(\R)$ is said to be white noise distributed if
\begin{equation}\label{Minlos defn}
\E \bigl\{ e^{i \langle f, q\rangle } \bigr\} = \exp\bigl\{ - \tfrac12 \| f
\|_{L^2}^2 \bigr\}
\end{equation}
for all real-valued Schwartz functions $f$.  Moreover,  given any
orthonormal basis $\{\psi_n\}$ of $L^2(\R)$ and $\{X_n\}$ i.i.d.
$N(0,1)$ random variables,
\begin{equation}\label{ONB rep}
\sum _n X_n \psi_n
\end{equation}
converges almost surely in $\mathcal S'(\R)$ and follows the white noise law.
\end{theorem}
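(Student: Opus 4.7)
The first assertion is really a definition; what needs a proof is the almost sure convergence of \eqref{ONB rep} in $\cS'(\R)$ together with the identification of its characteristic functional. My plan is to handle both in a unified framework based on the Hermite realization of $\cS'(\R)$.  Recall that the Hermite functions $\{h_n\}_{n\geq 0}$---eigenfunctions of $H:=-\partial_x^2+x^2$ with $Hh_n=(2n+1)h_n$---form an ONB of $L^2(\R)$ and furnish the standard identifications
$$
\cS(\R)=\bigcap_k \H^k,\qquad \cS'(\R)=\bigcup_k \H^{-k},\qquad \|u\|_{\H^k}^2:=\sum_{n\geq 0}(2n+1)^{2k}|\langle u, h_n\rangle|^2,
$$
with continuous inclusions throughout.

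The first step is to show that the partial sums $q_N:=\sum_{n\leq N}X_n\psi_n$ form a martingale bounded in $L^2(d\PP;\H^{-k})$ for any $k>\tfrac12$.  Expanding in the Hermite basis and using $\E\{X_n X_{n'}\}=\delta_{nn'}$,
$$
\E\bigl\{\|q_N-q_M\|_{\H^{-k}}^2\bigr\} = \sum_{m\geq 0}(2m+1)^{-2k}\sum_{M<n\leq N}|\langle\psi_n, h_m\rangle|^2.
$$
Since $\sum_n |\langle \psi_n, h_m\rangle|^2 = \|h_m\|_{L^2}^2 = 1$ for each $m$ and $\sum_m(2m+1)^{-2k}<\infty$, dominated convergence forces the right-hand side to zero as $M,N\to\infty$.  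Doob's $L^2$-martingale convergence theorem then yields an almost sure $\H^{-k}$-limit $q$, and hence almost sure convergence in $\cS'(\R)$.

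With this in hand, the second step is to identify the law.  Independence of the $X_n$ gives, for any real $f\in\cS(\R)$,
$$
\E\bigl\{e^{i\langle f,q_N\rangle}\bigr\} = \exp\Bigl(-\tfrac12\sum_{n\leq N}|\langle f,\psi_n\rangle|^2\Bigr),
$$
which by Parseval tends to $\exp(-\tfrac12\|f\|_{L^2}^2)$.  Combined with the a.s.\ convergence from the previous step and the trivial bound $|e^{i\cdot}|\leq 1$, dominated convergence delivers \eqref{Minlos defn}.  As a by-product, one obtains the Bochner--Minlos statement that this characteristic functional is actually realized by a probability measure on $\cS'(\R)$, so the definition of white noise is meaningful.

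The main subtlety I anticipate is arranging for a single null set to govern the convergence for every Schwartz test function simultaneously; a ``test one $f$ at a time'' approach only yields convergence pointwise in $f$ with an $f$-dependent null set.  This is precisely why I prefer to argue in a fixed Hilbert space $\H^{-k}$ and invoke Doob rather than rely directly on Gaussian-series convergence results --- the continuous embedding $\H^{-k}\hookrightarrow\cS'(\R)$ then transfers the a.s.\ limit to the intended topology, uniformly across all choices of ONB.
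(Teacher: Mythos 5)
Your proof is correct and follows essentially the same skeleton as the paper's: show that the partial sums form an $L^2(d\PP)$-bounded martingale in a separable Hilbert space continuously embedded in $\cS'(\R)$, invoke Hilbert-space martingale convergence to obtain the almost-sure limit there, and transfer it to $\cS'(\R)$ via the continuous inclusion. The only differences are cosmetic: the paper works in the polynomially-weighted space $\langle x\rangle^{-1} H^{-1}(\R)$ via a trace-class bound where you work in the Hermite-weighted space and bound via Parseval, and you spell out in full the identification of the limiting law that the paper dismisses as elementary.
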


\begin{remark}\label{R:cov}
In view of \eqref{Minlos defn}, we may say that white noise is the Gaussian process with covariance
\[
\E\{\langle \phi,q\rangle\langle \psi,q\rangle\} = \langle \phi,\psi\rangle,\qtq{or colloquially} \E\{q(x)q(y)\}= \delta(x-y).
\]
This shows that $\langle \phi, q\rangle$ and $\langle \psi, q\rangle$ are independent whenever $\phi$ and $\psi$ are orthogonal in $L^2$.  In particular, if $I$ and $J$ are disjoint intervals, then $q|_I$ and $q|_J$ are independent.  This independence property immediately yields the ergodicity (indeed, mixing property) of white noise under spatial translations.
\end{remark}

\begin{proof}[Outline of Proof]
The existence and uniqueness of a cylinder measure on $\mathcal
S'(\R)$ obeying \eqref{Minlos defn} follows from the Minlos Theorem;
see, for example,  \cite[Theorem~I.2.3]{Simon Functional
Integration}.

We turn now to \eqref{ONB rep}.  For any finite set $N$,
\begin{align}\label{1058}
\E\Bigl\{ \bigl\| \langle x\rangle^{-1}  \sum_{n\in N} X_n \psi_n
\bigr\|_{H^{-1}(\R)}^2 \Bigr\}
&= \sum_{n\in N} \bigl\langle\psi_n,   \langle x\rangle^{-1} (-\Delta+4)^{-1}  \langle x\rangle^{-1} \psi_n\bigr\rangle \notag\\
&\leq  \|    \langle x\rangle^{-1} (-\Delta+4)^{-1}  \langle
x\rangle^{-1}  \|_{\mathfrak I_1} \lesssim 1.
\end{align}
Thus $L^2(d\PP)$-convergence in this weighted Sobolev space is guaranteed.  Almost-sure convergence in this weighted Sobolev space (and so also in $\mathcal S'(\R)$) can then be deduced from the maximal inequality for martingales in Hilbert space (see \cite[Theorem~5.3.27]{Stroock}).  The fact that the limit satisfies \eqref{Minlos defn} is elementary.
\end{proof}

A more precise version of the calculation \eqref{1058} is the following: For $w\in L^2(\R)$,
\begin{equation}\label{weighted-wn}
\E\bigl\{ \| wq\|_{H^{-1}_\kappa}^2\bigr\} =  \tr\bigl\{ w (-\Delta+4\kappa^2)^{-1} w\bigr\} = \tfrac1{4\kappa} \int w^2 \,dx.
\end{equation} 

The construction \eqref{ONB rep} works equally well on any torus, say $\R/2L\Z$.  Equivalently, one may begin with white noise on $\R$, restrict to the interval $[-L,L]$ and then construct $q_L$ by periodizing (as in \eqref{qL} with $L=L_0$). The analogue of Remark~\ref{R:cov} in this setting is
\begin{align}\label{qL cov}
\E\{q_L(x)q_L(y)\}= \sum_{n\in\Z} \delta(x-y-2nL) .
\end{align}
Arguing as in \eqref{weighted-wn}, one readily sees that
\[
\E\bigl\{ \|q_L\|_{H^{-1}(\R/2L\Z)}^2\bigr\} \lesssim L \qtq{and thence} \E\bigl\{ \|q_L\|_{H^{-1}(\R/2L\Z)}^{2p}\bigr\} \lesssim_p L^p 
\]
for any $1\leq p<\infty$, by using Lemma~\ref{L3}.

The following result represents integration by parts in Gauss space; see \cite{Malliavin} for further details.

\begin{lemma}\label{T:Wick}  Let $q$ be white noise and suppose $F$ belongs to $D^2_1$, which is to say
\begin{align}\label{247}
\| F \|_{D^2_1} ^2 := \E \Bigl\{ \bigl\| \tfrac{\delta F}{\delta q} (q) \bigr\|_{L^2}^2 +\bigl|F(q)\bigr|^2 \Bigr\} < \infty.
\end{align}
Then for any $\varphi\in L^2$ we have
$$
\E\bigl\{\langle q, \varphi\rangle F(q)\bigr\} = \E\bigl\{\langle \tfrac{\delta F}{\delta q}, \varphi\rangle\bigr\}.
$$
\end{lemma}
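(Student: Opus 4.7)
The plan is to reduce the identity to one-dimensional Gaussian integration by parts via an orthogonal decomposition of $q$, and then extend from smooth cylinder functionals to all $F\in D^2_1$ by density. By linearity I normalize $\|\varphi\|_{L^2}=1$. Completing $\varphi$ to an orthonormal basis $\{\psi_n\}_{n\ge 1}$ of $L^2(\R)$ with $\psi_1=\varphi$ and setting $X_n=\ip{\psi_n}{q}$, Remark~\ref{R:cov} guarantees that $(X_n)_{n\ge1}$ are i.i.d. $N(0,1)$, and \eqref{ONB rep} expresses $q$ as the corresponding series.

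For smooth cylinder functionals $F(q)=f(X_1,\dots,X_N)$ with $f\in C_c^\infty(\R^N)$, a direct computation gives $\tfrac{\delta F}{\delta q}=\sum_{n=1}^N (\partial_n f)(X_1,\dots,X_N)\,\psi_n$, so that $\ip{\tfrac{\delta F}{\delta q}}{\varphi}=(\partial_1 f)(X_1,\dots,X_N)$.  The identity then collapses to
$$\E\bigl\{X_1\, f(X_1,\dots,X_N)\bigr\}=\E\bigl\{(\partial_1 f)(X_1,\dots,X_N)\bigr\},$$
which is a single integration by parts in $x_1$ against the standard Gaussian density on $\R^N$, via $-\partial_{x_1}e^{-|x|^2/2}=x_1 e^{-|x|^2/2}$.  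Both sides of the target identity are also well defined for general $F\in D^2_1$, since Cauchy--Schwarz plus $\ip{q}{\varphi}\sim N(0,1)\in L^2(d\PP)$ bounds $\E|\ip{q}{\varphi}F|$ by $\|\varphi\|_{L^2}\|F\|_{D^2_1}$, and similarly for the right-hand side using $\|\varphi\|_{L^2}$ and $\E\|\tfrac{\delta F}{\delta q}\|_{L^2}^2$.

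To extend to arbitrary $F\in D^2_1$ I would approximate by smooth cylinder functionals in the $D^2_1$ norm.  A natural construction is to set $F_n=\E\{F\mid\sigma(X_1,\dots,X_n)\}$, represent it as $F_n=\tilde f_n(X_1,\dots,X_n)$, and then convolve $\tilde f_n$ with a Gaussian mollifier $\rho_\eps$ on $\R^n$ to produce smooth cylinder approximants $F_{n,\eps}$.  Martingale convergence together with the standard compatibility of the Malliavin derivative with conditioning, namely $\tfrac{\delta F_n}{\delta q}=\E\{P_n\tfrac{\delta F}{\delta q}\mid\sigma(X_1,\dots,X_n)\}$ with $P_n$ the orthogonal projection onto $\operatorname{span}\{\psi_1,\dots,\psi_n\}$, yields $F_{n,\eps}\to F$ in $L^2(d\PP)$ and $\tfrac{\delta F_{n,\eps}}{\delta q}\to\tfrac{\delta F}{\delta q}$ in $L^2(d\PP;L^2(\R))$ along a suitable diagonal subsequence.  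Applying the cylinder-function identity to each $F_{n,\eps}$ and passing to the limit via Cauchy--Schwarz then yields the stated formula.

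The main obstacle is the density step, which hinges on the closability of $F\mapsto\tfrac{\delta F}{\delta q}$ on $D^2_1$ and its compatibility with conditioning and mollification.  These are classical facts in the theory of Gaussian Sobolev spaces, so I would invoke them with a reference to \cite{Malliavin} rather than reprove them from scratch; the first two steps above are the heart of the matter specific to the present setting.
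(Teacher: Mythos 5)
The paper gives no proof for this lemma, deferring to \cite{Malliavin}; your argument is the standard one — reduce to one-dimensional Gaussian integration by parts on smooth cylinder functionals via an orthonormal basis with $\psi_1=\varphi$, then extend by density using closability of the Malliavin derivative — which is precisely what that reference supplies. The argument is correct, and your self-identified reliance on classical density/closability facts is the right place to cite rather than reprove.
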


\subsection{Resolvents and the diagonal Green's function}\label{S:resolvent} 

For $k\in\C$ with $\Re k>0$, the free resolvent 
\[
R_0(k)=(-\partial^2 + k^2)^{-1}
\]
has integral kernel
\[
G_0(x,y;k)= \tfrac{1}{2k}e^{-k|x-y|}.
\]

\begin{definition}\label{D:admissible} We call nonzero $k\in\C$ \emph{admissible} if $0\leq \arg k < \tfrac{\pi}{4}$.  For any admissible $k$, we set
\begin{align}\label{b=a}
\kappa := |k|, \quad E:=\Re(k^2) \qtq{and} \sigma:=\Im(k^2).
\end{align}
For $\kappa_0>0$, we define 
\[
A(\kappa_0)=\{k\text{ admissible}:|k|\geq \kappa_0\}.
\]

We say an admissible $k$ is \emph{strictly admissible} if
$$
\Im(k^2)\geq C \qtq{and} |\tfrac{\pi}{8} - \arg k| < \tfrac{\pi}{16}.
$$
Here $C\geq 1$ is an absolute constant large enough to ensure that Lemma~\ref{P:11} below holds for all strictly admissible $k$.  Note that for a strictly admissible $k$ we have $|\Im(k^2)|\approx |k|^2$.
\end{definition}

We have the basic estimate
\[
\| R_0(k)\|_{H_\kappa^{-1}\to H_\kappa^1}\lesssim 1
\]
uniformly over all admissible $k$. 
For $k>0$, $R_0(k)$ is positive and self-adjoint and hence has a unique positive square root.  For admissible $k$ we define $\sqrt{R_0(k)}$ as a Fourier multiplier operator, with symbol analytically continued from $k>0$.  We then have
\[
\|\sqrt{R_0(k)}\|_{L^2\to H_\kappa^1} + \|\sqrt{R_0(k)}\|_{H_{\kappa}^{-1}\to L^2} \lesssim 1
\]
uniformly over admissible $k$.

We record a few estimates for the free resolvent.

\begin{lemma}\label{I2H1}
For $\kappa>0$,
\[
\|\sqrt{R_0(\kappa)} f\sqrt{R_0(\kappa)}\|_{\mathfrak{I}_2} =\kappa^{-\frac12}\|f\|_{H_{\kappa}^{-1}}. 
\]
\end{lemma}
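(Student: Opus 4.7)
The plan is to expand $\|A\|_{\mathfrak{I}_2}^2 = \tr(A^*A)$ for $A := \sqrt{R_0(\kappa)}\,f\,\sqrt{R_0(\kappa)}$ and then exploit the cyclicity of the trace to eliminate the awkward square roots. Since $\sqrt{R_0(\kappa)}$ is self-adjoint for $\kappa>0$,
$$A^*A \;=\; \sqrt{R_0(\kappa)}\,\bar f\, R_0(\kappa)\, f\,\sqrt{R_0(\kappa)},$$
and a single cyclic rotation gives
$$\tr(A^*A) \;=\; \tr\bigl(R_0(\kappa)\,\bar f\,R_0(\kappa)\,f\bigr),$$
in which the square roots have disappeared entirely.

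I would then expand this last trace using the explicit kernel $G_0(x,y;\kappa) = \tfrac{1}{2\kappa} e^{-\kappa|x-y|}$ and the standard identity $\tr(BC) = \iint B(x,y)\,C(y,x)\,dx\,dy$ for products of Hilbert--Schmidt operators, obtaining
$$\|A\|_{\mathfrak{I}_2}^2 \;=\; \iint G_0(x,y;\kappa)\,\bar f(y)\,G_0(y,x;\kappa)\,f(x)\,dx\,dy \;=\; \frac{1}{4\kappa^2}\iint f(x)\,\overline{f(y)}\,e^{-2\kappa|x-y|}\,dx\,dy.$$
The key observation is that $\tfrac{1}{4\kappa}e^{-2\kappa|x-y|}$ is the integral kernel of $(-\partial_x^2 + 4\kappa^2)^{-1} = R_0(2\kappa)$, so the double integral equals $\tfrac{1}{\kappa}\langle f,(-\partial_x^2 + 4\kappa^2)^{-1}f\rangle$. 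In view of the paper's convention $\|f\|_{H^{-1}_\kappa}^2 = \int|\hat f(\xi)|^2(4\kappa^2+\xi^2)^{-1}\,d\xi$, this is precisely $\tfrac{1}{\kappa}\|f\|_{H^{-1}_\kappa}^2$, and taking square roots yields the claimed identity.

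There is no genuine obstacle here; this is a direct computation once the cyclicity trick is spotted. The only point that deserves a brief comment is the legitimacy of these manipulations when $f$ is merely a distribution in $H^{-1}_\kappa$. For Schwartz $f$ every step above is classical and both sides of the asserted equality are manifestly finite; the identity then extends to arbitrary $f\in H^{-1}_\kappa$ by density, with the left-hand side interpreted as the continuous extension of $f\mapsto \sqrt{R_0(\kappa)}\,f\,\sqrt{R_0(\kappa)}$ from Schwartz functions to a bounded map $H^{-1}_\kappa\to\mathfrak{I}_2$—an extension whose existence is itself a byproduct of the very inequality being proved.
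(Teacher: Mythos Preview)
Your proof is correct and follows essentially the same approach as the paper: both begin by using cyclicity of the trace to replace $\|\sqrt{R_0(\kappa)}\,f\,\sqrt{R_0(\kappa)}\|_{\mathfrak I_2}^2$ by $\tr\{R_0(\kappa)\,f\,R_0(\kappa)\,\bar f\}$, and then evaluate this trace. The only cosmetic difference is that you carry out the evaluation in physical space via the explicit kernel $\tfrac{1}{2\kappa}e^{-\kappa|x-y|}$ and then recognize $\tfrac{1}{4\kappa}e^{-2\kappa|x-y|}$ as the kernel of $R_0(2\kappa)$, whereas the paper passes directly to the Fourier side to obtain $\tfrac{1}{\kappa}\int |\hat f(\xi)|^2(\xi^2+4\kappa^2)^{-1}\,d\xi$; the two computations are equivalent.
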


\begin{proof} Direct computation (as in \cite[Proposition~2.1]{KV}) shows
\begin{align*}
\|\sqrt{R_0(\kappa)}f \sqrt{R_0(\kappa)}\|_{\mathfrak{I}_2}^2  = \tr\bigl\{ R_0(\kappa)f R_0(\kappa) \bar f\bigr\}  = \tfrac{1}{\kappa}\int_\R \frac{|\hat f(\xi)|^2}{\xi^2+4\kappa^2}\,d\xi = \tfrac{1}{\kappa}\|f\|_{H_{\kappa}^{-1}}^2, 
\end{align*}
as desired. \end{proof}

The following lemma is easily verified from the explicit kernel of $R_0(k)$.  However, we use this opportunity to introduce the Combes--Thomas argument that will be very important later in the paper.  For a pedagogical introduction to this technique, see \cite{Hislop}.

\begin{lemma}\label{P:11} For $k$ admissible with $\kappa=|k|$ large enough and $0\leq n\leq 100$, 
\[
\| \langle x\rangle^{\pm n} R_0(k) \langle x\rangle^{\mp n}\|_{H_{\kappa}^{-1}\to H_{\kappa}^1} \lesssim 1. 
\]
\end{lemma}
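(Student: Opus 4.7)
I plan to establish this bound by the Combes--Thomas conjugation argument using the \emph{logarithmic} weight $\phi(x):=\pm n\log\langle x\rangle$, chosen so that $e^{\phi}=\langle x\rangle^{\pm n}$. The essential point, and the reason for preferring $\phi$ over $\langle x\rangle^{\pm n}$ itself, is that $\phi$ is uniformly Lipschitz with
\[
\|\phi'\|_{L^\infty}\lesssim n \qtq{and} \|\phi''\|_{L^\infty}\lesssim n,
\]
independently of $x$, despite $\langle x\rangle^{\pm n}$ having unbounded derivatives.

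A direct computation (justified initially on Schwartz functions) gives the conjugation identity
\[
e^{\phi}(-\partial^2+k^2)e^{-\phi}= -\partial^2 + k^2 + V, \qquad V:= 2\phi'\partial + \phi'' - (\phi')^2,
\]
and hence formally
\[
\langle x\rangle^{\pm n} R_0(k) \langle x\rangle^{\mp n} = e^{\phi}R_0(k)e^{-\phi} = \bigl[1+R_0(k)V\bigr]^{-1}R_0(k).
\]
To legitimize this it suffices to show that $\|R_0(k)V\|_{H_\kappa^1\to H_\kappa^1}<\tfrac12$ for $\kappa$ sufficiently large; then the Neumann series defines a bounded operator on $H_\kappa^1$, and a standard density argument promotes the formal identity to one of bounded operators $H_\kappa^{-1}\to H_\kappa^1$.

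For the operator bound on $V:H_\kappa^1\to H_\kappa^{-1}$, the zeroth-order terms $\phi''$ and $(\phi')^2$ are multiplication operators bounded on $H_\kappa^{-1}$ (by \eqref{multiplier1}) with norm $\lesssim n$, and composing with $H_\kappa^1\hookrightarrow H_\kappa^{-1}$ (norm $\lesssim\kappa^{-2}$ by \eqref{829-gain}) gives a contribution $\lesssim n/\kappa^2$. The first-order term is controlled by factoring $2\phi'\partial:H_\kappa^1\to L^2\to H_\kappa^{-1}$, using $\|\partial\|_{H_\kappa^1\to L^2}\leq 1$, $\|\phi'\cdot\|_{L^2\to L^2}\lesssim n$, and $\|L^2\hookrightarrow H_\kappa^{-1}\|\leq 1/(2\kappa)$, yielding $\|2\phi'\partial\|_{H_\kappa^1\to H_\kappa^{-1}}\lesssim n/\kappa$. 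Combined with $\|R_0(k)\|_{H_\kappa^{-1}\to H_\kappa^1}\lesssim 1$, this gives $\|R_0(k)V\|_{H_\kappa^1\to H_\kappa^1}\lesssim n/\kappa$, which is less than $\tfrac12$ once $\kappa$ is sufficiently large (depending only on $n\leq 100$). Since the analysis depends only on $|\phi'|$ and $|\phi''|$, the case of the opposite sign is handled identically. The only real subtlety, and the reason the lemma requires $\kappa$ large rather than merely positive, is the first-order piece $2\phi'\partial$, which produces only $1/\kappa$ smallness; everything else is cleaner.
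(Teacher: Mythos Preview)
Your proof is correct and follows essentially the same Combes--Thomas argument as the paper: write $\langle x\rangle^{\pm n}=e^{\phi}$ with $\phi=\pm n\log\langle x\rangle$, compute the conjugated operator $-\partial^2+k^2+V$, and invert via a Neumann series once $\|V\|_{H_\kappa^1\to H_\kappa^{-1}}\lesssim n/\kappa$ is small. The only cosmetic difference is that the paper uses the symmetric factorization $\sqrt{R_0(k)}\bigl[1+\sqrt{R_0(k)}V\sqrt{R_0(k)}\bigr]^{-1}\sqrt{R_0(k)}$ (inverting on $L^2$) rather than your $[1+R_0(k)V]^{-1}R_0(k)$ (inverting on $H_\kappa^1$); both are equivalent here. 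One minor imprecision: the multiplier $(\phi')^2$ has $L^\infty$ norm $\lesssim n^2$, not $n$, but since $n\leq 100$ this is harmless.
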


\begin{proof} We write $\langle x\rangle^{\pm n} = e^{\rho(x)}$ with $\rho(x)=\pm n\log\langle x\rangle$. The inverse of the operator appearing in the lemma can be written
\[
e^{\rho}(-\partial_x^2+k^2)e^{-\rho}=-\partial_x^2+k^2+B_\rho,
\]
where
\[
B_{\rho} = \rho'(x)\partial_x + \partial_x \rho'(x) - \rho'(x)^2. 
\]
By duality, one has
\[
\|B_\rho\|_{H_{\kappa}^{1}\to H_{\kappa}^{-1}} \lesssim \kappa^{-1}\|\rho'\|_{L^\infty} + \kappa^{-2}\|\rho'\|_{L^\infty}^2 \lesssim n\kappa^{-1} +n^2\kappa^{-2}.
\]
Thus for $\kappa$ large enough we have 
\[
\|\sqrt{R_0(k)}B_\rho \sqrt{R_0(k)}\|_{L^2\to L^2} \leq \tfrac12
\]
and so we may write 
\[
e^{\rho}R_0(k) e^{-\rho}=\sqrt{R_0(k)}\bigl[1+\sqrt{R_0(k)}B_\rho\sqrt{R_0(k)}\bigr]^{-1} \sqrt{R_0(k)}.
\]
This implies the result. \end{proof}

We next discuss the existence of the resolvent $R(k)=(-\partial_x^2+q+k^2)^{-1}$ for $q\in H^{-1}(\R/2L_0\Z)$, where $L_0$ may be finite or infinite.  Note that when $L_0$ is finite, we still regard the operator as acting on the line, albeit with periodic potential.    

\begin{proposition}\label{P:HR} Fix $L_0\geq 1$ and $q\in H^{-1}(\R/2L_0\mathbb{Z})$. There is a unique semi-bounded self-adjoint operator $H=H_q$ on $L^2(\R)$ such that
\[
\langle \psi, H\psi\rangle = \int |\psi'(x)|^2 + q(x)|\psi(x)|^2\,dx \qtq{for all} \psi\in H^1(\R). 
\]
Indeed, 
\[
E_0(q):=\inf \sigma(H_q)\gtrsim -\|q\|_{H^{-1}}^4.
\]

The resolvent $R(k):=(H+k^2)^{-1}$ exists as a jointly analytic function of $(q,k)$  on the domain
\[
\{(q,k):q\in H^{-1}(\R/2L_0\mathbb{Z})\qtq{and} k^2\in \C\backslash(-\infty,-E_0(q)]\},
\]
taking values in the space of bounded operators from $H^{-1}(\R)$ to $H^1(\R)$.

Moreover, the diagonal Green's function $g(x;q,k):=\langle \delta_x,R(k)\delta_x\rangle$ and its reciprocal $\tfrac{1}{g(x;q,k)}$ are jointly analytic functions on the same domain, taking values in $H^1(\R/2L_0\mathbb{Z})$. 
\end{proposition}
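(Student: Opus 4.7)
The plan is to build $H_q$ as a semi-bounded self-adjoint operator via the KLMN theorem, promote the resolvent to a jointly analytic family via Lax--Milgram and the second resolvent identity, and then extract the properties of $g$ and $1/g$ from the Weyl factorization and the Gel'fand--Dikii identity for smooth approximants. For the first step, I decompose a periodic $q\in H^{-1}(\R/2L_0\Z)$ as $q = c + f'$ with $c\in\R$ constant and $f\in L^2(\R/2L_0\Z)$ mean-zero. For $\psi\in H^1(\R)$, the one-dimensional Sobolev estimate
\[
\|\psi\|_{L^\infty(I_m)}^2 \lesssim L_0^{-1}\|\psi\|_{L^2(I_m)}^2 + \|\psi\|_{L^2(I_m)}\|\psi'\|_{L^2(I_m)}
\]
applied on each period interval $I_m$ of length $2L_0$ and summed in $m$, combined with Cauchy--Schwarz on $f$ over one period and Young's inequality, yields
\[
\Bigl|\int_\R q|\psi|^2\,dx\Bigr| \leq \eps\|\psi'\|_{L^2}^2 + C_\eps(1+\|q\|_{H^{-1}}^4)\|\psi\|_{L^2}^2
\]
for every $\eps > 0$. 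This is form-boundedness with relative bound $0$, so KLMN produces the unique self-adjoint $H_q$ on $L^2(\R)$ with form domain $H^1(\R)$, and the same inequality yields $E_0(q)\gtrsim -\|q\|_{H^{-1}}^4$.

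For the second step, I observe that the form coercivity just established and the Lax--Milgram lemma, applied in the duality pairing of $H^1(\R)$ with $H^{-1}(\R)$, show that whenever $k^2\notin -\sigma(H_q)$ the map $H_q + k^2: H^1(\R) \to H^{-1}(\R)$ is a bounded isomorphism, so $R(k)$ extends to a bounded operator on this pair. Analyticity in $k$ is standard resolvent theory. For analyticity in $q$, I exploit the one-dimensional algebra property $\|uv\|_{H^1(\R)} \lesssim \|u\|_{H^1(\R)}\|v\|_{H^1(\R)}$: by duality and the periodic pairing it implies that multiplication by $\delta q\in H^{-1}(\R/2L_0\Z)$ is a bounded operator $H^1(\R) \to H^{-1}(\R)$ of norm $\lesssim \|\delta q\|_{H^{-1}(\R/2L_0\Z)}$. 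The Neumann expansion
\[
R(k;q+\delta q) = \sum_{n=0}^\infty R(k;q)\bigl[-(\delta q)R(k;q)\bigr]^n
\]
then converges in $B(H^{-1}(\R), H^1(\R))$ for $\|\delta q\|_{H^{-1}}$ small, establishing joint analyticity on the stated domain.

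For the third step, since $\delta_x\in H^{-1}(\R)$ in one dimension, $g(x;q,k) = \langle\delta_x,R(k)\delta_x\rangle$ is well-defined and continuous, periodic with period $2L_0$ by translation invariance, and jointly analytic in $(q,k)$ as a $C_b(\R)$-valued function by the second step. To upgrade the target space to $H^1(\R/2L_0\Z)$ and to verify non-vanishing, I approximate: for smooth periodic $q_n\to q$ in $H^{-1}(\R/2L_0\Z)$, Floquet theory supplies exponentially decaying Weyl solutions $\phi_\pm$ of $(-\partial_x^2 + q_n + k^2)\phi = 0$, and
\[
g_n(x) = \frac{\phi_+(x)\phi_-(x)}{W(\phi_+,\phi_-)}
\]
is periodic and nowhere vanishing. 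Moreover, $g_n$ satisfies the Gel'fand--Dikii identity
\[
-2g_n g_n'' + (g_n')^2 + 4(q_n+k^2)g_n^2 = 1,
\]
which, together with the uniform resolvent bounds from step two on a neighborhood of $(q,k)$, produces uniform-in-$n$ bounds for $g_n$ and $1/g_n$ in $H^1(\R/2L_0\Z)$ and uniform pointwise lower bounds on $|g_n|$. Passing to the $n\to\infty$ limit using the $q$-continuity of $R(k)$ from step two gives $g, 1/g\in H^1(\R/2L_0\Z)$, with the joint analyticity inherited via the identity $g(x) = \langle\delta_x, R(k)\delta_x\rangle$ and the chain rule.

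The main obstacle is step three: securing uniform-in-$n$ lower bounds on $|g_n|$ and uniform $H^1(\R/2L_0\Z)$ control of $g_n$ and $1/g_n$. Non-vanishing is handled by the Weyl factorization and derivative control by the Gel'fand--Dikii identity, but these properties propagate to a general $q\in H^{-1}$ only because the Neumann series of step two is convergent on an open neighborhood of $(q,k)$ in the analyticity domain, furnishing exactly the stability required for the limit.
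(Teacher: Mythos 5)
Your steps 1 and 2 take a genuinely different route from the paper. For the construction of $H_q$ and the bound $E_0\gtrsim -\|q\|_{H^{-1}}^4$, the paper simply cites \cite[Sections 2 and 6]{KV}; your KLMN argument via the decomposition $q=c+f'$ is a reasonable substitute, although you should check that your additive constant can be absorbed (with $\eps=1/2$ the form estimate as written gives $E_0\gtrsim -(1+\|q\|^4)$ rather than $-\|q\|^4$, so a short argument is needed there). For the resolvent, the paper constructs $R(\vk)$ for large real $\vk$ via the Born series \eqref{resolvent-series} and then reaches general $k$ through the identity \eqref{name}; your Lax--Milgram plus Neumann-series-in-$\delta q$ approach is a legitimate alternative, provided you verify carefully that the pairing of a \emph{periodic} $H^{-1}$ distribution against a product $uv$ with $u,v\in H^1(\R)$ is controlled (this requires a Cauchy--Schwarz over period cells, not just the algebra property applied naively).

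Step 3, however, has a genuine gap. You assert that the Weyl factorization $g_n=\phi_+\phi_-/W$ and the Gel'fand--Dikii identity, ``together with the uniform resolvent bounds,'' produce \emph{uniform pointwise lower bounds} on $|g_n|$. But the Weyl factorization only shows $g_n$ is nowhere-vanishing \emph{qualitatively}, for each fixed $n$; non-vanishing is not a closed property, so this does not survive the limit. The Gel'fand--Dikii identity $-2g_ng_n''+(g_n')^2+4(q_n+k^2)g_n^2=1$ also does not furnish a quantitative lower bound on $|g_n|$: it only tells you that at a point where $g_n$ is small one has $|g_n'|\approx 1$, i.e., zeros would be simple and transversal, which is compatible with $\inf|g_n|\to 0$. (Moreover this identity involves $q_n g_n^2$ pointwise, which degenerates as $q_n\to q\in H^{-1}$, so it is a poor device for uniform estimates.) What you need is a \emph{quantitative} non-vanishing argument, and this is exactly where the paper's proof is cleaner: it uses the spectral theorem directly, showing
\[
\Re\,g(x)=\Re\langle\delta_x,R(k)\delta_x\rangle\geq (\Re k^2+E_0)\,\|R(k)\delta_x\|_{L^2}^2>0
\]
for $\Re k^2>-E_0$, and $\Im\,g(x)=-\Im(k^2)\,\|R(k)\delta_x\|_{L^2}^2\neq 0$ when $\Im k^2\neq 0$. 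Both observations apply directly to the limiting $q\in H^{-1}$ (no approximation needed), cover the full domain $\C\setminus(-\infty,-E_0]$, and give lower bounds stable under the $q$-dependence already established. The Weyl-solution argument you invoke is what the paper later uses to prove \eqref{E:q from g} in Lemma~\ref{L:diffeo}, after analyticity has already been secured, precisely because it is a density argument that propagates an algebraic identity rather than a positivity bound.
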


\begin{proof} Everything in the first paragraph was observed already in \cite[Sections~2 and 6]{KV}.  As was also discussed there, for $\vk \gtrsim 1+\|q\|_{H^{-1}}^2$, one may construct the resolvent via the series
\begin{equation}\label{resolvent-series}
R(\vk) = \sum_{\ell=0}^\infty (-1)^\ell \sqrt{R_0(\vk)} \Bigl( \sqrt{R_0(\vk)}\,q\,\sqrt{R_0(\vk)}\Bigr)^\ell \sqrt{R_0(\vk)},
\end{equation}
which converges in the space of bounded operators from $H^{-1}(\R)$ to $H^{1}(\R)$.  Indeed, one sees that the form domain of $H_q$ is precisely $H^1(\R)$.   Boundedness of $R(k)$ from $H^{-1}(\R)$ to  $H^{1}(\R)$ for general $k$ then follows abstractly, as one sees from the resolvent identity
\begin{equation}\label{name}
R(k) = R(\vk) - ( k^2 - \vk^2 ) \sqrt{R(\vk)} R(k) \sqrt{R(\vk)}.
\end{equation}
Analyticity in $k$ and $q$ follows from the resolvent identities.

That $g(x;q,k)$ belongs to $H^1(\R/2L_0\Z)$ and is jointly analytic now follows from the expansion
\begin{align}\label{g-to-quadratic}
g(x;q, k) &= \tfrac{1}{2k} - \langle\delta_x, R_0(k) q R_0(k) \delta_x\rangle + \langle\delta_x, R_0(k) q R(k) q R_0(k) \delta_x\rangle\notag\\
&=\tfrac{1}{2k} -\langle\delta_x, \tfrac{1}{k}R_0(2k)q\rangle + \langle \delta_x, R_0(k) qR(k)qR_0(k)\delta_x\rangle;
\end{align}
see \cite{KV} for the details in the case $k\in\R$.

That $1/g(x)$ belongs to $H^1$ and is analytic will follow from the analogous statements for $g(x)$, once one shows that $g(x)$ is nowhere vanishing.  This non-vanishing property follows immediately from the spectral theorem; indeed, $R(k)$ is a (non-isometric) isomorphism of $H^{-1}\to H^1$ and so we have
\begin{gather}
\Re \langle \delta_x, R(k) \delta_x\rangle  \geq (\Re k^2 + E_0) \langle \delta_x, R^*(k) R(k) \delta_x\rangle > 0 \qtq{for} \Re k^2 > - E_0,
\end{gather}
as well as
\begin{gather}
\Im \langle \delta_x, R(k) \delta_x\rangle  = - \Im(k^2) \langle \delta_x, R^*(k) R(k) \delta_x\rangle \gtrless 0 \qtq{when}  \Im k^2 \lessgtr 0.
\end{gather}
We use here that  $\delta_x\in H^{-1}$.
\end{proof}

Let us recall some basic facts about the resolvent.  For $k$ strictly admissible, we have
\begin{equation}\label{RL2L2}
\|R(k)\|_{L^2\to L^2} \leq \tfrac{1}{\sigma}.
\end{equation} 
In what follows, we define $\sqrt{R(k)}$ via the spectral theorem, again choosing the branch by analytic continuation from $k\gg 1$. 

By using the series expansion \eqref{resolvent-series} and Lemma~\ref{I2H1}, we deduce that
\begin{equation}\label{I2H1-2}
|\tr\{R(q,\kappa)fR(q,\kappa)h\}| \lesssim \kappa^{-1}\|f\|_{H^{-1}}\|h\|_{H^{-1}}\qtq{provided} \kappa\gtrsim 1+\|q\|_{H^{-1}}^2.
\end{equation}
In particular,
\begin{equation}\label{I2H1-3}
\| \sqrt{R(q,\kappa)}f\sqrt{R(q,\kappa)}\|_{\mathfrak{I}_2} \lesssim \kappa^{-\frac12} \|f\|_{H^{-1}}\qtq{provided} \kappa\gtrsim 1+\|q\|_{H^{-1}}^2.
\end{equation}

A computation using the resolvent identity yields
\begin{equation}\label{E:dgdq}
\frac{d\ }{ds}\biggr|_{s=0} g(x;q+sf,k) =  -\langle \delta_x,R(q,k)fR(q,k)\delta_x\rangle \quad\text{for $f\in H^{-1}$}.
\end{equation} 

The diagonal Green's function satisfies the following important diffeomorphism property.  A local version of this property was detailed in \cite[Proposition~2.2]{KV}, which considered only the $\delta$-ball in $H^{-1}$ and required $k \gtrsim 1+ \delta^2$.

\begin{lemma}[Global diffeomorphism property]\label{L:diffeo}
Fix a strictly admissible $k$ and $1\leq L_0\leq \infty$.  Then the mappings
$$
q\mapsto g(k)-\tfrac1{2k} \qtq{and} q\mapsto k-\tfrac1{2g(k)} 
$$
are diffeomorphisms from $H^{-1}(\R/2L_0\Z;\R)$ onto their ranges in $H^1(\R/2L_0\Z;\C)$, which are smoothly embedded submanifolds.  In particular, one may recover $q$ from $g$ via
\begin{equation}\label{E:q from g}
q = \bigl[\tfrac{g'(k)}{2g(k)}\bigr]' + \bigl[\tfrac{g'(k)}{2g(k)}\bigr]^2 +  \bigl[\tfrac{1}{4g^2(k)} - k^2 \bigr]  .
\end{equation}
\end{lemma}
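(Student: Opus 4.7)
The plan is to prove the lemma by exhibiting a smooth left inverse for the map $G:q\mapsto g(\,\cdot\,;q,k)$ built directly from the formula \eqref{E:q from g}.  First, I would introduce
$$
\Psi(g):=\bigl[\tfrac{g'}{2g}\bigr]' + \bigl[\tfrac{g'}{2g}\bigr]^2 + \tfrac{1}{4g^2} - k^2
$$
on the open subset $\mathcal O\subset H^1(\R/2L_0\Z;\C)$ of nowhere-vanishing functions.  Since $H^1\hookrightarrow L^\infty$ in one spatial dimension and $1/g$ is bounded on compact subsets of $\mathcal O$, each summand defining $\Psi(g)$ lies in $H^{-1}(\R/2L_0\Z;\C)$, and $\Psi$ is an analytic map $\mathcal O\to H^{-1}(\R/2L_0\Z;\C)$.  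The heart of the argument then reduces to establishing the identity $\Psi\circ G=\mathrm{id}$, which is exactly \eqref{E:q from g}.

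To prove \eqref{E:q from g}, I would first handle smooth (periodic) $q$ using the Weyl--Titchmarsh solutions $\psi_\pm$ of $-\psi''+q\psi=-k^2\psi$ decaying at $\pm\infty$ on the line (when $q$ is periodic these are the Floquet solutions with sub-unit multipliers, which exist because strict admissibility places $-k^2$ off the spectrum of $H_q$).  The logarithmic derivatives $m_\pm:=\psi_\pm'/\psi_\pm$ satisfy the Riccati equations $m_\pm'+m_\pm^2=q+k^2$.  From $g=\psi_+\psi_-/W$ (with $W$ the constant Wronskian) and $W=\psi_+\psi_-(m_--m_+)$ one reads off
$$
\tfrac{g'}{2g}=\tfrac{m_++m_-}{2} \qquad\text{and}\qquad \tfrac{1}{g}=m_--m_+,
$$
whence $[\tfrac{g'}{2g}]^2+\tfrac{1}{4g^2}=\tfrac{m_+^2+m_-^2}{2}$ and $[\tfrac{g'}{2g}]'=\tfrac{m_+'+m_-'}{2}$; summing and invoking Riccati gives $\Psi(g)=q$.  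For general $q\in H^{-1}(\R/2L_0\Z;\R)$, I would extend by analytic continuation: both sides of \eqref{E:q from g} depend analytically on $q$ on the domain of Proposition~\ref{P:HR} (the right-hand side because $g$ and $1/g$ are analytic $H^1$-valued functions of $q$ by Proposition~\ref{P:HR} and each operation composing $\Psi$ is continuous between appropriate function spaces), and smooth potentials are dense in $H^{-1}$.

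With $\Psi\circ G=\mathrm{id}$ in hand, the remaining assertions fall out easily.  Injectivity of $G$ (and hence of $q\mapsto g(k)-\tfrac{1}{2k}$) is immediate, and $\Psi$ restricts to a smooth inverse on the range.  Differentiating $\Psi\circ G=\mathrm{id}$ yields $d\Psi|_{G(q)}\circ dG|_q=\mathrm{id}_{H^{-1}(\R;\R)}$, so $dG|_q$ is at every $q$ an injection with closed, topologically complemented image; combined with the fact that the continuous left inverse $\Psi$ makes $G$ a topological embedding, this identifies the range of $G$ as a smoothly embedded real submanifold of $H^1(\R/2L_0\Z;\C)$ and $G$ as an analytic diffeomorphism onto that submanifold.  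The statements for $q\mapsto k-\tfrac{1}{2g(k)}$ follow by composition with the biholomorphism $g\mapsto k-\tfrac{1}{2g}$ of $\mathcal O$.

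The main obstacle I anticipate is the extension of \eqref{E:q from g} from smooth potentials to $q\in H^{-1}$: at that regularity the Weyl solutions $\psi_\pm$ are only $H^1_{\mathrm{loc}}$ and the logarithmic derivatives $m_\pm$ at best $L^2_{\mathrm{loc}}$, so the Riccati manipulation cannot be executed directly.  This is what forces the argument to go through analytic continuation, and what necessitates the up-front verification that each ingredient of $\Psi(g)$ is well defined as a distribution and depends analytically on $q$ through $g$.
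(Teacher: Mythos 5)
Your proposal is correct and takes essentially the same route as the paper: prove \eqref{E:q from g} by reducing to smooth $q$ via analyticity (Proposition~\ref{P:HR}), then deduce it from the Weyl solutions and the Riccati/Wronskian identities, and finally obtain the embedding by observing that the explicit left inverse $\Psi$ also left-inverts the differential. The only cosmetic difference is that you package the inverse as an explicit map $\Psi$ and argue directly that $G$ is an immersion and a topological embedding, whereas the paper exhibits the differential operator inverting $\tfrac{\delta g}{\delta q}$ and invokes the implicit function theorem; these are two phrasings of the same argument.
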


\begin{proof}
The central point here is to prove \eqref{E:q from g}.  In view of Proposition~\ref{P:HR}, both sides of this equality are analytic $H^{-1}$-valued functions of $q\in H^{-1}$.  Thus, it suffices to restrict attention to $q\in C^\infty(\R/2L_0\Z)$, in the case that $L_0$ is finite, or to Schwartz class $q$, when $L_0=\infty$.  In this setting (see \cite{MR0069338}), we may express the Green's function in terms of the Weyl solutions
$$
-\psi_\pm'' + q \psi_\pm = - k^2 \psi_\pm
$$
which we normalize to have Wronskian
$$
\psi_+(x) \psi_-'(x) - \psi_+'(x) \psi_-(x) \equiv 1
$$
and to be square-integrable at $\pm\infty$, respectively.  In this way, we have
$$
g(x;q, k) = \psi_+(x)\psi_-(x),
$$
which is nowhere-vanishing.  Now we simply compute:
$$
\frac{g'(k)}{2g(k)} = \frac{\psi_+'}{2\psi_+} + \frac{\psi_-'}{2\psi_-}
	\qtq{so} \Bigl(\frac{g'(k)}{2g(k)}\Bigr)' =  (q+k^2) - \tfrac12\Bigl(\frac{\psi_+'}{\psi_+}\Bigr)^2 - \tfrac12\Bigl(\frac{\psi_-'}{\psi_-}\Bigr)^2
$$
and thus
$$
\Bigl(\frac{g'(k)}{2g(k)}\Bigr)' + \Bigl(\frac{g'(k)}{2g(k)}\Bigr)^2 =  (q+k^2) - \tfrac14\Bigl(\frac{\psi_+'}{\psi_+} - \frac{\psi_-'}{\psi_-}\Bigr)^2
	= q + k^2 - \frac{1}{4g^2(k)},
$$
by using the Wronskian relation.  This proves \eqref{E:q from g}.

We are now ready to show that both maps are global embeddings.  From Proposition~\ref{P:HR} we know that the two mappings are smooth (indeed, real analytic).  On the other hand, \eqref{E:q from g} shows not only that the mappings are injective but so is the differential:
$$
\Bigl[\tfrac{1}{2g(k)}\partial_x^2 - \tfrac{1}{2g(k)} \tfrac{g'(k)}{g(k)} \partial_x - \tfrac{1}{2g(k)} \bigl(\tfrac{g'(k)}{g(k)}\bigr)' -\tfrac{1}{2g^3(k)}\Bigr] \tfrac{\delta g(k)}{\delta q} = \Id.
$$

Note that the operator in square brackets is bounded from $H^1\to H^{-1}$; thus, the implicit function theorem guarantees that the mappings are indeed smooth embeddings.
\end{proof}

\section{Invariance of white noise under the \texorpdfstring{$\H_k$}{Hk} flow on the torus}\label{S:commute}

As in \cite{KV}, an essential ingredient in our analysis will be the use of a suitable Hamiltonian approximation to the flow \eqref{KdV}. We now introduce the key quantities in our analysis, adapted from their definition in \cite{KV} to allow complex parameters $k$.  Specifically, for $1\leq L_0\leq \infty$, $q\in H^{-1}(\R/2L_0\Z)$, and $k$ strictly admissible, we define
\begin{gather}
\label{complex rho}
\rho(x;q,k) := k - \tfrac{1}{2g(x;q,k)} + \tfrac12\int_\R e^{-2k|x-y|} q(y)\,dy,  \\
\label{complex alpha}
\alpha(q,k) := \int_{-L_0}^{L_0} \rho(x;q,k)\,dx, \\
\label{complex H_k}
\H_k(q) := \Re \Bigl\{- 16 k^5 \alpha(q,k) + 2 k^2 \int_{-L_0}^{L_0} q(x)^2\,dx\Bigr\}.
\end{gather}
Naturally, many identities derived in \cite{KV} carry over immediately, either by analytic continuation, or by simply repeating the earlier arguments.  Note that 
$$
\rho(x;q,\bar k)=\overline{\rho(x;q,k)} \qtq{and correspondingly,} \alpha(q,\bar k)=\overline{\alpha(q,k)}.
$$

The necessity of considering complex $k$ stems from the fact that we must simultaneously avoid the spectrum of $H_q$ for all samples of white noise in order to define the resolvents $R(q,k)$ as random variables (cf. Proposition~\ref{P:HR}). In particular, while white noise almost surely belongs to $H^{-1}$ on the torus (in fact, the distribution of its $H^{-1}$ norm has exponential tails), there is no choice of $\kappa_0>0$ such that $R(q,\kappa)$ exists almost surely for all $\kappa\geq \kappa_0$. 

Note that formally, at least, the $\H_k$ flows approach the KdV flow as $|k|\to\infty$.  Indeed, as was discussed in \cite{KV}, for Schwartz functions $q$ one has
\[
\alpha(q,k)=\tfrac{1}{4k^3}P(q) - \tfrac{1}{16k^5} \H_{\text{KdV}}(q) + \mathcal{O}(\tfrac{1}{k^7}),
\]
where $P(q) = \tfrac12\int q^2$ is the momentum (i.e. generator of translations).  

We first establish an $H^{-1}$ global well-posedness result for the $\H_k$ flows on the torus.  For $k\geq 1$ and data small in $H^{-1}$, this appears already in \cite{KV}.  The argument here differs in two ways from that in \cite{KV}: $k$ is complex and (more significantly) we consider arbitrarily large $H^{-1}$ data.

\begin{prop}\label{P:H kappa} Fix $L_0\geq 1$ and $k$ strictly admissible.  Then the Hamiltonian flow induced by $\H_k$,
\begin{align}\label{H_k flow q}
\tfrac{d\ }{dt} q(x) = \Re \bigl\{ 16 k^5 g'(x;k) + 4k^2 q'(x) \bigr\},
\end{align}
is globally well-posed on $H^{-1}(\R/2L_0\Z)$ and commutes with the KdV flow.  If $q$ evolves according to \eqref{H_k flow q}, then for any strictly admissible $\vk\neq k$ the diagonal Green's function obeys
\begin{equation}\label{dt1/g}
\partial_t \tfrac{1}{2g(\vk)} = \biggl[\tfrac{k^2+ \bar k^2}{g(\vk)}-	\tfrac{2k^5}{\vphantom{(^{2^2}}k^2-\vk^2}\tfrac{g(k)}{g(\vk)}- \tfrac{2\bar k^5}{\vphantom{(^{2^2}}\bar k^2-\vk^2}\tfrac{g(\bar k)}{g(\vk)}\biggr]'. 
\end{equation}
\end{prop}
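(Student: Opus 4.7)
The plan splits into four parts: derive the equation of motion, establish local well-posedness on $H^{-1}(\R/2L_0\Z)$, extend to a global flow and verify commutativity with KdV, and derive \eqref{dt1/g}. The first part reduces to the identity $\delta\alpha/\delta q=-g(\cdot;q,k)$; paired with $\delta(\int q^2)/\delta q=2q$ and the Gardner bracket $\dot q=\partial_x(\delta\H_k/\delta q)$, this yields \eqref{H_k flow q} immediately. For real $\vk$ and small $q\in H^{-1}$, $\delta\alpha/\delta q=-g$ is already established in \cite{KV}; by the joint analyticity of $g(x;q,k)$ in $(q,k)$ provided by Proposition~\ref{P:HR}, it extends by analytic continuation to any strictly admissible $k$ and any $q\in H^{-1}(\R/2L_0\Z)$.

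For local well-posedness I will rewrite \eqref{H_k flow q} as $\dot q=4\Re(k^2)q'+\partial_x F(q)$ with $F(q):=\Re\{16k^5 g(\cdot;q,k)\}$, and solve the corresponding Duhamel equation using the unitary transport group $e^{4\Re(k^2)t\partial_x}$ on $H^{-1}(\R/2L_0\Z)$. Real-analyticity of $q\mapsto g(\cdot;q,k)$ from $H^{-1}$ into $H^1$ (Proposition~\ref{P:HR}) makes $q\mapsto\partial_x F(q)$ locally Lipschitz $H^{-1}\to L^2\hookrightarrow H^{-1}$, so Banach fixed-point gives local existence, uniqueness, and continuous dependence on data. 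To go global I would use an a priori $H^{-1}$ bound derived from the uniform resolvent estimate $\|R(k)\|_{L^2\to L^2}\leq 1/\sigma$ valid at strictly admissible $k$, which controls how the nonlinearity grows in $\|q\|_{H^{-1}}$, closed by a Gronwall-type argument.

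Commutativity with KdV follows from $\{H_\text{KdV}^\text{G},\H_k\}_0=0$, itself a special case of the involutive hierarchy relation $\{\alpha(q,\vk),\H_k\}_0=0$ — extended by analytic continuation from the real-$\varkappa$, small-$q$ identities of \cite{KV}. Combined with the Kappeler--Topalov global well-posedness of KdV on $H^{-1}(\R/2L_0\Z)$ \cite{MR2267286}, commutation on Schwartz data upgrades to $H^{-1}$ data by density. For \eqref{dt1/g} itself I would differentiate $1/(2g(\vk))$ in time, substitute \eqref{H_k flow q}, and apply \eqref{E:dgdq} to obtain
\[
\partial_t g(x;\vk)=-\bigl\langle\delta_x,R(\vk)\bigl(8k^5 g'(\cdot;k)+8\bar k^5 g'(\cdot;\bar k)+4\Re(k^2)q'\bigr)R(\vk)\delta_x\bigr\rangle,
\]
then recognize the right-hand side as an $x$-derivative. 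The cleanest route is via the microscopic conservation law attached to $\{\alpha(q,\vk),\H_k\}_0=0$: since $\int\rho(\cdot;\vk)\,dx$ is conserved, $\partial_t\rho(x;\vk)$ is automatically a divergence, and subtracting the contribution of the explicit convolution term in \eqref{complex rho} isolates the flux for $-1/(2g(\vk))$.

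Matching this flux with the specific combination in \eqref{dt1/g} is then an algebraic exercise using repeated application of $R(k)R(\vk)=(R(k)-R(\vk))/(\vk^2-k^2)$ to reduce double-resolvent products such as $R(\vk)g(\cdot;k)R(\vk)$ to combinations of single-resolvent diagonals; once verified for smooth $q$, the identity extends to $H^{-1}$ by analyticity of every term on display. The main obstacle will be exactly this algebraic bookkeeping: Poisson commutativity guarantees \emph{some} flux, but pinning down the coefficients $2k^5/(k^2-\vk^2)$ and $2\bar k^5/(\bar k^2-\vk^2)$ and recognizing the transport contribution as $k^2+\bar k^2$ multiplying $1/g(\vk)$ requires careful tracking of resolvent identities at the three (conjugate) spectral parameters $k,\bar k,\vk$ and their interplay with the nonlocal convolution in \eqref{complex rho}.
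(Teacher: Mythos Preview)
Your outline for deriving \eqref{H_k flow q}, for local well-posedness via Duhamel and the analyticity of $q\mapsto g(\cdot;q,k)$ from Proposition~\ref{P:HR}, and for commutativity with KdV, all match the paper's approach (which largely defers to \cite{KV} and analytic continuation). Your plan for \eqref{dt1/g} is also in the right spirit and more detailed than what the paper writes; the paper simply records that this identity is adapted from \cite[Proposition~3.2]{KV}.

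The gap is in your global step. The estimate $\|R(k)\|_{L^2\to L^2}\leq 1/\sigma$ does \emph{not} by itself control the nonlinearity in $H^{-1}$: bounding $g'(q,k)$ in $H^{-1}$ requires the $H^{-1}_\kappa\to H^1_\kappa$ mapping norm of $R(k)$, and that bound (obtained via \eqref{name} with $\vk=\kappa_0\gtrsim 1+\|q\|_{H^{-1}}^2$) grows polynomially in $\|q\|_{H^{-1}}$. Consequently the Lipschitz constant of $q\mapsto g(q,k)$ on the ball of radius $r$ in $H^{-1}$ grows like a power of $r$, and a Gronwall argument on $\|q(t)\|_{H^{-1}}$ yields a superlinear ODE inequality, which does not preclude finite-time blowup. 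The paper avoids this entirely by using a conserved quantity: from \eqref{dt1/g} one sees that $\alpha(q(t),\vk)$ is conserved along the $\H_k$ flow for every strictly admissible $\vk$, and since $\alpha(q,\vk)\approx \vk^{-1}\|q\|_{H^{-1}_\vk}^2$ once $\vk\gtrsim 1+\|q\|_{H^{-1}_\vk}^2$, this immediately gives the uniform bound \eqref{H-1bds} and hence global existence. Note in particular that in this logic \eqref{dt1/g} (established on the local interval of existence) precedes and underpins globality, rather than coming afterward as in your ordering.
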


\begin{proof} 
The derivation of \eqref{H_k flow q} and \eqref{dt1/g} is easily adapted from the corresponding statements in \cite[Proposition~3.2]{KV}, where $k, \vk$ were real with $\vk\neq k\geq 1$. 

We turn to the question of well-posedness.  We rewrite \eqref{H_k flow q} in Duhamel form:
\begin{equation}\label{Duhamel}
q(t) = e^{4\Re k^2 t\partial} q(0)+\int_0^t e^{4\Re k^2(t-s)\partial} \Re[16 k^5 g'(q(s),k)]\,ds.
\end{equation}
To establish local well-posedness, it suffices to prove that $q\mapsto g'(q,k)$ is Lipschitz from any ball in $H^{-1}$ into $H^{-1}$.  Equivalently, we may show that $q\mapsto g(q,k)$ is Lipschitz from any ball in $H^{-1}$ into $L^2$.  To this end, we let $q_0\neq q_1$ in $B_r(0)\subset H^{-1}$ and endeavor to estimate
\[
|\langle g(q_1,k)-g(q_0,k),h\rangle|=\biggl| \int_0^1 \tr\{R(q_\theta,k)(q_1-q_0)R(q_\theta,k)h\}\,d\theta\biggr|,
\]
where $h\in L^2$ is a unit vector, $q_\theta:=\theta q_1+(1-\theta)q_0$, and we have utilized \eqref{E:dgdq}.  To proceed, we omit the dependence on $q_\theta$ (using only that $\|q_\theta\|_{H^{-1}}\leq 2r$), write $f=q_1-q_0$, and let $\kappa_0\geq 1$ to be chosen below.  Using \eqref{name}, we estimate 
\begin{align}
|\tr\{& R(k)fR(k)h\}| \nonumber \\
&\leq |\tr\{R(\kappa_0)fR(\kappa_0)h\}| \label{wp1}\\
& \quad + |k^2-\kappa_0^2|\tr\{\sqrt{R(\kappa_0)}R(k)\sqrt{R(\kappa_0)}f R(\kappa_0)h\}| \label{wp2}\\
& \quad + |k^2-\kappa_0^2|\tr\{R(\kappa_0)f\sqrt{R(\kappa_0)}R(k)\sqrt{R(\kappa_0)}h\}| \label{wp3}\\
& \quad + |k^2-\kappa_0^2|^2|\tr\{\sqrt{R(\kappa_0)}R(k)\sqrt{R(\kappa_0)}f\sqrt{R(\kappa_0)}R(k)\sqrt{R(\kappa_0)}h\}|.\label{wp4} 
\end{align}

We first use \eqref{I2H1-2} to get 
\[
\eqref{wp1} \lesssim \kappa_0^{-1}\|f\|_{H^{-1}}\|h\|_{L^2}
\]
provided $\kappa_0\gtrsim 1+r^2$.  Similarly, using \eqref{I2H1-3} and \eqref{RL2L2}, we can bound
\[
\eqref{wp2}+\eqref{wp3} \lesssim \tfrac{|k^2-\kappa_0^2|}{\sigma} \kappa_0^{-1}\|f\|_{H^{-1}}\|h\|_{L^2}
\]
and
\[
\eqref{wp4} \lesssim \tfrac{|k^2-\kappa_0^2|^2}{\sigma^2} \kappa_0^{-1}\|f\|_{H^{-1}}\|h\|_{L^2}.
\]
We conclude that $q\mapsto g(q,k)$ is Lipschitz from any ball in $H^{-1}$ into $L^2$, which implies local well-posedness for the $\mathcal{H}_k$ flow. 

Global well-posedness follows from the conservation of $\alpha$, which in turn follows from \eqref{dt1/g} as in \cite[Proposition~3.2]{KV}.  Indeed, as
\begin{equation}\label{alpha-H-1}
\alpha(q,\vk)\approx \tfrac{1}{\vk}\| q\|_{H_{\vk}^{-1}}^2 \qtq{provided}\vk\gtrsim 1+\|q\|_{H_{\vk}^{-1}}^2 
\end{equation}
(cf. \cite[Equation (2.20)]{KV} and \cite[Equation~(6.10)]{KV}), conservation of $\alpha$ yields
\begin{equation}\label{H-1bds}
\sup_{t\in\R} \|q(t)\|_{H^{-1}} \leq C\bigl[\|q(0)\|_{H^{-1}}+\|q(0)\|_{H^{-1}}^3\bigr]
\end{equation}
for some absolute constant $C>0$. 

As $\alpha$ and the momentum $P$ are both conserved by the $\H_k$ and KdV flows, we see that not only do KdV and $\H_k$ commute, but so do $\H_k$ and $\H_\vk$ for any strictly admissible $k, \vk$. 
\end{proof}

Next, we wish to observe that for a given initial data, the $\H_k$ flows converge to the KdV flow as $|k|\to\infty$.  In the case where $k >0$, this was proved both on the line and on the torus in \cite[Theorem~5.1]{KV}.  With Proposition~\ref{P:H kappa} in place, minor modifications are needed to treat complex $k$. 

\begin{proposition}[Convergence of flows]\label{P:Hk-to-KdV} Fix $L_0\geq 1$ and $q^0\in H^{-1}(\R/2L_0\Z)$.  Given $k$ strictly admissible, let
$q_k$ denote the global solution to the $\H_k$ flow with initial data $q^0$ and let $q$ denote the solution to KdV with the same initial data. For any $T>0$, we have
\[
\lim_{|k|\to\infty} \sup_{|t|\leq T}\|q(t)-q_k(t)\|_{H^{-1}}=0. 
\]
\end{proposition}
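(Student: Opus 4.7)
The plan is to follow closely the structure of the proof of \cite[Theorem~5.1]{KV}, which handles the case $k>0$; the paper signals that only minor modifications are required to cover complex strictly admissible $k$. My argument proceeds in three stages: uniform a priori bounds, reduction to smooth initial data, and a quantitative comparison of the two vector fields on such data.

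First, from Proposition~\ref{P:H kappa} and the conservation of $\alpha(\cdot,k)$ one obtains, via \eqref{alpha-H-1} and \eqref{H-1bds}, a uniform $H^{-1}$-bound for $q_k(t)$ depending only on $\|q^0\|_{H^{-1}}$ and holding uniformly in strictly admissible $k$ with $|k|$ large; the analogous bound for the KdV solution $q(t)$ follows from Kappeler--Topalov \cite{MR2267286}.  Approximating $q^0$ by Schwartz data $q^0_n$ and splitting via the triangle inequality reduces matters to showing, for each fixed smooth $q^0_n$, that
\begin{equation*}
\sup_{|t|\leq T}\|q^{(n)}_k(t)-q^{(n)}(t)\|_{H^{-1}}\to 0 \qtq{as} |k|\to\infty,
\end{equation*}
provided the two tail terms can be controlled uniformly in $k$.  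For KdV this is \cite{MR2267286}; for the $\H_k$ flow, I would revisit the Lipschitz estimates \eqref{wp1}--\eqref{wp4} in the proof of Proposition~\ref{P:H kappa}, noting that a priori $H^{-1}$-boundedness of the trajectories combined with the uniform boundedness of $|k^2-\kappa_0^2|/\sigma$ in the strictly admissible wedge yields an $H^{-1}$-Lipschitz constant that, after a Gr\"onwall loop on any fixed $[-T,T]$, produces a modulus of continuous dependence uniform in $|k|\geq\kappa_0$.

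For smooth $q^0_n$, I would propagate higher Sobolev regularity uniformly in $k$ by exploiting that $\H_k$ lies in the KdV hierarchy and commutes with KdV (Proposition~\ref{P:H kappa}); the full tower of higher KdV invariants is then also conserved by $\H_k$ and bounds $q^{(n)}_k(t)$ in $H^N$ independently of $k$.  On such regular data, termwise differentiation of the inverse-$k$ expansion $-16k^5\alpha(q,k)+2k^2\int q^2\,dx=H_{\mathrm{KdV}}(q)+O(|k|^{-2})$ gives
\begin{equation*}
\bigl\|\Re\{16k^5 g'(q,k)+4k^2 q'\}-(-q'''+6qq')\bigr\|_{H^{-1}}\lesssim_{N}|k|^{-2}\,P\bigl(\|q\|_{H^N}\bigr)
\end{equation*}
for some polynomial $P$ and $N$ sufficiently large; a Gr\"onwall argument in $H^{-1}$ for the equation satisfied by $q^{(n)}_k - q^{(n)}$ then yields $\sup_{|t|\leq T}\|q^{(n)}_k(t)-q^{(n)}(t)\|_{H^{-1}}\lesssim_{T,n}|k|^{-2}$, closing the proof.

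The principal technical hurdle is tracking the cancellations hidden in the combination $16k^5\, g'(q,k)$, where each power of $k$ in the prefactor is offset by an inverse power of $k$ coming from the expansion $g(x;q,k)=\tfrac{1}{2k}+O(|k|^{-2}\|q\|)$ from \eqref{g-to-quadratic}.  This bookkeeping was carried out for real $k$ in \cite{KV} and should go through essentially verbatim for complex $k$ using the resolvent identity \eqref{name} together with the Hilbert--Schmidt and trace bounds \eqref{I2H1-2}--\eqref{I2H1-3}; the only additional care required is keeping track of real versus imaginary parts in the definitions \eqref{complex rho}--\eqref{complex H_k}, which is where the minor modifications alluded to in the paper enter.
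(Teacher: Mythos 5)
Your proposal departs from the paper's route and, as written, contains a genuine gap at the step where you reduce from $H^{-1}$ data to Schwartz approximants. You assert that the Lipschitz estimates \eqref{wp1}--\eqref{wp4} from the proof of Proposition~\ref{P:H kappa}, together with the uniform boundedness of $|k^2-\kappa_0^2|/\sigma$ in the strictly admissible wedge, produce an $H^{-1}$-Lipschitz constant for the $\H_k$ solution map that is uniform in $|k|\geq\kappa_0$. This overlooks the factor $|k|^5$ sitting in the nonlinearity. Those estimates show that $q\mapsto g(q,k)$ is Lipschitz from an $H^{-1}$-ball of radius $r$ into $L^2$ with constant $\lesssim\kappa_0^{-1}$, where $\kappa_0\gtrsim 1+r^2$ is fixed by the ball and is independent of $|k|$; but the Duhamel integrand in \eqref{Duhamel} is $\Re\{16k^5 g'(q,k)\}$. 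The resulting Lipschitz constant of the nonlinearity from $H^{-1}$ to $H^{-1}$ is therefore of size $|k|^5/\kappa_0$, the Gr\"onwall factor is $e^{C|k|^5T/\kappa_0}$, and this overwhelms the $O(|k|^{-2})$ gain you invoke from comparing the vector fields on smooth data. The triangle-inequality reduction to Schwartz data does not close, and since the continuous dependence you need is exactly as hard as the convergence you are trying to prove, this is not a repairable omission but the central difficulty.

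This is precisely the obstruction that \cite{KV} was designed to sidestep. The argument cited by the paper (\cite[Theorem~5.1]{KV}), like the parallel $k\to\infty$ argument carried out in Section~\ref{S:KdV}, is indirect: one first establishes convergence of the diagonal Green's function $g(\vk)$ at a \emph{fixed} auxiliary spectral parameter $\vk$, exploiting the evolution equation \eqref{dt1/g} for $1/g(\vk)$ (every term of which is meaningful pointwise in time, unlike $qq'$), together with the conservation of $\alpha$; one then recovers convergence of $q$ from convergence of $g(\vk)$ via the diffeomorphism property of Lemma~\ref{L:diffeo} (cf.\ \cite[Proposition~2.2]{KV}) and the recovery formula \eqref{E:q from g}. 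None of this machinery appears in your sketch. Even your treatment of Schwartz data, which is closer in spirit to what is done, is incomplete: the linear-in-$q$ part of $\Re\{16k^5 g'+4k^2 q'\}$ has symbol converging to $i\xi^3$ and is unbounded from $H^{-1}$ to $H^{-1}$, so your ``Gr\"onwall argument in $H^{-1}$'' must explicitly exploit the skew-adjointness of that operator in the energy identity, and one must still check that the genuinely nonlinear contributions carry $k$-uniform Lipschitz constants on $H^N$-bounded sets --- a cancellation you acknowledge at the end but never verify.
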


As white noise on the torus is almost surely in $H^{-1}$, Proposition~\ref{P:H kappa} allows us to solve the $\H_k$ flow \eqref{H_k flow q} on any torus with white noise initial data.  We next prove that this flow preserves white noise measure.

\begin{theorem}\label{T:Hktorus}  Fix $L_0\geq 1$ and $k$ strictly admissible.  The $\H_k$ flow \eqref{H_k flow q} preserves white noise measure on $\R/2L_0\mathbb{Z}$. 
\end{theorem}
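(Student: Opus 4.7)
My plan is to verify the invariance of white noise $\mu$ by a direct Gauss integration by parts (Lemma~\ref{T:Wick}), exploiting the translation invariance of $\H_k$.  By continuity of $\phi_t$ on $H^{-1}$ (Proposition~\ref{P:H kappa}) and the fact that bounded, Fr\'echet-smooth cylinder functionals form a convergence-determining class, it suffices to verify the infinitesimal invariance
\[
\int\{F,\H_k\}_0\,d\mu \;=\; 0
\]
for every such $F$; this upgrades to $(\phi_t)_*\mu=\mu$ via standard semigroup arguments.

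I would first decompose $\H_k=H_\alpha+H_M$ with $H_M:=2\Re\{k^2\}\int q^2\,dx$ and $H_\alpha:=-16\Re\{k^5\alpha(q,k)\}$.  Both summands are translation invariant, hence Poisson-commute: $\{H_\alpha,H_M\}_0=0$, and consequently $\phi_t^{\H_k}=\phi_t^{H_\alpha}\circ\phi_t^{H_M}$.  The $H_M$-flow is the pure translation $\partial_t q=4\Re\{k^2\}q'$, which preserves $\mu$ by stationarity of white noise (see Remark~\ref{R:cov}).  It therefore remains to prove invariance under the $H_\alpha$-flow alone.

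Matching \eqref{H_k flow q} with $\partial_t q=\partial_x\tfrac{\delta H}{\delta q}$ identifies $\tfrac{\delta\alpha}{\delta q}=-g(\cdot;q,k)$, so $\tfrac{\delta H_\alpha}{\delta q}=16\Re\{k^5 g(\cdot;q,k)\}$ and the vector field $V:=\partial_x\tfrac{\delta H_\alpha}{\delta q}$ lies in $L^2$ almost surely by Proposition~\ref{P:HR}.  Expanding $V(q)$ in a fixed $L^2$ orthonormal basis and applying Lemma~\ref{T:Wick} coordinatewise (to handle the random $q$-dependence of $V$) then yields the clean identity
\[
\int\{F,H_\alpha\}_0\,d\mu \;=\; -\int F\cdot\{H_\alpha,P\}_0\,d\mu,\qquad P:=\tfrac12\!\int q^2\,dx,
\]
where the formal divergence correction $\operatorname{div}(V)=\tfrac12\int\partial_x[K(x,x)]\,dx$ (with $K(x,y):=\tfrac{\delta^2H_\alpha}{\delta q(x)\delta q(y)}$) vanishes on the torus by the symmetry of $K$ and the fundamental theorem of calculus.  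The remaining bracket $\{H_\alpha,P\}_0=16\Re\{k^5\int g\,q'\,dx\}$ then vanishes by translation invariance of $\alpha$: differentiating the identity $\alpha(q(\cdot-\epsilon),k)\equiv\alpha(q,k)$ at $\epsilon=0$ produces $\int g\,q'\,dx=0$ for smooth~$q$.  Hence $\int\{F,\H_k\}_0\,d\mu=0$.

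The principal technical obstacle is justifying the Gauss-IBP step at white-noise regularity, where the pairing $\int g\,q'\,dx$ is borderline in the Sobolev scale: Proposition~\ref{P:HR} gives only $g\in H^1$, while $q'\in H^{-3/2-}$ using the enhanced regularity $q\in H^{-1/2-}$ of white noise.  I would handle this by mollifying $q$ to $q^\eta:=\rho_\eta*q$, for which $\int g(q^\eta,k)(q^\eta)'\,dx\equiv 0$ holds pointwise, and passing $\eta\to 0$ in expectation against $F$ using the joint continuity of $g$ on $H^{-1}$ (Proposition~\ref{P:HR}) together with the Gaussian moment bounds of Lemma~\ref{L3} (and Lemma~\ref{L:same conv} to upgrade convergence in probability to convergence in $L^1(d\mu)$).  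Equivalently, one may interpret $\{H_\alpha,P\}_0$ intrinsically as the identically-zero derivative $-\partial_\epsilon|_{\epsilon=0}H_\alpha(q_\epsilon)$, bypassing the distributional pairing altogether.  Either route yields $\int F\{H_\alpha,P\}_0\,d\mu=0$, completing the proof.
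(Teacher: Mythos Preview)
Your overall strategy---split off the translation part, then show $\E\{\langle\frac{\delta F}{\delta q},g'\rangle\}=0$ via Gaussian integration by parts---matches the paper's reduction. The routes diverge at the last step. The paper restricts to polynomial $F=\langle\varphi,q\rangle^n$ and proves $\E\{\langle\varphi,q\rangle^{n-1}\langle\varphi',g\rangle\}=0$ by induction on $n$: the base case uses translation invariance of the law of $g$, and the inductive step applies Lemma~\ref{T:Wick} twice, in the deterministic directions $\varphi$ and $\varphi'$, and compares the two outcomes. Your argument is more structural: it asserts that the Malliavin divergence of $V=16\Re\{k^5g'\}$ vanishes, by writing $\delta(V)=\langle V,q\rangle-\operatorname{div}(V)$ and killing each summand separately (translation invariance of $\alpha$ for the first; antisymmetry of $\partial_x$ against the symmetric Hessian for the second). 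This is cleaner conceptually and would apply verbatim to any translation-invariant Hamiltonian, whereas the paper's induction is bespoke.

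There is, however, a genuine gap in your justification. You correctly flag $\langle V,q\rangle$ as borderline, but the divergence term is equally problematic: the kernel of $DV=\partial_x\,D^2H_\alpha$ is $\partial_x K(x,y)$ with $K(x,y)=-16\Re\{k^5G(x,y)^2\}$, and since $\partial_xG$ jumps by $\pm1$ across the diagonal (the Wronskian relation), $\partial_xK$ jumps by $\pm32\Re\{k^5g(x)\}$ there. Hence $DV$ is only Hilbert--Schmidt, not trace class, and the identity $(\partial_1K)(x,x)=\tfrac12\partial_x[K(x,x)]$ you invoke has no pointwise meaning. Mollifying only $q$ does not repair this---you must also smooth the vector field, e.g.\ set $V^\eta(q):=T_\eta\bigl[16\Re\{k^5(g(T_\eta q))'\}\bigr]$ with $T_\eta=\rho_\eta*$, so that $DV^\eta=T_\eta\,J\,D^2H_\alpha(T_\eta q)\,T_\eta$ is genuinely trace class (whence $\tr=0$ by antisymmetry) and $\langle V^\eta,q\rangle=\langle J\nabla H_\alpha(q^\eta),q^\eta\rangle=0$ pointwise at smooth $q^\eta$; then pass $\eta\to0$ using the $H^{-1}$-continuity of $g$ from Proposition~\ref{P:HR}. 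The paper's induction sidesteps all of this by never separating $\delta(V)$ into pieces: each application of Lemma~\ref{T:Wick} is in a fixed deterministic direction, so no trace-class issue ever arises.
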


\begin{proof}
Suppose, contrary to the theorem, that white noise is not preserved by
the $\H_k$ flow \eqref{H_k flow q}.  Then there is a Schwartz function
$\phi$ and a time $T>0$ so that
\begin{equation}\label{E:P:T:Hktorus}
\E\Bigl\{ e^{i\langle \phi, q(T)\rangle} \Bigr\} \neq e^{-\frac12\int \phi^2}.
\end{equation}
Here $q(t)$ denotes the solution to the $\H_k$ flow with white-noise initial data.

Proceeding with this choice of $\phi$ and $T$, we define $f:\R\times
H^{-1}\to \C$ by
\[
f(t,q) = \exp\Bigl( i \bigl\langle \phi,\ e^{(T-t)J\nabla \H_k}q
\bigr\rangle\Bigr).
\]
Note that by the proof of Proposition~\ref{P:H kappa}, the data-to-solution map for the $\H_k$ flow is smooth on $H^{-1}$ with derivatives growing at
most polynomially.  Correspondingly, $q\mapsto f(t,q)$ is smooth and
\begin{equation}\label{Hktorus deriv bounds}
\bigl\| \tfrac{\delta f(t,q) }{\delta q}  \bigr\|_{H^{1}} \lesssim
\Bigl[ 1 + \| q \|_{H^{-1}}  \Bigr]^{C}
\end{equation}
uniformly for $t\in[0,T]$ and some absolute constant $C$.

Given the manner in which $f$ is defined and the fact that $q(0)$ is
white noise distributed, we may recast \eqref{E:P:T:Hktorus} as
\[
\E\bigl\{ f(0,q(0)) \bigr\} \neq \E\bigl\{ f(T,q(0)) \bigr\}.
\]
The remainder of this proof is devoted to refuting this assertion by
proving that
\begin{equation}\label{E:P:T:Hktorus'}
\partial_t \E\bigl\{ f(t,q(0)) \bigr\} \equiv 0.
\end{equation}

Using the group property of a well-posed flow and translation
invariance of white noise, we have
\begin{align*}
\E\Bigl\{ f(t+h,q(0)) \Bigr\} = \E\Bigl\{ f\bigl(t, e^{-hJ\nabla \H_k}
q(0)) \Bigr\} = \E\Bigl\{ f\bigl(t, e^{-hJ\nabla \H_k} e^{4\Re k^2 h\partial}
q(0)\bigr) \Bigr\}.
\end{align*}
On the other hand, by
the Duhamel formula \eqref{Duhamel},
\[
\tfrac{d\ }{dh}\Bigr|_{h=0}   [e^{-hJ\nabla \H_k} e^{4\Re k^2 h\partial} q(0)]
= -16 \Re k^5 g'(x;q(0),k).
\]
In this way, we deduce that $\E\{ f(t,q(0)) \}$ is differentiable with respect to $t$ and 
\[
\partial_t \E\Bigl\{ f(t,q(0)) \Bigr\} = \E\Bigl\{ -16  \bigl\langle \tfrac{\delta f}{\delta q}(t,q(0)),\ \Re k^5 g'(q(0),k) \bigr\rangle \Bigr\}.
\]
Thus, it suffices to show that
\begin{equation}\label{E:P:T:Hktorus''}
\E\Bigl\{ \bigl\langle \tfrac{\delta F}{\delta q}(q(0)),\ g'(q(0),k)
\bigr\rangle \Bigr\} = 0
\end{equation}
for any function $F\in D^2_1$; see \eqref{247}.  Note that $q\mapsto f(t, q)$ satisfies \eqref{247} by virtue of \eqref{Hktorus deriv bounds}.

In order to verify \eqref{E:P:T:Hktorus''}, it suffices to treat a dense class of functions in $D^2_1$.  A convenient class of such functions are `polynomials', that is, finite linear combinations of functions of the form $F(q)= \prod_n \langle \varphi_n, q\rangle$ where $\varphi_n$ are (not necessarily distinct) finitely many Schwartz functions.  In fact, the Hilbert space $D^2_1$ admits an orthogonal basis of such polynomials, namely,  finite products of Hermite polynomials in the individual $X_k$ appearing in \eqref{ONB rep}; see \cite[Chapter V]{Malliavin}.

In what follows, we write $q(0)=q$ and $g(q(0),k)=g$.  We begin with the case of monomials: $F(q)=\langle \varphi, q\rangle^n$, where $ \varphi \in \mathcal S$ is fixed.  Differentiating $F$ and then integrating by parts, we are left to prove
\begin{equation}\label{Bn0}
\E\bigl\{ \langle \varphi,q\rangle^{n-1}\langle \varphi',g\rangle\bigr\} =0. 
\end{equation}

We proceed by induction.  If $n=1$, the desired result follows from translation invariance of white noise, which guarantees that the law of $g$ is also translation invariant.  Indeed, we have
\[
0=\partial_h\Bigl|_{h=0}\E\bigl\{ \langle \varphi_h, g\rangle\bigr\}=\E\bigl\{ \langle \varphi',g\rangle\bigr\},
\]
where $\varphi_h(x)=\varphi(x+h)$.

Now suppose $n\geq 2$ and that \eqref{Bn0} holds for monomials of lesser degree.  By translation invariance,
\begin{align*}
0=\partial_h\Bigl|_{h=0} \E\bigl\{ \langle \varphi_h, q\rangle^{n-1}\langle \varphi_h, g\rangle\bigr\}.
\end{align*}
Using this identity and then applying Lemma~\ref{T:Wick} in the direction $\varphi'$, the fact that $\langle \varphi, \varphi'\rangle=0$, and \eqref{E:dgdq}, we get
\begin{align}\label{Eversion1}
\E\bigl\{ \langle \varphi,q\rangle^{n-1}\langle\varphi', g\rangle\} &= -(n-1) \E\bigl\{\langle \varphi,q\rangle^{n-2}\langle\varphi', q\rangle\langle \varphi, g\rangle\bigr\}\notag\\
&=(n-1) \E\bigl\{\langle \varphi,q\rangle^{n-2}\tr\{R(q)\varphi R(q)\varphi'\}\bigr\}.
\end{align}

On the other hand, applying Lemma~\ref{T:Wick} in the direction $\varphi$, using \eqref{E:dgdq}, and recalling the inductive hypothesis, we compute
\begin{align*}
\E\bigl\{ \langle \varphi,q\rangle^{n-1}\langle \varphi', g\rangle\bigr\}
& = (n-2)\E\{\langle \varphi,q\rangle^{n-3}\langle \varphi',g\rangle\} - \E\{\langle \varphi,q\rangle^{n-2}\tr\{R(q)\varphi R(q)\varphi'\}\bigr\} \\
& = -\E\bigl\{\langle \varphi,q\rangle^{n-2}\tr\{R(q)\varphi R(q)\varphi'\}\bigr\}.
\end{align*}
Comparing with \eqref{Eversion1}, we deduce
\[
E\{\langle \varphi,q\rangle^{n-1}\langle \varphi', g\rangle\} = 0,
\]
as desired.  This concludes the proof of \eqref{E:P:T:Hktorus''} for the case of monomials. 

Simple algebra (applying the above to linear combinations of test functions) allows one to pass from the case of monomials to multinomials
$
F(q)= \prod_n \langle \varphi_n, q\rangle
$
and thence to polynomials, as desired.
\end{proof}

We are now in a position to present a new proof of the invariance of white noise for KdV on the torus.  As noted in the introduction, this result was shown previously in \cite{Oh, Oh2, OhQuastelValko, QuastelValko} by different methods.  The problem of constructing dynamics for white noise initial data was resolved earlier in \cite{MR2267286}, which proved well-posedness on the whole space $q^0\in H^{-1}(\R/2L_0\Z)$.  Note that while our construction of the solutions follows \cite{KV}, the solutions themselves coincide with the Kappeler--Topalov solutions; they are the \emph{unique} limits of smooth solutions.  

\begin{theorem}[Invariance of white noise for KdV on the torus]\label{T:KdV-torus} Fix $L_0\geq 1$ and let $q^0$ be white noise distributed on $\R/2L_0\Z$.  Let $q$ denote the global solution to KdV with initial data $q^0$.  Then $q(t)$ is white noise distributed for all $t\in\R$. 
\end{theorem}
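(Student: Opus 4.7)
The plan is to combine Theorem~\ref{T:Hktorus} (invariance of white noise under the $\H_k$ flows on $\R/2L_0\Z$) with Proposition~\ref{P:Hk-to-KdV} (convergence of the $\H_k$ flows to the KdV flow as $|k|\to\infty$) and the characterization of white noise via characteristic functionals \eqref{Minlos defn}.

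First, let $q^0$ be white noise distributed on $\R/2L_0\Z$. Since $q^0\in H^{-1}(\R/2L_0\Z)$ almost surely (with $H^{-1}$ norm having exponential tails, as noted in the discussion after \eqref{weighted-wn}), we may apply Proposition~\ref{P:H kappa} for any strictly admissible $k$ to obtain a global-in-time solution $q_k(t)$ of the $\H_k$ flow emanating from $q^0$, and the Kappeler--Topalov theorem to obtain the KdV solution $q(t)$ from the same initial datum. For each sample $\omega$ in this full-measure set, Proposition~\ref{P:Hk-to-KdV} yields
\[
\lim_{|k|\to\infty} \sup_{|t|\leq T}\|q(t)-q_k(t)\|_{H^{-1}(\R/2L_0\Z)}=0
\]
for every $T>0$; in particular, for every fixed $t\in\R$, $q_k(t)\to q(t)$ almost surely in $H^{-1}$.

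Next I would verify that $q(t)$ has white noise law by checking the characteristic functional \eqref{Minlos defn}. Fix any real-valued Schwartz function $\phi$ and any $t\in\R$. Since $\langle \phi,\cdot\rangle$ is continuous on $H^{-1}(\R/2L_0\Z)$, the almost sure convergence above yields
\[
\langle \phi,q_k(t)\rangle \longrightarrow \langle \phi,q(t)\rangle \qtq{a.s. as $|k|\to\infty$.}
\]
The integrands $e^{i\langle \phi,q_k(t)\rangle}$ are bounded by $1$, so the dominated convergence theorem gives
\[
\E\bigl\{e^{i\langle \phi,q(t)\rangle}\bigr\} = \lim_{|k|\to\infty} \E\bigl\{e^{i\langle \phi,q_k(t)\rangle}\bigr\}.
\]
By Theorem~\ref{T:Hktorus}, $q_k(t)$ is white noise distributed for every admissible $k$, and so every term on the right equals $\exp\bigl\{-\tfrac12\|\phi\|_{L^2}^2\bigr\}$. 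This establishes \eqref{Minlos defn} for $q(t)$ and hence, by the uniqueness assertion in Minlos' theorem, $q(t)$ is white noise distributed.

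I do not anticipate a main obstacle here: all the heavy lifting sits in Proposition~\ref{P:H kappa}, Theorem~\ref{T:Hktorus}, and Proposition~\ref{P:Hk-to-KdV}. The only thing to be mildly careful about is that the $|k|\to\infty$ limit is taken along strictly admissible $k$ (as required in Proposition~\ref{P:Hk-to-KdV}) and that the convergence is almost sure; both are immediate from how $\H_k$ and the $H^{-1}$ solution theory have been set up.
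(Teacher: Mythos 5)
Your argument is correct and follows essentially the same route as the paper's own proof: solve the $\H_k$ and KdV flows from the same almost surely $H^{-1}$ data, use Proposition~\ref{P:Hk-to-KdV} for almost sure convergence, pass to the characteristic functional via dominated convergence, and invoke Theorem~\ref{T:Hktorus}. There is nothing to add or correct.
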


\begin{proof} As $q^0\in H^{-1}(\R/2L_0\Z)$ almost surely, we can solve both the $\H_k$ flow (for any $k$ strictly admissible) and KdV with initial data $q^0$, yielding global solutions $q_k$ and $q$, respectively.  By Theorem~\ref{T:Hktorus}, we have that $q_k(t)$ is white noise distributed at each $t\in\R$.

For $t\in \R$, Proposition~\ref{P:Hk-to-KdV} yields
\[
\lim_{|k|\to\infty} \|q(t)-q_k(t)\|_{H^{-1}} = 0 \qtq{almost surely.} 
\]
In particular, for any $\varphi\in\cS$ we have
\[
\lim_{|k|\to\infty} \langle \varphi,q_k(t)\rangle = \langle \varphi,q(t)\rangle \qtq{almost surely.}
\]
Thus, by dominated convergence and the fact that $q_k(t)$ is white noise distributed, we deduce
\[
\E\bigl\{ e^{i\langle \varphi,q(t)\rangle} \bigr\} = \exp\bigl\{-\tfrac12\|\varphi\|_{L^2}^2\bigr\},
\]
which completes the proof. \end{proof}

\begin{remark}In the preceding argument, we approximated KdV on the torus using $\H_k$ flows with strictly admissible $k$.  This anticipates the approach we will take later when we consider KdV on the line, in which case it is necessary to restrict to complex $k$.

However, if one is only interested in the torus, then it is enough to use $\H_{\kappa}$ flows with real $\kappa$, even though these flows cannot be defined for all samples of white noise.  Fixing $r>0$, we define $\kappa_r= \delta^{-1}(1+r^2)$,
$$
B_r =\{q: \, \|q\|_{H^{-1}_{\kappa_r}}\leq r \text{ and } \alpha(q, \kappa_r) < \delta\}, \qtq{and} \Omega_r= \{\omega:\, q^0\in B_r\},
$$ 
where $\delta>0$ is a small constant dictated by the results of \cite{KV}.  From that paper we see that for $\kappa\geq \kappa_r$, the $\H_\kappa$ flow is globally well-posed on the set $B_r$ which is invariant under the flow.   Moreover, $B_r$ is also invariant for the KdV flow and the $\H_\kappa$ flows converge to KdV on $B_r$ as $\kappa\to \infty$.  As the probability of $\Omega_r^c$ is exponentially small, this is sufficient to prove that the solution $q(t)$ to KdV is white noise distributed at each $t$.  Indeed, for any $r>0$, one has
\begin{align*}
\E\bigl\{e^{i\langle \varphi,q(t)\rangle}\bigr\} & = \E\bigl\{1_{\Omega_r} e^{i\langle \varphi,q(t)\rangle}\bigr\} + \mathcal{O}(e^{-cr}) \\
& = \lim_{\kappa\to \infty} \E\bigl\{ 1_{\Omega_r}e^{i\langle \varphi, q_\kappa(t)\rangle}\bigr\} + \mathcal{O}(e^{-cr}) \\
& = \lim_{\kappa\to \infty}\E\bigl\{ 1_{\Omega_r} e^{i\langle \varphi,q_\kappa(0)\rangle}\bigr\} + \mathcal{O}(e^{-cr}) \\
& = \exp\{-\tfrac12\|\varphi\|_{L^2}^2\} + \mathcal{O}(e^{-cr}).
\end{align*}
Sending $r\to \infty$ yields the result. 
\end{remark}

\section{Single scale analysis}\label{S:singlescale}
Throughout this section we restrict $L_0\in 2^{\mathbb{N}_0}\cup \{\infty\}$ and let $q$ be white noise distributed on $\R$.  For dyadic $1\leq L\leq L_0\in 2^{\mathbb{N}_0}$, we define
\begin{equation}\label{qL}
q_L(x) = \sum_{n\in\mathbb{Z}} \bigl[ 1_{[-L,L]}q\bigr](x-2nL_0),
\end{equation}
which produces a distribution that is $2L_0$-periodic.  If $L_0=\infty$ and $L\in 2^{\mathbb N_0}$, we set
\[
q_L(x) = \bigl[1_{[-L,L]} q\bigr](x).
\]

We further define
\[
R_L(k) = (H_L + k^2)^{-1}, 
\]
for strictly admissible $k$, where $H_L$ is the Schr\"odinger operator with potential $q_L$ (cf. Proposition~\ref{P:HR}). 

Given a sample from white noise on the line, the $L=L_0$ case of  \eqref{qL} provides a recipe for constructing a copy of white noise on the torus $\R/2L_0\Z$.  Indeed, this is precisely the manner in which we will ultimately couple our evolution problems on the line and on the torus.  The rationale for allowing $L<L_0$ will not be apparent in this section; rather, it is inspired by the needs of the multiscale analysis in the next section.  In fact, the `multiple scales' are precisely the different values of $L$, as we successively `reveal' ever more of the potential by sending $L\to L_0$.

The main results of this section are probabilistic estimates for the operators $R_L(k)$.  While the bounds we obtain in this section do deteriorate as $L\to \infty$, they provide the crucial foundation for the next section, where we employ multiscale analysis to obtain bounds independent of $L$ and $L_0$.  We begin with the following lemma.

\begin{lemma}\label{L:HS} For $\kappa>0$, $q$ white noise distributed, $\phi\in L^2$, and $1\leq p<\infty$, 
\[
\E\bigl\{ \bigl\| \sqrt{R_0(\kappa)} q\phi \sqrt{R_0(\kappa)} \bigr\|_{\HS}^p\bigr\}\lesssim_p \|\phi\|_{L^2}^p \kappa^{-p}. 
\]
\end{lemma}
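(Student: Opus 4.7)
The key observation is that Lemma~\ref{I2H1}, applied pointwise in $\omega$ with $f=q\phi$, gives the exact identity
\[
\bigl\|\sqrt{R_0(\kappa)}\,q\phi\,\sqrt{R_0(\kappa)}\bigr\|_{\HS} = \kappa^{-1/2}\|q\phi\|_{H_\kappa^{-1}}.
\]
The task therefore reduces to proving $\E\bigl\{\|q\phi\|_{H_\kappa^{-1}}^p\bigr\} \lesssim_p \kappa^{-p/2}\|\phi\|_{L^2}^p$ for every $1\leq p<\infty$.

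For this, I would observe that the squared norm $Q := \|q\phi\|_{H_\kappa^{-1}}^2$ is, via the white-noise expansion \eqref{ONB rep} of $q$, a positive semidefinite quadratic form in the i.i.d.\ $N(0,1)$ coefficients $\{X_n\}$. Its expectation is furnished directly by \eqref{weighted-wn} with $w=\phi$, namely $\E\{Q\} = \tfrac{1}{4\kappa}\|\phi\|_{L^2}^2$. Gaussian hypercontractivity (Lemma~\ref{L3}) then yields
\[
\E\{Q^{p/2}\} \lesssim_p (\E Q)^{p/2} = \bigl(\tfrac{1}{4\kappa}\|\phi\|_{L^2}^2\bigr)^{p/2}
\]
for $p\geq 2$, while for $1\leq p<2$ the same bound is immediate from Jensen's inequality applied to the concave map $x\mapsto x^{p/2}$. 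Substituting back into the preceding display yields the lemma.

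The only technical wrinkle I anticipate is that Lemma~\ref{L3} is stated for a quadratic form in finitely many jointly Gaussian variables, whereas $Q$ a priori depends on the full countable family $\{X_n\}$. I would handle this by truncating $q$ to the first $N$ modes, applying Lemma~\ref{L3} to the resulting finite-dimensional quadratic form, and passing to the limit $N\to\infty$ via monotone convergence; the expectations converge by a second invocation of \eqref{weighted-wn}. Beyond this routine approximation step, I foresee no real obstacle: the entire argument is a clean concatenation of the Hilbert--Schmidt identity of Lemma~\ref{I2H1}, the covariance computation \eqref{weighted-wn}, and Gauss-space hypercontractivity.
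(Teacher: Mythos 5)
Your proof is correct and takes essentially the same route as the paper: it combines the Hilbert--Schmidt identity of Lemma~\ref{I2H1}, the white-noise covariance computation (your invocation of \eqref{weighted-wn} is precisely the computation \eqref{931}--\eqref{HS-norm2} in the paper), and Lemma~\ref{L3} for the higher moments. Your reorganization — applying Lemma~\ref{I2H1} pathwise and reducing to $\E\{\|q\phi\|_{H_\kappa^{-1}}^p\}$ — is a tidy presentation of the same argument, and your explicit note about Jensen's inequality for $1\leq p<2$ and the finite-dimensional truncation step fills in details the paper leaves implicit.
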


\begin{proof} Lemma~\ref{I2H1} implies 
\begin{equation}\label{HS-norm}
\begin{aligned}\E\bigl\{ \bigl \|\sqrt{R_0(\kappa)}q\phi\sqrt{R_0(\kappa)} \bigr\|_{\HS}^2 \bigr\}  = \tfrac{1}{\kappa}\int_\R\frac{\E\{|\wh{\phi q}(\xi)|^2\}}{\xi^2+4\kappa^2}\,d\xi. 
\end{aligned}
\end{equation}
As $q$ is white noise distributed, we have that
\begin{align}\label{931}
\E\{|\widehat{\phi q}(\xi)|^2\} & = \tfrac{1}{2\pi} \E\bigl\{ |\langle \phi e^{-ix\xi},q\rangle |^2\} \notag\\
& = \tfrac{1}{2\pi} \E\bigl\{ \bigl[\langle \cos(x\xi)\phi,q\rangle\bigr]^2+\bigl[\langle \sin(x\xi)\phi,q\rangle\bigr]^2\bigr\} \notag \\
& = \tfrac{1}{2\pi} \int [\phi(x)]^2[\cos^2(x\xi)+\sin^2(x\xi)]\,dx = \tfrac1{2\pi} \|\phi\|_{L^2}^2,
\end{align}
uniformly in $\xi$. Continuing from \eqref{HS-norm} we deduce
\begin{equation}\label{HS-norm2}
\E\bigl\{\|\sqrt{R_0(\kappa)}q\phi\sqrt{R_0(\kappa)}\|_{\mathfrak{I}_2}^2\bigr\} = \tfrac{1}{4\kappa^2}\|\phi\|_{L^2}^2, 
\end{equation}
which settles $p=2$.  To extend this to $1\leq p<\infty$, we use the fact that the square of the Hilbert--Schmidt norm is a quadratic form in Gaussian random variables (cf. \eqref{ONB rep}) and Lemma~\ref{L3}. \end{proof}

Before we state the next lemma, we remind the reader of the notation $A(\kappa_0)$ from Definition~\ref{D:admissible}. 

\begin{lemma}\label{P:1}  There exists $c>0$ such that
\begin{equation}\label{E:P:1}
\PP\biggl\{\sup_{k\in A(\kappa_0)} |k|\kappa_0\|\sqrt{R_0(k)}q_L \sqrt{R_0(k)}\|_{L^2\to L^2}^2\geq\lambda\biggr\} \lesssim e^{-c\lambda}
\end{equation}
uniformly for $\kappa_0\geq 1$, $2\leq L\leq L_0$, and $\lambda\geq c^{-1} \log L$.

Consequently, for $1\leq p<\infty$ and $\kappa\geq 1$,
\begin{align}\label{E:C:q op}
\E\bigl\{ \|q_L\|_{H^1_\kappa\to H^{-1}_\kappa}^p \bigr\}\lesssim_p [\log L]^{\frac p2} \kappa^{-p}.
\end{align}
\end{lemma}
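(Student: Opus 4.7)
My strategy is to establish the tail bound \eqref{E:P:1} first, and then to deduce the moment bound \eqref{E:C:q op} by integrating the tail in $\lambda$. The starting reduction is that for admissible $k$, $\sqrt{R_0(k)}$ is a uniform norm equivalence between $L^2$ and $H^1_\kappa$, so that $\|\sqrt{R_0(k)}q_L\sqrt{R_0(k)}\|_{L^2\to L^2}\approx\|q_L\|_{H^1_\kappa\to H^{-1}_\kappa}$ depends on $k$ only through $\kappa=|k|$; this will also make the eventual supremum over $A(\kappa_0)$ tractable.

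For a fixed $\kappa\geq\kappa_0$, I would decompose $q_L=\sum_{|j|\lesssim L}w_j$ with $w_j:=\mathbf{1}_{I_j}q_L$ on disjoint unit intervals $I_j$. By the independence of white noise on disjoint intervals, the $w_j$ are independent; setting $T_j=\sqrt{R_0(\kappa)}w_j\sqrt{R_0(\kappa)}$, Lemma~\ref{L:HS} applied with $\phi=\mathbf{1}_{I_j}$ gives $\E\|T_j\|_{\HS}^2\lesssim\kappa^{-2}$. Since $\|T_j\|_{\HS}^2$ is a nonnegative quadratic form in Gaussians, Lemma~\ref{L3} upgrades this to the sub-exponential tail $\PP\{\kappa^2\|T_j\|_{\HS}^2>\lambda\}\lesssim e^{-c\lambda}$, and Lemma~\ref{L:sup} then yields $\PP\{\sup_j\kappa^2\|T_j\|_{\HS}^2>\lambda\}\lesssim e^{-c\lambda/2}$ for $\lambda\geq c^{-1}\log L$.

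The main obstacle is promoting this pointwise-in-$j$ control of $\|T_j\|_{\HS}$ to an \emph{operator-norm} control on $T=\sum_j T_j$; a naive triangle inequality over Hilbert--Schmidt norms would reintroduce the forbidden $\sqrt{L}$ factor. I would exploit the exponential decay (at rate $\sim\kappa$) of the Schwartz kernel of $\sqrt{R_0(\kappa)}$: using a Combes--Thomas argument in the spirit of Lemma~\ref{P:11} but with an exponential weight $e^{c\kappa|x-x_j|}$, together with the support property of $w_j$, I expect to obtain block bounds of the form $\|\chi_n T_j\chi_m\|_{L^2\to L^2}\lesssim e^{-c\kappa(|n-j|+|j-m|)}\|T_j\|_{\HS}$ for a smooth partition of unity $\{\chi_n\}$ adapted to unit intervals. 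Summing in $j$ collapses the convolution (using $\sum_\ell e^{-c\kappa\ell}\lesssim 1$ for $\kappa\geq 1$), and Schur's test applied to the resulting block matrix yields $\|T\|_{L^2\to L^2}\lesssim\sup_j\|T_j\|_{\HS}$, so $\kappa^2\|T\|_{L^2\to L^2}^2\lesssim\lambda$. Since $|k|\kappa_0\leq|k|^2=\kappa^2$ on $A(\kappa_0)$, this establishes \eqref{E:P:1} at a single $\kappa\geq\kappa_0$.

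To upgrade to the full supremum over $k\in A(\kappa_0)$, the initial reduction leaves only a supremum over $\kappa\geq\kappa_0$, which I would treat with a logarithmic-in-$L$ dyadic net $\{2^j\kappa_0\}$: the crude Hilbert--Schmidt bound $\|T\|_{\HS}^2\lesssim L/\kappa^2$ (via Lemma~\ref{I2H1} and $\E\|q_L\|_{H^{-1}_\kappa}^2\lesssim L/\kappa$) already forces $|k|\kappa_0\|T\|^2\ll\lambda$ beyond a polynomial-in-$L$ threshold in $\kappa$, so only $O(\log L)$ dyadic scales require attention, and the corresponding union bound is absorbed into the $c^{-1}\log L$ threshold. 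Finally, the moment bound \eqref{E:C:q op} follows by integrating the tail: with $X=\kappa^2\|q_L\|_{H^1_\kappa\to H^{-1}_\kappa}^2$, the identity $\E X^{p/2}=\tfrac{p}{2}\int_0^\infty\lambda^{p/2-1}\PP(X>\lambda)\,d\lambda$, split at $\lambda=c^{-1}\log L$, yields $\E X^{p/2}\lesssim_p(\log L)^{p/2}$, which is equivalent to \eqref{E:C:q op}.
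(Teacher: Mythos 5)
Your proposal shares the paper's overall scaffolding (disjoint/overlapping localization of $q_L$, \eqref{HS-norm2}-type second-moment control from Lemma~\ref{L:HS}, hypercontractivity via Lemma~\ref{L3}, union bound over $O(L)$ pieces via Lemma~\ref{L:sup}, then integrate the tail for \eqref{E:C:q op}), but you diverge at the two technical crux points, and both of your alternatives work. First, to promote the per-piece control to an operator-norm bound without a $\sqrt L$ loss, the paper does not use off-diagonal decay at all: it inserts the partition $1=\sum_{|n-m|\leq 1}\varphi_n^2\varphi_m^2$ directly into the bilinear form $\langle f, \sqrt{R_0}q_L\sqrt{R_0}f\rangle$, rewrites the result in terms of the almost-orthogonal family $f_m=\sqrt{-\partial^2+\kappa^2}\,\varphi_m\sqrt{R_0}f$ (which obeys $\sum_m\|f_m\|_{L^2}^2\lesssim\|f\|_{L^2}^2$), and then applies Cauchy--Schwarz, yielding $\|\sqrt{R_0}q_L\sqrt{R_0}\|\lesssim\sup_n\|\sqrt{R_0}q_L\varphi_n^2\sqrt{R_0}\|$ by pure functional-analytic soft arguments. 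Your Schur-test route instead needs the exponential decay of the kernel of $\sqrt{R_0(\kappa)}$, and here a word of caution: Lemma~\ref{P:11}'s Combes--Thomas argument conjugates the local operator $-\partial^2+k^2$ and inverts, which does not literally transfer to $\sqrt{R_0}$; you would either work with the explicit Bessel kernel $K_0(\kappa|x-y|)$ or run Combes--Thomas under the subordination formula $\sqrt{R_0(\kappa)}=\tfrac1\pi\int_0^\infty R_0(\sqrt{\kappa^2+t})\,t^{-1/2}\,dt$. Also, the natural block bound you land on (pairing $w_j$ with $K(x,\cdot)K(\cdot,y)$ in $H^{-1}_\kappa$--$H^1_\kappa$ duality) has amplitude $\|w_j\|_{H^{-1}_\kappa}=\kappa^{1/2}\|T_j\|_{\HS}$ rather than $\|T_j\|_{\HS}$; the extra $\kappa^{1/2}$ is harmless because $|n-j|+|j-m|\geq1$ forces an $e^{-c\kappa}$ factor in the Schur sums, but it should be tracked. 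Second, the supremum over $k\in A(\kappa_0)$: you propose a crude large-$\kappa$ bound plus a union bound over $O(\log L)$ dyadic scales, which is sound; the paper instead builds the dyadic $\kappa$-sum directly into $Q_n=\sum_{\kappa\in\kappa_0 2^{\mathbb{N}_0}}\kappa\kappa_0\|\sqrt{R_0(\kappa)}q_L\varphi_n^2\sqrt{R_0(\kappa)}\|_{\HS}^2$, exploiting the geometric convergence $\sum_\kappa\kappa_0/\kappa\lesssim1$ to make $\E Q_n\lesssim1$ at once, so that the $\kappa$- and $n$-suprema are controlled by a single random variable. What the paper's route buys is the avoidance of any kernel decay estimates for $\sqrt{R_0}$ and a tidier treatment of the $\kappa$-supremum; what yours buys is a more geometrically transparent picture (physical-space locality) that dovetails with the multiscale analysis that follows later in the paper.
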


\begin{proof} We begin by choosing $\varphi\in C_c^\infty(\R)$ satisfying
\[
\sum_{n\in\mathbb{Z}} \varphi_n^2(x)=1,\qtq{where}\varphi_n(x)=\varphi(x-n),
\]
as well as $\varphi_n\varphi_m\equiv 0$ for $|n-m|>1$.  Consider
\[
Q_n := \sum_{\kappa\in \kappa_0 2^{\mathbb{N}_0}}\kappa\kappa_0\|\sqrt{R_0(\kappa)}q_L\varphi_n^2\sqrt{R_0(\kappa)}\|_{\mathfrak{I}_2}^2
\]
and observe the following large deviation bound
\begin{equation}\label{large-deviation}
\PP\biggl\{ Q_{n}\geq\mu\biggr\} \lesssim e^{-c_0\mu},
\end{equation}
which holds for some fixed $c_0>0$, uniformly over $n\in\mathbb{Z}$.  Indeed, arguing as in the proof of Lemma~\ref{L:HS} (see \eqref{HS-norm2}) and performing a change of variables, we may write
\begin{align*}
\E\biggl\{ \sum_{\kappa\in \kappa_0 2^{\mathbb{N}_0}}\kappa\kappa_0\|\sqrt{R_0(\kappa)}q_L \varphi_n^2\sqrt{R_0(\kappa)}\|_{\mathfrak{I}_2}^2 \biggr\} & \approx  \sum_{\kappa\in \kappa_02^{\mathbb{N}_0}} \!\!\tfrac{\kappa_0}{\kappa}\|\varphi_n^2\|_{L^2}^2 
 \approx \|\varphi^2\|_{L^2}^2 \lesssim 1. 
\end{align*}
Thus, by Lemma~\ref{L3}, we have
\[
\E\bigl\{ e^{c_0Q_n}\bigr\} \lesssim 1
\]
for some $c_0>0$, which guarantees \eqref{large-deviation}. 

Next, we observe that $\{Q_{n}:n$ odd$\}$ and $\{Q_{n}:n$ even$\}$ are sets of only $\mathcal{O}(L)$-many distinct random variables.  Moreover, each set is comprised of independent random variables; see Remark~\ref{R:cov}.  Thus, by Lemma~\ref{L:sup},
\begin{equation}\label{large-deviation2}
\PP\biggl\{\sup_{n\in\mathbb{Z}} Q_{n}\geq\lambda\biggr\}\lesssim\begin{cases} 1 & \lambda<c^{-1}\log L \\ e^{-c\lambda}& 
\lambda\geq c^{-1}\log L\end{cases}
\end{equation}
for some absolute $c>0$.  

Next, fix $f\in L^2$ and define
\[
f_n:=\sqrt{-\partial^2+\kappa^2}\varphi_n\sqrt{R_0(\kappa)}f,
\]
which satisfy
\[
\sum_n \|f_n\|_{L^2}^2 \lesssim \|f\|_{L^2}^2. 
\]
Exploiting self-adjointness and writing
\[
1=\sum_n \varphi_n^2 = \sum_{|n-m|\leq 1}\varphi_n^2 \varphi_m^2,
\]
we get
\begin{align*}
\langle f,\sqrt{R_0(\kappa)}q_L\sqrt{R_0(\kappa)}f\rangle & = \sum_{|n-m|\leq 1}\langle f_m,\sqrt{R_0(\kappa)}q_L\varphi_n^2\sqrt{R_0(\kappa)}f_m\rangle. 
\end{align*}
We can now apply Cauchy--Schwarz to deduce
\begin{equation}\label{P:1-wws2}
\bigl|\langle f,\sqrt{R_0(\kappa)}q_L\sqrt{R_0(\kappa)}f\rangle\bigr|\lesssim \sum_m \|f_m\|_{L^2}^2\sup_n\|\sqrt{R_0(\kappa)}q_L \varphi_n^2\sqrt{R_0(\kappa)}\|_{L^2\to L^2}. 
\end{equation}

By duality and polarization, \eqref{P:1-wws2} implies
\begin{equation}\label{P:1-wws}
\|\sqrt{R_0(\kappa)}q_L\sqrt{R_0(\kappa)}\|_{L^2\to L^2} \lesssim \sup_{n} \|\sqrt{R_0(\kappa)}q_L\varphi_n^2\sqrt{R_0(\kappa)}\|_{L^2\to L^2}. 
\end{equation}
Using this together with
\[
\sup_{\kappa\in \kappa_0 2^{\mathbb{N}_0}}\kappa\kappa_0  \sup_{n}\|\sqrt{R_0(\kappa)}q_L\varphi_n^2\sqrt{R_0(\kappa)}\|_{L^2\to L^2}^2 \lesssim \sup_{n} Q_{n}
\]
and \eqref{large-deviation2}, we deduce
\[
\PP\biggl\{ \sup_{\kappa\in \kappa_0 2^{\mathbb N}} \kappa\kappa_0 \|\sqrt{R_0(\kappa)}q_L\sqrt{R_0(\kappa)}\|_{L^2\to L^2}^2 \geq \lambda\biggr\} \lesssim e^{-c\lambda},
\]
uniformly for $\kappa_0\geq 1$, $L\geq 2$, and $\lambda\geq c^{-1} \log L$.  This finally implies \eqref{E:P:1}, as we have the bound
\[
\|\sqrt{R_0(k)}(-\partial^2+\kappa^2)^{\frac12}\|_{L^2\to L^2} \lesssim 1, 
\]
uniformly for $k\in A(\kappa_0)$ with $\kappa\leq |k|\leq 2\kappa$. This then yields \eqref{E:C:q op} by the standard argument. \end{proof}

We now turn to our first key result of this section. 

\begin{proposition}\label{P:2} The following holds uniformly for finite $L$ satisfying  $2\leq L\leq L_0\in 2^{\mathbb N}\cup\{\infty\}$:  There exists $0<c_0<1$ such that for any $\kappa_1\geq c_0^{-1}\sqrt{\log L}$, there is an event $\Omega=\Omega(\kappa_1)$ on which
\begin{equation}\label{P2eqn}
\left.\begin{aligned}
\biggl\| \sqrt{R_L^*(k)R_L(k)}\biggr\|_{H_{\kappa}^{-1}\to H_{\kappa}^1} &\lesssim 1 \qtq{if} \kappa\geq \kappa_1, \\
\biggl\| \sqrt{R_L^*(k)R_L(k)}\biggr\|_{H_{\kappa_1}^{-1}\to H_{\kappa_1}^1} &\lesssim 1+\tfrac{\kappa_1^2}{\sigma} \qtq{if} \kappa\leq \kappa_1,
\end{aligned}
\qquad \right\}
\end{equation}
uniformly for strictly admissible $k$, where we used the notations \eqref{b=a}.  Moreover, 
\[
\PP(\Omega^c)\lesssim e^{-c_0\kappa_1^2}. 
\]
\end{proposition}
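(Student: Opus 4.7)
The estimate hinges on Lemma~\ref{P:1}: we apply it with $\kappa_0=\tfrac{1}{2}\kappa_1$ and $\lambda$ equal to a small constant multiple of $\kappa_1^2$. Since $\kappa_1\ge c_0^{-1}\sqrt{\log L}$, the constraint $\lambda\ge c^{-1}\log L$ holds, and one obtains an event $\Omega=\Omega(\kappa_1)$ with $\PP(\Omega^c)\lesssim e^{-c_0\kappa_1^2}$ on which
\begin{equation*}
\sup\bigl\{\|\sqrt{R_0(k)}\,q_L\,\sqrt{R_0(k)}\|_{L^2\to L^2}:\ k\text{ admissible},\ |k|\ge \tfrac{\kappa_1}{2}\bigr\}\le \tfrac{1}{2}.
\end{equation*}
On $\Omega$, the Born series
\begin{equation*}
R_L(k)=\sqrt{R_0(k)}\sum_{\ell\ge 0}\bigl(-\sqrt{R_0(k)}\,q_L\,\sqrt{R_0(k)}\bigr)^{\ell}\sqrt{R_0(k)}
\end{equation*}
converges in $\mathcal L(H_\kappa^{-1},H_\kappa^1)$ with norm $\lesssim 1$ for every admissible $k$ with $|k|\ge \kappa_1/2$; applied with real $k'\ge \kappa_1/2$, this also shows $-(k')^2\notin\sigma(H_L)$, and hence $E_0\ge -\kappa_1^2/4$ on $\Omega$.

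\textbf{Spectral reduction to a real-parameter resolvent.} Since $H_L$ is self-adjoint and $R_L^*(k)=R_L(\bar k)$ commutes with $R_L(k)$, functional calculus yields the identification
\begin{equation*}
\sqrt{R_L^*(k)\,R_L(k)}=\bigl[(H_L+E)^2+\sigma^2\bigr]^{-1/2}.
\end{equation*}
For positive self-adjoint operators $A\le B$ on $L^2$ the norm $\|A\|_{H_\kappa^{-1}\to H_\kappa^1}$ equals $\|(-\partial^2+4\kappa^2)^{1/2}A(-\partial^2+4\kappa^2)^{1/2}\|_{L^2\to L^2}$, which is monotone in the operator order; hence it suffices to dominate the left-hand side spectrally by a real-parameter resolvent whose Born series we already control. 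When $\kappa\ge \kappa_1$, strict admissibility gives $E\gtrsim |k|^2\ge \kappa_1^2>|E_0|$ on $\Omega$, so $H_L+E>0$; the elementary pointwise inequality $((\lambda+E)^2+\sigma^2)^{-1/2}\le (\lambda+E)^{-1}$ promotes to $\sqrt{R_L^*R_L}\le R_L(\sqrt E)$, and with $\sqrt E$ real-admissible and comparable to $\kappa$, the Born series furnishes $\|R_L(\sqrt E)\|_{H_\kappa^{-1}\to H_\kappa^1}\lesssim 1$. When $\kappa\le \kappa_1$, setting $c=\kappa_1^2-E\in[0,\kappa_1^2]$ and $\mu=\lambda+E$, the triangle inequality
\begin{equation*}
|\mu+c|\le |\mu|+|c|\le \sqrt{\mu^2+\sigma^2}+\tfrac{|c|}{\sigma}\sqrt{\mu^2+\sigma^2}\le \bigl(1+\tfrac{\kappa_1^2}{\sigma}\bigr)\sqrt{\mu^2+\sigma^2}
\end{equation*}
delivers $\sqrt{R_L^*R_L}\le \bigl(1+\kappa_1^2/\sigma\bigr)\,R_L(\kappa_1)$ in the operator order, and the Born series bound $\|R_L(\kappa_1)\|_{H_{\kappa_1}^{-1}\to H_{\kappa_1}^1}\lesssim 1$ closes this case.

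\textbf{Main obstacle.} The principal technical point is not any single inequality but the bookkeeping needed to arrange a \emph{single} event $\Omega(\kappa_1)$ of the asserted probability that simultaneously controls every real and complex parameter used above. In particular, the choice $\kappa_0=\kappa_1/2$ (rather than $\kappa_0=\kappa_1$) in Lemma~\ref{P:1} is forced by the need to include $\sqrt E$ inside the range where the Born series converges on $\Omega$: for strictly admissible $k$ with $|k|=\kappa_1$, $\sqrt E$ can be as small as $\sqrt{\sin(\pi/8)}\,\kappa_1<\kappa_1$. The second case requires further care: the sharp factor $1+\kappa_1^2/\sigma$ (as opposed to something like $\kappa_1^4/\sigma^2$) is obtained only because one uses $|\mu|/\sqrt{\mu^2+\sigma^2}\le 1$ for the large-$\lambda$ spectral range rather than the cruder bound $\sqrt{\mu^2+\sigma^2}\ge \sigma$.
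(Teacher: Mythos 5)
Your proof is correct, but it takes a genuinely different and noticeably more economical route than the paper. The paper's argument passes through the integral representation $\sqrt{R_L^*(k)R_L(k)}\approx\int_0^\infty R_L^*(k_\tau)R_L(k_\tau)\,d\tau$ with $k_\tau=\sqrt{k^2+i\tau}$, decomposes $R_L^*R_L$ via the splitting $R_L = R_0 + (R_L - R_0)$ into four pieces, and estimates each piece by hand (with a further split of the $\tau$-integral at $\kappa_1^2$ in the regime $\kappa\le\kappa_1$, handled via the resolvent identity at $\kappa_1$). Your argument instead applies the spectral theorem to $H_L$ to realize $\sqrt{R_L^*(k)R_L(k)}=f(H_L)$ with $f(\lambda)=((\lambda+E)^2+\sigma^2)^{-1/2}$, dominates $f$ pointwise on $\sigma(H_L)$ by the spectral symbol of a \emph{single} real-parameter resolvent (namely $(\lambda+E)^{-1}$ for $\kappa\ge\kappa_1$, and $(1+\kappa_1^2/\sigma)(\lambda+\kappa_1^2)^{-1}$ for $\kappa\le\kappa_1$, both valid because $E_0>-\kappa_1^2/4$ on $\Omega$), and then transfers this operator-order inequality to the $H_\kappa^{-1}\to H_\kappa^1$ norm by the monotonicity of $A\mapsto\|TAT\|_{L^2\to L^2}$ for $T=(-\partial^2+4\kappa^2)^{1/2}$ and $A\ge 0$. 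This reduces everything to the Born-series bound for a real, admissible parameter, which your $\Omega$ (built with $\kappa_0=\kappa_1/2$ to catch $\sqrt{E}\in[\sqrt{\cos(3\pi/8)}\,\kappa,\kappa)$) already controls. The conclusions and the probability bound $\PP(\Omega^c)\lesssim e^{-c_0\kappa_1^2}$ agree with the paper's. What the spectral-monotonicity route buys is concision: no $\tau$-integral, no four-term decomposition, one elementary scalar inequality per regime; what the paper's route buys is directness and the avoidance of any operator-monotonicity machinery, keeping the argument entirely within the chain of norm estimates already developed for $R_L-R_0$.
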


Before proving this proposition, let us record a useful corollary.

\begin{corollary} \label{C:2} Define
\begin{equation}\label{XL}
X_L(k):=\bigl\|\sqrt{R_L(k)}\bigr\|_{L^2\to H_\kappa^1}^2 = \bigl\| \sqrt{R_L(k)}\bigr\|_{H_\kappa^{-1}\to L^2}^2 \geq \|R_L(k)\|_{H_{\kappa}^{-1}\to H_{\kappa}^1}.  
\end{equation}
There exists $c>0$ such that
\[
\PP\bigl\{X_L(k)\geq \lambda\bigr\} \lesssim e^{-c\lambda} \qtq{for all} \lambda \geq c^{-1}\log L,
\]
uniformly for finite $L$ satisfying  $2\leq L\leq L_0\in 2^{\mathbb N}\cup\{\infty\}$ and strictly admissible $k$.  Consequently, for $0<p<\infty$, we have
\[
\E\biggl\{\bigl\| \sqrt{R_L(k)}\bigr\|_{L^2\to H_{\kappa}^1}^p\biggr\} = \E\biggl\{ \bigl\| \sqrt{R_L(k)}\bigr\|_{H_{\kappa}^{-1}\to L^2}^p\biggr\} \lesssim_p [\log L]^{\frac{p}{2}},
\]
uniformly in $2\leq L\leq L_0$ and strictly admissible $k$.  In particular,
\begin{equation}\label{RLk-itself}
\E\biggl\{\|R_L(k)\|_{H_\kappa^{-1}\to H_\kappa^1}^p\biggr\} \lesssim_p [\log L]^{p}. 
\end{equation}
\end{corollary}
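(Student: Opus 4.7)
The essential content is the tail bound on $X_L(k)$; the $L^p$ bounds and the bound on $R_L(k)$ itself will fall out by integrating the tail. The first step is to recognize that $X_L(k)$ is precisely the quantity controlled in Proposition~\ref{P:2}. Since $H_L = -\partial^2 + q_L$ is self-adjoint and $\sqrt{R_L(k)}$ is a function of it, the square root is normal, so $\sqrt{R_L(k)}\sqrt{R_L(k)}^* = \sqrt{R_L(k)}^*\sqrt{R_L(k)} = \sqrt{R_L^*(k)R_L(k)}$. Setting $A_\kappa = (4\kappa^2-\partial^2)^{1/2}$ and applying $\|B\|_{L^2\to L^2}^2 = \|BB^*\| = \|B^*B\|$ with $B = A_\kappa\sqrt{R_L(k)}$ and $B = \sqrt{R_L(k)}A_\kappa$ identifies both operator norms in \eqref{XL} with $\|\sqrt{R_L^*R_L}\|_{H_\kappa^{-1}\to H_\kappa^1}$; the trailing inequality then follows by factoring $R_L = \sqrt{R_L}\cdot\sqrt{R_L}$ and using submultiplicativity of the operator norms.

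\textbf{Tail estimate.} Fix a strictly admissible $k$ and $\lambda\geq c^{-1}\log L$, and set $\kappa_1 := A\sqrt{\lambda}$ for a constant $A>0$ to be chosen. For $c$ small enough relative to $A$ and $c_0$, the hypothesis $\kappa_1 \geq c_0^{-1}\sqrt{\log L}$ of Proposition~\ref{P:2} is met, so the bad event $\Omega(\kappa_1)^c$ has probability $\lesssim e^{-c_0\kappa_1^2} = e^{-c_0 A^2\lambda}$. On the good event there are two cases. If $\kappa = |k| \geq \kappa_1$, the first line of \eqref{P2eqn} gives $X_L(k)\lesssim 1$. If $\kappa\leq \kappa_1$, the trivial monotonicity $\|T\|_{H_\kappa^{-1}\to H_\kappa^1}\leq \|T\|_{H_{\kappa_1}^{-1}\to H_{\kappa_1}^1}$ (a consequence of $\|f\|_{H_{\kappa_1}^1}$ being monotone increasing and $\|f\|_{H_{\kappa_1}^{-1}}$ monotone decreasing in $\kappa_1$) together with the second line of \eqref{P2eqn} and the fact that strict admissibility forces $\sigma\gtrsim 1$ yields $X_L(k)\lesssim 1 + \kappa_1^2 \lesssim A^2\lambda$. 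Choosing $A$ small enough that the implicit constant makes $X_L(k)\leq \lambda$ on $\Omega(\kappa_1)$, I conclude $\PP\{X_L(k)\geq\lambda\}\lesssim e^{-c\lambda}$ for some $c>0$.

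\textbf{Moments.} With the tail bound in hand, the $L^p$ estimates are routine. Writing $\E\{X_L(k)^p\} = p\int_0^\infty \lambda^{p-1}\PP\{X_L(k)\geq\lambda\}\,d\lambda$ and splitting at the threshold $\lambda = c^{-1}\log L$, the sub-threshold contribution is $\lesssim (c^{-1}\log L)^p$ and the tail contributes $\lesssim_p 1$ by exponential integrability, giving $\E\{X_L(k)^p\}\lesssim_p [\log L]^p$. The bound \eqref{RLk-itself} follows via $\|R_L\|_{H_\kappa^{-1}\to H_\kappa^1}\leq X_L(k)$, while the square-root moment bounds come from applying the $p/2$ power of $X_L$ and using $\|\sqrt{R_L(k)}\|_{L^2\to H_\kappa^1} = \|\sqrt{R_L(k)}\|_{H_\kappa^{-1}\to L^2} = \sqrt{X_L(k)}$.

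\textbf{Main obstacle.} There is no substantive obstacle here beyond Proposition~\ref{P:2} itself. The one point that must be handled with care is the simultaneous reconciliation of three constraints on the scale parameter $\kappa_1$: it must exceed $c_0^{-1}\sqrt{\log L}$ to activate Proposition~\ref{P:2}, its square must be comparable to $\lambda$ to produce the correct tail rate, and the threshold $\lambda\geq c^{-1}\log L$ must be consistent with both. This is handled by fixing $A$ first (small enough to absorb implicit constants in the $\kappa\leq\kappa_1$ case) and then choosing $c$ small enough to satisfy the remaining constraint.
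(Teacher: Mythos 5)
Your proof is correct and follows the same route as the paper: invoke Proposition~\ref{P:2} with $\kappa_1^2\approx\lambda$ to obtain the exponential tail bound, then integrate the tail to get the moments. You merely spell out two steps that the paper leaves implicit — the verification of the identities in \eqref{XL} via normality of $\sqrt{R_L(k)}$, and the case split $\kappa\gtrless\kappa_1$ together with the monotonicity of $\|\cdot\|_{H_\kappa^{-1}\to H_\kappa^1}$ in $\kappa$ needed to translate the $\kappa_1$-norm estimate of \eqref{P2eqn} into a bound on $X_L(k)$.
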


\begin{proof} Let $c_0$ be the constant appearing in Proposition~\ref{P:2}.  Given $\lambda\geq c_0^{-2}\log L$, we may choose $\kappa_1^2=a\lambda$ for some small $a>0$ (meant to defeat the implicit constants appearing in \eqref{P2eqn}) and apply Proposition~\ref{P:2} to deduce
\[
\PP\bigl\{ X_L(k)\geq \lambda\bigr\} \lesssim e^{-ac_0\lambda}
\]
uniformly in $L$ and over strictly admissible $k$. 

For $0<p<\infty$ we estimate as follows: 
\begin{align*}
\E\bigl\{|X_L(k)|^{\frac{p}{2}}\bigr\} & = \tfrac{p}{2}\int_0^\infty \lambda^{\frac{p}{2}} \PP\bigl\{|X_L(k)|\geq\lambda\bigr\}\tfrac{d\lambda}{\lambda} \\
& \lesssim_p \int_0^{c_0^{-2}\log L}\lambda^{\frac{p}{2}}\tfrac{d\lambda}{\lambda} + \int_{c_0^{-2}\log L}^\infty \lambda^{\frac{p}{2}}e^{-ac_0\lambda}\tfrac{d\lambda}{\lambda}  \lesssim_p \bigl[ \log L\bigr]^{\frac{p}{2}}.
\end{align*}
The result follows by choosing $c\leq c_0\min\{a, c_0\}$.  \end{proof}

\begin{proof}[Proof of Proposition~\ref{P:2}]  Our task is to estimate the random variables 
\[
X_L(k)=\bigl\|\sqrt{R_L(k)}\bigr\|_{L^2\to H_\kappa^1}^2 = \bigl\| \sqrt{R_L(k)}\bigr\|_{H_\kappa^{-1}\to L^2}^2=\bigl\|\sqrt{R_L^*(k)R_L(k)}\|_{H_{\kappa}^{-1}\to H_{\kappa}^1}.
\]

Given $\kappa_1>0$, let $\Omega=\Omega(\kappa_1)$ denote the event
\begin{equation}
\| \sqrt{R_0(k)}q_L\sqrt{R_0(k)}\|_{L^2\to L^2}^2 \leq \tfrac12\tfrac{\kappa_1}{|k|}\qtq{for all}k\in A(\kappa_1). 
\end{equation}
Note that the event $\Omega(\kappa_1)$ grows as $\kappa_1$ grows.

By Lemma~\ref{P:1} we have the estimate
\begin{align}\label{p.17*}
\PP(\Omega^c)\lesssim e^{-c\kappa_1^2}\qtq{provided}\kappa_1^2\geq 2c^{-1}\log L. 
\end{align}

In the following, we work on the set $\Omega$.  On this set and for $k\in A(\kappa_1)$, we may construct $R_L(k)$ via the series expansion
\[
R_L(k)=R_0(k) + \sum_{\ell\geq 1}(-1)^\ell\sqrt{R_0(k)}\left( \sqrt{R_0(k)}q_L\sqrt{R_0(k)}\right)^\ell \sqrt{R_0(k)}. 
\]
In particular, we have the estimate
\begin{equation}\label{RLR0}
\|R_L(k)-R_0(k)\|_{H_{\kappa}^{-1}\to H_{\kappa}^1} \lesssim \sqrt{\tfrac{\kappa_1}{\kappa}} \qtq{for all} k\in A(\kappa_1).
\end{equation}

We next observe that
\begin{equation}\label{st-equiv2}
X_L(k) \approx \biggl\| \int_0^\infty R_L^*(k_\tau)R_L(k_\tau)\,d\tau\biggr\|_{H_{\kappa}^{-1}\to H_{\kappa}^1},
\end{equation}
where
\[
k_\tau:=\sqrt{k^2+i\tau} \qtq{with}\tau\in(0,\infty). 
\]
Writing $k^2=E+i\sigma$, this is a consequence of the fact that
\[
\int_0^\infty \bigl| (\lambda+E)+i(\sigma+\tau)\bigr|^{-2}\,d\tau = \sqrt{\lambda +E -i\sigma} \, m(\lambda) \sqrt{\lambda +E + i\sigma} \qtq{with} m(\lambda)\approx 1
\]
along with the spectral theorem. 

We first consider the case $\kappa\geq \kappa_1$ and seek to prove the estimate
\begin{equation}\label{1106-1}
\biggl\|\int_0^\infty R_L^*(k_\tau)R_L(k_\tau)\,d\tau\biggr\|_{H_{\kappa}^{-1}\to H_{\kappa}^1} \lesssim 1.
\end{equation}
Note that in this case, $\kappa_\tau\in A(\kappa_1)$.

We begin with the identity
\begin{equation}\label{RLRLstar}
R_L^* R_L^{ }  = -(R_L^*-R_0^*)(R_L-R_0)+R_L^*(R_L^{ }-R_0)+(R_L^*-R_0^*)R_L + R_0^* R_0^{ }, 
\end{equation}
where each resolvent is evaluated at $k_\tau$.  The contribution to \eqref{1106-1} of the last term on the right-hand side of \eqref{RLRLstar} is straightforward to estimate: applying \eqref{st-equiv2} again, we get
\[
\biggl\| \int_0^\infty R_0^*(k_\tau)R_0(k_\tau)\,d\tau\biggr\|_{H_{\kappa}^{-1}\to H_\kappa^1} \lesssim \bigl\|\sqrt{R_0^*(k)R_0(k)}\bigr\|_{H_\kappa^{-1}\to H_\kappa^1} \lesssim 1,
\]
which is acceptable.

The contributions of the second and third terms on the right-hand side of \eqref{RLRLstar} can be handled similarly; we present here the details for the second term. We begin with the estimate
\begin{align*}
\bigl\| R_L^*(k_\tau)[R_L(k_\tau)&-R_0(k_\tau)]\bigr\|_{H_\kappa^{-1}\to H_\kappa^1} \\
&\lesssim \|1\|_{H_{|k_\tau|}^1\to H_{\kappa}^1}\|R_L^*(k_\tau)\|_{H_{|k_\tau|}^{-1}\to H_{|k_\tau|}^1}\|1\|_{H_{|k_\tau|}^1\to H_{|k_\tau|}^{-1}} \\
&\quad\times \|R_L(k_\tau)-R_0(k_\tau)\|_{H_{|k_\tau|}^{-1}\to H_{|k_\tau|}^1}\|1\|_{H_\kappa^{-1}\to H_{|k_\tau|}^{-1}}.
\end{align*}
To estimate these norms we first observe that $|k_\tau|\geq \kappa$, so that
\[
\|1\|_{H_\kappa^{-1}\to H_{|k_\tau|}^{-1}} + \|1\|_{H_{|k_\tau|}^1\to H_{\kappa}^1} \lesssim1.
\]
By \eqref{829-gain}, we also have the estimate
\begin{equation}\label{ktaugain}
 \|1\|_{H_{|k_\tau|}^1\to H_{|k_\tau|}^{-1}} \lesssim \tfrac{1}{\kappa^2+\tau}. 
\end{equation}
Thus, as $k_\tau\in A(\kappa_1)$, we may continue from above and use \eqref{RLR0} to deduce
\[
\bigl\| R_L^*(k_\tau)[R_L(k_\tau)-R_0(k_\tau)]\bigr\|_{H_\kappa^{-1}\to H_\kappa^1} \lesssim \frac{\sqrt{\kappa_1}}{(\kappa^2+\tau)^{\frac54}}.
\]
By the triangle inequality, this gives the acceptable contribution 
\[
\biggl\| \int_0^\infty R_L^*(k_\tau)[R_L(k_\tau)-R_0(k_\tau)]\,d\tau\biggr\|_{H_{\kappa}^{-1}\to H_{\kappa}^1} \lesssim\sqrt{\tfrac{\kappa_1}{\kappa}}.
\]

Finally, for the first term on the right-hand side of \eqref{RLRLstar}, an analogous argument yields the estimate
\[
\bigl\| [R_L^*(k_\tau)-R_0^*(k_\tau)][R_L(k_\tau)-R_0(k_\tau)]\bigr\|_{H_{\kappa}^{-1}\to H_{\kappa}^1} \lesssim \tfrac{\kappa_1}{(\kappa^2+\tau)^{\frac32}},
\]
whence
\[
\biggl\| \int_0^\infty [R_L^*(k_\tau)-R_0^*(k_\tau)][R_L(k_\tau)-R_0(k_\tau)]\,d\tau\biggr\|_{H_{\kappa}^{-1}\to H_{\kappa}^1} \lesssim \tfrac{\kappa_1}{\kappa},
\]
which is acceptable. 

It remains to consider the case $\kappa\leq \kappa_1$, where \eqref{P2eqn} will follow from
\begin{align}\label{1106-2}
\biggl\| \int_0^\infty R_L^*(k_\tau)R_L(k_\tau)\,d\tau\biggr\|_{H_{\kappa_1}^{-1}\to H_{\kappa_1}^1} \lesssim 1+\tfrac{\kappa_1^2}{\sigma}.
\end{align}
We will split the integral in $\tau$ into the regions $[0,\kappa_1^2]$ and $[\kappa_1^2,\infty)$.  

We begin with the region $[\kappa_1^2,\infty)$.  Defining 
\[
\tilde k = \sqrt{k^2+i\kappa_1^2},\quad \tilde\kappa=|\tilde k|,\qtq{and} \tilde k_\tau=\sqrt{k^2+i\kappa_1^2+i\tau}
\]
and observing that $\kappa_1\leq \tilde \kappa$, we can change variables and bound
\[
\biggl\| \int_{\kappa_1^2}^\infty R_L^*(k_\tau)R_L(k_\tau)\,d\tau\biggr\|_{H_{\kappa_1}^{-1}\to H_{\kappa_1}^1} \leq \biggl\| \int_0^\infty R_L^*(\tilde k_\tau)R_L(\tilde k_\tau)\,d\tau\biggr\|_{H_{\tilde \kappa}^{-1}\to H_{\tilde \kappa}^1}. 
\]
As $\tilde k_\tau\in A(\kappa_1)$ for $\tau\in(0,\infty)$,  we are in a position to estimate the term on the right-hand side exactly as we did for \eqref{1106-1}. In particular, we deduce
\[
\biggl\| \int_{\kappa_1^2}^\infty R_L^*(k_\tau)R_L(k_\tau)\,d\tau\biggr\|_{H_{\kappa_1}^{-1}\to H_{\kappa_1}^1} \lesssim 1. 
\]

We turn to the region $[0,\kappa_1^2]$. Using the resolvent identity 
\[
R_L(k_{\tau})=R_L(\kappa_1)-(k_{\tau}^2-\kappa_1^2)\sqrt{R_L(\kappa_1)}R_L(k_\tau)\sqrt{R_L(\kappa_1)},
\]
we find that it suffices to estimate the following three terms:
\begin{align}
&\int_0^{\kappa_1^2} \|R_L^*(\kappa_1)R_L(\kappa_1)\|_{H_{\kappa_1}^{-1} \to H_{\kappa_1}^1}\,d\tau, \label{829-1}\\
&\int_0^{\kappa_1^2}|k_\tau^2-\kappa_1^2|\|R_L^*(\kappa_1)\sqrt{R_L(\kappa_1)}R_L(k_\tau)\sqrt{R_L(\kappa_1)}\|_{H_{\kappa_1}^{-1}\to H_{\kappa_1}^1}\,d\tau, \label{829-2} \\
&\int_0^{\kappa_1^2} |k_\tau^2-\kappa_1^2|^2\| \sqrt{R_L^*(\kappa_1)}R_L^*(k_\tau) \sqrt{R_L^*(\kappa_1)}\sqrt{R_L(\kappa_1)} R_L(k_\tau)\sqrt{R_L(\kappa_1)}\|_{ H_{\kappa_1}^{-1}\to H_{\kappa_1}^1}\,d\tau. \label{829-3}
\end{align}
 
For \eqref{829-1}, we observe that \eqref{RLR0} implies 
\[
\|R_L(\kappa_1)\|_{H_{\kappa_1}^{-1}\to H_{\kappa_1}^1} + \|R_L^*(\kappa_1)\|_{H_{\kappa_1}^{-1}\to H_{\kappa_1}^1} \lesssim 1.
\] 
Combining this with \eqref{829-gain}, we estimate $\eqref{829-1} \lesssim 1,$  which is acceptable.
 
Next, we have
\begin{align*}
\eqref{829-2}  \lesssim \int_0^{\kappa_1^2}& \kappa_1^2\bigl\{ \|R_L^*(\kappa_1)\|_{H_{\kappa_1}^{-1}\to H_{\kappa_1}^1} \|1\|_{H_{\kappa_1}^1\to H_{\kappa_1}^{-1}} \|\sqrt{R_L(\kappa_1)}\|_{L^2\to H_{\kappa_1}^1} \\
& \quad \times \|R_L(k_\tau)\|_{L^2\to L^2} \|\sqrt{R_L(\kappa_1)}\|_{H_{\kappa_1}^{-1}\to L^2}\bigr\}\,d\tau.
\end{align*}
Using \eqref{RL2L2} with  $\Im(k_\tau^2)= \tau+\sigma$, \eqref{829-gain}, and the fact that \eqref{P2eqn} is already proved for $\kappa=\kappa_1$, we get
\[
\eqref{829-2} \lesssim \int_0^{\kappa_1^2} \tfrac{d\tau}{\tau+\sigma} \lesssim \tfrac{\kappa_1^2}{\sigma},
\]
which is an acceptable contribution to \eqref{1106-2}. 

Finally we turn to \eqref{829-3}.  Estimating in the same way,
\begin{align*}
\eqref{829-3} &\lesssim \int_0^{\kappa_1^2}  \kappa_1^4\big\{ \|\sqrt{R_L^*(\kappa_1)}\|_{L^2\to H_{\kappa_1}^1} \|R_L^*(k_\tau)\|_{L^2\to L^2} \|\sqrt{R_L^*(\kappa_1)}\|_{H_{\kappa_1}^{-1}\to L^2} \\
& \quad\quad\quad \times \|1\|_{H_{\kappa_1}^{1}\to H_{\kappa_1}^{-1}}\|\sqrt{R_L(\kappa_1)}\|_{L^2\to H_{\kappa_1}^1}\|R_L(k_\tau)\|_{L^2\to L^2} \\
& \quad\quad\quad \times \|\sqrt{R_L(\kappa_1)}\|_{H_{\kappa_1}^{-1}\to L^2}\bigr\}\,d\tau \\
& \lesssim \int_0^{\kappa_1^2} \tfrac{\kappa_1^2}{(\tau+\sigma)^2}\,d\tau \lesssim \tfrac{\kappa_1^2}{\sigma},
\end{align*} 
which is acceptable. 

This completes the proof of \eqref{1106-2} and hence the proof of Proposition~\ref{P:2}.\end{proof}

The remaining three propositions in this section are devoted to exhibiting decay of the resolvent away from the diagonal.   We begin by running the Combes--Thomas argument with a generic weight $e^\rho$ for $\rho$ belonging to the class
\begin{equation}\label{def:W}
W(\lambda):=\bigl\{\rho:\R\to\R\qtq{such that} \|\rho'\|_{L^\infty}\leq \lambda^{-\frac12}\bigr\}
\end{equation}
for $\lambda>0$.  We then employ certain concrete choices of weight to obtain our main results, namely, Propositions~\ref{P:4} and~\ref{P:alpha}.  Recall that we employed the Combes--Thomas argument earlier when proving Lemma~\ref{P:11}.

\begin{proposition}\label{P:3} There exists $c>0$ such that
\begin{equation}\label{E:P:3}
\PP\bigl\{ \| e^\rho R_L(k) e^{-\rho}\|_{H_\kappa^{-1}\to H_\kappa^1} \geq \lambda\bigr\} \lesssim e^{-c\lambda} \qtq{for all} \lambda\geq c^{-1} \log L,
\end{equation}
uniformly over finite $L$ satisfying $2 \leq L \leq L_0\in 2^{\mathbb N_0}\cup\{\infty\}$, strictly admissible $k$, and $\rho\in W(\lambda)$. 
\end{proposition}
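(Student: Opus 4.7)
My approach is a Combes--Thomas conjugation argument analogous to that of Lemma~\ref{P:11}, now applied to the random resolvent $R_L(k)$ and closed using the exponential resolvent tail bound supplied by Corollary~\ref{C:2}. A direct computation gives
\begin{equation*}
e^\rho(H_L + k^2)e^{-\rho} = H_L + k^2 + B_\rho, \qquad B_\rho = \rho'' + 2\rho'\partial_x - (\rho')^2,
\end{equation*}
so that, using the analytic square root $\sqrt{R_L(k)}$ introduced in Section~\ref{S:resolvent},
\begin{equation*}
e^\rho R_L(k) e^{-\rho} = \sqrt{R_L(k)}\,\bigl[1 + \sqrt{R_L(k)}\,B_\rho\,\sqrt{R_L(k)}\bigr]^{-1}\sqrt{R_L(k)},
\end{equation*}
valid wherever the bracketed Neumann series converges on $L^2$.

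Duality, \eqref{829-gain}, and $\|\rho'\|_{L^\infty}\leq \lambda^{-1/2}$ yield
\begin{equation*}
\|B_\rho\|_{H_\kappa^1\to H_\kappa^{-1}} \lesssim \kappa^{-1}\|\rho'\|_{L^\infty} + \kappa^{-2}\|\rho'\|_{L^\infty}^2 \lesssim \kappa^{-1}\lambda^{-1/2},
\end{equation*}
where the last step uses $\kappa \gtrsim 1$ from strict admissibility. Recalling the identity $X_L(k) = \|\sqrt{R_L(k)}\|_{L^2\to H_\kappa^1}^2 = \|\sqrt{R_L(k)}\|_{H_\kappa^{-1}\to L^2}^2$ from \eqref{XL}, this gives the key estimate
\begin{equation*}
\bigl\|\sqrt{R_L(k)}\,B_\rho\,\sqrt{R_L(k)}\bigr\|_{L^2\to L^2} \leq X_L(k)\,\|B_\rho\|_{H_\kappa^1\to H_\kappa^{-1}} \lesssim X_L(k)\,\kappa^{-1}\lambda^{-1/2}.
\end{equation*}

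To close the argument, I would introduce the event $\Omega_\lambda := \{X_L(k) \leq c_0\lambda\}$ for a small absolute constant $c_0 > 0$, chosen so that the right-hand side above is at most $1/2$ on $\Omega_\lambda$. Corollary~\ref{C:2} then gives $\PP(\Omega_\lambda^c) \lesssim e^{-cc_0\lambda}$ for every $\lambda \geq c^{-1}\log L$. On $\Omega_\lambda$ the Neumann series for the bracketed inverse converges, and the factorized identity delivers the deterministic bound
\begin{equation*}
\|e^\rho R_L(k) e^{-\rho}\|_{H_\kappa^{-1}\to H_\kappa^1} \leq 2\,X_L(k) \leq 2c_0\lambda \leq \lambda
\end{equation*}
once $c_0\leq 1/2$, which is the sought inequality.

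The main subtlety is the quantitative balance between the Combes--Thomas smallness condition on $\sqrt{R_L(k)}\,B_\rho\,\sqrt{R_L(k)}$ and the target deterministic bound $2X_L(k) \leq \lambda$: both must be arranged on the same event $\Omega_\lambda$, with constants uniform over strictly admissible $k$, $\rho\in W(\lambda)$, $L$, and $L_0$. The strict admissibility of $k$, which enforces $\kappa\gtrsim 1$, is precisely what allows the prefactor $\kappa^{-1}\lambda^{-1/2}$ in the bound on $\|B_\rho\|_{H_\kappa^1\to H_\kappa^{-1}}$ to be absorbed uniformly for $\lambda \geq c^{-1}\log L$, and the restriction $\lambda\geq c^{-1}\log L$ is exactly the threshold at which Corollary~\ref{C:2} yields exponential tail control on $X_L(k)$.
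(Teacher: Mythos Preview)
Your Combes--Thomas scheme is the right one, but the quantitative balance you describe does not close. On the event $\Omega_\lambda=\{X_L(k)\leq c_0\lambda\}$ your bound reads
\[
\bigl\|\sqrt{R_L(k)}\,B_\rho\,\sqrt{R_L(k)}\bigr\|_{L^2\to L^2}\lesssim X_L(k)\,\kappa^{-1}\lambda^{-1/2}\leq c_0\,\kappa^{-1}\lambda^{1/2},
\]
and this \emph{grows} with $\lambda$ for any fixed strictly admissible $k$ (strict admissibility only gives $\kappa\gtrsim 1$, not $\kappa\gtrsim\sqrt\lambda$). Hence there is no absolute $c_0>0$ making this $\leq\tfrac12$ uniformly in $\lambda$; the Neumann series is not under control on $\Omega_\lambda$ once $\lambda\gg\kappa^2$. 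Conversely, if you shrink the event to $\{X_L(k)\leq c_0\kappa\sqrt\lambda\}$ to recover smallness, the tail bound from Corollary~\ref{C:2} degrades and you lose either the exponent $e^{-c\lambda}$ or the uniformity in $k$.

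The missing ingredient is Proposition~\ref{P:2} itself (not just its corollary): it supplies control of $\sqrt{R_L^*R_L}$ in the $H_{\kappa_1}^{-1}\to H_{\kappa_1}^1$ norm for an auxiliary scale $\kappa_1$ that you are free to choose, at the price of a factor $1+\kappa_1^2/\sigma$. The paper's proof runs your Combes--Thomas argument with $\zeta=\max\{\kappa,\kappa_1\}$ in place of $\kappa$ and sets $\kappa_1=a\sqrt\lambda$; then $\|B_\rho\|_{H_\zeta^1\to H_\zeta^{-1}}\lesssim \zeta^{-1}\lambda^{-1/2}\lesssim a^{-1}\lambda^{-1}$, which exactly absorbs the $(1+\kappa_1^2/\sigma)\lesssim 1+a^2\lambda$ loss and yields $\|\sqrt{R_L}B_\rho\sqrt{R_L}\|_{L^2\to L^2}\lesssim a$, small uniformly in $\lambda$ and $k$. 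The final $H_\kappa^{-1}\to H_\kappa^1$ bound is then $\lesssim 1+\kappa_1^2/\sigma\lesssim 1+a^2\lambda\leq\lambda$, and $\PP(\Omega^c)\lesssim e^{-c\kappa_1^2}=e^{-ca^2\lambda}$ from \eqref{p.17*}. The key point you are missing is that the $B_\rho$ estimate must be taken at a $\lambda$-dependent Sobolev scale, and only Proposition~\ref{P:2} (not Corollary~\ref{C:2}) provides the resolvent control needed at that scale.
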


\begin{proof}  Let $\kappa_1 \gg \sqrt{\log L}$ and take $\Omega=\Omega(\kappa_1)$ as in Proposition~\ref{P:2}.  In view of \eqref{p.17*}, it suffices to show that for $\rho\in W(\lambda)$ we have
\[
\| e^\rho R_L(k) e^{-\rho}\|_{H_\kappa^{-1}\to H_\kappa^1} \leq \lambda \quad \text{on $\Omega$,}
\]
provided $\lambda$ and $\kappa_1$ are chosen appropriately. 

Given $\rho\in W(\lambda)$, we may write
\begin{equation}\label{erhoinv}
e^{\rho(x)}\bigl(-\partial_x^2+q_L+k^2\bigr)e^{-\rho(x)}=-\partial_x^2+q_L+k^2+B_\rho,
\end{equation}
where $B_\rho$ is the differential operator 
\[
B_\rho:=\rho'(x)\partial_x + \partial_x\rho'(x)-\rho'(x)^2.
\]
Note that
\[
e^{\rho}R_L(k)e^{-\rho} = \sqrt{R_L(k)}\bigl(1+\sqrt{R_L(k)}B_\rho \sqrt{R_L(k)}\bigr)^{-1}\sqrt{R_L(k)}.
\]

By duality, one can readily show that
\[
\|B_\rho\|_{H_\zeta^{1}\to H_{\zeta}^{-1}}\leq 2\zeta^{-1}\|\rho'\|_{L^\infty} + \zeta^{-2}\|\rho'\|_{L^\infty}^2
\]
for any $\zeta>0$.  Thus for $\rho\in W(\lambda)$ and $\zeta=\max\{\kappa,\kappa_1\}$, Proposition~\ref{P:2} yields
\begin{align*}
\| \sqrt{R_L(k)}B_\rho \sqrt{R_L(k)}\|_{L^2\to L^2} 
&\leq \|\sqrt{R_L(k)}\|_{H_{\zeta}^{-1}\to L^2} \|B_\rho\|_{H_{\zeta}^1\to H_{\zeta}^{-1}}\|\sqrt{R_L(k)}\|_{L^2\to H_{\zeta}^1}\\
& \lesssim \bigl(1+\tfrac{\kappa_1^2}{\sigma}\bigr)\bigl(\tfrac{1}{\sqrt{\lambda}\kappa_1}+\tfrac{1}{\lambda\kappa_1^2}\bigr) \quad\text{on $\Omega$}.
\end{align*}
Hence, choosing $\kappa_1=a\sqrt{\lambda}$ for a small $a>0$, which is consistent with $\kappa_1\gg \sqrt{\log L}$ if we choose $c$ appearing in \eqref{E:P:3} sufficiently small, we deduce
\[
\| \sqrt{R_L(k)}B_\rho \sqrt{R_L(k)}\|_{L^2\to L^2}\leq \tfrac12
\]
on $\Omega$ for all strictly admissible $k$ and for all $\rho\in W(\lambda)$. Thus, with $\zeta=\max\{\kappa,\kappa_1\}$,
\begin{align*}
\| e^{\rho}R_L(k) e^{-\rho}\|_{H_{\kappa}^{-1}\to H_{\kappa}^1} \lesssim \|\sqrt{R_L(k)}\|_{L^2\to H_{\zeta}^1} \|\sqrt{R_L(k)}\|_{H_{\zeta}^{-1}\to L^2}\lesssim 1+ \tfrac{\kappa_1^2}\sigma \lesssim 1+a^2 \lambda 
\end{align*}
on $\Omega$.  The result follows by choosing $a$ and $c$ suitably small. \end{proof}

To continue, let $\chi_1\in C_c^\infty(\R)$ be a bump function satisfying
\[
1_{[-1,1]}\leq \chi_1 \leq 1_{[-2,2]}.
\]
For $n\in 2^{\mathbb{N}}$ we define
\begin{align}\label{chin}
\chi_n(x) := \chi_1(\tfrac{x}{n})-\chi_1(\tfrac{2x}{n}),
\end{align}
which yields the smooth partition of unity
\[
\sum_{n\in 2^{\mathbb{N}_0}} \chi_n \equiv 1.
\]
Note that for $n>1$, $\chi_n$ is supported in $\{\tfrac{n}{2}\leq |x|\leq 2n\}$. In particular, $\chi_n\chi_m\equiv 0$ for $1\leq n\leq \frac m4$. 

By construction we have
\[
\|\chi_n'\|_{L^\infty}\lesssim 1\quad \text{uniformly in $n\in 2^{\mathbb{N}_0}$},
\]
which by \eqref{multiplier1} implies
\begin{equation}\label{multiplier2}
\|\chi_n\|_{H_\kappa^1\to H_\kappa^1} = \|\chi_n\|_{H_\kappa^{-1}\to H_\kappa^{-1}}\lesssim 1\quad \text{uniformly in $n\in 2^{\mathbb{N}_0}$ and $\kappa\geq 1$}. 
\end{equation}

\begin{proposition}\label{P:4} Fix $0<\alpha<\frac12$.  For $1\leq p<\infty$, we have
\begin{equation*}
\E\bigl\{ \bigl\| \chi_m R_L(k)\chi_n\bigr\|_{H_{\kappa}^{-1}\to H_{\kappa}^1}^p \bigr\} = \E\bigl\{\bigl\|\chi_n R_L(k)\chi_m \bigr\|_{H_{\kappa}^{-1}\to H_{\kappa}^1}^p\bigr\} \lesssim_{p,\alpha} [\log(L)]^p e^{-8p\langle m\rangle^{\alpha}}
\end{equation*} 
uniformly for $n,m\in 2^{\mathbb{N}_0}$ with $n\leq \tfrac18 m$, finite $L$ satisfying $2 \leq L \leq L_0\in 2^{\mathbb N} \cup \{\infty\}$, and strictly admissible $k$.
\end{proposition}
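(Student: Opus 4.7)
The plan is to derive the bound by Combes--Thomas conjugation, with Proposition~\ref{P:3} supplying the main probabilistic input. By the duality identity $\|A\|_{H^{-1}\to H^1}=\|A^*\|_{H^{-1}\to H^1}$, it suffices to treat $\chi_m R_L(k)\chi_n$; the version with the cutoffs swapped then follows.

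First I would set up the weight. Put $A:=8\langle m\rangle^\alpha$, fix a smooth $\psi\colon\R\to[0,1]$ with $\psi\equiv 0$ on $\{|x|\le m/4\}$, $\psi\equiv 1$ on $\{|x|\ge m/2\}$, and $\|\psi'\|_{L^\infty}\lesssim 1/m$, and set $\rho:=A\psi$. The hypothesis $n\le m/8$ combined with \eqref{chin} forces $\supp(\chi_n)\subset\{|x|\le m/4\}$ and $\supp(\chi_m)\subset\{m/2\le|x|\le 2m\}$, so $\rho\equiv 0$ on $\supp(\chi_n)$ and $\rho\equiv A$, $\rho'\equiv 0$ on $\supp(\chi_m)$. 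Factoring $\chi_m R_L\chi_n=(\chi_m e^{-\rho})(e^\rho R_L e^{-\rho})(e^\rho\chi_n)$ and applying \eqref{multiplier1} to the outer factors gives
\[
\|\chi_m R_L\chi_n\|_{H_\kappa^{-1}\to H_\kappa^1}\lesssim e^{-A}\,\|e^\rho R_L e^{-\rho}\|_{H_\kappa^{-1}\to H_\kappa^1}.
\]

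Next, Proposition~\ref{P:3} enters. Because $\|\rho'\|_{L^\infty}\lesssim A/m$, we have $\rho\in W(\lambda_0)$ for some $\lambda_0\asymp m^{2-2\alpha}$, so that
\[
\PP\bigl\{\|e^\rho R_L e^{-\rho}\|_{H_\kappa^{-1}\to H_\kappa^1}>\lambda\bigr\}\lesssim e^{-c\lambda},\qquad\lambda\in[c^{-1}\log L,\lambda_0].
\]
Meanwhile \eqref{multiplier1} also yields the deterministic control $\|e^\rho R_L e^{-\rho}\|_{H_\kappa^{-1}\to H_\kappa^1}\lesssim e^{2A}X_L$ with $X_L$ as in Corollary~\ref{C:2}. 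Splitting expectation on the event $\{\|e^\rho R_L e^{-\rho}\|\le\lambda_0\}$ and its complement: the first summand is $\lesssim[\log L]^p$ by tail-integrating Proposition~\ref{P:3}; the second, via Cauchy--Schwarz together with $\E\{X_L^{2p}\}^{1/2}\lesssim[\log L]^p$ (Corollary~\ref{C:2}) and $\PP\{\cdot>\lambda_0\}^{1/2}\lesssim e^{-c\lambda_0/2}$, is $\lesssim[\log L]^p\,e^{2pA-c\lambda_0/2}$. Multiplying by the conjugation prefactor $e^{-pA}$ produces
\[
\E\bigl\{\|\chi_m R_L\chi_n\|_{H_\kappa^{-1}\to H_\kappa^1}^p\bigr\}\lesssim[\log L]^p\bigl(e^{-pA}+e^{pA-c\lambda_0/2}\bigr).
\]

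The hard part will be arranging the second exponential to be no larger than the first. The cross-over condition $c\lambda_0\ge 4pA$ reads $m^{2-3\alpha}\gtrsim p$; because $\alpha<\tfrac12<\tfrac23$ keeps $2-3\alpha>0$, this is an $L$-independent threshold $m\ge M_1(p,\alpha)$, and in that regime one recovers the desired $\E\lesssim_{p,\alpha}[\log L]^p\,e^{-8p\langle m\rangle^\alpha}$ directly. For the finitely many $m<M_1(p,\alpha)$, the factor $e^{-8p\langle m\rangle^\alpha}$ is bounded below by a positive constant depending only on $p,\alpha$, so the trivial bound $\|\chi_m R_L\chi_n\|_{H_\kappa^{-1}\to H_\kappa^1}\lesssim X_L$ from Corollary~\ref{C:2} closes that range after the lost exponential is absorbed into the implicit constant. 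The residual intermediate window $M_1(p,\alpha)\le m\lesssim(\log L)^{1/(2-2\alpha)}$, in which $\lambda_0<c^{-1}\log L$ and Proposition~\ref{P:3} cannot be applied at $\lambda_0$ with our $\rho$, is the main technical hurdle and will be handled by repeating the Cauchy--Schwarz scheme with a reduced-amplitude companion weight $\rho'$ tuned so that $\rho'\in W(c^{-1}\log L)$, using that $\alpha<1/2$ forces the resulting amplitude $A'\asymp m/\sqrt{\log L}$ to remain commensurate with $\langle m\rangle^\alpha$ throughout this window. The Cauchy--Schwarz split between the tail regime of Proposition~\ref{P:3} and the deterministic $e^{2A}X_L$ bound is the decisive device throughout.
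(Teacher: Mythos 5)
Your overall architecture — Combes--Thomas conjugation with a compactly supported ramp, Proposition~\ref{P:3} as the probabilistic engine, and a Cauchy--Schwarz split to handle the low-probability event — is exactly the paper's. The reduction to a single transpose via $\|A\|_{H^{-1}\to H^1}=\|A^*\|_{H^{-1}\to H^1}$ is also standard and matches. Where you diverge is the choice of amplitude, and this is precisely where the gap opens up.

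\textbf{The strategic mistake: no slack in the amplitude.} You take $A=8\langle m\rangle^\alpha$, which is the \emph{exact} rate in the target bound. The paper's weight instead is $\rho(x)=20\langle x\rangle^\alpha$ for $|x|\geq m/2$, which on $\supp\chi_m$ gives an effective conjugation factor $\|e^{-\rho}\chi_m\|_{H_\kappa^1\to H_\kappa^1}\lesssim e^{-10\langle m\rangle^\alpha}$ — the margin $10>8$ is used deliberately. This slack is what allows the paper to bound the first term on the event $\Omega_{lo}$ (with threshold $\langle m\rangle^{2(1-\alpha)}+c^{-1}\log L$) by simply multiplying the deterministic bound $\langle m\rangle^{2(1-\alpha)}+\log L$ by $e^{-10\langle m\rangle^\alpha}$ and absorbing the polynomial $\langle m\rangle^{2p(1-\alpha)}$ into $e^{-2p\langle m\rangle^\alpha}$. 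It also makes the $\Omega_{lo}^c$ term come out cleanly via $e^{-c\langle m\rangle^{2(1-\alpha)}/2}\lesssim_{p,\alpha}e^{-8p\langle m\rangle^\alpha}$, using $2(1-\alpha)>\alpha$. No case analysis on $m$ versus $\log L$ is needed, because the additive $c^{-1}\log L$ baked into the threshold always keeps Proposition~\ref{P:3} in play. Your choice of amplitude buys nothing — the tail-integration you use for the first summand yields the same $[\log L]^p$ factor as the paper achieves through slack — and costs you the three-case decomposition.

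\textbf{The intermediate window is a genuine gap, and the proposed fix does not work.} For $M_1(p,\alpha)\leq m\lesssim(\log L)^{1/(2-2\alpha)}$ you have $\lambda_0<c^{-1}\log L$ and cannot invoke Proposition~\ref{P:3} with your $\rho$. You assert that a reduced-amplitude weight $\rho'\in W(c^{-1}\log L)$ has amplitude $A'\asymp m/\sqrt{\log L}$ that ``remain[s] commensurate with $\langle m\rangle^\alpha$ throughout this window.'' This is false at the lower end of the window: fix $m=M_1(p,\alpha)$ (a constant) and let $L\to\infty$. Then $A'\asymp M_1/\sqrt{\log L}\to 0$ while $\langle m\rangle^\alpha=\langle M_1\rangle^\alpha$ stays bounded away from zero. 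So $e^{-A'}\not\lesssim e^{-8\langle m\rangle^\alpha}$; the reduced weight cannot possibly deliver the required decay. (It is commensurate only near the \emph{upper} edge $m\approx(\log L)^{1/(2-2\alpha)}$, which is not enough.) The window is a real obstruction to your version of the argument, not a technicality.

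\textbf{How to close it.} Adopt the paper's choice: take $\rho$ with amplitude $\gtrsim 20\langle m\rangle^\alpha$ on $\supp\chi_m$ (vanishing on $\supp\chi_n$ as you have), define $\Omega_{lo}$ by the threshold $\langle m\rangle^{2(1-\alpha)}+c^{-1}\log L$, and estimate the $\Omega_{lo}$ piece deterministically — the extra margin $e^{-2p\langle m\rangle^\alpha}$ absorbs $\langle m\rangle^{2p(1-\alpha)}$ and the $\Omega_{lo}^c$ piece is closed by the strict inequality $2(1-\alpha)>\alpha$. With the slack in hand, your intermediate window disappears.
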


\begin{proof} The two operators under consideration have the same norm since they are transposes of one another.  Thus we only consider $\chi_m R_L(k)\chi_n$ in what follows.

Fix $n\leq \tfrac18m$ and strictly admissible $k$.  We define the weight $\rho$ via
\[
\rho(x)=\begin{cases} 20\langle x\rangle^{\alpha} & |x| \geq \tfrac{m}{2} \\ 0 & |x|\leq 2n \end{cases},
\]
and by linear interpolation in the remaining intervals.  By construction,
\[
\|\rho'\|_{L^\infty} \lesssim \langle m\rangle^{\alpha-1},
\]
and thus $\rho\in W(\lambda)$ with $\lambda\approx \langle m\rangle^{2(1-\alpha)}$ (cf. \eqref{def:W}).  By \eqref{multiplier1},  
\[
\|e^{-\rho}\chi_m\|_{H_{\kappa}^1\to H_{\kappa}^1}\lesssim e^{-10\langle m\rangle^{\alpha}}\qtq{and} \|e^{\rho}\chi_n\|_{H_{\kappa}^{-1}\to H_{\kappa}^{-1}} \lesssim 1. 
\]

We define $\Omega_{lo}$ to be the event
\begin{align*}
\|e^{\rho}R_L(k)e^{-\rho}\|_{H_\kappa^{-1}\to H_{\kappa}^1}&\leq \langle m\rangle^{2(1-\alpha)}+ c^{-1} \log L,
\end{align*}
where $c$ is as in Proposition~\ref{P:3}.

We estimate
\begin{align*}
\E\bigl\{ 1_{\Omega_{lo}}\|\chi_m R_L \chi_n \|_{H_{\kappa}^{-1}\to H_\kappa^1}^p\bigr\}
& \lesssim_p \|e^{-\rho}\chi_m\|_{H_{\kappa}^1\to H_{\kappa}^1}^p \|e^{\rho}\chi_n\|_{H_{\kappa}^{-1}\to H_{\kappa}^{-1}}^p [\langle m\rangle^{2(1-\alpha)}+ \log L ]^p \\
&\lesssim_{p,\alpha} [\log L]^p e^{-8p\langle m\rangle^\alpha},
\end{align*}
which is an acceptable contribution.

Next, using Proposition~\ref{P:3}, \eqref{RLk-itself}, and \eqref{multiplier2}, we estimate
\begin{align*}
\E&\bigl\{1_{\Omega_{lo}^c}\|\chi_m R_L \chi_n\|_{H_{\kappa}^{-1}\to H_{\kappa}^1}^p \bigr\}  \lesssim  [\PP\bigl(\Omega_{lo}^c\bigr)]^{\frac12}\sqrt{\E\bigl\{\|\chi_mR_L\chi_n\|_{H_{\kappa}^{-1}\to H_{\kappa}^1}^{2p}\bigr\}} \\
& \lesssim e^{-\frac{c}{2}\langle m\rangle^{2(1-\alpha)}}\sqrt{\E\bigl\{\|R_L\|_{H_{\kappa}^{-1}\to H_{\kappa}^1}^{2p}\bigr\}} \lesssim_{p,\alpha} e^{-8p\langle m\rangle^{\alpha}}[\log L]^p,
\end{align*}
which is also acceptable.  This completes the proof of Proposition~\ref{P:4}. \end{proof}

At this point a word of explanation is warranted regarding the sub-exponential decay appearing in Proposition~\ref{P:4} and recurrently through the remainder of the paper.  First, and most important, is the fact that this type of decay suffices to prove all the results we need.  On the other hand, polynomial weights do not suffice.  This is most evident in the dynamical arguments of Section~\ref{S:L} where Gronwall arguments need to be applied.  In this setting, unavoidable logarithmic losses in the volume (cf. Lemma~\ref{L:sup}) raised to a power greater than one must be exponentiated. Thus one needs decay that is stronger than polynomial in order to defeat them.

On the other hand, pure exponential decay of the resolvent would be a much more difficult goal to pursue, and ultimately unnecessary.  In particular, when concatenating exponential decay from one cube to a second, and then on to a third (which is an essential technique in multi-scale analysis), one has to be extremely vigilant to avoid continually accumulating losses in the rate of decay.  The strict subadditivity of fractional powers, makes this a breeze in the  sub-exponential regime.   

\begin{proposition}\label{P:alpha} Fix $0<\alpha<\tfrac12$, $0<\beta\leq 4$, and $1\leq p<\infty$.  Then
\[
\E\bigl\{ \|e^{\pm\beta\langle x\rangle^\alpha}\chi_m R_L(k) \chi_n e^{\mp\beta\langle x\rangle^{\alpha}}\|_{H_{\kappa}^{-1}\to H_{\kappa}^1}^p \bigr\} \lesssim_{p,\alpha, \beta} [\log L]^p,
\]
uniformly in finite $L$ satisfying $2 \leq L \leq L_0\in 2^{\mathbb N}\cup \{\infty\}$, $m,n\in 2^{\mathbb{N}}$ with $m,n\gg [\log L]^{\frac{1}{2(1-\alpha)}}$, and strictly admissible $k$. 
\end{proposition}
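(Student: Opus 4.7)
The plan is to reduce the claim to Proposition~\ref{P:3} by inserting a Combes--Thomas weight $e^{\pm\rho}$ that agrees with $e^{\pm\beta\langle x\rangle^\alpha}$ on the supports of $\chi_m$ and $\chi_n$. The main obstacle to overcome is that the natural candidate $\rho(x)=\beta\langle x\rangle^\alpha$ has derivative $\beta\alpha x\langle x\rangle^{\alpha-2}$, which is bounded but not small near the origin; consequently, this weight lies in $W(\lambda)$ only for $\lambda\lesssim_{\alpha,\beta}1$, far too restrictive once $\log L$ is large. The fix is to flatten $\rho$ near the origin, and the hypothesis $m,n\gg[\log L]^{1/(2(1-\alpha))}$ is precisely what guarantees this truncation does not affect the values of the weight on the supports of $\chi_m,\chi_n$.

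Concretely, I would set $M=\min(m,n)\geq 2$ and define a smooth $\rho:\R\to\R$ by $\rho(x)=\beta\langle x\rangle^\alpha$ for $|x|\geq M/2$, $\rho(x)\equiv\beta\langle M/2\rangle^\alpha$ for $|x|\leq M/4$, with smooth interpolation in between arranged so that
\[
\|\rho'\|_{L^\infty}\lesssim \beta\alpha\, M^{\alpha-1}.
\]
This is routine: on $\{|x|\geq M/2\}$ one has $|\rho'(x)|\leq\beta\alpha\langle x\rangle^{\alpha-1}\lesssim M^{\alpha-1}$, and the interpolation can be chosen with derivative of the same order. Since $m,n\geq 2$ places $\operatorname{supp}\chi_m,\operatorname{supp}\chi_n\subset\{|x|\geq M/2\}$, the identity $\rho(x)=\beta\langle x\rangle^\alpha$ holds on both supports, and so
\[
e^{\beta\langle x\rangle^\alpha}\chi_m R_L(k)\chi_n e^{-\beta\langle x\rangle^\alpha} = \chi_m\bigl[e^{\rho}R_L(k)e^{-\rho}\bigr]\chi_n.
\]

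With \eqref{multiplier2} controlling $\chi_m$ and $\chi_n$ as multipliers on $H_\kappa^1$ and $H_\kappa^{-1}$ respectively, the left-hand side is dominated by $\|e^{\rho}R_L(k)e^{-\rho}\|_{H_\kappa^{-1}\to H_\kappa^1}$. By construction $\rho\in W(\lambda)$ with $\lambda\approx_{\alpha,\beta}M^{2(1-\alpha)}$, and the hypothesis on $m,n$ ensures this $\lambda$ exceeds the threshold $c^{-1}\log L$ of Proposition~\ref{P:3}. Applying that proposition and integrating the resulting exponential tail above $c^{-1}\log L$ in the standard way (mimicking the proof of Corollary~\ref{C:2}) yields
\[
\E\bigl\{\|e^{\rho}R_L(k)e^{-\rho}\|_{H_\kappa^{-1}\to H_\kappa^1}^p\bigr\}\lesssim_p [\log L]^p,
\]
which settles the $+/-$ case. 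The opposite sign pattern $e^{-\beta\langle x\rangle^\alpha}\cdots e^{\beta\langle x\rangle^\alpha}$ follows identically by running the same argument with the weight $-\rho$, noting that $\|(-\rho)'\|_{L^\infty}=\|\rho'\|_{L^\infty}$ so that $-\rho\in W(\lambda)$ as well.
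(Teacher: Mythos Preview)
Your overall plan---flatten $\beta\langle x\rangle^\alpha$ near the origin so that the resulting $\rho$ has $\|\rho'\|_{L^\infty}\lesssim M^{\alpha-1}$, then invoke Proposition~\ref{P:3}---is the right idea and matches the paper's strategy. However, the final step contains a genuine gap.

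Proposition~\ref{P:3} gives the tail bound $\PP\{\|e^\rho R_L e^{-\rho}\|\geq\lambda\}\lesssim e^{-c\lambda}$ only for those $\lambda$ with $\rho\in W(\lambda)$, i.e.\ for $c^{-1}\log L\leq\lambda\lesssim\lambda_0:=M^{2(1-\alpha)}$. Your appeal to ``the standard way (mimicking Corollary~\ref{C:2})'' is misleading: in that corollary the weight is trivial, so $\rho=0\in W(\lambda)$ for \emph{every} $\lambda$, and the exponential tail holds all the way out. Here it does not, and you have supplied no control on $\|e^\rho R_L e^{-\rho}\|$ on the event $\Omega^c:=\{\|e^\rho R_L e^{-\rho}\|>\lambda_0\}$. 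Since that norm is not a priori bounded by any deterministic quantity (your $\rho$ is unbounded), the moment integral $\int_{\lambda_0}^\infty\lambda^{p-1}\PP\{\cdot\geq\lambda\}\,d\lambda$ cannot be closed from the information you have.

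The paper handles $\Omega^c$ by retreating to the \emph{cutoff} quantity: one always has $\|e^{\beta\langle x\rangle^\alpha}\chi_m R_L\chi_n e^{-\beta\langle x\rangle^\alpha}\|\lesssim e^{\beta\langle 2m\rangle^\alpha}\|R_L\|$, and then Cauchy--Schwarz with Corollary~\ref{C:2} and $\PP(\Omega^c)\lesssim e^{-c\lambda_0}$ closes the estimate \emph{provided} $\langle 2m\rangle^\alpha\ll\lambda_0\approx M^{2(1-\alpha)}$. When $m\approx n$ this holds because $\alpha<2(1-\alpha)$. But in your setup $M=\min(m,n)$, and for $m\gg n$ (upper signs) one has $\lambda_0\approx n^{2(1-\alpha)}$ while the loss is $e^{\beta\langle 2m\rangle^\alpha}$, which cannot be absorbed. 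This is why the paper first disposes of the far-off-diagonal case $m\not\approx n$ using the decay from Proposition~\ref{P:4} (the factor $e^{-8p\langle m\rangle^\alpha}$ there beats $e^{p\beta\langle 2m\rangle^\alpha}$ since $\beta\leq4$), and only then runs the Combes--Thomas argument in the regime $\tfrac n4\leq m\leq 4n$. You need to add that reduction.
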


\begin{proof} By transpose symmetry, it suffices to consider only the upper signs.  Moreover, the preceding proposition allows us to restrict our attention to the case $\frac n4\leq m\leq 4n$.  We define
\[
\rho(x) = \beta\langle x\rangle^\alpha \qtq{for}\tfrac{n}{8}\leq |x|\leq 8n
\]
with $\rho$ continuous and constant on each remaining interval. Note that
\[
\|\rho'\|_{L^\infty} \lesssim n^{\alpha-1}.
\]
In particular, $\rho\in W(\lambda)$ for $\lambda \approx n^{2(1-\alpha)}$.  

Let $\Omega$ denote the event
\[
\|e^{\rho} R_L(k) e^{-\rho}\|_{H_\kappa^{-1}\to H_\kappa^1}\leq n^{2(1-\alpha)}.
\]
The restriction $n\gg [\log L]^{\frac{1}{2(1-\alpha)}}$ allows us to apply Proposition~\ref{P:3} and obtain
\[
\PP\{\Omega^c\} \lesssim e^{-cn^{2(1-\alpha)}}. 
\] 
Using Proposition~\ref{P:3} and \eqref{multiplier2}, we now estimate
\begin{align*}
\E\bigl\{&1_{\Omega} \|e^{\beta\langle x\rangle^\alpha}\chi_m R_L(k) \chi_n e^{-\beta\langle x\rangle^\alpha}\|_{H_\kappa^{-1}\to H_\kappa^1}^p\bigr\}\\
&\lesssim \E\bigl\{1_{\Omega}\|e^{\rho}R_L(k)e^{-\rho}\|_{H_\kappa^{-1}\to H_\kappa^1}^p \bigr\} \lesssim_p\ \int_0^{c^{-1}\log L}\mu^{p-1}d\mu + \int_{c^{-1}\log L}^{n^{2(1-\alpha)}} \mu^{p-1}e^{-c\mu}\,d\mu \\
&\phantom{ \lesssim \E\bigl\{1_{\Omega}\|e^{\rho}R_L(k)e^{-\rho}\|_{H_\kappa^{-1}\to H_\kappa^1}^p \bigr\}\,} \lesssim_p [\log L]^p.
\end{align*}
Applying Cauchy--Schwarz and using Corollary~\ref{C:2} as well, we estimate
\begin{align*}
\E\bigl\{ 1_{\Omega^c} \|e^{\beta\langle x\rangle^\alpha}\chi_m &  R_L(k) \chi_n e^{-\beta\langle x\rangle^\alpha}\|_{H_\kappa^{-1}\to H_\kappa^1}^p\bigr\} \\
& \lesssim [\PP\{\Omega^c\}]^{\frac12}e^{p\beta\langle 2m\rangle^\alpha} \bigl[\E\{\| R_L(k)\|_{H_\kappa^{-1}\to H_\kappa^1}^{2p}\bigr\}\bigr]^{\frac12}  \lesssim_{p,\alpha,\beta} [\log L]^p.
\end{align*}
This completes the proof.  \end{proof}

\section{Multiscale analysis}\label{S:multiscale}

The key idea of the multiscale analysis is to tame bad behavior in $q$ by exploiting decay of the resolvent away from the diagonal.  Note that ergodicity of white noise under translation on the line guarantees that such bad behavior is inevitable.  Our approach to multiscale analysis is to successively reveal the potential on dyadic shells using the resolvent identity; archetypal examples appear in \eqref{RL-reveal} and \eqref{dyadic-reveal-2} below.  In both cases, the sum in $L$ will be controlled using decay of the resolvent in the form exhibited in Proposition~\ref{P:4}.

The ultimate goal of the multiscale analysis is to obtain bounds independent of the volume $L_0$, such as those appearing in Proposition~\ref{P:9'}.   These are evidently essential if we are to send $L_0\to\infty$, which is what will be done in  Section~\ref{S:L}.  However, our first application of this technique is Proposition~\ref{P:8}, which removes the limitations on $m$ and $n$ in the formulation of Proposition~\ref{P:alpha}.  In this setting, a logarithmic loss in $L$ is unavoidable, as it originates from contributions near the diagonal.

\begin{proposition}\label{P:8}
Fix $0<\alpha<\tfrac12$ and let $\varphi_L$ be a smooth cutoff satisfying
\begin{align}\label{phiL}
1_{[-4L,4L]}\leq \varphi_L \leq 1_{[-8L,8L]}.
\end{align}
For $1\leq p<\infty$ and $0<\beta\leq 4$, we have
\[
\E\bigl\{ \|e^{\beta \langle x\rangle^{\alpha}}R_{L}(k)\varphi_Le^{- \beta\langle x\rangle^{\alpha}}\|_{H_{\kappa}^{-1}\to H_{\kappa}^1}^p\bigr\} \lesssim_p [\log L]^{2p},
\]
uniformly for  finite $L$ satisfying $2\leq L \leq L_0\in 2^{\mathbb N}\cup\{\infty\}$ and strictly admissible $k$.
\end{proposition}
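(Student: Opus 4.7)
The plan is to combine a dyadic decomposition of both the input and output of $R_L$ with the single-scale estimates from Section~\ref{S:singlescale}, and to sum via the triangle inequality in $L^p(d\PP)$. Inserting $\sum_{n\in 2^{\mathbb{N}_0}}\chi_n=1$ on each side, one writes
\[
e^{\beta\langle x\rangle^\alpha}R_L(k)\varphi_L e^{-\beta\langle x\rangle^\alpha}=\sum_{m,n\in 2^{\mathbb{N}_0}}e^{\beta\langle x\rangle^\alpha}\chi_m R_L(k)\chi_n\varphi_L e^{-\beta\langle x\rangle^\alpha},
\]
with the sum over $n$ effectively restricted to $n\lesssim L$ by the support of $\varphi_L$; there are $O(\log L)$ such $n$. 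It then suffices to show that each fixed $n$ contributes $\lesssim\log L$ in $L^p(d\PP)$, since summing the $O(\log L)$ such bounds produces the desired factor $[\log L]^2$.

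For a given $n$, I would split the sum over $m$ into three regimes using the threshold $N_L:=C[\log L]^{1/(2(1-\alpha))}$ appearing in Proposition~\ref{P:alpha}. When the two scales are well-separated ($n\leq m/8$ or $m\leq n/8$), I combine Proposition~\ref{P:4} with the pointwise multiplier estimate $\|e^{\beta\langle x\rangle^\alpha}\chi_j\|_{H^1_\kappa\to H^1_\kappa}\lesssim e^{\beta\langle 2j\rangle^\alpha}$ derived from \eqref{multiplier1}; the constraints $\beta\leq 4$ and $\alpha<\tfrac12$ ensure $\beta\cdot 2^\alpha<8$, so the weight growth is strictly dominated by the decay $e^{-8\langle\max(m,n)\rangle^\alpha}$ in Proposition~\ref{P:4}, and the geometric series in $m$ sums to $O(\log L)$. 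When the scales are comparable ($n/4\leq m\leq 4n$) and both are at least $N_L$, Proposition~\ref{P:alpha} applies directly, contributing $\log L$ per term with only $O(1)$ such $m$.

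The main obstacle is the small-scale regime $m,n\leq N_L$, where Proposition~\ref{P:alpha} is inapplicable (its Combes--Thomas weight needs derivative $\lesssim n^{\alpha-1}$, too large when $n$ is small) and the naive pointwise weight estimate $e^{\beta\langle N_L\rangle^\alpha}$ is super-polynomial in $\log L$. My resolution is to invoke the multiscale expansion~\eqref{E:intro MSA},
\[
R_L=R_{L_1}-\sum_{\ell\in 2^{\mathbb{N}_0},\,L_1\leq\ell\leq L/2}R_{2\ell}(q_{2\ell}-q_\ell)R_\ell,
\]
with $L_1\approx N_L$. Each annular correction has its potential difference $q_{2\ell}-q_\ell$ supported on a scale-$\ell$ annulus with $\ell\geq N_L$, so that Proposition~\ref{P:alpha} applies after annular cutoffs and contributes $\log L$ in $L^p$ per $\ell$, with decay between annular scales making the sum over $\ell$ convergent. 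The head $R_{L_1}\varphi_L$ is controlled by Proposition~\ref{P:3} with a Combes--Thomas weight that equals $\beta\langle x\rangle^\alpha$ for $|x|\geq 2L_1$ and is constant inside the core, whose derivative $\lesssim L_1^{\alpha-1}\lesssim(\log L)^{-1/2}$ lies exactly within the reach of Proposition~\ref{P:3}. Combining all three regimes yields the per-$n$ bound of $\log L$ in $L^p(d\PP)$, and hence the claimed $[\log L]^{2p}$ bound after summing over $n$.
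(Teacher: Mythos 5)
Your overall strategy mirrors the paper's closely: both decompose $R_L$ with the dyadic cutoffs $\chi_m,\chi_n$, invoke Proposition~\ref{P:4} for the well-separated scales, and appeal to Proposition~\ref{P:alpha} when both $m$ and $n$ are comparable and large. The gap lies in your treatment of the small-scale near-diagonal regime $m\approx n\lesssim N_L$.

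Your Combes--Thomas weight $\rho$ is \emph{constant} on the core $|x|\le 2L_1$, with $L_1\approx N_L=[\log L]^{1/(2(1-\alpha))}$. The cutoffs $\chi_m$ and $\chi_n$ with $m,n\le N_L$ are supported \emph{inside} that core, where $\rho$ does not match $\beta\langle x\rangle^\alpha$. After conjugating you therefore pay the residual multiplier cost
\[
\bigl\|e^{\beta\langle x\rangle^\alpha-\rho}\chi_m\bigr\|\cdot\bigl\|\chi_n\,e^{\rho-\beta\langle x\rangle^\alpha}\bigr\|\;\lesssim\;e^{\beta(\langle 2m\rangle^\alpha-\langle n/2\rangle^\alpha)},
\]
which for $m\approx n\approx N_L$ is of size $e^{O(N_L^\alpha)}=e^{O([\log L]^{\alpha/(2(1-\alpha))})}$. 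Since $\alpha/(2(1-\alpha))>0$, this is super-polynomial in $\log L$ and cannot be absorbed into the target bound $[\log L]^{2p}$. You cannot fix this by letting $\rho$ match $\beta\langle x\rangle^\alpha$ down to the $n$-annulus while keeping $L_1\approx N_L$: that would force $\|\rho'\|_{L^\infty}\gtrsim n^{\alpha-1}$, which for $n\ll N_L$ puts $\rho\in W(\lambda)$ only for $\lambda\approx n^{2(1-\alpha)}\ll\log L_1$, below the threshold needed by Proposition~\ref{P:3}.

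The paper avoids this tension by making the head of the multiscale expansion \emph{adaptive} to $n$: in the near-diagonal regime it writes $R_L = R_{32n} - \sum_{\ell\geq 32n} R_{2\ell}(q_{2\ell}-q_\ell)R_\ell$, and applies Proposition~\ref{P:alpha} to the head $R_{32n}$ with a weight matching $\beta\langle x\rangle^\alpha$ exactly on the $n$-annulus (so there is no residual factor). Since $R_{32n}$ lives at scale $32n$, the threshold in Proposition~\ref{P:3} drops to $\lambda\geq c^{-1}\log(32n)$, and the hypothesis $n\gg[\log(32n)]^{1/(2(1-\alpha))}$ holds for all $n$ beyond an absolute constant; the finitely many remaining $n\lesssim 1$ are handled directly via \eqref{multiplier1} and \eqref{RLk-itself}. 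If you replace your fixed $L_1\approx N_L$ by $L_1\approx n$ (and choose the weight to match the annulus, as in the proof of Proposition~\ref{P:alpha}), your argument goes through and coincides with the paper's.
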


\begin{proof} Recall the cutoffs $\chi_n$ introduced in \eqref{chin}. We begin by using the triangle inequality to estimate
\begin{align*}
\bigl(&\E\{\|e^{\beta \langle x\rangle^\alpha}R_{L}(k) \varphi_Le^{-\beta\langle x\rangle^\alpha}\|_{H_\kappa^{-1}\to H_{\kappa}^1}^p\}\bigr)^{\frac{1}{p}} \\
& \lesssim \sum_{m\geq 1} \sum_{n=1}^{8L} \bigl( \E\{\|e^{\beta\langle x\rangle^\alpha}\chi_m R_{L}(k)\chi_n e^{- \beta \langle x\rangle^\alpha}\|_{H_\kappa^{-1}\to H_{\kappa}^1}^p \}\bigr)^{\frac{1}{p}} \\
& \lesssim \log L \sup_{1\leq n\leq 8L} \sum_{m\geq 1} \bigl(\E\{\|e^{\beta\langle x\rangle^\alpha}\chi_m R_{L}(k)\chi_n e^{-\beta \langle x\rangle^\alpha}\|_{H_\kappa^{-1}\to H_{\kappa}^1}^p \}\bigr)^{\frac{1}{p}}. 
\end{align*}

Using \eqref{multiplier1} and Proposition~\ref{P:4}, we can estimate the contributions of $m<\frac n4$ and $m>4n$ by
\begin{align*}
\lesssim_p \log L \sup_{1\leq n\leq 8L} \Bigl[ \sum_{m<\frac n4}  e^{ \beta\langle 2m\rangle^\alpha}e^{-8\langle n\rangle^\alpha} \log L  + \sum_{m> 4n}  e^{ \beta\langle 2m\rangle^\alpha}e^{-8\langle m\rangle^\alpha} \log L\Bigr] \lesssim_p [\log L]^2,
\end{align*}
which is acceptable. 

It remains to estimate the contribution near the diagonal, namely when $\frac n4\leq m\leq 4n\leq 32 L$.  We distinguish two cases; the case $L<64n$ will be discussed at the end of the proof.

If $L\geq 64n$, we use the decomposition
\begin{align}
R_{L}(k) &= R_{32n}(k) + \sum_{\ell=32n}^{L/2} [R_{2\ell}(k)-R_\ell(k)] \notag \\
&= R_{32n}(k) - \sum_{\ell=32n}^{L/2} R_{2\ell}(k)(q_{2\ell}-q_\ell)R_\ell(k). \label{RL-reveal}
\end{align}

We first consider the contribution of $R_{32n}(k)$.  If $n\gg 1$ so that $n\gg (\log n)^{\frac{1}{2(1-\alpha)}}$, we use Proposition~\ref{P:alpha} to estimate
\[
\E\{\| e^{\beta\langle x\rangle^\alpha} \chi_m R_{32n}(k) \chi_n e^{-\beta\langle x\rangle^\alpha}\|_{H_\kappa^{-1}\to H_{\kappa}^1}^p\} \lesssim_p [\log n]^p \lesssim_p [\log L]^p, 
\]
which is acceptable.  In the remaining case $n\lesssim 1$, we have by \eqref{multiplier1} and \eqref{RLk-itself} that
\[
\E\{\| e^{\beta\langle x\rangle^\alpha} \chi_m R_{32n}(k)\chi_n e^{-\beta\langle x\rangle^\alpha}\|_{H_{\kappa}^{-1}\to H_{\kappa}^1}^p\} \lesssim_p 1,
\]
which is acceptable, as well. 

We treat an individual summand in \eqref{RL-reveal} using \eqref{multiplier1}:
\begin{align*}
\E&\bigl\{ \| e^{ \beta\langle x\rangle^\alpha}\chi_m R_{2\ell}(k)\tilde\chi_\ell(q_{2\ell}-q_\ell)\tilde\chi_\ell R_\ell(k)\chi_n e^{- \beta\langle x\rangle^\alpha}\|_{H_\kappa^{-1}\to H_\kappa^1}^p\bigr\} \\
& \lesssim e^{p\beta\langle 2m\rangle^\alpha}\E\{\|\chi_m R_{2\ell}(k)\tilde\chi_{\ell}\|_{H_{\kappa}^{-1}\to H_{\kappa}^1}^p
	\|\tilde\chi_\ell R_\ell(k) \chi_n\|_{H_{\kappa}^{-1}\to H_\kappa^1}^p \|q_{2\ell}-q_\ell\|_{H_{\kappa}^{1}\to H_{\kappa}^{-1}}^p \},
\end{align*}
where we have employed the following cutoff to the support of $q_{2\ell}-q_\ell$:
\begin{equation}\label{chitilde}
\begin{aligned}
\tilde\chi_\ell(x):=
\begin{cases} \sum_{h\in \Z}\bigl(\chi_\ell+\chi_{2\ell}\bigr)(x-2hL_0) &\quad \text{if $L_0\in 2^{\mathbb N}$,}\\
\bigl(\chi_\ell+\chi_{2\ell}\bigr)(x) &\quad \text{if $L_0=\infty$}.
\end{cases}
\end{aligned}
\end{equation}
Recalling that $\ell\geq 32n\geq 8m$ and using H\"older's inequality, \eqref{E:C:q op}, and Proposition~\ref{P:4}, for $L_0\in 2^{\mathbb N}$ we estimate the above by
\begin{align*}
\lesssim_p e^{p\beta \langle 2m\rangle^\alpha}\Bigl(\sum_{h\in \Z} e^{-8p\langle \ell+2|h|L_0\rangle^\alpha}[\log(\ell+2|h|L_0)]^p\Bigr)^2 [\log \ell]^{p/2}  \lesssim_p e^{-8p\langle \ell\rangle^\alpha},
\end{align*}
which sums in $\ell$ with a final bound of order $1$.  If $L_0=\infty$, a similar argument yields the bound
$$
\lesssim_p e^{p\beta \langle 2m\rangle^\alpha}\bigl[e^{-8p\langle \ell\rangle^\alpha}[\log(\ell)]^p\bigr]^2 [\log \ell]^{p/2}  \lesssim_p e^{-8p\langle \ell\rangle^\alpha},
$$
which also sums in $\ell$ with a final bound of order $1$.

It remains to consider the contribution near the diagonal in the case $L<64n$.  This can be handled using \eqref{RLk-itself} and Proposition~\ref{P:alpha} in the same way as when controlling the contribution of $R_{32n}$ above.
\end{proof} 

\begin{proposition}\label{P:9'} For $1\leq p<\infty$, we have the following:
\[
\E\bigl\{ \|\chi_m R_{L_0}(k) \chi_n\|_{H_\kappa^{-1}\to H_\kappa^1}^p\bigr\} \lesssim_p \begin{cases} [\log m]^p & \text{if }m\approx n \\ e^{-6p\langle m\rangle^\alpha} & \text{if }m>4n \\
e^{-6p\langle n\rangle^\alpha} & \text{if }n>4m,\end{cases}
\]
uniformly over $m,n\in 2^{\mathbb{N}}$, $2\leq L_0\in 2^{\mathbb{N}}\cup \{\infty\}$, and strictly admissible $k$.
\end{proposition}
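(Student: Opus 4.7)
The plan is to use the dyadic resolvent expansion
\begin{equation*}
R_{L_0}(k) = R_{L}(k) - \sum_{\ell \in \{L, 2L, \ldots\},\, \ell \leq L_0/2} R_{2\ell}(k)(q_{2\ell}-q_\ell) R_\ell(k),
\end{equation*}
with $L := \min(L_0,\, 32\max(m,n))$, so that every correction term probes the potential at scales $\ell \geq 32\max(m,n)$, well outside the supports of $\chi_m$ and $\chi_n$. The sub-exponential decay supplied by Proposition~\ref{P:4} will make the corrections negligible, and the three stated bounds will all come from analysis of the base term $\chi_m R_L(k)\chi_n$.

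For the base term, observe that $\log L \lesssim \log\max(m,n)$ by construction. If $m \approx n$, Corollary~\ref{C:2} gives
\begin{equation*}
\E\bigl\{\|\chi_m R_L(k)\chi_n\|^p\bigr\} \leq \E\bigl\{\|R_L(k)\|_{H^{-1}_\kappa \to H^1_\kappa}^p\bigr\} \lesssim_p [\log L]^p \lesssim [\log m]^p.
\end{equation*}
If $m > 4n$, then $n \leq m/8$ since both are dyadic, so Proposition~\ref{P:4} yields
\begin{equation*}
\E\bigl\{\|\chi_m R_L(k)\chi_n\|^p\bigr\} \lesssim_p [\log L]^p e^{-8p\langle m\rangle^\alpha} \lesssim_p e^{-6p\langle m\rangle^\alpha},
\end{equation*}
using $[\log m]^p \lesssim_p e^{2p\langle m\rangle^\alpha}$ for any fixed $\alpha \in (0, \tfrac12)$. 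The case $n > 4m$ follows from $m > 4n$ by the transpose symmetry argument used in the proof of Proposition~\ref{P:4}: since $q_L$ is real-valued, $H_{q_L} + k^2$ is symmetric under the bilinear pairing, whence $R_L(k)^T = R_L(k)$ and $\chi_m R_L(k)\chi_n$ and $\chi_n R_L(k)\chi_m$ are bilinear transposes of each other with equal $H^{-1}_\kappa \to H^1_\kappa$ operator norm.

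For each correction with $\ell \geq 32\max(m,n)$, insert the cutoff $\tilde\chi_\ell$ from \eqref{chitilde} on $\supp(q_{2\ell}-q_\ell)$ and apply H\"older's inequality:
\begin{equation*}
\|\chi_m R_{2\ell}(q_{2\ell}-q_\ell) R_\ell \chi_n\| \leq \|\chi_m R_{2\ell}\tilde\chi_\ell\| \cdot \|q_{2\ell}-q_\ell\|_{H^1_\kappa \to H^{-1}_\kappa} \cdot \|\tilde\chi_\ell R_\ell \chi_n\|.
\end{equation*}
Since $\ell \geq 8m$ and $\ell \geq 8n$, the two resolvent factors obey $L^p$-bounds of the form $[\log\ell]^{Cp}e^{-8p\langle\ell\rangle^\alpha}$ via Proposition~\ref{P:4}, with the summation over periodic translates in $\tilde\chi_\ell$ handled via the translation-invariance of $R_\ell$ and $R_{2\ell}$ by $2L_0$, exactly as in the proof of Proposition~\ref{P:8}. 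Combined with the moment bound \eqref{E:C:q op} on $\|q_{2\ell}-q_\ell\|_{H^1_\kappa\to H^{-1}_\kappa}$ and H\"older in the probability space, each summand is then bounded in $L^p$ by $[\log\ell]^{Cp}e^{-16p\langle\ell\rangle^\alpha}$, and the dyadic sum over $\ell \geq 32\max(m,n)$ telescopes to $e^{-c\langle\max(m,n)\rangle^\alpha}$, which is absorbed by the base-term bound in every case.

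The main subtlety is the periodic case $L_0 < \infty$: the cutoff $\tilde\chi_\ell$ is a sum over periodic translates of $\chi_\ell + \chi_{2\ell}$, so bounding $\|\chi_m R_{2\ell}\tilde\chi_\ell\|$ requires summing decay contributions from all translates by $2hL_0$. This summation closes only because Proposition~\ref{P:4} supplies sub-exponential decay at distance $\ell + 2|h|L_0$; pure polynomial decay would not survive it. This is the structural reason for the sub-exponential weights $e^{\beta\langle x\rangle^\alpha}$ underlying Section~\ref{S:singlescale}.
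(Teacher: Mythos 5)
Your proposal is correct and follows essentially the same route as the paper's proof: the same dyadic revealing expansion of $R_{L_0}$, the same use of Proposition~\ref{P:4}, \eqref{RLk-itself}, \eqref{E:C:q op}, and the cutoff $\tilde\chi_\ell$ to control the correction sum, and the same transpose-symmetry reduction. The only difference is cosmetic --- you take the base scale $L=\min(L_0,\,32\max(m,n))$ uniformly, whereas the paper first symmetrizes to $m\geq n$ and then starts at $8m$ (treating $16m>L_0$ as a separate degenerate case) --- but the estimates and their combination are the same.
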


\begin{proof} By transpose symmetry, it suffices to consider only the cases $m\approx n$ or $m>4n$.  At the outset, we assume that $16m\leq L_0$.  The remaining case will be discussed at the end of the proof.  As in \eqref{RL-reveal}, we write
\begin{equation}\label{dyadic-reveal-2}
R_{L_0}(k) =  R_{8m}(k) \ -\sum_{8m\leq L\leq L_0/2} R_{2L}(k)[q_{2L}-q_L]R_L(k).
\end{equation}

If $m\approx n$ we use \eqref{RLk-itself} to bound
\[
\E\bigl\{\|\chi_m R_{8m}(k)\chi_n\|_{H_{\kappa}^{-1}\to H_\kappa^1}^p\bigr\} \lesssim_p [\log m]^p, 
\]
which is acceptable.  If instead $m> 4n$, then Proposition~\ref{P:4} yields
\[
\E\bigl\{ \|\chi_m R_{8m}(k) \chi_n\|_{H_\kappa^{-1}\to H_\kappa^1}^p\bigr\} \lesssim_p [\log m]^pe^{-8p\langle m\rangle^\alpha},
\]
which is also acceptable.  

For $L\geq 8m$, we use Proposition~\ref{P:4}, \eqref{E:C:q op}, \eqref{RLk-itself}, and H\"older's inequality to estimate 
\begin{align*}
\E\bigl\{&  \| \chi_m R_{2L}(k)[q_{2L}-q_L]R_L(k)\chi_n\|_{H_\kappa^{-1}\to H_\kappa^1}^p \bigr\} \\
& \lesssim \E\bigl\{  \| \chi_m R_{2L}(k)[q_{2L}-q_L]\tilde\chi_L^2 R_L(k)\chi_n\|_{H_\kappa^{-1}\to H_\kappa^1}^p \bigr\} \\
& \lesssim \E\bigl\{ \|\chi_m R_{2L}(k)\tilde\chi_L\|_{H_\kappa^{-1}\to H_\kappa^1}^p\|\tilde\chi_L R_L(k)\chi_n\|_{H_\kappa^{-1}\to H_\kappa^1}^p
	\|q_{2L}-q_L\|_{H_\kappa^1\to H_\kappa^{-1}}^p \bigr\} \\
& \lesssim_p [\log L]^{p/2}\Bigl(\sum_{h\in\Z} e^{-8p\langle L+2|h|L_0\rangle^\alpha}[\log(L+2|h|L_0)]^p\Bigr)^2  \lesssim_p e^{-8p\langle L\rangle^\alpha},
\end{align*}
where $\tilde\chi_L$ is as in \eqref{chitilde}.  Summing over $L\geq 8m$ yields an acceptable contribution.

It remains to consider the case $16m>L_0$.  In this case, the claim follows readily from \eqref{RLk-itself} and Proposition~\ref{P:4}, in the same way that we treated the contribution of $R_{8m}(k)$ above. 
\end{proof}

Proposition~\ref{P:9'} leads to several useful estimates, which we record here. 

\begin{corollary}\label{C:9} Fix $1\leq p<\infty$.  The following holds uniformly over $L_0\in 2^{\mathbb N}\cup\{\infty\}$ and strictly admissible $k$:
\begin{gather}
\E\bigl\{\|\chi_m R_{L_0}(k)\|_{H_{\kappa}^{-1}\to H_\kappa^1}^p\bigr\} \lesssim_p [\log m]^p \quad\text{uniformly for $m\in 2^{\mathbb{N}}$,} \label{E:9-1}\\
\E\bigl\{ \| R_{L_0}(k)\langle x\rangle^{-\beta}\|_{H_\kappa^{-1}\to H_\kappa^1}^p\bigr\}  \lesssim_{\beta,p} 1 \quad\text{for any $\beta>0$,} \label{E:9-2}\\
\E\bigl\{ \| e^{\langle x\rangle^\alpha} R_{L_0}(k)e^{-2\langle x\rangle^\alpha}\|_{H_\kappa^{-1}\to H_\kappa^1}^p\bigr\}  \lesssim_{\alpha,p} 1 \quad\text{for any $0<\alpha<\tfrac14$,}\label{E:9-3}\\
\E\bigl\{ \| e^{2\langle x\rangle^\alpha}\varphi_L R_{L_0}(k)e^{-2\langle x\rangle^\alpha} \|_{H_\kappa^{-1}\to H_\kappa^1}^p\bigr\} \lesssim_{\alpha, p} [\log L]^{p} \quad\text{for any $0<\alpha<\tfrac12$,}\label{E:9-4}
\end{gather}
where the implicit constant in \eqref{E:9-4} is uniform over $L\in 2^{\mathbb N}$ and $\varphi_L$ is as in \eqref{phiL}. 
\end{corollary}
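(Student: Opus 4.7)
The plan is to derive all four estimates from Proposition~\ref{P:9'} by dyadic decomposition using $1=\sum_n\chi_n$ from \eqref{chin}, the $L^p(d\PP)$ triangle inequality, and the multiplier bound \eqref{multiplier1}.  Each claim reduces to a deterministic $(m,n)$-sum of products of multiplier norms and the three-case estimate of Proposition~\ref{P:9'}.

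For \eqref{E:9-1}, I would write $\chi_m R_{L_0}(k)=\sum_n \chi_mR_{L_0}(k)\chi_n$; the diagonal terms $n\in[m/4,4m]$ (only $O(1)$ of them) each contribute $\log m$ in $L^p(d\PP)$, while the off-diagonal tails $m>4n$ or $n>4m$ are summable thanks to the factor $e^{-6\langle\cdot\rangle^\alpha}$, yielding the total $[\log m]^p$.  For \eqref{E:9-2}, decompose $\langle x\rangle^{-\beta}=\sum_n\chi_n\langle x\rangle^{-\beta}$; then \eqref{multiplier1} gives $\|\chi_n\langle x\rangle^{-\beta}\|_{H^{-1}_\kappa\to H^{-1}_\kappa}\lesssim\langle n\rangle^{-\beta}$, and combining with the transpose version of \eqref{E:9-1} (same operator norm, since the Schr\"odinger operator is self-transpose) gives a convergent sum $\sum_n\langle n\rangle^{-\beta}\log\langle n\rangle$.

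For \eqref{E:9-3}, decompose on both sides to get $\sum_{m,n}(\chi_me^{\langle x\rangle^\alpha})(\chi_mR_{L_0}(k)\chi_n)(\chi_ne^{-2\langle x\rangle^\alpha})$.  On $\supp(\chi_m)$ (where $|x|\leq 2m$) we have $\|\chi_me^{\langle x\rangle^\alpha}\|_{L^\infty}\leq e^{2^\alpha\langle m\rangle^\alpha}$ with comparable derivative, and similarly $\|\chi_n e^{-2\langle x\rangle^\alpha}\|_{L^\infty}\leq e^{-2^{1-\alpha}\langle n\rangle^\alpha}$.  The constraint $\alpha<1/4$ ensures $2^\alpha<2^{1-\alpha}$ with ample room, so on the diagonal the net exponential is negative and absorbs the $\log m$ factor; off-diagonally the $e^{-6\langle\cdot\rangle^\alpha}$ of Proposition~\ref{P:9'} trivially dominates.

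The main obstacle is \eqref{E:9-4}: with matching weight $\beta=2$ on both sides, the naive two-sided decomposition produces a diagonal factor $e^{(2^{1+\alpha}-2^{1-\alpha})\langle m\rangle^\alpha}$ that grows with $m$ even inside $\supp(\varphi_L)$, so a more refined argument is needed.  I would mimic the strategy of Proposition~\ref{P:8}: first apply the multiscale identity
\begin{equation*}
R_{L_0}(k)=R_{8L}(k)-\sum_{8L\leq L''\leq L_0/2}R_{2L''}(k)[q_{2L''}-q_{L''}]R_{L''}(k)
\end{equation*}
to reduce to the truncated resolvent $R_{8L}(k)$, controlling the tail by Proposition~\ref{P:4}, \eqref{E:C:q op}, and H\"older in expectation (using the spatial separation between $\supp(\varphi_L)$ and $\supp(q_{2L''}-q_{L''})$ for $L''\geq 8L$).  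For the main piece, the weights on the diagonal are absorbed by a Combes--Thomas argument as in Propositions~\ref{P:3} and~\ref{P:alpha}, now with $\rho(x)=2\langle x\rangle^\alpha$ (which forces the small-$(m,n)$ threshold $m,n\gtrsim(\log L)^{1/(2(1-\alpha))}$, handled separately since the weight is then polylogarithmic in $L$); summing over the $O(\log L)$ diagonal dyadic pairs and combining with the off-diagonal contributions from Proposition~\ref{P:9'} produces the claimed polylogarithmic bound.
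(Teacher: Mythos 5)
The arguments for \eqref{E:9-1}--\eqref{E:9-3} match the paper's proof.  For \eqref{E:9-2} you correctly flag the need for the transpose version of \eqref{E:9-1} (the paper elides this step; it holds because $-\partial_x^2+q+k^2$ is self-transpose, although not self-adjoint for complex $k$).  For \eqref{E:9-3} your stated justification for the restriction $\alpha<\tfrac14$ is imprecise: the inequality $2^\alpha<2^{1-\alpha}$ requires only $\alpha<\tfrac12$ and only covers the case $m=n$; the binding constraint comes from the edges of the near-diagonal band $\tfrac n4\le m\le 4n$ in Proposition~\ref{P:9'}.  Taking $n=m/4$, the net exponent is $\approx m^\alpha\bigl(2^\alpha-2^{1-3\alpha}\bigr)$, which is negative precisely when $4\alpha<1$.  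This does not affect the correctness of the conclusion, but you should know where the restriction actually comes from.

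Your proposal for \eqref{E:9-4}, however, contains a genuine gap.  Revealing the potential globally at scale $8L$ and then applying Proposition~\ref{P:alpha} to the main piece $R_{8L}$ brings in the constraint $m,n\gg(\log 8L)^{1/(2(1-\alpha))}$, which depends on $L$.  For the excluded near-diagonal shells $m\approx n\lesssim M:=(\log L)^{1/(2(1-\alpha))}$ the weights do \emph{not} cancel: even with $m=n$ the exponent $2\langle 2m\rangle^\alpha-2\langle m/2\rangle^\alpha\approx 2(2^\alpha-2^{-\alpha})m^\alpha$ is strictly positive, so the only available crude bound via \eqref{RLk-itself} gives
\[
\E\bigl\{\|e^{2\langle x\rangle^\alpha}\chi_m R_{8L}(k)\chi_n e^{-2\langle x\rangle^\alpha}\|_{H^{-1}_\kappa\to H^1_\kappa}^p\bigr\}\lesssim \exp\bigl(CpM^\alpha\bigr)[\log L]^p = \exp\bigl(Cp(\log L)^{\frac{\alpha}{2(1-\alpha)}}\bigr)[\log L]^p.
\]
Since $\tfrac{\alpha}{2(1-\alpha)}>0$, the factor $\exp\bigl(C(\log L)^{\alpha/(2(1-\alpha))}\bigr)$ grows faster than every fixed power of $\log L$ as $L\to\infty$; it is sub-polynomial but emphatically not polylogarithmic, so your assertion that the weight on the exceptional shells "is then polylogarithmic in $L$" is false.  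The paper avoids this by adapting the revelation scale to the shell index: for each near-diagonal pair with $\tfrac m4\le n\le 4m$ it writes $R_{L_0}=R_{32m}-\sum_{\ell\ge 32m}R_{2\ell}(q_{2\ell}-q_\ell)R_\ell$ (this is \eqref{reveal}, the analogue of \eqref{RL-reveal}).  Applying Proposition~\ref{P:alpha} to $R_{32m}$ then carries only the $L$-independent constraint $m\gg(\log 32m)^{1/(2(1-\alpha))}$, which holds for all $m$ above an absolute constant $C_0(\alpha)$; the finitely many shells $m\le C_0$ carry $O_\alpha(1)$ weight and are dispatched by \eqref{RLk-itself}.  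The tail beyond scale $32m$ is summed as in the proof of Proposition~\ref{P:8} using Proposition~\ref{P:4} and \eqref{E:C:q op}.  The per-shell revelation scale is what keeps the Combes--Thomas threshold free of $L$, and the global reduction to $R_{8L}$ cannot substitute for it.
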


\begin{proof} The estimate \eqref{E:9-1} follows from Proposition~\ref{P:9'} by writing $1=\sum_n \chi_n$ and then splitting into the regimes $n<\tfrac14m$, $\tfrac14m\leq n\leq 4m$, and $n>4m$. 

The estimate \eqref{E:9-2} follows from \eqref{E:9-1}.  Indeed, writing $1=\sum_m \chi_m$, we estimate
\begin{align*}
\bigl[\E\{\|R_{L_0}(k)\langle x\rangle^{-\beta}\|_{H_\kappa^{-1}\to H_\kappa^1}^p\bigr\}\bigr]^{\frac1p}
\lesssim \sum_{m\in 2^{\mathbb{N}_0}}\langle m\rangle^{-\beta} \bigl[\E\{\|R_{L_0}(k)\langle x\rangle^{-\beta}\langle m\rangle^\beta \chi_m\|_{H_\kappa^{-1}\to H_\kappa^1}^p\bigr\}\bigr]^{\frac{1}{p}}.
\end{align*}
Observing that $\langle x\rangle^{-\beta}\langle m\rangle^\beta\chi_m$ defines a bounded operator on $H_\kappa^{\pm1}$, the result follows from \eqref{E:9-1} and the fact that  $\langle m\rangle^{-\beta}\log m$ is summable over $m\in 2^{\mathbb{N}}$.

To obtain \eqref{E:9-3} we insert $1=\sum_n \chi_n=\sum_m \chi_m$ and use that multiplication by
\[
e^{c\langle x\rangle^\alpha}e^{- c\langle 2\ell\rangle^\alpha}\chi_\ell(x) \qtq{and} e^{-c\langle x\rangle^\alpha}e^{c\langle \ell/2\rangle^\alpha}\chi_\ell(x)
\]
are bounded operators on $H_\kappa^{\pm 1}$ uniformly for $\ell\in 2^{\mathbb{N}_0}$.  Thus, invoking Proposition~\ref{P:9'} we get
\begin{align*}
\bigl[\text{LHS\eqref{E:9-3}}\bigr]^{\frac1p}
&\lesssim \sum_{n,m\in 2^{\mathbb{N}_0}} e^{\langle 2m\rangle^\alpha} e^{-2\langle n/2\rangle^\alpha}\bigl[\E\bigl\{ \| \chi_m R_{L_0}(k) \chi_n\|_{H_\kappa^{-1}\to H_\kappa^1}^p\bigr\}\bigr]^{\frac{1}{p}}\\
& \lesssim_p \sum_{m>4n}e^{\langle 2m\rangle^\alpha} e^{-2\langle n/2\rangle^\alpha}e^{-6\langle m\rangle^\alpha} + \sum_{n>4m}e^{\langle 2m\rangle^\alpha} e^{-2\langle n/2\rangle^\alpha}e^{-6\langle n\rangle^\alpha} \\
&\quad +\sum_{\frac n4\leq m\leq 4n}e^{\langle 2m\rangle^\alpha} e^{-2\langle n/2\rangle^\alpha}\log(m)\lesssim_p1,
\end{align*}
where we use that $0<\alpha<\tfrac14$ in order to control the contribution near the diagonal.

We turn now to \eqref{E:9-4}.  By the triangle inequality, we estimate
\begin{align*}
\bigl(&\E\{\|e^{2 \langle x\rangle^\alpha} \varphi_LR_{L_0}(k)e^{-2\langle x\rangle^\alpha}\|_{H_\kappa^{-1}\to H_{\kappa}^1}^p\}\bigr)^{\frac{1}{p}} \\
& \lesssim  \sum_{m=1}^{8L} \sum_{n\geq 1}\bigl( \E\{\|e^{2\langle x\rangle^\alpha}\chi_m R_{L}(k)\chi_n e^{-2 \langle x\rangle^\alpha}\|_{H_\kappa^{-1}\to H_{\kappa}^1}^p \}\bigr)^{\frac{1}{p}}.
\end{align*}
Using Proposition~\ref{P:9'}, we estimate the contributions away from the diagonal by 
$$
\sum_{m>4n}e^{2\langle 2m\rangle^\alpha} e^{-2\langle n/2\rangle^\alpha} e^{-6\langle m\rangle^\alpha} + \sum_{n>4m}e^{2\langle 2m\rangle^\alpha} e^{-2\langle n/2\rangle^\alpha} e^{-6\langle n\rangle^\alpha}\lesssim1.
$$
To estimate the contribution of $\frac m4\leq n\leq 4m\leq 32 L$, we repeat the argument covering this case in the proof of Proposition~\ref{P:8}, using the decomposition
\begin{align}\label{reveal}
R_{L_0} = R_{32m} - \sum_{\ell=32m}^{L_0/2} R_{2\ell}(q_{2\ell}-q_\ell)R_\ell
\end{align}
in place of \eqref{RL-reveal}.  
\end{proof}

As discussed in the introduction and manifested already in our treatment of the torus problem, the diagonal Green's function $g(x;q,k)$ and its reciprocal play essential roles in our analysis.  The two remaining propositions in this section provide the essential volume-independent bounds on these objects.  We begin with $g(x)$ itself: 

\begin{prop}\label{P:L53} Fix  $1\leq s<\frac32$ and $1\leq p<\infty$.  For $k$ strictly admissible,
\begin{alignat}{2}
\E\bigl\{ \| \langle x\rangle^{-\beta}[g_{L_0}(x;k)-\tfrac{1}{2k}]\|_{L^p}^p\bigr\} & \lesssim_{p,\beta} \kappa^{-\frac{5p}{2}} \quad&&\text{for $\beta>1$},\label{L53-1.5} \\
\E\bigl\{\| \langle x\rangle^{-\beta}[g_{L_0}(x;k)-\tfrac{1}{2k}]\|_{H^s_\kappa}^p \bigr\} & \lesssim_{p,\beta,s} \kappa^{(2s-4)p} \quad&&\text{for $\beta>1$},\label{L53-3.5}\\
\E\bigl\{\| \langle x\rangle^{-\beta}[g_{L_0}(x;k)-\tfrac{1}{2k}]\|_{H^{-1}}^p \bigr\} & \lesssim_{p,\beta} \kappa^{-3p} \quad&&\text{for $\beta>2$},\label{L53-5.5}
\end{alignat}
uniformly in $L=L_0\in 2^{\mathbb N}\cup\{\infty\}$.
\end{prop}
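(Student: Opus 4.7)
My plan is to split the Green's function via the resolvent expansion \eqref{g-to-quadratic},
\[
g_{L_0}(x;k) - \tfrac{1}{2k} = L(x) + Q(x),
\]
with $L(x) := -\tfrac{1}{k}[R_0(2k)q_{L_0}](x)$ linear in the (periodized) white noise and $Q(x) := \langle \delta_x, R_0(k)q_{L_0}R_{L_0}(k)q_{L_0}R_0(k)\delta_x\rangle$ collecting all higher-order contributions.  These two pieces will be treated by essentially disjoint techniques: $L$ is a smoothed Gaussian to which Fourier analysis applies directly, while $Q$ will be controlled via the weighted resolvent bounds of Section~\ref{S:multiscale}.

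For \eqref{L53-1.5}, translation invariance of the law of $q_{L_0}$ ensures that $\E\{|g(x;k)-\tfrac{1}{2k}|^p\}$ is independent of $x$ (if $L_0=\infty$) or periodic in $x$ (for finite $L_0$), so Fubini reduces matters to a uniform pointwise bound; the hypothesis $\beta>1$ then secures integrability of $\langle x\rangle^{-\beta p}$ for all $p\geq 1$.  At $x=0$, $L(0)=-\tfrac{1}{4k^2}\langle e^{-2k|\cdot|},\,q_{L_0}\rangle$ is a complex Gaussian whose variance I will compute directly from the covariance (or from \eqref{qL cov} in the periodic case) to be of order $\kappa^{-5}$, yielding $\E|L(0)|^p\lesssim_p\kappa^{-5p/2}$.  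For the quadratic term I will rewrite $Q(0) = \langle qR_0(\bar k)\delta_0,\, R_{L_0}(k)\,qR_0(k)\delta_0\rangle$, insert weights $w=e^{\langle y\rangle^\alpha}$ with $\alpha<\tfrac14$, and bound
\[
|Q(0)| \leq \|w^{-1}qR_0(\bar k)\delta_0\|_{H^{-1}_\kappa}\,\|wR_{L_0}(k)w^{-2}\|_{H^{-1}_\kappa\to H^1_\kappa}\,\|w^2qR_0(k)\delta_0\|_{H^{-1}_\kappa}.
\]
The central factor has bounded moments by Corollary~\ref{C:9}, equation~\eqref{E:9-3}, while the flanking factors will be handled via Lemma~\ref{L:HS} applied to the \emph{deterministic} $L^2$-functions $w^{-1}R_0(\bar k)\delta_0$ and $w^2R_0(k)\delta_0$ (whose $L^2$ norms are of order $\kappa^{-3/2}$).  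H\"older's inequality then gives $\E|Q(0)|^p\lesssim\kappa^{-cp}$ for some $c>\tfrac52$, so $L$ dominates.

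For \eqref{L53-3.5} and \eqref{L53-5.5} I retain the same splitting.  The key observation for the linear piece is that $WL = -\tfrac{1}{k}WR_0(2k)q_{L_0}$, with $W$ denoting multiplication by $\langle x\rangle^{-\beta}$, is Gaussian in any fixed Hilbert norm, whence
\[
\E\|WL\|_X^2 = \tfrac{1}{|k|^2}\|A\,W\,R_0(2k)\|_{\HS}^2,
\]
where $A$ realizes the norm $X$ via $\|f\|_X = \|Af\|_{L^2}$ (namely $A=(-\partial^2+4\kappa^2)^{s/2}$ for $X=H^s_\kappa$, and $A=(-\partial^2+4)^{-1/2}$ for $X=H^{-1}$).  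Computing this $\HS$ norm via Plancherel yields an explicit frequency-space integral; its convergence requires exactly $s<\tfrac32$ in \eqref{L53-3.5}, and the pointwise bound $|\xi^2+4k^2|\gtrsim\xi^2+4\kappa^2$ valid for strictly admissible $k$, together with the weight condition $\beta>1$ (respectively $\beta>2$ for \eqref{L53-5.5}), then furnishes the stated $\kappa$-decay.  Hypercontractivity (Lemma~\ref{L3}) promotes the $L^2(d\PP)$ bound to $L^p(d\PP)$ for every $p<\infty$.  The quadratic contribution $Q$ is handled as in \eqref{L53-1.5} but with the $X$-norm of $WQ$ in place of pointwise evaluation: decomposing $W$ dyadically via the partition $\sum_n\chi_n$, one applies the weighted resolvent bounds of Corollary~\ref{C:9} together with Lemma~\ref{L:HS} on each annulus and sums.

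The main obstacle will be calibrating the weights in the quadratic bound: Lemma~\ref{L:HS} must be invoked on \emph{deterministic} $L^2$ functions (rather than on the random object $R_{L_0}\delta_x$), while simultaneously leaving enough room for Corollary~\ref{C:9}, equation~\eqref{E:9-3}, to absorb the weight inside the resolvent without incurring the logarithmic losses that are unavoidable in \eqref{E:9-4}.  The choice $\alpha<\tfrac14$ in the weights $e^{\pm\langle y\rangle^\alpha}$ is precisely what reconciles these two requirements.
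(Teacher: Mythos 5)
Your overall plan — split $g(x;k)-\tfrac{1}{2k}$ via \eqref{g-to-quadratic} into a Gaussian piece $L$ and a quadratic piece $Q$, treat $L$ by Plancherel/Gaussian calculus and $Q$ by weighted resolvent bounds — matches the paper's proof for \eqref{L53-1.5} in spirit, though you use sub-exponential weights $e^{\pm\langle y\rangle^\alpha}$ and Corollary~\ref{C:9}\eqref{E:9-3}, whereas the paper uses polynomial weights $\langle x\rangle^{\pm n}$ via Lemma~\ref{P:11} and \eqref{E:9-2}; both yield a $\kappa^{-cp}$ with $c>\tfrac52$ for $Q(0)$ (yours gives $c=4$, the paper's $c=3$). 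Minor slip: with the bilinear $L^2$ pairing used in the paper and the fact that $G_0(0,y;k)=G_0(y,0;k)$, your rewriting should read $Q(0)=\langle qR_0(k)\delta_0,\,R_{L_0}(k)\,qR_0(k)\delta_0\rangle$ (no complex conjugate on the left resolvent); immaterial for the estimates, but worth catching.

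There is, however, a genuine gap in the treatment of the quadratic piece of \eqref{L53-3.5}, and this is the dominant contribution: for $s<\tfrac32$ the linear piece gives $\kappa^{(s-\frac52)p}$, which is \emph{better} than the stated $\kappa^{(2s-4)p}$, so the $\kappa^{(2s-4)p}$ ceiling is set by $Q$. Your suggestion to ``decompose $W$ dyadically via $\sum_n\chi_n$ and apply Corollary~\ref{C:9} and Lemma~\ref{L:HS} on each annulus'' is a mechanism for controlling a weighted $L^2$ or $H^{-1}$ norm of $Q$, but it does not explain how to extract the positive Sobolev regularity $H^s_\kappa$ with $s\geq1$. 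Localizing in $x$ does not help estimate $|\nabla|^s Q$: on any fixed annulus one still has the same quadratic-form structure, and one must move $(-\Delta+\kappa^2)^{s/2}$ onto one of the free resolvents to realize the regularity gain. The paper does this by duality (pairing $\langle x\rangle^{-\beta}Q$ against $f\in H^{-s}_\kappa$), rewriting as a trace, using the commutation identity $\langle x\rangle^{-\beta/2}R_0(k)=BR_0(k)\langle x\rangle^{-\beta/2}$ with $B$ bounded on $H^s_\kappa$, and then proving the Hilbert--Schmidt bound $\E\{\|(-\Delta+\kappa^2)^{s/2}\langle x\rangle^{-\beta/2}R_0(k)q\langle x\rangle^{\gamma}\sqrt{R_0(\kappa)}\|_{\mathfrak I_2}^2\}\lesssim_s\kappa^{2s-4}$, whose frequency-space integral converges precisely for $s<\tfrac32$. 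Without an analogue of this step, your decomposition-on-annuli plan does not produce the $H^s_\kappa$ estimate, and the proof of \eqref{L53-3.5} is incomplete.
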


\begin{proof}  As all estimates will be uniform in $L_0$, it will be convenient to drop the subscript everywhere in what follows.

By translation invariance of white noise, \eqref{L53-1.5} will follow readily from
\begin{equation}
\E\bigl\{ \bigl| g(0;k) - \tfrac{1}{2k}\bigr|^p \bigr\} \lesssim \kappa^{-\frac{5p}{2}}.\label{L53-1}
\end{equation}
Recall from \eqref{g-to-quadratic} that 
\begin{align*}
g(0;k)-\tfrac{1}{2k} & = -\langle\delta_0, \tfrac{1}{k}R_0(2k)q\rangle + \langle \delta_0, R_0(k) qR(k)qR_0(k)\delta_0\rangle. 
\end{align*}

As $q$ is white noise distributed,
\begin{align*}
\E\bigl\{\bigl| \langle \delta_0,\tfrac{1}{k}R_0(2k)q\rangle\bigr|^2\bigr\}  \lesssim \kappa^{-2} \E\bigl\{|\langle R_0(2k)\delta_0,q\rangle|^2\bigr\} 
& \lesssim  \kappa^{-2}\|R_0(2k)\delta_0\|_{L^2}^2 \lesssim \kappa^{-5}. 
\end{align*}
Using Lemma~\ref{L3}, we deduce 
\[
\E\bigl\{ \bigl| \langle \delta_0, \tfrac{1}{k}R_0(2k)q\rangle\bigr|^p\bigr\} \lesssim_p \kappa^{-\frac{5p}{2}}.
\]

Next, we estimate
\begin{align*}
\bigl| \langle \delta_0,R_0(k)qR(k)qR_0(k)\delta_0\rangle\bigr|&\lesssim\| \langle x\rangle^2\delta_0\|_{H_\kappa^{-1}} \|\langle x\rangle^4\delta_0\|_{H_\kappa^{-1}} \\
& \quad\times \|\langle x\rangle^{-2}R_0(k)\langle x\rangle^2\|_{H_\kappa^{-1}\to H_\kappa^1} \|\langle x\rangle^4 R_0(k)\langle x\rangle^{-4}\|_{H_\kappa^{-1}\to H_\kappa^1} \\
& \quad \times \|\langle x\rangle^{-2}q\|_{H_\kappa^1\to H_\kappa^{-1}}^2 \|R(k)\langle x\rangle^{-2}\|_{H_\kappa^{-1}\to H_\kappa^1}.
\end{align*}
Thus, using Lemma~\ref{P:11}, \eqref{E:C:q op}, and \eqref{E:9-2}, we get
\begin{align*}
\E\bigl\{ \bigl| \langle \delta_0,R_0(k)qR(k)qR_0(k)\delta_0\rangle\bigr|^p\bigr\} \lesssim_p \kappa^{-3p}.
\end{align*}
This completes the proof of \eqref{L53-1}. 

We turn now to \eqref{L53-3.5}.  Using again \eqref{g-to-quadratic}, we write
\begin{align}\label{200}
g(x;k)-\tfrac{1}{2k} & = -\langle\delta_x, \tfrac{1}{k}R_0(2k)q\rangle + \langle \delta_x, R_0(k) qR(k)qR_0(k)\delta_x\rangle. 
\end{align}

The contribution of the first term on the right-hand side of \eqref{200} is straightforward to estimate.  Indeed, 
\begin{align*}
\E\bigl\{\| \langle x\rangle^{-\beta}\tfrac{1}{k}R_0(2k)q\|_{H^s_\kappa}^2 \bigr\}
&= \kappa^{-2} \tr \bigl\{R_0(2k)^*  \langle x\rangle^{-\beta} (-\Delta + 4\kappa^2)^s  \langle x\rangle^{-\beta} R_0(2k)  \bigr\}\\
&=\kappa^{-2} \iint \frac{|m_\beta(\xi-\eta)|^2 (\eta^2+4\kappa^2)^s}{|\xi^2+ 4k^2|^2}\, d\xi\, d\eta,
\end{align*}
where $m_\beta$ denotes the Fourier transform of the function $\langle x\rangle^{-\beta}$.  As $\beta>1$, we have 
$$
\bigl|m_\beta(\xi)\bigr|\lesssim_n \langle \xi\rangle^{-n} \quad\text{for any $n\in \mathbb N$},
$$
which readily implies that
$$
\E\bigl\{\| \langle x\rangle^{-\beta}\tfrac{1}{k}R_0(2k)q\|_{H^s_\kappa}^2 \bigr\}\lesssim_s  \kappa^{-2} \kappa^{2s-3}\lesssim_s \kappa^{2s-5}.
$$
Invoking Lemma~\ref{L3}, this gives
$$
\E\bigl\{\| \langle x\rangle^{-\beta}\tfrac{1}{k}R_0(2k)q\|_{H^s_\kappa}^p \bigr\}\lesssim_{p,s}  \kappa^{(s-\frac52)p},
$$
which is an acceptable contribution to \eqref{L53-3.5}.

To estimate the contribution of the second term on the right-hand side of \eqref{200}, we argue by duality.  For $f\in H^{-s}_\kappa$ with $s>\frac12$ and $\gamma>0$,
\begin{align}\label{943}
&\bigl|\bigl\langle f, \langle x\rangle^{-\beta}\langle \delta_x, R_0(k) qR(k)qR_0(k)\delta_x\rangle \bigr\rangle\bigr|\\
&\leq \bigl|\tr \bigl\{ f \langle x\rangle^{-\beta}R_0(k) qR(k)qR_0(k)\bigr\}\bigr|\notag\\
&\lesssim \|R_0(\kappa)^{\frac s2} f R_0(\kappa)^{\frac s2}\|_{L^2\to L^2} \|\langle x\rangle^{-\gamma} R(k)\langle x\rangle^{-\gamma}\|_{H^{-1}_\kappa\to H^1_\kappa}\notag\\
&\qquad\times\|(-\Delta + \kappa^2)^{\frac s2}\langle x\rangle^{-\beta/2}R_0(k) q\langle x\rangle^{\gamma}\sqrt{R_0(\kappa)}\|_{\mathfrak I_2}^2\notag\\
&\lesssim \|f\|_{H^{-s}_\kappa} \|\langle x\rangle^{-\gamma} R(k)\langle x\rangle^{-\gamma}\|_{H^{-1}_\kappa\to H^1_\kappa}
\|(-\Delta + \kappa^2)^{\frac s2}\langle x\rangle^{-\beta/2}R_0(k) q\langle x\rangle^{\gamma}\sqrt{R_0(\kappa)}\|_{\mathfrak I_2}^2.\notag
\end{align}

To continue, we write
$$
\langle x\rangle^{-\frac\beta 2}R_0(k)= BR_0(k)\langle x\rangle^{-\frac\beta2} \qtq{with} B=I + R_0(k)A \langle x\rangle^{-\frac\beta2}R_0(k)\langle x\rangle^{\frac\beta2} (-\Delta+k^2)
$$
and 
$$
A= \beta\partial_x \tfrac{x}{\langle x\rangle^{2}} + \beta\tfrac{(\beta+2)x^2-2}{4\langle x\rangle^{4}}.
$$
For $1\leq s\leq 2$, Lemma~\ref{P:11} yields
\begin{align*}
\|B\|_{H^s_\kappa\to H^s_\kappa}
&\leq 1 + \|R_0(k)A\|_{H^1_\kappa\to H^s_\kappa} \|\langle x\rangle^{-\frac\beta2}R_0(k)\langle x\rangle^{\frac\beta2}\|_{H^{-1}_\kappa\to H^1_\kappa}\|-\Delta+k^2\|_{H^s_\kappa\to H^{-1}_\kappa}\\
&\lesssim 1.
\end{align*}
Thus, using \eqref{931} we get
\begin{align*}
\E\bigl\{ \|(-\Delta + \kappa^2)^{\frac s2}\langle x\rangle^{-\frac\beta2}&R_0(k) q\langle x\rangle^{\gamma}\sqrt{R_0(\kappa)}\|_{\mathfrak I_2}^2 \bigr\}\\
&\lesssim \|B\|_{H^s_\kappa\to H^s_\kappa}^2 \E\bigl\{\|(-\Delta + \kappa^2)^{\frac s2}R_0(k) q\langle x\rangle^{\gamma-\frac\beta2}\sqrt{R_0(\kappa)}\|_{\mathfrak I_2}^2\bigr\}\\
&\lesssim \iint\frac{|\xi^2+\kappa^2|^{s}}{(\eta^2+\kappa^2)|\xi^2+k^2|^2} \E\bigl\{\bigl| \mathcal F(q\langle x\rangle^{\gamma-\frac\beta2}) (\xi-\eta)\bigr|^2\bigr\}\, d\xi\, d\eta\\
&\lesssim \|\langle x\rangle^{\gamma-\frac\beta2}\|_{L^2}^2  \iint\frac{|\xi^2+\kappa^2|^{s}}{(\eta^2+\kappa^2)|\xi^2+k^2|^2}\, d\xi\, d\eta\lesssim_s \kappa^{2s-4},
\end{align*}
provided $\frac\beta2>\gamma+\frac12$ and $s<\frac32$.  By Lemma~\ref{L3}, this gives
\begin{align*}
\E\bigl\{ \|(-\Delta + \kappa^2)^{\frac s2}\langle x\rangle^{-\frac\beta2}R_0(k) q\langle x\rangle^{\gamma}\sqrt{R_0(\kappa)}\|_{\mathfrak I_2}^p \bigr\}
\lesssim_{p,\beta,s} \kappa^{p(s-2)} \quad\text{for all $1\leq p<\infty$},
\end{align*}
which combined with \eqref{943} and \eqref{E:9-2} yields
$$
\E\bigl\{ \|\langle x\rangle^{-\beta}\langle \delta_x, R_0(k) qR(k)qR_0(k)\delta_x\rangle\|_{H^s_\kappa}^p \bigr\}\lesssim_{p,\beta,s} \kappa^{p(2s-4)}
$$
for all $\beta>1$ and $1\leq s<\frac32$.  This completes the proof of \eqref{L53-3.5}.

To prove \eqref{L53-5.5}, we argue as for \eqref{L53-3.5}.  Starting from \eqref{200}, we estimate
\begin{align*}
\E\bigl\{\| \langle x\rangle^{-\beta}\tfrac{1}{k}R_0(2k)q\|_{H^{-1}}^2 \bigr\}
&= \kappa^{-2} \tr \bigl\{R_0(2k)^*  \langle x\rangle^{-\beta} (-\Delta + 4)^{-1}  \langle x\rangle^{-\beta} R_0(2k)  \bigr\}\\
&=\kappa^{-2} \iint \frac{|m_\beta(\xi-\eta)|^2 }{|\xi^2+ 4k^2|^2(\eta^2+4)}\, d\xi\, d\eta\lesssim \kappa^{-6}.
\end{align*}
Invoking Lemma~\ref{L3}, we thus obtain
$$
\E\bigl\{\| \langle x\rangle^{-\beta}\tfrac{1}{k}R_0(2k)q\|_{H^{-1}}^p \bigr\}\lesssim_{p}  \kappa^{-3p}.
$$
To estimate the contribution to \eqref{L53-5.5} of the second term on the right-hand side of \eqref{200}, we argue by duality.  For $f\in H^{1}$,
\begin{align*}
&\bigl|\bigl\langle f, \langle x\rangle^{-\beta}\langle \delta_x, R_0(k) qR(k)qR_0(k)\delta_x\rangle \bigr\rangle\bigr|\\
&\leq \bigl|\tr \bigl\{ f \langle x\rangle^{-\beta}R_0(k) qR(k)qR_0(k)\bigr\}\bigr|\\
&\lesssim  \|f\|_{H^1_\kappa\to H^{-1}_\kappa} \|\langle x\rangle^{\frac\beta2} R_0(\kappa)\langle x\rangle^{-\frac\beta2}\|_{H^{-1}_\kappa\to H^1_\kappa} \|\langle x\rangle^{-\frac\beta2}  R_0(\kappa)\langle x\rangle^{\frac\beta2} \|_{H^{-1}_\kappa\to H^1_\kappa}\\
&\quad  \times \|\langle x\rangle^{-\frac\beta4} R(k)\langle x\rangle^{-\frac\beta4}\|_{H^{-1}_\kappa\to H^1_\kappa}\|\sqrt{R_0(\kappa)}\langle x\rangle^{-\frac\beta4}q\sqrt{R_0(\kappa)}\|_{\mathfrak I_2}^2\\
&\lesssim \kappa^{-2}\|f\|_{H^1}\|R(k)\langle x\rangle^{-\frac\beta4}\|_{H^{-1}_\kappa\to H^1_\kappa}\|\sqrt{R_0(\kappa)}\langle x\rangle^{-\frac\beta4}q\sqrt{R_0(\kappa)}\|_{\mathfrak I_2}^2,
\end{align*}
where we used Lemma~\ref{P:11} in the last step.  Using also Lemma~\ref{L:HS} together with \eqref{E:9-2}, we obtain
$$
\E\bigl\{ \|\langle x\rangle^{-\beta}\langle \delta_x, R_0(k) qR(k)qR_0(k)\delta_x\rangle\|_{H^{-1}}^p \bigr\}\lesssim_{p, \beta} \kappa^{-4p},
$$
provided $\beta>2$.  This completes the proof of \eqref{L53-5.5}.
\end{proof}

We turn now to obtaining bounds on $1/g(x;q,\kappa)$ that are independent of the volume.  Evidently, the new difficulty is obtaining \emph{lower} bounds on the diagonal of the resolvent.   This is quite at odds with what is usually investigated in the Anderson model.  Regions where the potential is extremely large (and positive) improve localization; indeed, they act as de facto Dirichlet boundary conditions.  This is advantageous for localization, but an enemy in our setting, since this drives the Green's function to zero!

As our bounds on $1/g(x;q,\kappa)$ need to be volume-independent, multiscale analysis again plays an essential role.  We encapsulate precisely what we need in the following lemma.

\begin{lemma}\label{L:18} Let $\varphi\in C_c^\infty$. For $1\leq p<\infty$, we have
\[
\E\bigl\{ \|\varphi \sqrt{R_{L_0}^*(k)R_{L_0}(k)}\varphi\|_{H_{\kappa}^{-1}\to H_{\kappa}^1}^p\bigr\}\lesssim_{p,\varphi} 1
\]
uniformly over $L_0\in 2^{\mathbb N}\cup\{\infty\}$ and strictly admissible $k$. 
\end{lemma}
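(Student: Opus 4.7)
The proof will parallel that of Proposition~\ref{P:2}. Starting from the spectral representation
\[
\sqrt{R_{L_0}^*(k)R_{L_0}(k)}\approx\int_0^\infty R_{L_0}^*(k_\tau)R_{L_0}(k_\tau)\,d\tau,\qquad k_\tau=\sqrt{k^2+i\tau},
\]
(which holds as an identity of positive self-adjoint operators on $L^2$ up to a spectral multiplier pinched between absolute constants, exactly as in \eqref{st-equiv2}), and using monotonicity of the $H_\kappa^{-1}\to H_\kappa^1$ norm on positive self-adjoint operators under conjugation by the real cutoff $\varphi$, we reduce matters to bounding
\[
\Bigl\|\varphi\int_0^\infty R_{L_0}^*(k_\tau)R_{L_0}(k_\tau)\,d\tau\,\varphi\Bigr\|_{H_\kappa^{-1}\to H_\kappa^1}.
\]

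We then split the $\tau$-integral at a threshold $\tau_*\sim\kappa_1^2$ with $\kappa_1\gg\sqrt{\log L_0}$, so that on the event $\Omega(\kappa_1)$ of Proposition~\ref{P:2} the Neumann series for $R_{L_0}(k_\tau)$ converges for every $\tau\geq\tau_*$. For $\tau\in[0,\tau_*]$, the parameter $k_\tau$ remains essentially equivalent to $k$, and the bound \eqref{E:9-2} of Corollary~\ref{C:9}---applied through the elementary estimate $\varphi\le C_\varphi\langle x\rangle^{-\beta}$---yields an $L_0$-independent moment bound. For $\tau>\tau_*$, we expand $R_{L_0}(k_\tau)$ in its Neumann series around $R_0(k_\tau)$ as in \eqref{resolvent-series}; the compact support of $\varphi$, together with the exponential decay of $R_0(k_\tau)$'s kernel at rate $\Re k_\tau\sim\tau^{1/4}$, produces integrable Hilbert--Schmidt bounds of the form $\|\varphi R_0^*(k_\tau)R_0(k_\tau)\varphi\|_{\mathfrak{I}_2}\lesssim_\varphi\tau^{-3/2}$, with analogous bounds on higher-order terms controlled by Lemma~\ref{L:HS}. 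Volume independence in this regime is preserved by the multiscale decomposition \eqref{dyadic-reveal-2} from the proof of Proposition~\ref{P:9'}, while the exponential tail $\PP(\Omega(\kappa_1)^c)\lesssim e^{-c\kappa_1^2}$ absorbs the off-event contributions into the final $L^p(d\PP)$ estimate via Cauchy--Schwarz and \eqref{RLk-itself}.

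The main obstacle lies in securing an \emph{integrable} $\tau$-decay for the large-$\tau$ tail in the $H_\kappa^{-1}\to H_\kappa^1$ norm: the soft operator bound $\|R_0(k_\tau)\|_{H_\kappa^{-1}\to L^2}^2\sim\tau^{-1}$ falls just short of integrability, so the compact support of $\varphi$ must be exploited in earnest to upgrade operator-norm estimates into Hilbert--Schmidt estimates. Propagating this decay through the successive terms of the Neumann series while maintaining $L_0$-independent bounds via the multiscale analysis of Section~\ref{S:multiscale} is the technical heart of the argument.
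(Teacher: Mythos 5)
Your opening reduction — rewriting $\sqrt{R_{L_0}^*R_{L_0}}$ via \eqref{st-equiv2} and noting that conjugation by the real multiplier $\varphi$ respects the operator ordering — matches the paper's starting point. After that, however, the proposal departs from what the paper does, and the departure introduces genuine gaps.

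The central difficulty, which you acknowledge for the large-$\tau$ tail but do not resolve for small $\tau$, is that one cannot pull the $H_\kappa^{-1}\to H_\kappa^1$ norm inside the $\tau$-integral: the pointwise estimate $\|R^*(k_\tau)R(k_\tau)\|_{H_\kappa^{-1}\to H_\kappa^1}\lesssim (\kappa^2+\tau)^{-1}$ fails to be integrable, and only the cancellation encoded by the spectral identity \eqref{st-equiv2} saves the day. Your split at $\tau_*\sim\kappa_1^2$ with $\kappa_1\gg\sqrt{\log L_0}$ does not handle this: integrating $\int_0^{\tau_*}(\kappa^2+\tau)^{-1}\,d\tau$ still costs $\log\tau_*\sim\log\log L_0$, which is not $L_0$-uniform; and the claim that ``$k_\tau$ remains essentially equivalent to $k$'' for $\tau\in[0,\tau_*]$ is false, since $\tau_*\gg|k|^2$ once $L_0$ is large. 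Worse, the entire split is meaningless for $L_0=\infty$ (one cannot take $\kappa_1\gg\sqrt{\log\infty}$), yet the lemma is stated and needed for $L_0\in 2^{\mathbb N}\cup\{\infty\}$.

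The paper's proof avoids the split altogether. It applies the multiscale decomposition $R_{L_0}=R_\ell-\sum_{L=\ell}^{L_0/2}R_{2L}(q_{2L}-q_L)R_L$ \emph{inside} the $\tau$-integral, with $\ell$ chosen so that $\supp\varphi\subset(-\ell/4,\ell/4)$. The near-diagonal piece $\int_0^\infty\varphi R_\ell^*(k_\tau)R_\ell(k_\tau)\varphi\,d\tau$ is then \emph{recombined} back into $\varphi\sqrt{R_\ell^*R_\ell}\varphi$ by a second application of \eqref{st-equiv2}, whose norm is controlled by Corollary~\ref{C:2} at the cost of $(\log\ell)^p$ — a constant depending only on $\varphi$. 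This step is the essential trick your proposal never identifies: it is what turns a logarithmically divergent $\tau$-integral into a finite one. The cross terms carry genuine extra $\tau$-decay from \eqref{ktaugain} and $\|q_{2L}-q_L\|_{H^1_{\kappa_\tau}\to H^{-1}_{\kappa_\tau}}\lesssim\kappa_\tau^{-1}$, plus sub-exponential decay in $L$ from Proposition~\ref{P:4}, so for them the $\tau$- and $L$-sums can be taken in absolute value. No Neumann series or off-event Cauchy--Schwarz bookkeeping is needed; the multiscale expansion does all the work.
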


\begin{proof} We recall from \eqref{st-equiv2} that
\[
\|\varphi\sqrt{R_{L_0}^*(k)R_{L_0}(k)}\varphi\|_{H_{\kappa}^{-1}\to H_\kappa^1} \approx \biggl\| \int_0^\infty \varphi R_{L_0}^*(k_\tau)R_{L_0}(k_\tau)\varphi\,d\tau\biggr\|_{H_\kappa^{-1}\to H_\kappa^1}, 
\]
where once again we write $k_\tau = \sqrt{k^2+i\tau}$ and $\kappa_\tau=|k_\tau|$.

To proceed, we write
\[
R_{L_0}=R_\ell+ \sum_{L= \ell}^{L_0/2}(R_{2L}-R_L) = R_\ell - \sum_{L=\ell}^{L_0/2} R_{2L}(q_{2L}-q_L)R_L 
\]
where $\ell$ is chosen so that $\supp(\varphi)\subset (-\ell/4, \ell/4)$. As $\kappa_\tau\geq \kappa$, we have
\begin{align}
\|&\varphi\sqrt{R_{L_0}^*(k)R_{L_0}(k)}\varphi\|_{H_\kappa^{-1}\to H_\kappa^1}\nonumber \\
  & \leq \biggl\| \int_0^\infty \varphi R_\ell^* R_\ell\varphi\,d\tau \biggr\|_{H_\kappa^{-1}\to H_\kappa^1} \label{P101} \\
& \quad + 2\sum_{L=\ell}^{\frac12 L_0} \int_0^\infty \|\varphi R_\ell^*R_{2L}(q_{2L}-q_L)R_L\varphi\|_{H_{\kappa_\tau}^{-1}\to H_{\kappa_\tau}^1} \,d\tau \label{P102} \\
& \quad + \sum_{L,L'=\ell}^{\frac12 L_0}\int_0^\infty \|\varphi R_{L'}^*(q_{2L'}-q_{L'})R_{2L'}^*R_{2L}(q_{2L}-q_L)R_L\varphi\|_{H_{\kappa_\tau}^{-1}\to H_{\kappa_\tau}^1}\, d\tau, \label{P103}
\end{align}
where the resolvents are all evaluated at $k_\tau$. 

We first consider the contribution of \eqref{P101}.  Applying \eqref{st-equiv2} once again, followed by \eqref{multiplier1} and Corollary~\ref{C:2}, we estimate the contribution of \eqref{P101} by
\[
\E\bigl\{ \|\varphi\sqrt{R_\ell^*(k) R_\ell(k)}\varphi\|_{H_{\kappa}^{-1}\to H_\kappa^1}^p\bigr\} \lesssim_{p, \varphi} (\log \ell)^p,
\]
which is acceptable. 

By \eqref{multiplier1}, Corollary~\ref{C:2}, \eqref{E:C:q op}, Proposition~\ref{P:4}, \eqref{ktaugain}, and H\"older's inequality, we estimate the contribution of \eqref{P102} by
\begin{align*}
\sum_{L=\ell}^{\frac12 L_0}& \int_0^\infty \Bigl[\E\bigl\{\| \varphi R_\ell^* R_{2L}(q_{2L}-q_L)\tilde \chi_L R_L\varphi\|_{H_{\kappa_\tau}^{-1}\to H_{\kappa_\tau}^1}^p\bigr\}\Bigr]^{\frac{1}{p}}\,d\tau \\
& \lesssim_\varphi \sum_{L=\ell}^{\frac12 L_0} \int_0^\infty \Bigl[\E\bigl\{\|R_\ell^*\|_{H_{\kappa_\tau}^{-1}\to H_{\kappa_\tau}^1}^p \|1\|_{H_{\kappa_\tau}^1 \to H_{\kappa_\tau}^{-1}}^p \| R_{2L}\|_{H_{\kappa_\tau}^{-1}\to H_{\kappa_\tau}^1}^p \\
& \quad\quad\quad\quad \times \|q_{2L}-q_L\|_{H_{\kappa_\tau}^1\to H_{\kappa_\tau}^{-1}}^p \|\tilde\chi_L R_L\varphi \|_{H_{\kappa_\tau}^{-1}\to H_{\kappa_\tau}^1}^p\bigr\}\Bigr]^{\frac1p}\,d\tau \\
& \lesssim_{p, \varphi} \sum_{L=\ell}^{\frac12 L_0} \int_0^\infty \frac{\log \ell \cdot [\log L]^{2+\frac12} \cdot e^{-8\langle L\rangle ^{\alpha}}}{(\kappa^2+\tau)^{\frac32}}\,d\tau \lesssim_{p,\varphi} \tfrac{1}{\kappa},
\end{align*}
where $\tilde\chi_L$ is a cutoff to the support of $q_{2L}-q_L$ as defined in \eqref{chitilde}.  The penultimate step is immediate when $L_0=\infty$.  When $L_0$ is finite, one must first introduce an additional sum over $h\in \Z$ as in the proof of Propositions~\ref{P:8} and \ref{P:9'}, which is then easily seen to be controlled by the $h=0$ summand.

Arguing similarly, we estimate the contribution of \eqref{P103} by
\begin{align*}
\sum_{L,L'=\ell}^{\frac12 L_0}& \int_0^\infty  \|1\|_{H_{\kappa_\tau}^1\to H_{\kappa_\tau}^{-1}} \E\Bigl\{ \|\varphi R_{L'}^*\tilde\chi_{L'}\|_{H_{\kappa_\tau}^{-1}\to H_{\kappa_\tau}^1}^p \|q_{2L'}-q_{L'}\|_{H_{\kappa_\tau}^1\to H_{\kappa_\tau}^{-1}}^p \\
&\quad\quad\quad\times \|R_{2L'}^*\|_{H_{\kappa_\tau}^{-1}\to H_{\kappa_\tau}^1}^p \|R_{2L}\|_{H_{\kappa_\tau}^{-1} \to H_{\kappa_\tau}^1}^p \|q_{2L}-q_{L}\|_{H_{\kappa_\tau}^1\to H_{\kappa_\tau}^{-1}}^p \\
& \quad\quad\quad\times \|\tilde \chi_L R_L \varphi\|_{H_{\kappa_\tau}^{-1}\to H_{\kappa_\tau}^1}^p\Bigr\}^{\frac{1}{p}}\,d\tau \\
& \lesssim_p \sum_{L,L'=\ell}^{\frac12 L_0} \int_0^\infty \frac{[\log L']^{2+\frac12}e^{-8\langle L'\rangle^\alpha}\cdot [\log L]^{2+\frac12} e^{-8\langle L\rangle ^\alpha}}{(k^2+\tau)^2}\,d\tau \lesssim_{p,\varphi} \tfrac{1}{\kappa^2},
\end{align*}
which is acceptable. This completes the proof. \end{proof}

We are now ready to prove the bounds we need on the reciprocal of the diagonal Green's function; these are optimal in $\kappa$, as we can see already from the case $q\equiv 0$.

\begin{prop}\label{P:L56} Fix $1\leq p<\infty$.  For $k$ strictly admissible, we have
\begin{alignat}{2}
\E\bigl\{ \bigl\| \langle x\rangle^{-\beta}\tfrac{1}{g_{L_0}(x;k)}\bigr\|_{L^p}^p \bigr\} & \lesssim_{p,\beta} \kappa^p
	\quad &&\text{for $\beta>\tfrac1p$},\label{L56-3.5}\\
\E\bigl\{ \bigl\| \langle x\rangle^{-\beta}\tfrac{1}{g_{L_0}(x;k)}\bigr\|_{H^s}^p\bigr\} & \lesssim_{p, \beta} \kappa^{p}
	\quad &&\text{for $\beta>5$ and $0\leq s<\tfrac32$}, \label{L56-1.5} 
\end{alignat}
uniformly in $L=L_0\in 2^{\mathbb N}\cup\{\infty\}$.
\end{prop}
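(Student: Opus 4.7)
The plan is to reduce \eqref{L56-3.5} to the single-point moment bound $\E\{|1/g_{L_0}(0;k)|^p\} \lesssim_p \kappa^p$. The law of $q_{L_0}$ is translation invariant---periodized white noise in the case $L_0<\infty$ (see \eqref{qL cov}) and white noise on the line for $L_0=\infty$---and the Green's function transforms covariantly under translations. Hence $\E\{|1/g_{L_0}(x;k)|^p\}$ is independent of $x$, and Fubini yields
\begin{align*}
\E\bigl\{\|\langle x\rangle^{-\beta}/g_{L_0}(\cdot;k)\|_{L^p}^p\bigr\}
= \|\langle x\rangle^{-\beta}\|_{L^p}^p\cdot \E\{|1/g_{L_0}(0;k)|^p\},
\end{align*}
with the spatial factor finite precisely when $\beta p>1$.

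For the single-point bound I would expand $g(0;k)=\tfrac{1}{2k}+A+B$ as in the proof of \eqref{L53-1}, noting that Proposition~\ref{P:L53} supplies $\E\{|A+B|^q\}\lesssim_q \kappa^{-5q/2}$ for every finite $q$. Writing $1/g(0;k)=2k/(1+2k(A+B))$, on the good event $\mathcal G:=\{|2k(A+B)|\leq \tfrac12\}$ one has $|1/g(0;k)|\leq 4\kappa$, so this event contributes at most $(4\kappa)^p$ to $\E\{|1/g(0;k)|^p\}$; Chebyshev then gives $\PP(\mathcal G^c)\lesssim_q \kappa^{-3q/2}$ for every $q<\infty$. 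A H\"older argument on $\mathcal G^c$ will close the estimate once one has \emph{any} polynomial-in-$\kappa$ bound on some higher moment $\E\{|1/g(0;k)|^P\}$.

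This crude polynomial bound I would extract from the sign structure of the Green's function: for strictly admissible $k$, the proof of Proposition~\ref{P:HR} gives $\Im g(0;k)=-\sigma\,\|R(k)\delta_0\|_{L^2}^2$ with $\sigma\approx\kappa^2$, hence the deterministic inequality $|1/g(0;k)|\leq 1/(\sigma\|R(k)\delta_0\|_{L^2}^2)$. It thus suffices to control $\E\{\|R(k)\delta_0\|_{L^2}^{-2P}\}$ by a polynomial in $\kappa$, which is the step requiring the ``additional gymnastics'' of Lemma~\ref{L:18}. I would take a compactly supported cutoff $\varphi$ with $\varphi(0)=1$, use the resolvent identity $R(k)\delta_0=R_0(k)\delta_0-R(k)\,q\,R_0(k)\delta_0$, and exploit the explicit computation $\|R_0(k)\delta_0\|_{L^2}^2\approx\kappa^{-3}$; the perturbation $R(k)\,q\,R_0(k)\delta_0$ is then controlled in $L^2$ via $\|\varphi R(k)^*R(k)\varphi\|_{H_\kappa^{-1}\to H_\kappa^1}$ together with the off-diagonal tail bounds of Proposition~\ref{P:4} and the moment bound \eqref{E:C:q op} on $\|q\|_{H_\kappa^1\to H_\kappa^{-1}}$. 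Lemma~\ref{L:18} bounds the first factor in expectation, and a secondary good/bad split on its size produces the required crude polynomial bound.

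Finally, for \eqref{L56-1.5} I would bootstrap in $s$. Writing $1/g=2k-2k\epsilon/g$ with $\epsilon=g-\tfrac{1}{2k}$, the Banach algebra property of $H^s_\kappa$ for $s>\tfrac12$, the $H^s_\kappa$ bound for $\epsilon$ from \eqref{L53-3.5}, and the $L^p$ bound on $1/g$ (which via Sobolev embedding yields $L^\infty$ control once $H^1$ is in hand) allow one to close the estimate by a short iterative argument; the enlarged weight $\beta>5$ absorbs the products of polynomial weights picked up through repeated application of the product rule and the distinction between weighted and unweighted norms. The main obstacle throughout is precisely the crude polynomial bound on $\E\{|1/g(0;k)|^{P}\}$: a white-noise potential can locally produce near-Dirichlet configurations that drive $g(0;k)$ toward zero, and it is Lemma~\ref{L:18}---the multiscale, volume-independent resolvent estimate forged in Section~\ref{S:multiscale}---that quantifies how rare these configurations are.
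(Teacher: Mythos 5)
Your reduction to a single-point estimate via translation invariance, and your observation that the whole matter hinges on a crude polynomial-in-$\kappa$ bound on some high moment $\E\{|1/g(0;k)|^P\}$, are both correct and in line with the paper. The gap is in how you propose to obtain that crude bound.

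You want to control $\E\{\|R(k)\delta_0\|_{L^2}^{-2P}\}$ (which, via $|\Im g(0;k)|=\sigma\|R(k)\delta_0\|_{L^2}^2$, is the same object you started with, so this step by itself does not simplify anything) by splitting on the event $\mathcal E=\{\|R(k)qR_0(k)\delta_0\|_{L^2}\leq \tfrac12\|R_0(k)\delta_0\|_{L^2}\}$. On $\mathcal E$ the resolvent identity indeed gives a lower bound $\|R(k)\delta_0\|_{L^2}\gtrsim\kappa^{-3/2}$. But on $\mathcal E^c$ you have \emph{no} lower bound whatsoever: the resolvent identity tells you only that the perturbation is not small, and neither that nor any upper bound on $\|\varphi\sqrt{R^*R}\varphi\|$ (Lemma~\ref{L:18}) nor any ``secondary good/bad split'' on such an upper bound can produce a lower bound on $\|R(k)\delta_0\|_{L^2}$. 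A negative moment simply cannot be extracted from positive-moment control of different operator norms. So the contribution of $\mathcal E^c$ is not controlled, and the argument does not close.

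What the paper actually does to obtain the lower bound is a Cauchy--Schwarz duality with a localized test function $\psi$ with $\psi(0)=1$, namely $1=|\langle\delta_0,\psi\rangle|^2 = |\langle [R^*R]^{1/4}\delta_0,\,[(H+k^2)^*(H+k^2)]^{1/4}\psi\rangle|^2 \leq \langle\delta_0,\sqrt{R^*R}\delta_0\rangle\,\langle\psi,|H+k^2|\psi\rangle$, which yields $\langle\delta_0,\sqrt{R^*R}\delta_0\rangle^{-1}\leq\langle\psi,|H+k^2|\psi\rangle$. The right-hand side is a \emph{positive} moment of a quadratic form, and expanding it shows it is controlled by $\kappa+\|q\psi\|_{H_\kappa^{-1}}^2$ times $\|\tilde\psi\sqrt{R^*R}\tilde\psi\|_{H_\kappa^{-1}\to H_\kappa^1}$; only now is Lemma~\ref{L:18} the right tool, because it is used to bound a \emph{positive} moment. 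Crucially, one cannot carry out the same duality with $R^*R$ in place of $\sqrt{R^*R}$: that would require $\langle\psi,|H+k^2|^2\psi\rangle=\|(H+k^2)\psi\|_{L^2}^2$ to be finite, but for white noise $\|q\psi\|_{L^2}=\infty$ almost surely. This is why the paper pairs $\delta_0$ with the gentler operator $\sqrt{R^*R}$ and then uses the H\"older interpolation $\langle\delta_0,R^*R\delta_0\rangle^{-1}\lesssim\langle\delta_0,\sqrt{R^*R}\delta_0\rangle^{-5}\,\langle\delta_0,[R^*R]^{3/8}\delta_0\rangle^{4}$ to bridge from the tame quantity to the one needed; the positive power $\langle\delta_0,[R^*R]^{3/8}\delta_0\rangle$ is then handled by a spectral integral representation and the upper bounds from Proposition~\ref{P:L53}. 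This interpolation step, which your proposal bypasses, is the crux of the whole argument. Your sketch for \eqref{L56-1.5} is broadly in the right spirit (both use a product rule on $1/g$ and the bounds from Proposition~\ref{P:L53}), but it is moot until the $L^p$ estimate \eqref{L56-3.5} is secured.
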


\begin{proof}  As the arguments to be presented do not depend on $L_0$, we suppress it in the notation below.

By translation invariance of white noise, \eqref{L56-3.5} will follow readily from
\begin{equation}\label{L53-3}
\E\bigl\{  \bigl|\tfrac{1}{g(0;k)}\bigr|^p\}  \lesssim_p \kappa^p  
\end{equation}
To prove \eqref{L53-3}, we use
\[
\tfrac{1}{|g(0;k)|} \leq \tfrac{1}{| \Im g(0;k)|}
\]
and (recalling that $\sigma=\Im k^2$)
\begin{equation}\label{eqnforImg0}
\Im g(0;k) = \tfrac{1}{2i} \langle \delta_0,[R(k)-R^*(k)]\delta_0\rangle = -\sigma \langle \delta_0, R^*(k)R(k)\delta_0\rangle. 
\end{equation}
Using H\"older (with respect to the spectral measure), we estimate
\begin{align*}
|\langle\delta_0, \sqrt{R^*(k)R(k)}\delta_0\rangle| & = \|[R^*(k)R(k)]^{\frac14}\delta_0\|_{L^2}^2 \\
& \lesssim | \langle \delta_0, R^*(k)R(k)\delta_0\rangle|^{\frac15} |\langle \delta_0,[R^*(k)R(k)]^{\frac38}\delta_0\rangle|^{\frac45},
\end{align*}
and so
\begin{equation}\label{Imgk1}
|\Im g(0;k)|^{-1} \lesssim \sigma^{-1} |\langle \delta_0,\sqrt{R^*(k)R(k)}\delta_0\rangle|^{-5} |\langle\delta_0, [R^*(k)R(k)]^{\frac38}\delta_0\rangle|^4. 
\end{equation}

We now let $\psi\in C_c^\infty$ satisfy
\begin{equation}\label{Imgk-psi}
\psi(x)=\begin{cases} 1, &\quad |x|\leq \tfrac{1}{2\kappa} \\ 0, & \quad |x|\geq \tfrac{1}{\kappa}.\end{cases}
\end{equation}
By Cauchy--Schwarz,
\begin{align*}
1= |\langle \delta_0,\psi\rangle|^2 & = |\langle[R(k)^*R(k)]^{\frac14}\delta_0,[(H+k^2)^*(H+k^2)]^{\frac14}\psi\rangle|^2 \\ 
& \leq \bigl|\langle\delta_0,\sqrt{R^*(k)R(k)}\delta_0\rangle\bigr|\, \bigl| \langle \psi,|H+k^2|\psi\rangle\bigr|. 
\end{align*}
In particular, 
\[
|\langle\delta_0,\sqrt{R^*(k)R(k)}\delta_0\rangle|^{-1}\leq |\langle\psi,|H+k^2|\psi\rangle|.
\]

We now let $\tilde\psi\in C_c^\infty$ be a bump function with $\tilde \psi \psi=\psi$.  As $H+k^2$ is a local operator and all operators involved are normal, 
\begin{align*}
|\langle \psi,|H+k^2|\psi\rangle| & = |\langle (H+k)^2\psi, \sqrt{R^*(k)R(k)}(H+k^2)\psi\rangle| \\
 & = |\langle (H+k^2)\psi,\tilde\psi\sqrt{R^*(k)R(k)}\tilde\psi(H+k^2)\psi\rangle|   \\
& \lesssim \|(H+k^2)\psi\|_{H_{\kappa}^{-1}}^2 \|\tilde\psi\sqrt{R^*(k)R(k)}\tilde\psi\|_{H_\kappa^{-1}\to H_\kappa^1} \\
& \lesssim\bigl( \|\psi\|_{H_\kappa^1}^2 + \|q\psi\|_{H_\kappa^{-1}}^2\bigr) \|\tilde\psi\sqrt{R^*(k)R(k)}\tilde\psi\|_{H_\kappa^{-1}\to H_\kappa^1} \\
& \lesssim\bigl(\kappa + \|q\psi\|_{H_\kappa^{-1}}^2\bigr) \|\tilde\psi\sqrt{R^*(k)R(k)}\tilde\psi\|_{H_\kappa^{-1}\to H_\kappa^1}. 
\end{align*}
Thus,
\begin{equation}\label{ImgkUB3}
|\langle\delta_0,\sqrt{R^*(k)R(k)}\delta_0\rangle|^{-1} \lesssim \bigl(\kappa+\|q\psi\|_{H_{\kappa}^{-1}}^2\bigr)\|\tilde\psi\sqrt{R^*(k)R(k)}\tilde\psi\|_{H_\kappa^{-1}\to H_\kappa^1}.
\end{equation}
By \eqref{weighted-wn}, 
\[
\E\{\|q\psi\|_{H_{\kappa}^{-1}}^2\bigr\}= \tfrac1{4\kappa}\|\psi\|_{L^2}^2 \lesssim \kappa^{-2},
\]
which (appealing to Lemma~\ref{L3}) then implies
\[
\E\{\|q\psi\|_{H_\kappa^{-1}}^p\}\lesssim_p \kappa^{-p}
\]
for all $1\leq p<\infty$.  Using this together with Lemma~\ref{L:18}, we get
\begin{equation}\label{ImgkUB4}
\E\bigl\{ |\langle \delta_0,\sqrt{R^*(k)R(k)}\delta_0\rangle|^{-p}\} \lesssim_p \kappa^{p} 
\end{equation}
for any $1\leq p<\infty$. 

We turn to the remaining term in \eqref{Imgk1}. We recall the notation from the proof of Proposition~\ref{P:2} and write $k_\tau:=\sqrt{k^2+i\tau}$ for $\tau\in(0,\infty)$.  We begin by observing
\begin{align*}
\langle  \delta_0, [R^*(k)R(k)]^{\frac38}\delta_0\rangle
& = \int \bigl(|\lambda+E|^2+\sigma^2\bigr)^{-\frac34}\,d\mu(\lambda) \\
& \approx \int_0^\infty \int \frac{\tau^{\frac14}}{|\lambda+E|^2+\tau^2+\sigma^2}\,d\mu(\lambda)\,d\tau \\
& \approx \int_0^\infty \int \frac{\tau^{\frac14}}{|\lambda+k^2+i\tau|^2}d\mu(\lambda)\,d\tau \\
& = \biggl\langle \delta_0,\biggl(\int_0^\infty R^*(k_\tau)R(k_\tau)\tau^{\frac14}\,d\tau\biggr)\delta_0\biggr\rangle \\
& =\int_0^\infty \langle\delta_0, R^*(k_\tau)R(k_\tau)\delta_0\rangle\tau^{\frac14}\,d\tau=-\int_0^\infty \tfrac{\tau^{\frac14}}{\sigma+\tau}\Im g(0;k_\tau)\,d\tau,
\end{align*}
where $d\mu$ denotes the spectral measure for $(H, \delta_0)$. Thus, using \eqref{L53-1}, we get
\begin{align}
\bigl[\E\bigl\{|\langle\delta_0,\bigl[R^*(k)R(k)\bigr]^{\frac38}\delta_0\rangle|^p\bigr\}\bigr]^{\frac{1}{p}} 
& \lesssim \int_0^\infty \bigl[\E\bigl\{ |\Im g(0;k_\tau)|^p\bigr\}\bigr]^{\frac{1}{p}}\tfrac{\tau^{\frac14}}{\sigma+\tau}\,d\tau \nonumber \\
& \lesssim \int_0^\infty \tfrac{\tau^{\frac14}}{\sigma+\tau}|k_\tau|^{-1}\,d\tau   \lesssim \sigma^{-\frac14} \label{ImgkUB5}.
\end{align}
Continuing from \eqref{Imgk1} and using \eqref{ImgkUB4} and \eqref{ImgkUB5}, we finally derive 
\begin{align*}
\E\bigl\{ \bigl| \tfrac{1}{g(0;k)} \bigr|^p\bigr\}  &\lesssim \sigma^{-p}\E\bigl\{ \langle \delta_0, \sqrt{R^*(k)R(k)}\delta_0\rangle^{-5p}\langle \delta_0, [R^*(k)R(k)]^{\frac38}\delta_0\rangle^{4p}\bigr\} \\
& \lesssim_p \sigma^{-2p} \kappa^{5p} \lesssim_p \kappa^p
\end{align*}
since $\kappa^2\approx\sigma$, by strict admissiblity.  This completes the proof of \eqref{L53-3}.

Turning to  \eqref{L56-1.5}, we first consider the case $s=1$.  By \eqref{L56-3.5}, Sobolev embedding, and \eqref{L53-3.5}, for $1<p<\infty$ we get
\begin{align*}
\E\bigl\{ \bigl\| \langle x\rangle^{-\beta}\bigl(\tfrac{1}{g(x;k)}\bigr)'\bigr\|_{L^2}^p\bigr\}
&\lesssim \Bigl(\E\bigl\{\bigl\| \langle x\rangle^{-\frac\beta 4}\tfrac{1}{g(x;k)}\bigr\|_{L^{4p}}^{4p} \bigr\} \Bigr)^{\frac12} \Bigl(\E\bigl\{\bigl\| \langle x\rangle^{-\frac\beta 2}g'(x;k)\bigr\|_{L^{\frac{2p}{p-1}}}^{2p} \bigr\} \Bigr)^{\frac12}\\
&\lesssim_{p, \beta} \kappa^{2p} \Bigl(\E\bigl\{\| \langle x\rangle^{-\frac\beta2}[g(x;k)-\tfrac{1}{2k}]\|_{H^{1+\frac1{2p}}}^{2p} \bigr\}\Bigr)^{\frac12}\lesssim_{p, \beta} \kappa,
\end{align*}
provided $\beta>2$.  On the other hand, by \eqref{L53-3} and either Jensen's or H\"older's inequalities,
\begin{align*}
\Bigl( \E\bigl\{ \bigl\| \langle x\rangle^{-\beta}\tfrac{1}{g(x;k)}\bigr\|_{L^2}^p \bigr\} \Bigr)^{\frac1p} \lesssim_{p, \beta} \kappa \quad \text{for all $1\leq p<\infty$ and $\beta>1$.}
\end{align*}
Combining the two bounds above we get
\begin{align}\label{1052}
\E\bigl\{ \bigl\| \langle x\rangle^{-\beta}\tfrac{1}{g(x;k)}\bigr\|_{H^s}^p \bigr\}  \lesssim_p \kappa^p \quad \text{for all $1\leq p<\infty$, $0\leq s\leq 1$, and $\beta>2$.}
\end{align}

To treat $1<s<\frac32$, we observe that by the fractional (and usual) product rule and Sobolev embedding,
\begin{align*}
\bigl\| |\nabla|^{s-1}&\bigl[ \langle x\rangle^{-\beta}\bigl(\tfrac{1}{g(x;k)}\bigr)'\bigr]\bigr\|_{L^2}\\
&\lesssim \bigl\| |\nabla|^{s-1}\bigl[ \langle x\rangle^{-\gamma}\tfrac{1}{g(x;k)}\bigr]\bigr\|_{L^{\frac2{s-1}}}\bigl\|\langle x\rangle^{-\gamma}\tfrac{1}{g(x;k)}\bigr\|_{L^{\frac2{s-1}}}
\bigl\| \langle x\rangle^{-\beta+2\gamma}g'(x;k)\bigr\|_{L^{\frac2{3-2s}}}\\
&\quad+ \bigl\|\langle x\rangle^{-\gamma}\tfrac{1}{g(x;k)}\bigr\|_{L^\infty}^2\bigl\| |\nabla|^{s-1}\bigl[ \langle x\rangle^{-\beta+2\gamma}g'(x;k)\bigr]\bigr\|_{L^2}\\
&\lesssim \bigl\|\langle x\rangle^{-\gamma}\tfrac{1}{g(x;k)}\bigr\|_{H^1}^2\Bigl\{\bigl\| \langle x\rangle^{-\beta+2\gamma}g(x;k)\bigr\|_{H^s} + \bigl\| \langle x\rangle^{-\beta+2\gamma-1}g(x;k)\bigr\|_{H^{s-1}}\bigr\}.
\end{align*}
The result now follows from \eqref{L53-3.5} and \eqref{1052} by taking $\gamma>2$ and $\beta-2\gamma>1$.
\end{proof}

\section{Invariance of white noise for the \texorpdfstring{$\H_k$}{Hk} flow on the line }\label{S:L}

Our main objective in this section is to `send $L_0\to\infty$' and deduce invariance of white noise under the $\H_k$ flow \eqref{H_k flow q} on the entire real line.  The precise meaning of a solution in this context will be given in Definition~\ref{D:good} below. First, however, we must settle some notational conventions.

Starting in this section (and continuing until the end of the paper), we work only with the fully revealed potential, which corresponds to setting $L=L_0$ in \eqref{qL}. 

Throughout this section, we freeze the parameter $k$:
\begin{equation}\label{arg k}
k = \kappa e^{i\pi/8}
\end{equation}
with $\kappa$ large enough so that $k$ is strictly admissible.   Like the choice of opening angle in the definition of strict admissibility, this is essentially arbitrary; we prefer concrete numbers purely for expository reasons.  One caveat on this generality (and rationale for choosing specificity) is that in order to prove Lemma~\ref{L:gL dif} below in the requisite generality, we do need $k$ to lie in a slightly narrower sector than that used in the preceding sections.    

\begin{definition}\label{D:good} We say that $q:\R\to\mathcal S'(\R)$ is a \emph{global good solution} to the $\H_k$ flow \eqref{H_k flow q} if for every $T>0$ there exist $0<\alpha<\frac14$ and $C\geq 1$ such that
\begin{gather}
\sup_{t\in[-T,T]}\|e^{-\langle x\rangle^\alpha}q(t)\|_{H_\kappa^{-1}} <\infty,\label{good-solution0}\\
\int_{-T}^T\|e^{\langle x\rangle^\alpha}R(k)e^{-2\langle x\rangle^\alpha} \|_{H_\kappa^{-1}\to H_\kappa^1}^4\,dt <\infty, \label{good-solution}\\
\sup_{L \in 2^{\mathbb{N}}}  [\log L]^{-C} \int_{-T}^T\|e^{2\langle x\rangle^\alpha} \varphi_LR(k) e^{-2\langle x\rangle^\alpha}\|_{H_{\kappa}^{-1}\to H_\kappa^1}^2\, dt < \infty, \label{good-solution'}
\end{gather}
and
\begin{equation}\label{weak-soln-Hk}
\langle \varphi,q(t) \rangle = \bigl\langle\varphi, e^{4\Re k^2 t\partial} q(0)+\int_0^t e^{4\Re k^2(t-s)\partial} \Re[16 k^5 g'(q(s))]\,ds\bigr\rangle
\end{equation}
for all $\varphi\in C_c^\infty(\R)$ and all $t\in[-T,T]$.  Here $R(k)$ is the resolvent associated to the potential $q$ with $L_0=\infty$ and $\varphi_L$ denotes a smooth cutoff as in \eqref{phiL}.
\end{definition}

This notion of good solution is motivated by the fact that if $q(t)$ is white noise distributed at each time $t\in[-T,T]$, then $q$ satisfies the bounds in \eqref{good-solution} and \eqref{good-solution'} not only almost surely, but even in expectation. Indeed, this is a consequence of Corollary~\ref{C:9}. 

The main results of this section are Propositions~\ref{P:666} and~\ref{P:uniq}.  Taken together they yield

\begin{theorem}\label{T:invariance-Hk-line}For almost every sample $q^0$ of white noise on the real line, there exists a unique global good solution $q$ to the $\H_k$ flow \eqref{H_k flow q} on the real line with initial data $q^0$. As a random variable, $q(t)$ is white noise distributed at each $t\in\R$. 
\end{theorem}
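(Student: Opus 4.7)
The plan is to construct $q(t)$ as the almost-sure limit of $\H_k$ flows on tori of increasing circumference, following the coupling laid out in \eqref{qL}. Given a sample $q^0$ of white noise on $\R$, for each dyadic $L_0$ form $q^0_{L_0}$ by restricting $q^0$ to $[-L_0,L_0]$ and periodizing; Proposition~\ref{P:H kappa} produces a unique global torus solution $q_{L_0}(t)$ of the $\H_k$ flow with initial datum $q^0_{L_0}$, and Theorem~\ref{T:Hktorus} guarantees that $q_{L_0}(t)$ is periodized white noise at every time. The heart of the argument is to show that $q_{L_0}(t)\to q(t)$ almost surely in a weighted $H^{-1}$ norm, uniformly on every compact time interval; once this is in place, invariance and the characterization as a good solution follow immediately.

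The first ingredient is uniform-in-$L_0$ control on the finite-volume solutions. Applying the Duhamel formula \eqref{Duhamel} and a Gr\"onwall argument to an exponentially weighted $H^{-1}$ norm exploits the effective finite speed of propagation of the $\H_k$ flow (of order $|k|^2$) to yield, for each $T>0$ and $p<\infty$,
\[
\sup_{L_0 \in 2^{\mathbb N}} \,\sup_{|t| \leq T} \E\bigl\{\|e^{-\langle x\rangle^\alpha} q_{L_0}(t)\|_{H^{-1}_\kappa}^p\bigr\} < \infty,
\]
in the spirit of Lemmas~\ref{T:L0-to-infinity} and~\ref{L:66}. Since $q_{L_0}(t)$ is periodized white noise at each $t$, the multiscale resolvent bounds of Section~\ref{S:multiscale} --- notably Propositions~\ref{P:8} and~\ref{P:9'} and Corollary~\ref{C:9} --- supply $L_0$-uniform expectation bounds for the integrals in \eqref{good-solution} and \eqref{good-solution'} as well.

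The crux is convergence of the dynamics, which reduces to showing that the diagonal Green's functions $g_{L_0}(t):=g(\,\cdot\,;q_{L_0}(t),k)$ form a Cauchy sequence. Pointwise in time, the resolvent identity together with \eqref{E:dgdq} expresses $g_{L_0}(t)-g_{L_0'}(t)$ via the difference $q_{L_0}(t)-q_{L_0'}(t)$ sandwiched between two resolvents; the exponential off-diagonal decay from Propositions~\ref{P:4} and~\ref{P:8} lets us profit from the fact that $q_{L_0}(0)$ and $q_{L_0'}(0)$ agree on $[-\min(L_0,L_0'),\min(L_0,L_0')]$, while the effective finite propagation speed confines their temporal discrepancy to a region growing only linearly in $t$. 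This yields convergence in probability at each fixed time, as in Lemma~\ref{L:6.4?}. The pointwise-in-time convergence is then promoted to uniform-in-time $L^p(d\PP)$ convergence on $[-T,T]$ via the Kolmogorov-type equicontinuity criterion of Lemma~\ref{L:KCT'}, whose hypotheses are verified by applying Lemma~\ref{L:KCT} to $g_{L_0}(\,\cdot\,)$ and estimating time-increments through \eqref{H_k flow q}.

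Having built the limit $q(t)$, passing the integral equation \eqref{weak-soln-Hk} to the limit against Schwartz test functions shows that $q$ satisfies the Duhamel identity, and the bounds \eqref{good-solution0}--\eqref{good-solution'} descend from the uniform-in-$L_0$ estimates by Fatou; hence $q$ is a good solution. Since each $q_{L_0}(t)$ is white noise distributed and $\langle\varphi,q_{L_0}(t)\rangle\to\langle\varphi,q(t)\rangle$ in probability for every Schwartz $\varphi$, dominated convergence applied to $e^{i\langle\varphi,q_{L_0}(t)\rangle}$ combined with \eqref{Minlos defn} identifies $q(t)$ as white noise distributed. Uniqueness of good solutions (Proposition~\ref{P:uniq}) then furnishes the remaining half of the theorem and permits choosing the exceptional null set to be invariant under the flow. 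The principal obstacle will be bridging the gap between pointwise-in-time Lipschitz estimates for $q\mapsto g(q,k)$, which are only available through the multiscale bounds \emph{in expectation}, and the uniform-in-time convergence required by Definition~\ref{D:good}; this is precisely where the equicontinuity machinery of Lemmas~\ref{L:KCT} and~\ref{L:KCT'} becomes indispensable, and why the effective finite speed of propagation must be deployed carefully in coupling successive tori.
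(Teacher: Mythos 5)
Your proposal follows the paper's own route in all essentials: coupling the torus flows via periodization of the white noise sample, proving almost-sure convergence of $q_L(t)$ with a Gronwall argument on an exponentially weighted $H^{-1}$ norm (Lemma~\ref{T:L0-to-infinity}, Lemma~\ref{L:66}), upgrading convergence of the diagonal Green's functions via the Kolmogorov-type machinery of Lemmas~\ref{L:KCT}, \ref{L:KCT'}, \ref{L:6.4?}, and~\ref{L:gL dif}, and invoking Proposition~\ref{P:uniq} for uniqueness. The only inaccuracies are cosmetic: the ``finite speed of propagation'' is effective (the weighted norm grows at a controlled rate, it does not literally confine the discrepancy to a linearly growing region, since the flow is nonlocal), and verifying the Duhamel identity \eqref{weak-soln-Hk} is not ``immediate'' from the $q_L\to q$ convergence but rather requires precisely the Green's function convergence you then go on to describe.
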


The remainder of this section is dedicated to the proof of this theorem, as well as certain additional conclusions that will be needed in the next section.

Given white noise $q^0$ on the line and $L\in 2^{\mathbb{N}}$, we define
\begin{equation}\label{qL0-S6}
q_L^0(x) = \sum_{n\in\mathbb{Z}} \bigl[1_{[-L,L]}q^0\bigr](x-2nL),
\end{equation}
which is $2L$-periodic white noise.  Using Proposition~\ref{P:H kappa}, we define $q_L(t)$ to be the global $2L$-periodic solution to the $\H_k$ flow \eqref{H_k flow q} with initial data $q_L^0$.  Theorem~\ref{T:Hktorus} yields that as a random variable, $q_L(t)$ is $2L$-periodic white noise for all $t\in\R$.  In particular, $q_L(t)$ obeys the bounds \eqref{good-solution} and \eqref{good-solution'} almost surely.  Below we will also show that $q_L$ satisfies \eqref{good-solution0}, so that $q_L$ is a global good solution to \eqref{H_k flow q}. 

We will prove that almost surely, the solutions $q_L(t)$ converge to a limit $q(t)$ for all $t\in\R$ in a suitable norm as $L\to\infty$; see Lemma~\ref{T:L0-to-infinity} below.  We will then show that $q(t)$ is white noise distributed for each $t$ and that $q$ is a global good solution to the $\H_k$ flow \eqref{H_k flow q} on the line; see Proposition~\ref{P:666}.  
Finally, we will demonstrate the uniqueness of good solutions to the $\H_k$ flow in Proposition~\ref{P:uniq}; this implies, in particular, that the solution $q(t)$ is independent of the particular sequence used to construct it.

We begin with the following convergence result.

\begin{lemma}\label{T:L0-to-infinity} Fix $T>0$, $\alpha\in(0,\tfrac14)$, and $\delta>0$. Then almost surely,
\begin{equation}\label{LConvergence1}
\sum_{L\in 2^{\mathbb{N}}} L^{2\delta} \sup_{t\in [-T,T]} \|e^{-\langle x\rangle^\alpha}[q_{2L}(t) - q_{L}(t)]\|_{H_{\kappa}^{-1}}^2<\infty.
\end{equation}
\end{lemma}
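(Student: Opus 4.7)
The plan is to establish the quantitative moment bound
\[
\E \sup_{|t|\leq T} \|e^{-\langle x\rangle^\alpha}[q_{2L}(t) - q_L(t)]\|_{H_\kappa^{-1}}^2 \lesssim_{T,\delta,\alpha} L^{D} \cdot e^{-cL^\alpha/2}
\]
for some $c, D>0$; once this is in hand, \eqref{LConvergence1} follows at once by Monotone Convergence, since the dyadic sum of $L^{2\delta+D} e^{-cL^\alpha/2}$ is finite. Throughout, write $w(t) := q_{2L}(t) - q_L(t)$ and $\phi := e^{-\langle x\rangle^\alpha}$.

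\textbf{Initial data estimate.} By construction $w(0) \equiv 0$ on $[-L,L]$: both $q_{2L}^0$ and $q_L^0$ agree with $q^0$ there. A direct covariance calculation using Remark~\ref{R:cov} and the periodicities yields $\E[w(0)(x)w(0)(y)] = 2\delta(x-y)\cdot 1_A(x)$, where $A$ is a $4L$-periodic set disjoint from $[-L,L]$. Arguing as in \eqref{weighted-wn},
\[
\E \|\phi w(0)\|_{H_\kappa^{-1}}^2 \lesssim \kappa^{-1}\int_{|x|\geq L} e^{-2\langle x\rangle^\alpha}\, dx \lesssim e^{-c_0 L^\alpha}
\]
for some $c_0>0$. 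Since $\|\phi w(0)\|_{H_\kappa^{-1}}^2$ is a positive quadratic form in Gaussians, Lemma~\ref{L3} promotes this to $\E\|\phi w(0)\|_{H_\kappa^{-1}}^{2p}\lesssim_p e^{-c_0 p L^\alpha}$ for all $p\geq 1$.

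\textbf{Gronwall-type a priori estimate.} Subtracting the two $\H_k$ evolution equations and using the resolvent identity,
\[
\partial_t w = 4\Re(k^2) w' + 16\Re\{k^5(g_{2L}-g_L)'\}, \qquad g_{2L}(x;k)-g_L(x;k) = -\langle \delta_x, R_{2L}(k)\, w\, R_L(k)\delta_x\rangle,
\]
so the Green's function difference is bilinear in $w$. Differentiating $\|\phi w(t)\|_{H_\kappa^{-1}}^2$ in time, the transport term integrates by parts: its leading antisymmetric piece vanishes under $\Re$, and since $\|\phi'/\phi\|_{L^\infty}\leq \alpha < \tfrac14$, the commutator remainder grows at rate $\lesssim |k|^2$. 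For the Green's function term, the plan is to insert the conjugating weights $e^{\pm 2\langle x\rangle^\alpha}$ into each resolvent in the bilinear representation so that one can extract exactly the factor $\|\phi w\|_{H_\kappa^{-1}}^2$, yielding a bound
\[
|\langle \phi w,\, \phi k^5 (g_{2L}-g_L)'\rangle_{H_\kappa^{-1}}| \lesssim |k|^5 F_{2L}(t) F_L(t) \|\phi w\|_{H_\kappa^{-1}}^2, \quad F_L(t):=\|e^{2\langle x\rangle^\alpha}\varphi_L R_L(k) e^{-2\langle x\rangle^\alpha}\|_{H_\kappa^{-1}\to H_\kappa^1}
\]
with $\varphi_L$ as in \eqref{phiL}. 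Gronwall's inequality then yields
\[
\sup_{|t|\leq T}\|\phi w(t)\|_{H_\kappa^{-1}}^2 \leq \|\phi w(0)\|_{H_\kappa^{-1}}^2 \exp\Bigl\{CT|k|^2 + C|k|^5\!\!\int_{-T}^T F_{2L}(t)F_L(t)\, dt\Bigr\}.
\]

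\textbf{Moment bounds and conclusion.} Since $q_L(t)$ and $q_{2L}(t)$ are white noise distributed at every fixed $t$ by Theorem~\ref{T:Hktorus}, estimate~\eqref{E:9-4} of Corollary~\ref{C:9} gives $\E F_L(t)^p \lesssim_p [\log L]^{Cp}$ pointwise in $t$. Jensen's inequality in $t$ together with Cauchy--Schwarz in $\omega$ then produces $\E\bigl(\int_{-T}^T F_{2L}F_L\, dt\bigr)^n \lesssim_{n,T} [\log L]^{C'n}$. Taylor-expanding the exponential and summing gives $\E \exp\bigl(C_1\int F_{2L}F_L\, dt\bigr)\lesssim L^{D}$ for some $D>0$. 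One final Cauchy--Schwarz combined with the initial-data estimate yields the promised bound $L^{D}\cdot e^{-c_0 L^\alpha/2}$, completing the proof.

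\textbf{Main obstacle.} The technical core is the weighted Gronwall step: the representation $g_{2L}-g_L = -\langle\delta_x, R_{2L}w R_L\delta_x\rangle$ must be manipulated so that precisely $\|\phi w\|_{H_\kappa^{-1}}^2$ is extracted, with the leftover resolvent factors amenable to the polylogarithmic moment bounds of Corollary~\ref{C:9}. A looser insertion of weights would forfeit either the exponential decay in $L^\alpha$ inherited from the initial data (wrecking summability) or the volume-independence afforded by the multiscale analysis (wrecking the moment bounds).
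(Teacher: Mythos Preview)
Your overall strategy---Gronwall on the weighted $H^{-1}$ difference, then control of the exponentiated Gronwall rate---mirrors the paper's, but the key step where you assert
\[
\E\exp\Bigl(C_1\!\int_{-T}^T F_{2L}(t)F_L(t)\,dt\Bigr)\lesssim L^{D}
\]
does not follow from the available moment bounds. The estimates of Proposition~\ref{P:8} and Corollary~\ref{C:9} read $\E F_L(t)^p\lesssim_p[\log L]^{Cp}$, but the implicit constant in $\lesssim_p$ grows at least like $p!$; this is already visible for the bare resolvent norm, whose tail is merely exponential by Corollary~\ref{C:2}. Consequently $\E(F_{2L}F_L)^n$ behaves like $(n!)^2[\log L]^{C'n}$, and the Taylor series for the exponential diverges term by term: a product of two random variables with exponential tails has only stretched-exponential tail, hence no finite exponential moment. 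No rearrangement of the H\"older/Cauchy--Schwarz steps repairs this, and so the moment bound you target may simply be false.

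The paper sidesteps this by never taking the expectation of the Gronwall exponential. It writes the Gronwall rate as
\[
X_L(t)\approx\kappa^2+\kappa^3\|R_{2L}(k)\|_{H_\kappa^{-1}\to H_\kappa^1}\,\bigl\|e^{\langle x\rangle^\alpha}R_L(k)\varphi_Le^{-\langle x\rangle^\alpha}\bigr\|_{H_\kappa^{-1}\to H_\kappa^1}
\]
(only one resolvent carries the weight conjugation, which streamlines the insertion via the $4L$-periodicity of $g_{2L}-g_L$), and then shows that the two \emph{almost sure} quantities
\[
\sup_{L}e^{L^\alpha/2}\,d_L(0)\qquad\text{and}\qquad \sup_{L}\,[\log L]^{-5}\!\int_{-T}^T X_L(t)\,dt
\]
are finite, simply because the corresponding sums over $L\in2^{\mathbb N}$ have finite expectation (the first from $\E d_L(0)\lesssim e^{-L^\alpha}$; the second from $\E\int X_L\lesssim[\log L]^3$ and $\sum_{L\in2^{\mathbb N}}[\log L]^{-2}<\infty$). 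Feeding these back into Gronwall gives
\[
\sup_{|t|\leq T}d_L(t)\leq d_L(0)\exp\Bigl(\int_{-T}^T X_L\Bigr)\leq M_1\,e^{-L^\alpha/2}\exp\bigl(M_2[\log L]^5\bigr)
\]
with $M_1,M_2$ almost surely finite and independent of $L$. This is summable against $L^{2\delta}$ because $e^{-L^\alpha/2}$ eventually dominates $L^{2\delta}e^{M_2[\log L]^5}$. Thus the paper establishes only almost-sure finiteness of~\eqref{LConvergence1}, not a first-moment bound of the form you aim for.
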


\begin{proof} For $L\in 2^{\mathbb{N}}$ and $t\in [-T,T]$, we define 
\[
d_{L}(t) = \| e^{-\langle x\rangle^\alpha}[q_{2L}(t)-q_{L}(t)]\|_{H_{\kappa}^{-1}}^2. 
\]
As $q_{2L}^0-q_L^0=0$ on $[-L,L]$, using \eqref{weighted-wn} we obtain
\begin{align}\label{dL0-2}
\E\{d_{L}(0)\} & \lesssim \E\bigl\{\|e^ {-\langle x\rangle^\alpha} 1_{|x|\geq L} q_{2L}^0\|^2_{H^{-1}_\kappa} +\|e^ {-\langle x\rangle^\alpha} 1_{|x|\geq L} q_{L}^0\|^2_{H^{-1}_\kappa} \bigr\}\lesssim e^{-L^\alpha}.
\end{align}

In what follows, we will show
\begin{equation}\label{dL0-1}
\sup_{t\in[-T,T]} d_{L}(t) \leq d_{L}(0)\exp\biggl\{\int_{-T}^T X_L(t)\,dt\biggr\},
\end{equation}
where $X_L(t)\geq 0$ is a quantity that satisfies
\begin{equation}\label{dL0*}
X_L(t) \approx \kappa^2+\kappa^3\|R_{2L}(k)\|_{H_{\kappa}^{-1}\to H_{\kappa}^1}\| e^{\langle x\rangle^\alpha}R_{L}(k)\varphi_Le^{-\langle x\rangle^\alpha}\|_{H_{\kappa}^{-1}\to H_{\kappa}^1}  
\end{equation}
uniformly for $t\in[-T,T]$.  By \eqref{RLk-itself} and Proposition~\ref{P:8}, for any $1\leq p<\infty$,
\begin{equation}\label{dL0-3}
\E\bigl\{|X_L(t)|^p\bigr\}  \lesssim_p \kappa^{2p}+\kappa^{3p}[\log L]^{3p}\qtq{uniformly for}t\in[-T,T]. 
\end{equation}

Assuming \eqref{dL0-1} and \eqref{dL0-3} for the moment, let us show that \eqref{LConvergence1} holds.   In view of \eqref{dL0-2} and \eqref{dL0-3}, we have that both
\begin{align*}
\sup_{L\in 2^{\mathbb N}} e^{L^\alpha/2} d_L(0) \qtq{and}   \sup_{L\in 2^{\mathbb N}} \tfrac{1}{[\log L]^{5}} \!\int_{-T}^T X_L(t)\,dt 
\end{align*}
are almost surely finite (indeed, the \emph{sum} in $L$ has finite expectation).   Building on this, \eqref{LConvergence1} follows from \eqref{dL0-1}.

It remains to prove \eqref{dL0-1} and \eqref{dL0*}.   We start with \eqref{dL0-1}, which we will prove using the equation \eqref{H_k flow q} and Gronwall's inequality.  In particular, we will establish the bound 
\[
|\partial_t d_L(t)| \leq X_L(t)d_L(t)
\]
for a nonnegative quantity $X_L(t)$ obeying \eqref{dL0*}. 

To this end, we write $g_L(k)$ and $g_{2L}(k)$ for the diagonal Green's functions associated to $q_L$ and $q_{2L}$.  Using \eqref{H_k flow q} and the identity
\begin{equation}\label{d-pr}
(e^{-\langle x\rangle^\alpha}f)' = e^{-\langle x\rangle^\alpha}f' - \tfrac{\alpha x}{\langle x\rangle^{2-\alpha}} e^{-\langle x\rangle^\alpha} f,
\end{equation}
we compute
\begin{align}
\partial_t d_L(t) & = 32\Re\bigl\{ k^5\bigl\langle e^{-\langle x\rangle^{\alpha}}[q_{2L}-q_L],e^{-\langle x\rangle^{\alpha}}[g_{2L}(k)-g_L(k)]'\bigr\rangle_{H_{\kappa}^{-1}}\bigr\} \label{dprime1}\\
& \quad +8\alpha\Re\bigl\{ k^2\bigl\langle e^{-\langle x\rangle^\alpha}[q_{2L}-q_L],\tfrac{x}{\langle x\rangle^{2-\alpha}}e^{-\langle x\rangle^{\alpha}}[q_{2L}-q_L]\bigr\rangle_{H_{\kappa}^{-1}}\bigr\}. \label{dprime22}
\end{align}

Applying H\"older in \eqref{dprime1} and using that multiplication by $x\langle x\rangle^{\alpha-2}$ is bounded on $H_\kappa^{-1}$ to handle \eqref{dprime22}, we get 
\[
|\partial_t d_L(t)| \lesssim \kappa^2 d_L(t) + \kappa^5 \|e^{-\langle x\rangle^{\alpha}}[g_{2L}(k)-g_L(k)]\|_{L^2}\,d_L(t)^{\frac12}.
\]
In this way, the proof of \eqref{dL0-1} and thence also \eqref{LConvergence1} will be complete once we show 
\begin{align}\label{dL0-41}
 \|&e^{-\langle x\rangle^{\alpha}}[g_{2L}(k)-g_L(k)]\|_{H^1_\kappa}\\
 &\lesssim \kappa^{-1}\|R_{2L}(k)\|_{H_{\kappa}^{-1}\to H_{\kappa}^1}\| e^{\langle x\rangle^\alpha}R_{L}(k)\varphi_Le^{-\langle x\rangle^\alpha}\|_{H_{\kappa}^{-1}\to H_{\kappa}^1} \|e^{-\langle x\rangle^{\alpha}}[q_{2L}-q_L]\|_{H^{-1}_\kappa}\notag.
\end{align}

As both $g_{2L}$ and $g_L$ are $4L$-periodic,
$$
\|e^{-\langle x\rangle^{\alpha}}[g_{2L}(k)-g_L(k)]\|_{H^1_\kappa} \lesssim \|e^{-\langle x\rangle^{\alpha}}\varphi_L[g_{2L}(k)-g_L(k)]\|_{H^1_\kappa},
$$
where $\varphi_L$ is as in \eqref{phiL}.  We estimate this norm by duality.  For $f\in H^{-1}_\kappa$, we use the resolvent identity
\[
R_{2L}-R_{L} = -R_{2L}(q_{2L}-q_L)R_L
\]
to write
\[
\langle e^{-\langle x\rangle^\alpha}\varphi_L[g_{2L}-g_L], f\rangle = -\tr\{ R_{2L}(q_{2L}-q_L)R_L\varphi_L e^{-\langle x\rangle^\alpha}f\}. 
\]
Cycling the trace and using Lemma~\ref{I2H1}, we  get
\begin{align*}
\bigl|&\tr\bigl\{ R_{2L}(k)(q_{2L}-q_L)R_{L}(k)\varphi_L e^{-\langle x\rangle^\alpha}f\bigr\}\bigr| \\
 & \lesssim \| R_{2L}(k)\|_{H^{-1}_\kappa\to H^1_\kappa} \| \sqrt{R_0(\kappa)}(q_{2L}-q_L)e^{-\langle x\rangle^\alpha}\sqrt{R_0(\kappa)}\|_{\mathfrak{I}_2} \\
& \quad\quad \times \|e^{\langle x\rangle^\alpha} R_{L}(k)\varphi_L e^{-\langle x\rangle^\alpha}\|_{H^{-1}_\kappa\to H^1_\kappa}\|\sqrt{R_0(\kappa)}f\sqrt{R_0(\kappa)}\|_{\mathfrak{I}_2} \\
& \lesssim \kappa^{-1}\|R_{2L}(k)\|_{H_{\kappa}^{-1}\to H_{\kappa}^1} \|e^{-\langle x\rangle^\alpha}(q_{2L}-q_L)\|_{H_\kappa^{-1}}   \|e^{\langle x\rangle^\alpha}R_L(k)\varphi_L e^{-\langle x\rangle^\alpha}\|_{H_{\kappa}^{-1}\to H_{\kappa}^1}\|f\|_{H_\kappa^{-1}}.
\end{align*}
Taking the supremum over unit vectors $f\in H^{-1}_\kappa$, we conclude that \eqref{dL0-41} holds.
\end{proof}

Our next result shows convergence in probability for the diagonal Green's function at each time.  This is a crucial stepping stone to the stronger convergence we will demonstrate in Proposition~\ref{P:666} below.

\begin{lemma}\label{L:6.4?}
Given $T>0$, $\alpha\in (0,\frac14)$, $\eps>0$, and $\vk$ strictly admissible,
\begin{align*}
\lim_{L\to\infty}\ \sup_{|t|\leq T}
	\PP\biggl\{\sup_{L'\in 2^\mathbb{N}L} \bigl\| e^{-\langle x\rangle^\alpha} [g_{L'}(t;\vk)-g_L(t;\vk)\bigr\|_{H^1} > \eps \biggr\} = 0,
\end{align*}
where $g_L(t;\vk)=g_L(x;q_L(t),\vk)$ is the diagonal Green's function associated to $q_L(t)$. 
\end{lemma}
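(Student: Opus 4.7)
The plan is to decompose the difference dyadically and exploit the static estimate from the proof of Lemma~\ref{T:L0-to-infinity}. Writing any $L'=2^n L$ with $n\geq 1$ and telescoping,
\[
g_{L'}(t;\vk) - g_L(t;\vk) = \sum_{\substack{\ell\in 2^{\mathbb{N}_0}\\ L\leq \ell < L'}} \bigl[g_{2\ell}(t;\vk) - g_\ell(t;\vk)\bigr],
\]
and for each dyadic $\ell$ I would repeat the resolvent-identity/duality computation underlying \eqref{dL0-41} (which is insensitive to the precise choice of strictly admissible parameter and works just as well with $\vk$ in place of $k$) to obtain
\[
\bigl\|e^{-\langle x\rangle^\alpha}[g_{2\ell}(t;\vk) - g_\ell(t;\vk)]\bigr\|_{H^1} \lesssim A_\ell(t)\, B_\ell(t)\, C_\ell(t),
\]
with $A_\ell(t):=\|R_{2\ell}(\vk)\|_{H^{-1}\to H^1}$, $B_\ell(t):=\|e^{\langle x\rangle^\alpha} R_\ell(\vk)\varphi_\ell e^{-\langle x\rangle^\alpha}\|_{H^{-1}\to H^1}$ (the resolvents being built from $q_{2\ell}(t)$ and $q_\ell(t)$ respectively), and $C_\ell(t):=\|e^{-\langle x\rangle^\alpha}[q_{2\ell}(t)-q_\ell(t)]\|_{H^{-1}}$.

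Next, I would fix $\delta>0$ slightly smaller than the $\delta$ appearing in Lemma~\ref{T:L0-to-infinity} and apply Cauchy--Schwarz in $\ell$:
\[
\sup_{L'\in 2^{\mathbb N} L}\bigl\|e^{-\langle x\rangle^\alpha}[g_{L'}(t;\vk)-g_L(t;\vk)]\bigr\|_{H^1} \lesssim \Bigl(\sum_{\ell\geq L}\ell^{-2\delta} A_\ell(t)^2 B_\ell(t)^2\Bigr)^{1/2}\Bigl(\sum_{\ell\geq L}\ell^{2\delta} C_\ell(t)^2\Bigr)^{1/2}.
\]
For the first factor I would use that, by Theorem~\ref{T:Hktorus}, $q_\ell(t)$ and $q_{2\ell}(t)$ are periodic white noise at every time $t\in[-T,T]$; thus Corollary~\ref{C:2} and Proposition~\ref{P:8} yield $\mathbb{E}\{A_\ell(t)^4\}\lesssim[\log\ell]^4$ and $\mathbb{E}\{B_\ell(t)^4\}\lesssim[\log\ell]^8$ uniformly in $t$. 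By H\"older and summation,
\[
\mathbb{E}\Bigl\{\sum_{\ell\geq L}\ell^{-2\delta}A_\ell(t)^2 B_\ell(t)^2\Bigr\}\lesssim \sum_{\ell\geq L}\ell^{-2\delta}[\log\ell]^6\longrightarrow 0 \quad\text{as }L\to\infty,
\]
uniformly in $t$, which via Markov's inequality delivers uniform-in-$t$ convergence in probability to zero for this factor.

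The hard part is the second factor: the joint law of $(q_\ell(t),q_{2\ell}(t))$ is not preserved in time, so $C_\ell(t)$ cannot be estimated by the Gaussian calculations of Section~\ref{S:singlescale} directly. This is precisely the difficulty that the Gronwall argument already performed inside the proof of Lemma~\ref{T:L0-to-infinity} circumvents, yielding the \emph{time-independent} dominating bound
\[
\sup_{t\in[-T,T]}C_\ell(t)^2 \leq d_\ell(0)\exp\Bigl(\int_{-T}^T X_\ell(s)\,ds\Bigr).
\]
Consequently $\sup_{t\in[-T,T]}\sum_{\ell\geq L}\ell^{2\delta}C_\ell(t)^2$ is dominated by the tail of the almost-surely convergent series in \eqref{LConvergence1}, and therefore tends to $0$ almost surely as $L\to\infty$. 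Combining the two one-sided estimates yields the claimed uniform-in-$t$ convergence in probability.
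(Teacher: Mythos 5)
Your proof is correct and follows essentially the same route as the paper's: bound each dyadic increment $g_{2\ell}-g_\ell$ exactly as in \eqref{dL0-41} with $\vk$ in place of $k$, split the telescoping sum by Cauchy--Schwarz with a $\pm\delta$ power of $\ell$, control the resolvent factor in expectation (uniformly in $t$) using the white-noise invariance from Theorem~\ref{T:Hktorus} together with \eqref{RLk-itself} and Proposition~\ref{P:8}, and control the $q$-difference factor almost surely via the Gronwall-type bound of Lemma~\ref{T:L0-to-infinity}. The only difference is cosmetic — the paper extends the resolvent sum to all dyadic scales and places the decay $L^{-\delta}$ as a prefactor in $Y_L$, whereas you keep both sums restricted to $\ell\geq L$ — and your final combination of "bounded in $L^1(d\PP)$" with "almost sure tail decay" is exactly the elementary $\PP\{XY>\eta\}\leq\PP\{X>\eta^{-1}\}+\PP\{Y>\eta^2\}$ argument the paper spells out explicitly.
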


\begin{proof}
Recall from \eqref{dL0-41} that
\begin{align*}
\|&e^{-\langle x\rangle^{\alpha}}[g_{2L}(t;\vk)-g_L(t;\vk)]\|_{H^1}\\
 &\ \lesssim \|R_{2L}(\vk)\|_{H_{|\vk|}^{-1}\to H_{|\vk|}^1}\| e^{\langle x\rangle^\alpha}R_{L}(\vk)\varphi_Le^{-\langle x\rangle^\alpha}\|_{H_{|\vk|}^{-1}\to H_{|\vk|}^1} \|e^{-\langle x\rangle^{\alpha}}[q_{2L}-q_L]\|_{H^{-1}_{|\vk|}}.
\end{align*}
Thus, letting $\delta>0$ and defining
\begin{align*}
X(t) = \sum_{M\in 2^\mathbb{N}} M^{-\delta} \|R_{2M}(\vk)\|_{H_{|\vk|}^{-1}\to H_{|\vk|}^1}^2
	\| e^{\langle x\rangle^\alpha}R_{M}(\vk)\varphi_M e^{-\langle x\rangle^\alpha}\|_{H_{|\vk|}^{-1}\to H_{|\vk|}^1}^2
\end{align*}
and
\begin{align*}
Y_L(t) = L^{-\delta} \sum_{M\in 2^\mathbb{N}} M^{2\delta} \|e^{-\langle x\rangle^{\alpha}}[q_{2M}-q_M]\|_{H^{-1}_{|\vk|}}^2,
\end{align*}
we get
\begin{align*}
\sup_{L'\in 2^\mathbb{N}L} \|e^{-\langle x\rangle^{\alpha}}[g_{L'}(t;\vk)-g_L(t;\vk)]\|_{H^1}^2 \lesssim X(t) Y_L(t).
\end{align*}

Using Lemma~\ref{T:L0-to-infinity} and the fact that $H^1_\kappa$ and $H^1_{|\vk|}$ have comparable norms, we get
$$
\sup_{|t|\leq T} Y_L(t) \to 0\quad\text{as}\quad L\to\infty
$$
almost surely and so also in probability.  On the other hand, by \eqref{RLk-itself} and Proposition~\ref{P:8},
$$
\sup_{|t|\leq T} \E\bigl\{ X(t) \} < \infty .
$$

Noting that for every $\eta>0$,
$$
\PP\{ X(t) Y_L(t) > \eta \} \leq \PP\{ X(t) > \eta^{-1} \} + \PP\{ Y_L(t) > \eta^2 \},
$$
this gives
$$
\sup_{|t|\leq T} \PP\{ X(t) Y_L(t) > \eta \} \leq \eta \sup_{|t|\leq T}  \E\{ X(t) \} + \PP\{ \sup_{|t|\leq T} Y_L(t) > \eta^2 \},
$$
which proves the lemma.
\end{proof}

To continue, for fixed $T>0$, the estimate \eqref{LConvergence1} implies that almost surely, there exists a limit $q$ on $[-T,T]$ satisfying
\[
e^{-\langle x\rangle^\alpha}q \in C_t ([-T,T]; H_\kappa^{-1}). 
\]
Taking a sequence $T_n\to\infty$, we can find a full probability event on which $q_L(t)$ converges to $q(t)$  for all $t\in\R$.  We wish to prove that as a random variable, $q(t)$ is white noise distributed for all $t\in\R$.  Due to the extremely rapid decay of the weight (which is essential for proving Lemma~\ref{T:L0-to-infinity}), we do not yet know enough about $q(t)$ to even make sense of it as a tempered distribution.  To remedy this, we establish improved bounds for $q_L$, which will be inherited by $q$.

\begin{lemma}\label{L:66} For any $1\leq p<\infty$, $\beta>1$, and $T>0$,
\begin{equation}\label{qL-weighted}
\E\biggl\{\sup_{t\in[-T,T]}\|\langle x\rangle^{-\beta}q_L(t)\|_{H_\kappa^{-1}}^p \biggr\} \lesssim_{p,\beta, T, \kappa} 1 
\end{equation}
uniformly over $L\in 2^{\mathbb{N}}$.
\end{lemma}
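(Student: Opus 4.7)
The plan is to apply Gronwall's inequality to the quantity $D_L(t) := \|\langle x\rangle^{-\beta} q_L(t)\|_{H^{-1}_\kappa}^2$. Using the evolution equation \eqref{H_k flow q} in the form $\partial_t q_L = \Re\{16 k^5 g_L'(k)\} + 4E\, q_L'$ (where $E = \Re k^2 \approx \kappa^2$ by strict admissibility, and both $q_L$ and $q_L'$ are real-valued), the chain rule gives
\[
\tfrac{d}{dt} D_L(t) = 2 \bigl\langle \langle x\rangle^{-\beta} q_L,\, \langle x\rangle^{-\beta} \Re\{16 k^5 g_L'(k)\}\bigr\rangle_{H^{-1}_\kappa} + 8E \bigl\langle \langle x\rangle^{-\beta} q_L,\, \langle x\rangle^{-\beta} q_L'\bigr\rangle_{H^{-1}_\kappa}.
\]
For the Green's function term, writing $\langle x\rangle^{-\beta}\partial = \partial\langle x\rangle^{-\beta} + \beta x\langle x\rangle^{-\beta-2}$, using $\|\partial\|_{L^2\to H^{-1}_\kappa}\lesssim 1$, the multiplier estimate \eqref{multiplier1}, and Cauchy--Schwarz, we obtain a bound of order $\kappa^5 D_L^{1/2} \|\langle x\rangle^{-\beta} g_L(k)\|_{L^2}$.

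The transport term is the delicate one, and is handled via the following observation. Set $u = \langle x\rangle^{-\beta} q_L$, so that $\langle x\rangle^{-\beta} q_L' = u' + \beta x\langle x\rangle^{-2} u$. The inner product $\langle u,u'\rangle_{H^{-1}_\kappa}$ equals $-i\int \xi|\hat u(\xi)|^2 (4\kappa^2+\xi^2)^{-1}\,d\xi$, which vanishes because $u$ is real (so $|\hat u|^2$ is even). The remaining commutator piece $\langle u,\beta x\langle x\rangle^{-2} u\rangle_{H^{-1}_\kappa}$ is controlled by $\beta D_L$ via \eqref{multiplier1}. Combining and applying AM--GM we arrive at
\[
\bigl|\tfrac{d}{dt} D_L(t)\bigr| \lesssim_\beta \kappa^2 D_L(t) + \kappa^{8} \|\langle x\rangle^{-\beta} g_L(t;k)\|_{L^2}^2.
\]
Gronwall's inequality then yields $D_L(t) \lesssim_{T,\kappa} D_L(0) + \int_{-T}^T \|\langle x\rangle^{-\beta} g_L(s;k)\|_{L^2}^2\,ds$ uniformly for $|t|\leq T$.

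Taking the $(p/2)$-th power (for $p\geq 2$, using H\"older in $s$; for $p<2$ reducing to $p=2$ via Jensen) and then expectations, the problem is reduced to bounding $\E\{D_L(0)^{p/2}\}$ and $\sup_{|s|\leq T} \E\{\|\langle x\rangle^{-\beta} g_L(s;k)\|_{L^2}^p\}$.  The first is handled by \eqref{weighted-wn} followed by Lemma~\ref{L3}, using that $\langle x\rangle^{-\beta}\in L^2$ (as $\beta>1$). For the second, Theorem~\ref{T:Hktorus} tells us that $q_L(s)$ is white-noise distributed at every time, so we may apply Proposition~\ref{P:L53} at each $s$ to bound $\|\langle x\rangle^{-\beta}[g_L(s;k)-\tfrac1{2k}]\|_{L^2}$ uniformly in $s$; the constant subtracted contributes only a $\kappa$-dependent quantity. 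The main obstacle is the transport term: were it not for the cancellation $\langle u,u'\rangle_{H^{-1}_\kappa}=0$ for real $u$, one would be forced to control an extra derivative of $q_L$ that is not available from $D_L$ alone. The real-valuedness of $q_L$ (inherited from the structure of \eqref{H_k flow q}) together with the weight commutator is what keeps the Gronwall argument closed.
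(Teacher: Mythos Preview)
Your argument is correct and follows essentially the same Gronwall approach as the paper: differentiate $\|\langle x\rangle^{-\beta}q_L(t)\|_{H^{-1}_\kappa}^2$, exploit the vanishing of the transport contribution $\langle u,u'\rangle_{H^{-1}_\kappa}$ for real $u$, and bound the $g_L'$ term in $L^2$. The only difference is in the endgame: the paper estimates $\|\langle x\rangle^{-\beta}g_L\|_{L^2}$ by the trace duality
\[
|\langle \langle x\rangle^{-\beta}g_L,h\rangle|=|\tr\{\langle x\rangle^{-\beta}R_L h\}|\lesssim \kappa^{-1}\|h\|_{L^2}\,\|\langle x\rangle^{-\beta/2}R_L(k)\|_{H^{-1}_\kappa\to H^1_\kappa},
\]
and then invokes the resolvent bound \eqref{E:9-2}, whereas you appeal directly to the Green's function moment bounds of Proposition~\ref{P:L53} via invariance of white noise; both routes close with \eqref{weighted-wn} and Lemma~\ref{L3} for the initial data.
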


\begin{proof} The proof of \eqref{qL-weighted} is similar to that of Lemma~\ref{T:L0-to-infinity}.  We define
\[
A_L(t) = \|\langle x\rangle^{-\beta}q_L(t)\|_{H_\kappa^{-1}}^2. 
\]
Using the equation \eqref{H_k flow q} and the identity
\[
(\langle x\rangle^{-\beta} f)' = \langle x\rangle^{-\beta} f' - \tfrac{\beta x}{\langle x\rangle^2} \langle x\rangle^{-\beta} f,
\]
we derive
\[
|\partial_tA_L(t) | \lesssim \kappa^2 A_L(t) + \kappa^5\sqrt{A_L(t)}\| \langle x\rangle^{-\beta} g_L\|_{L^2}. 
\]
We estimate the $L^2$ norm by duality: for $h\in L^2$ and $\beta>1$, 
\begin{align*}
\bigl| \bigl\langle \langle x\rangle^{-\beta} g_L,h\bigr\rangle\bigr|  &= |\tr\{\langle x\rangle^{-\beta} R_L h\}| \\
& \lesssim \|\sqrt{R_0(\kappa)}\langle x\rangle^{-\frac\beta 2} h\sqrt{R_0(\kappa)}\|_{\mathfrak{I}_1} \| \langle x\rangle^{-\frac\beta 2}R_L(k)\|_{H_{\kappa}^{-1}\to H_\kappa^1} \\
& \lesssim \kappa^{-1} \|\langle x\rangle^{-\frac\beta 2} h\|_{L^1} \|\langle x\rangle^{-\frac\beta 2}R_L(k)\|_{H_\kappa^{-1}\to H_\kappa^1} \\
& \lesssim \kappa^{-1}\|h\|_{L^2}\|\langle x\rangle^{-\frac\beta 2}R_L(k)\|_{H_\kappa^{-1}\to H_\kappa^1}.
\end{align*}
Taking the supremum over unit vectors $h\in L^2$, we get
\begin{align*}
|\partial_tA_L(t) | &\lesssim \kappa^2 A_L(t) + \kappa^4\sqrt{A_L(t)}\|\langle x\rangle^{-\frac\beta 2}R_L(k)\|_{H_\kappa^{-1}\to H_\kappa^1}\\
&\lesssim  \kappa^2 A_L(t) + \kappa^6\|\langle x\rangle^{-\frac\beta 2}R_L(k)\|_{H_\kappa^{-1}\to H_\kappa^1}^2.
\end{align*}
Thus by Gronwall's inequality, 
\[
\sup_{|t|\leq T} A_L(t) \lesssim e^{C\kappa^2 T}\biggl[A_L(0) + \kappa^6 \int_{-T}^T \|\langle x\rangle^{-\frac\beta 2}R_L(k)\|_{H_\kappa^{-1}\to H_\kappa^1}^2\,ds\biggr]. 
\]
As $q_L(t)$ is white noise distributed for each time, the claim follows from \eqref{weighted-wn}, Lemma~\ref{L3}, and \eqref{E:9-2}. 
\end{proof}

It is our intention to employ Lemma~\ref{L:KCT'} to upgrade the convergence given in Lemma~\ref{L:6.4?}.  Our next lemma provides the requisite equicontinuity (cf. Lemma~\ref{L:KCT}).

\begin{lemma}\label{L:gL dif}
Fix $1\leq p<\infty$.  For any strictly admissible $\varkappa$ with $|\varkappa|\leq 10 \kappa$, 
\begin{align}
\E \bigl\{ \bigl\|\langle x\rangle^{-\beta}\bigl[\tfrac1{g_L(t;\varkappa)} -  \tfrac1{g_L(s;\varkappa)}\bigr]\bigr\|_{H^{-2}}^p\bigr\}\lesssim_{p, \beta} |t-s|^p |\varkappa|^{3p} \quad\text{for $\beta>4$},\label{228}\\
\E \bigl\{ \bigl\|\langle x\rangle^{-\beta}\bigl[g_L(t;\varkappa) -  g_L(s;\varkappa)\bigr]\bigr\|_{H^{-1}}^p\bigr\}\lesssim_{p, \beta} |t-s|^{p/2} \quad\text{for $\beta>6$}, \label{229}
\end{align}
uniformly in $L\in 2^{\mathbb N}$ and $t,s\in\R$.
\end{lemma}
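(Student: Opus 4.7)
The plan is to derive both bounds from the ODE \eqref{dt1/g}, integrated in time. Writing
\[
\tfrac{1}{g_L(t;\vk)} - \tfrac{1}{g_L(s;\vk)} = 2\int_s^t \partial_x F_\vk(\tau)\,d\tau, \qquad F_\vk := \tfrac{k^2+\bar k^2}{g(\vk)} - \tfrac{2k^5}{k^2-\vk^2}\tfrac{g(k)}{g(\vk)} - \tfrac{2\bar k^5}{\bar k^2-\vk^2}\tfrac{g(\bar k)}{g(\vk)},
\]
where $g(\cdot)=g_L(\tau;\cdot)$, we exploit that, by Theorem~\ref{T:Hktorus}, $q_L(\tau)$ is $2L$-periodic white noise for every $\tau$, so the spatial estimates of Propositions~\ref{P:L53} and~\ref{P:L56} apply to $F_\vk(q_L(\tau))$ with constants independent of $\tau$ and $L$. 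Minkowski's inequality in $\tau$ will then convert these stationary spatial bounds into bounds for the time integral, producing the $|t-s|$ factors. Implicit constants will absorb the fixed parameter $k$ (and in particular the factor $(k^2-\vk^2)^{-1}$, which is controlled by strict admissibility of $\vk$).

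For \eqref{228}, the commutator identity $\langle x\rangle^{-\beta}\partial_x f = \partial_x(\langle x\rangle^{-\beta}f) + \beta x\langle x\rangle^{-\beta-2}f$ gives $\|\langle x\rangle^{-\beta}\partial_x f\|_{H^{-2}} \lesssim \|\langle x\rangle^{-\beta}f\|_{H^{-1}}$. It therefore suffices to prove
\[
\E\bigl\{\|\langle x\rangle^{-\beta}F_\vk(\tau)\|_{H^{-1}}^p\bigr\}^{1/p} \lesssim_{p,\beta} |\vk|^3
\]
uniformly in $\tau$ and $L$. To estimate each of the three summands of $F_\vk$ we decompose the numerator via $g(\bullet) = \tfrac{1}{2\bullet} + (g(\bullet)-\tfrac{1}{2\bullet})$. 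The constant pieces, paired with $1/g(\vk)$, lie in $L^2\hookrightarrow H^{-1}$ with bounds from \eqref{L56-3.5}. The fluctuating pieces are products, estimated by Cauchy--Schwarz in $L^2\cdot L^2 = L^1$ using \eqref{L53-1.5} for $g(\bullet) - \tfrac{1}{2\bullet}$ and \eqref{L56-3.5} for $1/g(\vk)$; the one-dimensional embedding $L^1 \hookrightarrow H^{-1}(\R)$ (dual to $H^1\hookrightarrow L^\infty$) converts this to an $H^{-1}$ bound. The condition $\beta>4$ is enough to make all weighted norms summable.

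For \eqref{229}, the Plancherel--Cauchy--Schwarz interpolation
\[
\|h\|_{H^{-1}}^2 \leq \|h\|_{H^{-2}}\|h\|_{L^2}
\]
applied to $h = \langle x\rangle^{-\beta}[g_L(t;\vk) - g_L(s;\vk)]$, together with Cauchy--Schwarz in $\E$, reduces matters to a Lipschitz-in-time $H^{-2}$ bound and a uniform-in-time $L^2$ bound for $h$. The $L^2$ bound is immediate from the triangle inequality and \eqref{L53-1.5}. The $H^{-2}$ bound uses the algebraic identity $g(t)-g(s) = -g(t)g(s)[1/g(t) - 1/g(s)]$ and the decomposition $g(t)g(s) = \tfrac{1}{4k^2} + R$, where $R$ collects the terms containing at least one factor of $g(\bullet) - \tfrac{1}{2k}$. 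The leading term $-\tfrac{1}{4k^2}[1/g(t)-1/g(s)]$ inherits directly the desired $|t-s|$ bound in $H^{-2}$ from \eqref{228}; the correction $-R[1/g(t)-1/g(s)]$ is controlled by multiplication estimates, using that \eqref{L53-3.5} places $R$ in $H^s$ for any $s<\tfrac32$ (in particular in $L^\infty$ locally, by the 1D Sobolev embedding). Taking the square root of $|t-s|$ at the end of the interpolation yields the Hölder-$\tfrac12$ estimate in \eqref{229}.

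The principal obstacle is the $H^{-2}$ reduction in \eqref{229}: multiplication by $g(t)g(s)$ is not bounded on $H^{-2}$, because $g$ has only $H^{3/2-}$ regularity and therefore fails to lie in the $H^2$ Sobolev algebra. The remedy, outlined above, is to isolate the scalar leading term $\tfrac{1}{4k^2}$ explicitly, thereby reducing the main contribution directly to \eqref{228}, and to handle the fluctuation via Sobolev algebra estimates that do apply to $g - \tfrac{1}{2k}$ in $H^s$ for $s$ arbitrarily close to $\tfrac32$.
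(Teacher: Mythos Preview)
Your approach to \eqref{228} has a genuine gap: the claim that $(k^2-\vk^2)^{-1}$ is ``controlled by strict admissibility of $\vk$'' is false. Recall that $k=\kappa e^{i\pi/8}$ is itself strictly admissible and satisfies $|k|=\kappa\le 10\kappa$, so nothing in the hypotheses prevents $\vk=k$ (or $\vk$ arbitrarily close to $k$), at which point the coefficient $\tfrac{2k^5}{k^2-\vk^2}$ in $F_\vk$ blows up. The formula \eqref{dt1/g} is only derived for $\vk\neq k$; the singularity it exhibits at $\vk=k$ is removable for $\partial_t\tfrac{1}{g(\vk)}$ as a whole, but not term by term. The paper handles this by first restricting to $|k-\vk|>10^{-6}\kappa$, proving the estimate there, and then extending to the omitted ball via the Cauchy integral formula and analyticity of $\vk\mapsto 1/g(\vk)$ (this is precisely why the text notes, at the start of Section~\ref{S:L}, that $k$ must be chosen in a slightly narrower sector). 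Without this device or an equivalent, your argument does not yield bounds uniform over the stated range of $\vk$.

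Your route to \eqref{229} is also flawed as written. You propose to prove an $H^{-2}$ Lipschitz bound for $g$ and interpolate with the uniform $L^2$ bound; for the $H^{-2}$ step you decompose $g(t)g(s)=\tfrac{1}{4\vk^2}+R$ and claim the correction $R\cdot[\tfrac{1}{g(t)}-\tfrac{1}{g(s)}]$ is controlled ``by multiplication estimates'' since $R\in H^s$ for $s<\tfrac32$. But multiplication by an $H^{3/2-}$ (or merely $L^\infty$) function is \emph{not} bounded on $H^{-2}$: by duality one would need $R\cdot H^2\subset H^2$, which requires $R\in H^2$ (the algebra threshold in one dimension), and \eqref{L53-3.5} only gives $s<\tfrac32$. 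The paper avoids this obstruction by interpolating \emph{first} on $1/g$ --- combining \eqref{228} with the uniform $L^2$ bound from \eqref{L56-3.5} to obtain H\"older-$\tfrac12$ continuity of $1/g$ in $H^{-1}$ --- and only then invoking $g(t)-g(s)=-g(t)g(s)\bigl[\tfrac{1}{g(t)}-\tfrac{1}{g(s)}\bigr]$. Since \eqref{L53-3.5} places $g$ in weighted $H^1$, multiplication by $g(t)g(s)$ \emph{is} bounded on $H^{-1}$, and the argument closes without any $H^{-2}$ multiplication.
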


\begin{proof}
To ease notation, throughout the proof we omit the subscript $L$.  All our estimates will be independent of $L$.

We will first prove \eqref{228} under the additional restriction that $|k-\vk|> 10^{-6} \kappa$.  The significance of the absolute constant here is that it guarantees that even after doubling the radius of the omitted ball, it is still contained inside the strictly admissible sector.  This restriction on $\vk$ will be removed later.

Using \eqref{dt1/g}, we write
\begin{align}
\tfrac1{g(t;\varkappa)} -  \tfrac1{g(s;\varkappa)}= -4\int_s^t \biggl\{\tfrac{k^2\vk^2}{k^2-\vk^2}&\tfrac{1}{2g(\tau;\vk)} + \tfrac{k^5}{k^2-\vk^2}\tfrac{1}{g(\tau;\vk)}\bigl[g(\tau;k)-\tfrac1{2k}\bigr] \label{6:dt1/gt}\\
&+\tfrac{\bar k^2\vk^2}{\bar k^2-\vk^2}\tfrac{1}{2g(\tau;\vk)} + \tfrac{\bar k^5}{\bar k^2-\vk^2}\tfrac{1}{g(\tau;\vk)}\bigl[g(\tau;\bar k)-\tfrac1{2\bar k}\bigr]\biggr\}'\, d\tau. \notag
\end{align}
Thus, by H\"older, \eqref{L53-5.5}, \eqref{L56-3.5}, and \eqref{1052} we get
\begin{align*}
\text{LHS}\eqref{228} &\lesssim_p |t-s|^{p-1} \int_s^t \E\bigl\{ |\vk|^{2p}\bigl\|\langle x\rangle^{-\beta}\tfrac{1}{g(\tau;\vk)}\bigr\|_{H^{-1}}^p\\
&\qquad\qquad\qquad\qquad\quad  + \kappa^{3p}\bigl\|\langle x\rangle^{-\beta}\tfrac{1}{g(\tau;\vk)}\bigl[g(\tau;k)-\tfrac1{2 k}\bigr]\bigr\|_{H^{-1}}^p\bigr\}\,d\tau\\
&\lesssim_{p, \beta} |t-s|^p |\varkappa|^{3p},
\end{align*}
whenever $\beta>4$, which settles \eqref{228}.

To remove the restriction on $\vk$, we recall that $1/g$ is an analytic function of $\vk$.  Thus its values in the omitted ball can be reconstructed from the values on the boundary of the doubled ball and the Cauchy (or Poisson) integral formula.  That \eqref{228} continues to hold (albeit with a slightly larger constant) then follows just by using the triangle inequality. 

Interpolating between \eqref{228} and \eqref{L56-3.5}, we get
\begin{align*}
\E \bigl\{ \bigl\|\langle x\rangle^{-\beta}\bigl[\tfrac1{g(t;\varkappa)} -  \tfrac1{g(s;\varkappa)}\bigr]\bigr\|_{H^{-1}}^p\bigr\}\lesssim_{p, \beta} |t-s|^{p/2} |\varkappa|^{2p} 
\end{align*}
whenever $\beta>4$, uniformly in $t,s\in\R$.  The claim \eqref{229} now follows from this and \eqref{L53-3.5}, by observing
$$
g(t;\varkappa) -  g(s;\varkappa) = -g(t;\vk)g(s;\vk)\bigl[\tfrac1{g(t;\varkappa)} -  \tfrac1{g(s;\varkappa)}\bigr].
$$
This completes the proof of the lemma.
\end{proof}

We are now ready to formulate our result concerning the \emph{existence} of good solutions to the $\H_k$ flow.  The question of uniqueness will be discussed later.

\begin{proposition}\label{P:666} For almost every initial data $q^0$, there is a global good solution $q(t)$ of the $\H_k$ flow on the line that satisfies all of the following:\\
{\rm(a)} For any $1\leq p<\infty$, $T>0$, and $\beta>1$,
\begin{align}\label{666a}
\lim_{L\to \infty}\E\bigl\{\| \langle x\rangle^{-\beta} [q_L(t)-q(t)] \|_{C_t([-T,T]; H^{-1})}^p\bigr\} = 0.
\end{align}
{\rm(b)} For every $t\in\R$, $q(t)$ is white noise distributed.\\
{\rm(c)} Given a strictly admissible $\vk$ with $|\vk|\leq 10\kappa$ and writing $g(t,x;\vk)=g(x;q(t),\vk)$, we have
\begin{align}\label{666c}
\E\bigl\{\bigl\| \langle x\rangle^{-\beta}g(\vk) \bigr\|_{C_t^{\gamma}([-T,T]; H^{\sigma})}^p\bigr\}+\E\bigl\{\bigl\| \langle x\rangle^{-\beta}\tfrac1{g(\vk)} \bigr\|_{C_t^{\gamma}([-T,T]; H^{\sigma})}^p\bigr\}\lesssim_{|\vk|,p, \beta,\sigma,T} 1,
\end{align}
uniformly in $k$, for any $T>0$, $1\leq p<\infty$, $1\leq \sigma<\frac32$, $\beta>6$, and some $\gamma=\gamma(p, \sigma)>0$.  Moreover, if in addition $\beta>18$, then
\begin{align}\label{666b}
\bigl\| \langle x\rangle^{-\beta} [g_L(\vk)-g(\vk)] \bigr\|_{C_t([-T,T]; H^{1})}
	+ \bigl\| \langle x\rangle^{-\beta} \bigl[\tfrac{1}{g_L(\vk)}-\tfrac{1}{g(\vk)}\bigr] \bigr\|_{C_t([-T,T]; H^{1})}
	\to 0
\end{align}
in $L^p(d\PP)$ as $L\to \infty$.\\
{\rm(d)} $g(t)$ and $q(t)$ are related by \eqref{E:q from g} and $1/g(t)$ solves \eqref{dt1/g}.
\end{proposition}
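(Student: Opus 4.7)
The plan is to construct $q$ as the almost-sure limit of the periodized solutions $q_L$ furnished by Lemma~\ref{T:L0-to-infinity}, and then to verify each of the four claims in turn by transferring the corresponding property from the approximants.

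For (a), Lemma~\ref{T:L0-to-infinity} already produces an almost-sure limit $q$ with $e^{-\langle x\rangle^\alpha}(q_L-q)\to 0$ in $C_t([-T,T];H^{-1}_\kappa)$. To upgrade this to $L^p(d\PP;C_t([-T,T];\langle x\rangle^\beta H^{-1}))$, I would split $\langle x\rangle^{-\beta}(q_L-q)$ using a smooth cutoff $\chi_R$ into a near piece, controlled by the exponentially weighted almost-sure convergence, and a tail piece, controlled by the uniform bound $\E\{\sup_{|t|\le T}\|\langle x\rangle^{-\beta}q_L(t)\|_{H^{-1}_\kappa}^p\}\lesssim 1$ from Lemma~\ref{L:66}. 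Lemma~\ref{L:same conv} then converts almost-sure convergence into $L^p$ convergence.

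For (b), Theorem~\ref{T:Hktorus} ensures $q_L(t)$ is $2L$-periodized white noise, so \eqref{qL cov} gives $\langle\varphi,q_L(t)\rangle\sim N(0,\|\varphi\|_{L^2}^2)$ for any $\varphi\in C_c^\infty$ as soon as $L$ is large enough that $\supp\varphi$ fits strictly inside $[-L,L]$. Part (a) yields $\langle\varphi,q_L(t)\rangle\to\langle\varphi,q(t)\rangle$ in $L^p$, so dominated convergence of the bounded integrand gives $\E\{e^{i\langle\varphi,q(t)\rangle}\}=\exp\bigl(-\tfrac12\|\varphi\|_{L^2}^2\bigr)$ for every such $\varphi$; by density of $C_c^\infty$ in $\cS$ and continuity of characteristic functionals, this extends to all $\varphi\in\cS$, which is exactly \eqref{Minlos defn}.

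For (c), the pointwise-in-time bounds in $L^p_\omega H^\sigma_{\langle x\rangle^{-\beta}}$ for $g(t;\vk)$ and $1/g(t;\vk)$ are immediate from Propositions~\ref{P:L53} and~\ref{P:L56} applied to the now white-noise-distributed $q(t)$. To promote these pointwise bounds to $L^p_\omega C^\gamma_t H^\sigma_{\langle x\rangle^{-\beta}}$, I would apply Lemma~\ref{L:gL dif} (which furnishes $L$-uniform time-difference bounds for $1/g_L$ in weighted $H^{-2}$ and for $g_L$ in weighted $H^{-1}$), interpolate these against the spatial regularity in Propositions~\ref{P:L53}--\ref{P:L56} to obtain a summable time-difference exponent in the sense of Lemma~\ref{L:KCT}, and pass the resulting $L$-uniform $C^\gamma_t$ bound to $g$ via Fatou once \eqref{666b} is in hand. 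For \eqref{666b} itself, Lemma~\ref{L:6.4?} supplies convergence of $g_L(t;\vk)$ in probability at each fixed $t$ in the exponentially weighted $H^1$ norm; combined with the Hölder equicontinuity just proved, Lemma~\ref{L:KCT'} upgrades this to $L^p_\omega C_tH^1_{\langle x\rangle^{-\beta}}$ convergence. The analogous statement for $1/g_L$ follows from $1/g_L-1/g=(g-g_L)/(g_Lg)$ together with the pointwise lower bounds on $|g_L|,|g|$ from Proposition~\ref{P:L56}.

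For (d), both identities already hold for the approximants: \eqref{E:q from g} by Lemma~\ref{L:diffeo} on each torus, and the evolution \eqref{dt1/g} by Proposition~\ref{P:H kappa}. Each term in \eqref{E:q from g} is a rational expression in $g_L,g_L',g_L''$ with denominator bounded away from $0$, so the convergences from (a) and \eqref{666b} let one pass \eqref{E:q from g} to the limit distributionally against any Schwartz test function. For \eqref{dt1/g}, one passes to the limit in its integrated (Duhamel) form, exploiting continuity of the right-hand side in $t$ from (c) together with the same convergences. The principal obstacle is part (c): one must juggle the exponential weights natural to Lemmas~\ref{T:L0-to-infinity} and~\ref{L:6.4?} with the polynomial weights required by Propositions~\ref{P:L53}--\ref{P:L56} across a chain of interpolation and Kolmogorov-type arguments, making sure that the lower bound on $|g|$ survives simultaneously with the $H^1$-level convergence of $g_L-g$.
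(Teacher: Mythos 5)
Your plan follows essentially the same route as the paper: define $q$ as the almost-sure limit from Lemma~\ref{T:L0-to-infinity}, upgrade the exponentially weighted convergence to a polynomial weight by a near/far splitting plus Lemma~\ref{L:66} and Lemma~\ref{L:same conv}, read off the white-noise law from \eqref{Minlos defn} and dominated convergence, establish $L$-uniform $C_t^\gamma$ bounds for $g_L$ and $1/g_L$ by combining Propositions~\ref{P:L53}--\ref{P:L56} and Lemma~\ref{L:gL dif} with Lemma~\ref{L:KCT}, feed in the per-time convergence from Lemma~\ref{L:6.4?} via Lemma~\ref{L:KCT'}, and finally transfer the identities \eqref{E:q from g} and \eqref{dt1/g} from finite to infinite volume.

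One phrasing in part (c) is not quite right and is worth flagging: you invoke ``pointwise lower bounds on $|g_L|,|g|$ from Proposition~\ref{P:L56}'' to control the identity $\tfrac1{g}-\tfrac1{g_L} = \tfrac1{g_L}\tfrac1{g}(g_L-g)$. Proposition~\ref{P:L56} does not produce pointwise lower bounds on $|g|$; indeed no such deterministic (or almost-sure, uniform-in-$x$) bound can hold, since $q(t)$ is white-noise distributed and the potential may be arbitrarily large and positive on any interval, driving $g$ locally small. What Proposition~\ref{P:L56} actually gives is $L^p(d\PP)$ moment control of weighted $H^1$ norms of $1/g_L$ (hence, via Sobolev embedding, weighted $L^\infty_x$ control in an $L^p(d\PP)$ sense). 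That control, together with the already established $C_t H^1$ convergence $g_L\to g$, is what makes the product argument work; the paper proceeds exactly this way, using the $L^p_\omega C_t^\gamma H^\sigma$ bounds for $1/g_L$ rather than any pointwise statement. With ``pointwise lower bounds'' replaced by ``moment bounds on $1/g$'' (i.e.\ upper bounds on $1/g$ in expectation), your argument matches the paper's.
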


\begin{proof}
We first define $q(t)$ as the almost sure limit guaranteed by Lemma~\ref{T:L0-to-infinity}.  Consequently,
\begin{equation}\label{bad weight}
\| e^{-\langle x\rangle^{\alpha}} [q_L(t)-q(t)] \|_{C_t([-T,T];H^{-1})}  \to 0 \qtq{as} L\to \infty
\end{equation}
almost surely.  Combining this with Lemmas~\ref{L:66} and~\ref{L:same conv}, we immediately deduce that \eqref{bad weight} also holds in $L^p(d \PP)$ sense, that is,
\begin{align}\label{1205}
\lim_{L\to \infty}\E\bigl\{\| e^{-\langle x\rangle^{\alpha}} [q_L(t)-q(t)] \|_{C_t([-T,T];H^{-1})}^p\bigr\} = 0  \qtq{for any} 1\leq p<\infty.
\end{align}
This can then be upgraded to \eqref{666a} using Lemma~\ref{L:66} together with the fact that
$$
\| \langle x\rangle^{-\beta} f \|_{H^{-1}}  \lesssim e^{2r^\alpha} \| e^{-\langle x\rangle^{\alpha}} f \|_{H^{-1}}
	+ r^{\beta'-\beta} \| \langle x\rangle^{-\beta'} f \|_{H^{-1}} 
$$
for any $r\gtrsim 1$ and $1<\beta' < \beta$.

Recall from Theorem~\ref{T:Hktorus} that $q_L(t)$ is $2L$-periodic white noise for each $t\in\R$; thus part (b) follows from \eqref{666a} via the characterization \eqref{Minlos defn}.

We turn now to part (c).  Combining \eqref{L53-3.5} (with $1\leq \sigma<s<\frac32$) and~\eqref{229} we deduce that
\begin{align}
\E \bigl\{\bigl\| \langle x\rangle^{-\beta} [g_L(t;\vk)-g_L(s;\vk)] \bigr\|_{H^{\sigma}}^p\bigr\} \lesssim_{|\vk|,p,\beta, \sigma}   |t-s|^{\frac{p(s-\sigma)}{2(1+s)}}.
\end{align}
Using again \eqref{L53-3.5} and invoking Lemma~\ref{L:KCT}, we obtain
\begin{align}\label{g328}
\sup_{L\in 2^{\mathbb N}}  \E \bigl\{\bigl\| \langle x\rangle^{-\beta} g_L(\vk)\bigl\|_{C^\gamma([-T,T];H^{\sigma})}^p \bigr\}\lesssim_{|\vk|,p,\beta, \sigma,T} 1
\end{align}
for some $\gamma(p,\sigma)>0$.

Taking $\sigma=1$ in \eqref{g328} together with Lemma~\ref{L:6.4?} provides the prerequisites to apply Lemma~\ref{L:KCT'} and so deduce that there is some process $G(t)$ so that
\begin{align}
\lim_{L\to \infty} \E\bigl\{\| e^{-\langle x\rangle^{\alpha}}  [g_L(t;\vk)-G(t)] \|_{C_t([-T,T];H^1)}^p\bigr\}=0 \qtq{for any} 1\leq p<\infty.
\end{align}
We will verify that $G(t)=g(x;q(t),\vk)$ in a moment; first, we observe that the exponential weight may be replaced by the polynomial weight appearing in \eqref{666b} by the same argument we used to do this for $q(t)$ in part (a) above.

In view of the preceding, to verify that $G(t)=g(x;q(t),\vk)$, it suffices to show
\begin{align}\label{g353}
\lim_{L\to\infty}
\E\bigl\{ \| e^{-4\langle x\rangle^\alpha} [ g_L(t;\vk) - g(t;\vk) ] \|_{L^2_t([-T,T]; H^1)}^2 \bigr\} = 0.
\end{align}
To this end, we note that the resolvent identity gives
\begin{align*}
&\|e^{-4\langle x\rangle^{\alpha}}[g_{L}(t;\vk)-g(t;\vk)] \|_{H^1}^2 \\
 &\lesssim \|e^{-2\langle x\rangle^\alpha} R(\vk) e^{\langle x\rangle^\alpha}\|_{H_{|\vk|}^{-1}\to H_{|\vk|}^1}^2
 	\|e^{-2\langle x\rangle^{\alpha}}[q_L-q]\|_{H^{-1}_{|\vk|}}^2
	\| e^{\langle x\rangle^\alpha}R_{L}(\vk)  e^{-2\langle x\rangle^\alpha}\|_{H_{|\vk|}^{-1}\to H_{|\vk|}^1}^2 .
\end{align*}
This then yields \eqref{g353} by taking expectation, using \eqref{1205} and \eqref{E:9-3}, and then integrating in time.  

As noted above, the identification $G(t)=g(x;q(t),\vk)$ completes the proof of the first claim in \eqref{666b}.  Combining this with \eqref{g328} also yields the first claim in \eqref{666c}.

We turn now to the claims regarding $1/g$ in \eqref{666c} and \eqref{666b}.  The former claim follows by the same argument used to prove \eqref{g328}, by employing \eqref{L56-1.5} and~\eqref{228} in place of \eqref{L53-3.5} and~\eqref{229}.  The latter claim follows from \eqref{666c}, the identity
$$
\tfrac{1}{g} - \tfrac{1}{g_L}=\tfrac{1}{g_L} \tfrac{1}{g} [g_L -  g] 
$$ 
and the convergence $g_L\to g$ shown above.

We next show that $q$ is a good solution (cf. Definition~\ref{D:good}).  Property \eqref{good-solution0} follows already from part (a), while properties \eqref{good-solution} and \eqref{good-solution'} follow from \eqref{E:9-3} and \eqref{E:9-4}, respectively, because we have shown that $q(t)$ is white noise distributed at every time.  That $q(t)$ obeys \eqref{weak-soln-Hk} follows from the fact that $(q_L,g_L)$ obey this identity at finite $L$ and the convergence shown in parts (a) and (c).

Consider now part (d).  By Lemma~\ref{L:diffeo} and Proposition~\ref{P:H kappa}, \eqref{E:q from g} and \eqref{dt1/g} both hold at finite $L$.  These results then carry over to infinite volume by virtue of parts (a) and (c).  This completes the proof of Proposition~\ref{P:666}.
\end{proof}

To complete the proof of Theorem~\ref{T:invariance-Hk-line}, it remains to establish uniqueness within the class of good solutions. 

\begin{proposition}[Uniqueness of good solutions]\label{P:uniq} Suppose $q,\tilde q$ are global good solutions to the $\H_k$ flow \eqref{H_k flow q} on $\R$.  If $q(0)=\tilde q(0)$, then $q(t)\equiv \tilde q(t)$ for all $t\in \R$. 
\end{proposition}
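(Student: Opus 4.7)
The plan is to run a Gronwall argument in the style of the proof of Lemma~\ref{T:L0-to-infinity}, adapted to the infinite-volume setting where only weighted resolvent bounds are available. Fix $\alpha\in(0,\tfrac14)$ common to the good-solution data of both $q$ and $\tilde q$, and set
\[
d(t):=\|e^{-\langle x\rangle^\alpha}[q(t)-\tilde q(t)]\|_{H^{-1}_\kappa}^2,
\]
which by \eqref{good-solution0} is finite on $[-T,T]$ and by hypothesis vanishes at $t=0$. It suffices to derive a differential inequality $|\partial_t d(t)|\leq X(t)\,d(t)$ with $X\in L^1([-T,T])$ almost surely; Gronwall then forces $d\equiv 0$, whence $q\equiv\tilde q$ as tempered distributions on $[-T,T]$, and letting $T\to\infty$ gives the global conclusion.

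Differentiating $d$ with the help of \eqref{weak-soln-Hk} produces two contributions. The transport term from $4\Re(k^2)(q-\tilde q)'$ in \eqref{H_k flow q} contributes $\lesssim\kappa^2 d(t)$ by integration by parts in the $H^{-1}_\kappa$ pairing, since the logarithmic derivative of $e^{-\langle x\rangle^\alpha}$ is a bounded multiplier on $H^{-1}_\kappa$ (cf.~\eqref{multiplier1}). The nonlinear term, by Cauchy--Schwarz and $\|e^{-\langle x\rangle^\alpha}h'\|_{H^{-1}_\kappa}\lesssim\|e^{-\langle x\rangle^\alpha}h\|_{L^2}$, is bounded by $\kappa^5\|e^{-\langle x\rangle^\alpha}[g(q)-g(\tilde q)]\|_{L^2}\cdot d(t)^{1/2}$. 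The whole argument therefore reduces to establishing the stability estimate (the analog of \eqref{dL0-41})
\[
\|e^{-\langle x\rangle^\alpha}[g(q)-g(\tilde q)]\|_{L^2}\ \lesssim\ Y(t)\cdot d(t)^{1/2}
\]
for some $Y$ that is $L^2_t$-integrable almost surely and built from the weighted resolvent quantities appearing in \eqref{good-solution}--\eqref{good-solution'}.

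To prove this, I will use the resolvent identity $g(q)(x)-g(\tilde q)(x)=-\langle\delta_x,R(q)(q-\tilde q)R(\tilde q)\delta_x\rangle$ and duality: for unit $f\in L^2$, the pairing equals $-\tr\{R(q)(q-\tilde q)R(\tilde q)e^{-\langle x\rangle^\alpha}f\}$. Inserting $I=e^{\langle x\rangle^\alpha}e^{-\langle x\rangle^\alpha}$ between $(q-\tilde q)$ and $R(\tilde q)$ and applying the four-factor Hilbert--Schmidt trace bound (exactly as in the derivation of \eqref{dL0-41}) extracts the weighted multiplier $(q-\tilde q)e^{-\langle x\rangle^\alpha}$, which by Lemma~\ref{I2H1} contributes $\kappa^{-1/2}d(t)^{1/2}$, along with the weighted resolvent $e^{\langle x\rangle^\alpha}R(\tilde q)e^{-\langle x\rangle^\alpha}$, the HS norm $\|\sqrt{R_0}f\sqrt{R_0}\|_{\mathfrak I_2}\lesssim\kappa^{-3/2}\|f\|_{L^2}$, and an unweighted $R(q)$ factor.

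The main obstacle, distinguishing this argument from that of Lemma~\ref{T:L0-to-infinity}, is that no unweighted bound on $\|R(q)\|_{H^{-1}_\kappa\to H^1_\kappa}$ is available: by ergodicity of white noise under spatial translation, this norm is almost surely infinite. I intend to bypass this via a dyadic decomposition $f=\sum_{l\in 2^{\mathbb{N}_0}}\chi_l f$ using \eqref{chin}, replacing $R(q)$ on each piece by $\varphi_{L(l)}R(q)$ with $L(l)\sim l$; then \eqref{good-solution'} supplies the required $L^2_t$ control with a $[\log l]^C$ loss absorbed by the subexponential weight $e^{-\langle x\rangle^\alpha}$, while the complementary $(1-\varphi_{L(l)})R(q)\chi_l$ tail is handled by a Combes--Thomas-type estimate (in the spirit of Propositions~\ref{P:3}--\ref{P:4}) that produces subexponential decay in $l$ ensuring summability. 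A parallel treatment of the $R(\tilde q)$ factor --- using the transpose form of \eqref{good-solution} to match the weight structure --- then yields the required $Y(t)\in L^2_t$ and closes the Gronwall loop.
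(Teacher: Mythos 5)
Your Gronwall framework and the reduction to a stability estimate on $g(q)-g(\tilde q)$ are exactly right, but there are two genuine gaps.

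First, the handling of the tail piece is wrong in kind, not just detail.  Propositions~\ref{P:3} and~\ref{P:4} (and the Combes--Thomas mechanism generally) are \emph{probabilistic} statements about the random resolvent $R_{L}(k)$ when $q_L$ is a periodization of white noise; they assert that certain weighted norms are small outside a low-probability bad event.  Proposition~\ref{P:uniq}, by contrast, is a deterministic statement about two individual good solutions, and Definition~\ref{D:good} gives you nothing beyond the specific weighted bounds \eqref{good-solution0}--\eqref{good-solution'}.  There is no Combes--Thomas estimate available for $(1-\varphi_{L(l)})R(q)\chi_l$ on a fixed sample path, so your dyadic decomposition cannot be summed.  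The paper sidesteps this entirely: it uses a \emph{single} cutoff $\varphi_L$, controls the near piece \eqref{Unique1} with the two hypotheses $\|e^{2\langle x\rangle^{\alpha}}\varphi_L R\,e^{-2\langle x\rangle^{\alpha}}\|$ and $\|e^{-2\langle x\rangle^{\tilde\alpha}}\tilde R\varphi_L\,e^{2\langle x\rangle^{\tilde\alpha}}\|$ from \eqref{good-solution'}, and treats the far piece \eqref{Unique2} \emph{not} by closing the Gronwall loop but by bounding it outright using \eqref{good-solution0} and \eqref{good-solution}, producing a fixed \emph{additive} term of size $e^{-\langle L\rangle^{\alpha}}$.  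Gronwall then gives $\sup_{|t|\le T}M(t)\lesssim e^{-\langle L\rangle^\alpha+C(T,\kappa)[\log L]^{C}}$ and the limit $L\to\infty$ is taken \emph{after} Gronwall.  This ``cut once, Gronwall, then remove the cutoff'' structure is the essential idea you are missing.

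Second, you cannot simply ``fix a common $\alpha$''.  Definition~\ref{D:good} attaches a parameter $\alpha$ to each solution separately, and the weighted resolvent bounds \eqref{good-solution}--\eqref{good-solution'} are stated for that particular $\alpha$; they do not transfer automatically to a different exponent (decreasing $\alpha$ weakens the inner weight $e^{-2\langle x\rangle^\alpha}$, and increasing it strengthens the outer weight $e^{\langle x\rangle^\alpha}$ or $e^{2\langle x\rangle^\alpha}$, so neither direction is free).  The paper deals with this by working with the \emph{product} weight $e^{-2\langle x\rangle^\alpha-2\langle x\rangle^{\tilde\alpha}}$ in the definition of $M(t)$ and splitting it between the two resolvents so that each one sees exactly the weight for which its own good-solution bound applies.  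Only the \emph{bound} \eqref{good-solution0} transfers across exponents (in the favourable direction $\tilde\alpha\geq\alpha$), as the paper notes explicitly, and that is the only place where a change of exponent is used.
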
 

\begin{proof} Evidently, it suffices to work on an interval $[-T,T]$ for an arbitrary $T>0$.  We proceed similarly to the proof of Lemmas~\ref{T:L0-to-infinity} and \ref{L:66}.  For $t\in[-T,T]$, we define
\[
M(t):=\|e^{-2\langle x\rangle^\alpha -2\langle x\rangle^{\tilde\alpha} }[q(t)-\tilde q(t)]\|_{H_\kappa^{-1}}^2,
\]
where $\alpha$ and $\tilde\alpha$ are the parameters attendant to the two solutions $q$ and $\tilde q$ appearing in Definition~\ref{D:good}.  By symmetry, we may assume $\alpha\leq\tilde\alpha$.

By assumption, $M(0)=0$.  Computing the time derivative leads to the estimate
\begin{align*}
\tfrac{d\,}{dt}M(t) & \lesssim \kappa^2 M(t) + \kappa^5 [M(t)]^{\frac12}\| e^{-2\langle x\rangle^\alpha -2\langle x\rangle^{\tilde\alpha} } [g(t)-\tilde g(t)]\|_{L^2},
\end{align*}
where $g,\tilde g$ denote the diagonal Green's functions for $q$ and $\tilde q$, respectively.  As before, we estimate the $L^2$-norm by duality.  For $h\in L^2$ and $L\geq 1$,\begin{align}
|\langle e^{-2\langle x\rangle^\alpha -2\langle x\rangle^{\tilde\alpha} } [g-\tilde g],h\rangle|
	&\leq |\tr\{e^{-2\langle x\rangle^{\tilde\alpha}}\tilde R\varphi_L^2(q-\tilde q)Re^{-2\langle x\rangle^\alpha} h\}| \label{Unique1}\\
& \quad + |\tr\{e^{-2\langle x\rangle^{\tilde\alpha}}\tilde R(1-\varphi_L^2)(q-\tilde q)Re^{-2\langle x\rangle^\alpha}h\}|, \label{Unique2}
\end{align}
where $\varphi_L$ denotes a smooth cutoff as in \eqref{phiL} and $R,\tilde R$ are the resolvents corresponding to $q$ and $\tilde q$, respectively. 

Using Lemma~\ref{I2H1}, we first estimate
\begin{align*}
\eqref{Unique1} & \lesssim \|e^{-2\langle x\rangle^{\tilde\alpha}} \tilde R\varphi_L\,e^{2\langle x\rangle^{\tilde\alpha}}\|_{H_\kappa^{-1}\to H_\kappa^1}
	\| \sqrt{R_0(\kappa)}e^{-2\langle x\rangle^\alpha -2\langle x\rangle^{\tilde\alpha}}(q-\tilde q)\sqrt{R_0(\kappa)}\|_{\mathfrak{I}_2} \\
& \quad \times \|e^{2\langle x\rangle^\alpha}\varphi_L Re^{-2\langle x\rangle^\alpha}\|_{H_\kappa^{-1}\to H_\kappa^1} \|\sqrt{R_0(\kappa)}h\sqrt{R_0(\kappa)}\|_{\mathfrak{I}_2} \\
& \lesssim  \kappa^{-1}  \|h\|_{L^2}\|e^{-2\langle x\rangle^{\tilde\alpha}} \tilde R\varphi_L\,e^{2\langle x\rangle^{\tilde \alpha}}\|_{H_\kappa^{-1}\to H_\kappa^1}\|e^{2\langle x\rangle^\alpha}\varphi_L Re^{-2\langle x\rangle^\alpha}\|_{H_\kappa^{-1}\to H_\kappa^1}[M(t)]^{\frac12}.
\end{align*}
Similarly, using \eqref{good-solution0}, \eqref{multiplier1}, and Lemma~\ref{I2H1}, we estimate
\begin{align*}
\eqref{Unique2}
& \lesssim \| e^{-2\langle x\rangle^{\tilde\alpha}}\tilde R e^{\langle x\rangle^{\tilde\alpha}}\|_{H_\kappa^{-1}\to H_\kappa^1}
	\|\sqrt{R_0(\kappa)}e^{-\langle x\rangle^{\tilde\alpha}}(q-\tilde q)\sqrt{R_0(\kappa)}\|_{\mathfrak{I}_2} \\
& \quad \times \|e^{-\langle x\rangle^\alpha}(1-\varphi_L^2)\|_{H_\kappa^{-1}\to H_\kappa^{-1}}\| e^{\langle x\rangle^\alpha}Re^{-2\langle x\rangle^\alpha}\|_{H_\kappa^{-1}\to H_\kappa^1} \|\sqrt{R_0(\kappa)}h\sqrt{R_0(\kappa)}\|_{\mathfrak{I}_2} \\
& \lesssim C_T\kappa^{-1}\|h\|_{L^2}e^{-\frac12 \langle L\rangle^\alpha} \| e^{-2\langle x\rangle^\alpha}\tilde R e^{\langle x\rangle^\alpha}\|_{H_\kappa^{-1}\to H_\kappa^1}\| e^{\langle x\rangle^\alpha}Re^{-2\langle x\rangle^\alpha}\|_{H_\kappa^{-1}\to H_\kappa^1} \end{align*}
for some $C_T>0$. (We use here that if \eqref{good-solution0} holds with $\alpha$, it also holds with $\tilde\alpha\geq \alpha$.)

Continuing from above, we deduce
\begin{align*}
\tfrac{d\,}{dt}M(t) \lesssim M(t)&\bigl\{\kappa^2+\kappa^4\|e^{-2\langle x\rangle^{\tilde\alpha}} \tilde R\varphi_L\,e^{2\langle x\rangle^{\tilde\alpha}}\|_{H_\kappa^{-1}\to H_\kappa^1}\|e^{2\langle x\rangle^\alpha}\varphi_L Re^{-2\langle x\rangle^\alpha}\|_{H_\kappa^{-1}\to H_\kappa^1} \\
& +\kappa^8C_T^2 \| e^{-2\langle x\rangle^{\tilde\alpha}}\tilde R e^{\langle x\rangle^{\tilde\alpha}}\|_{H_\kappa^{-1}\to H_\kappa^1}^2\| e^{\langle x\rangle^\alpha}Re^{-2\langle x\rangle^\alpha}\|_{H_\kappa^{-1}\to H_\kappa^1} ^2\bigr\} + e^{-\langle L\rangle^\alpha}.
\end{align*}
Applying Gronwall's inequality and the bounds \eqref{good-solution} and \eqref{good-solution'} for good solutions, we deduce
\[
\sup_{t\in[-T,T]} M(t) \lesssim e^{-\langle L\rangle^\alpha+C(T,\kappa)[\log L]^C}.
\]
Sending $L\to \infty$, we deduce $M(t)\equiv 0$, which completes the proof.\end{proof}

This uniqueness statement allows us to quickly dispel any concerns that our notion of the $\mathcal H_k$ flow is not truly a flow (i.e., a one-parameter group).  In proving this, it will be convenient to adopt the abbreviation
\begin{align}\label{W defn}
\| f\|_W  := \bigl\|\langle x\rangle^{-\beta} f \bigr\|_{H^{-1}(\R)},
\end{align}
where $\beta>1$ is fixed.
 
\begin{corollary}[Group property for $\mathcal H_k$]\label{C:semigroup}
There is a one parameter group of transformations $\Phi:\R\times W\to W$ preserving white noise measure so that
$$
\PP\bigl(\{ q(t) = \Phi(t,q^0) \text{ for all $t\in\R$}\}\bigr) = 1,
$$
where $q(t)$ denotes the solution to the $\mathcal H_k$ flow with initial data $q^0$ and $q^0$ is chosen at random following a white noise law.
\end{corollary}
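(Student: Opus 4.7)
The plan is to construct $\Phi(t,\cdot)$ directly from the good solutions provided by Theorem~\ref{T:invariance-Hk-line}. Let $\Omega_0\subset W$ be a measurable set of full white noise measure on which the construction in Proposition~\ref{P:666} yields a global good solution; for $q^0\in\Omega_0$ set $\Phi(t,q^0):=q(t)$, and for $q^0\notin\Omega_0$ put $\Phi(t,q^0):=q^0$. Measurability of $\Phi(t,\cdot)$ on $\Omega_0$ comes from its realization as the almost-sure limit of the measurable periodized flows $q^0\mapsto q_L(t)$; outside $\Omega_0$ measurability is automatic since $\Phi(t,\cdot)$ is the identity there.

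First I would verify that $\Omega_0$ is invariant under $\Phi$, which is the one structural claim doing real work. This reduces to the observation that the conditions in Definition~\ref{D:good} are time-translation invariant: if $q$ is a global good solution with $q(0)=q^0\in\Omega_0$, then for every $t_0\in\R$ the shift $\tilde q(s):=q(s+t_0)$ is again a global good solution, now with initial data $q(t_0)$, so that $q(t_0)\in\Omega_0$. (The bounds \eqref{good-solution0}--\eqref{good-solution'} for $\tilde q$ on $[-T,T]$ are inherited from those for $q$ on $[t_0-T,t_0+T]$, and \eqref{weak-soln-Hk} transfers by the change of variable $s\mapsto s-t_0$.)

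With invariance in hand, the group property is essentially automatic. For any $q^0\in\Omega_0$ and any $s,t\in\R$, both $\tau\mapsto q(s+\tau)$ and $\tau\mapsto\Phi(\tau,\Phi(s,q^0))$ are global good solutions sharing the initial datum $\Phi(s,q^0)\in\Omega_0$; uniqueness (Proposition~\ref{P:uniq}) forces them to coincide at $\tau=t$, yielding $\Phi(t+s,q^0)=\Phi(t,\Phi(s,q^0))$. Setting $s=0$ also gives $\Phi(0,\cdot)=\mathrm{Id}$ on $\Omega_0$, as both sides are good solutions agreeing at the initial time. The preservation of white noise measure is then immediate from Proposition~\ref{P:666}(b): if $q^0\sim\mu_{\mathrm{WN}}$, then $q^0\in\Omega_0$ almost surely and $\Phi(t,q^0)=q(t)\sim\mu_{\mathrm{WN}}$. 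The almost-sure identity $q(t)=\Phi(t,q^0)$ for all $t\in\R$ holds by construction on $\Omega_0$.

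The main obstacle I anticipate is purely organizational: because $\R$ is uncountable, one must avoid constructing $\Phi$ by assembling pointwise-in-$t$ definitions and then intersecting a continuum of full-measure events. The device above---defining $\Omega_0$ intrinsically through the existence of a good solution rather than time by time---produces a single measurable, flow-invariant set that handles all $t$ simultaneously, and so sidesteps the issue entirely.
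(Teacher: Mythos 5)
Your proposal is correct and follows essentially the same route as the paper: define the invariant full-measure set $\Omega_0$ (the paper calls it $\mathcal O$) intrinsically as the set of initial data admitting a global good solution, set $\Phi$ to be the good solution there and the identity elsewhere, deduce invariance of $\Omega_0$ from the time-translation invariance of Definition~\ref{D:good}, and obtain the group law from the uniqueness statement of Proposition~\ref{P:uniq}. The additional remarks you make about measurability, $\Phi(0,\cdot)=\mathrm{Id}$, and measure preservation via Proposition~\ref{P:666}(b) are correct elaborations that the paper leaves implicit.
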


\begin{proof}
Let $\mathcal O$ denote the subset of $W$ comprised of initial data for which the $\mathcal H_k$ flow admits a good global solution.  In view of Proposition~\ref{P:666}, $\PP(q^0\in\mathcal O)=1$.  On this set we define $\Phi$ in the obvious way; on the complement of $\mathcal O$ we define $\Phi$ to be the identity.

The definition of good solution guarantees that if $q(t)$ is a good solution, so is $q(t+T)$ for any $T\in \R$.  Thus the set $\mathcal O$ is invariant under $\Phi$.  As good solutions are unique, we may also deduce that $\Phi(t)\circ\Phi(s)=\Phi(t+s)$ for all $t,s\in \R$.
\end{proof}

\section{Invariance of white noise for KdV on the line}\label{S:KdV}

Our goal in this section is to complete the proof of the main result of the paper, namely, the existence of KdV dynamics for white noise initial data on the whole line and the invariance of white noise measure under these dynamics; see Theorem~\ref{T:ktoinfinity} below. Our procedure for doing this is to take a limit of the $\H_k$ flows, as $k\to\infty$.  Recall that for $H^{-1}(\R)$ initial data, this limit was shown to coincide with the KdV flow in \cite{KV}.  For concreteness, we shall send $k\to\infty$ along the sequence
\[
k_n = 2^n e^{i\pi/8}  \qtq{where}  n\in\mathbb{N}.
\]
Any other sequence with $\eps<\arg(k_n)<\frac\pi4 -\eps$ would serve equally well and would lead to the same limit.  The sparseness imposed on this sequence is purely so that we may prove $L^p(d\PP)$ convergence by summing increments; this restriction is readily removed via Urysohn's subsequence principle.

\begin{theorem}[Invariance of white noise for KdV on the line]\label{T:ktoinfinity} Given initial data $q^0$ following the white noise distribution on the line, let $q_n$ denote the corresponding global good solution to the $\H_{k_n}$ flow \eqref{H_k flow q} on the line, whose existence is guaranteed by Theorem~\ref{T:invariance-Hk-line}.  For any $T>0$, $q_n$ converges to a limit $q$ in the following sense: for any $1\leq p<\infty$ and $\beta>24$,
\begin{equation}\label{E:qnq}
\lim_{n\to \infty}\E\bigl\{ \bigl\|\langle x\rangle^{-\beta} (q_n-q) \bigr\|_{C_t( [-T,T]; H^{-1})}^p\bigr\} =0.
\end{equation}
Moreover, $q(t)$ is white noise distributed for every $t\in\R$. 
\end{theorem}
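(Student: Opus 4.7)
The plan is to run the argument through the diagonal Green's functions at a fixed auxiliary parameter $\vk$, say $\vk=e^{i\pi/8}$, and to recover the potentials via the diffeomorphism identity \eqref{E:q from g}. The essential point is that the local evolution equation \eqref{dt1/g} for $1/g(\vk)$ retains meaning pointwise in time for any good solution, even though the KdV nonlinearity itself does not. Once I have sufficiently strong convergence $g_n(\vk)\to g(\vk)$, Proposition~\ref{P:666}(d) (which asserts \eqref{E:q from g} almost surely for each $n$) will yield convergence of $q_n\to q$, and invariance at each time will then follow from the Minlos characterization \eqref{Minlos defn} applied to $q(t)$.

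For the Green's function convergence, I start from \eqref{dt1/g} with $k=k_n$, obeyed on the whole line by $1/g_n(\vk)$. Expanding $k_n^4/(k_n^2-\vk^2)=k_n^2+\vk^2+O(|k_n|^{-2})$, the formally divergent $k_n^2$ contributions cancel against the leading part $1/(2k_n)$ of $g_n(k_n)$, leaving a bounded driver $[-2\vk^2/g_n(\vk)]'$ plus corrections in which $g_n(k_n)-1/(2k_n)$ is multiplied by a coefficient of size $|k_n|^3$. Because Proposition~\ref{P:L53} alone bounds this difference only by $|k_n|^{-3}$ in weighted $H^{-1}$, one must extract the next term of \eqref{g-to-quadratic} (the quadratic-in-$q_n$ resolvent contribution) and show that after this subtraction the remainder is genuinely $o(1)$ in weighted $H^{-2}$ in expectation, uniformly in $t\in[-T,T]$. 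Taking the difference of the equations at levels $n$ and $m>n$, applying Gronwall together with the lower bounds on $|g_n(\vk)|^{-1}$ from Proposition~\ref{P:L56}, and using the fact that each $q_n(t)$ is white noise distributed (so the resolvent bounds of Sections~\ref{S:multiscale} and~\ref{S:L} apply uniformly in $n$), one obtains pointwise-in-$t$ convergence of $1/g_n(\vk)$ in $L^p(d\PP;\langle x\rangle^{-\beta}H^{-2})$.

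To upgrade this to uniform-in-$t$ convergence in weighted $H^1$, I invoke Lemma~\ref{L:gL dif} (valid uniformly in $n$ by the white noise invariance of each $q_n(t)$) to obtain H\"older-in-$t$ equicontinuity in weighted $H^{-2}$, feed these bounds into Lemma~\ref{L:KCT'} to upgrade pointwise-in-$t$ convergence to $L^p_\omega C_t^\gamma$ convergence, and then interpolate against the uniform weighted $H^1$ bound on $1/g_n(\vk)$ from Proposition~\ref{P:L56}. The identity $g_m-g_n=-g_m g_n[1/g_m-1/g_n]$ together with the companion weighted $H^1$ bound from Proposition~\ref{P:L53} transfers the conclusion from $1/g_n(\vk)$ to $g_n(\vk)$. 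Passing to the limit in
\[
q_n=\bigl[\tfrac{g_n'(\vk)}{2g_n(\vk)}\bigr]'+\bigl[\tfrac{g_n'(\vk)}{2g_n(\vk)}\bigr]^2+\tfrac{1}{4g_n(\vk)^2}-\vk^2
\]
then yields \eqref{E:qnq}: the first summand sits in weighted $H^{-1}$ via $g_n'/(2g_n)\in$ weighted $L^2$; the second uses the embedding $H^1\hookrightarrow L^\infty$ available in one dimension; the third is in weighted $L^\infty$. The requirement $\beta>24$ reflects the splittings of $\langle x\rangle^{-\beta}$ needed across products, derivatives, and the quadratic term. Invariance follows at once: Theorem~\ref{T:invariance-Hk-line} gives $\langle\varphi,q_n(t)\rangle\sim N(0,\|\varphi\|_{L^2}^2)$ for every Schwartz $\varphi$, and dominated convergence on $e^{i\langle\varphi,q_n(t)\rangle}$ combined with \eqref{E:qnq} propagates the Gaussian characteristic function to $q(t)$, whereupon \eqref{Minlos defn} identifies $q(t)$ as white noise.

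The principal obstacle is the cancellation underlying the second paragraph: the factor $|k_n|^3$ overwhelms the naive bound on $g_n(k_n)-1/(2k_n)$, so the estimates of Proposition~\ref{P:L53} must be combined with the explicit linear-in-$q_n$ contribution to $g_n(k_n)$ read off from \eqref{g-to-quadratic}. After this algebraic subtraction, what must be shown to vanish in expectation is a quadratic-in-$q_n$ object whose control rests on the $H^{-1}\to H^1$ resolvent bounds established by the multiscale analysis of Section~\ref{S:multiscale} and the Hilbert--Schmidt arithmetic of Section~\ref{S:singlescale}. This is the heart of the proof; everything after it is a synthesis of the regularity machinery developed in Sections~\ref{S:L} and~\ref{S:multiscale}.
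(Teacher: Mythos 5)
Your outline correctly identifies the global architecture of the proof: push the estimate through $1/g_n(\vk)$ at a fixed auxiliary energy, upgrade pointwise-in-time bounds to $C_t$-convergence via the Kolmogorov-type equicontinuity machinery (Lemmas~\ref{L:gL dif} and~\ref{L:KCT'}), transfer to $g_n(\vk)$ by the algebraic identity, recover $q_n$ from \eqref{E:q from g}, and propagate the Gaussian characteristic function to conclude invariance. You also correctly spot the delicate algebraic cancellation: when feeding \eqref{dt1/g} with $k=k_n$, the $|k_n|^3$ prefactor defeats the naive bound from Proposition~\ref{P:L53} on $g_n(k_n)-\tfrac{1}{2k_n}$, so one must extract the linear-in-$q$ contribution from \eqref{g-to-quadratic} and exhibit the extra smallness in the combination $(2k)^2R_0(4k)-k^2R_0(2k)$. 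These are the right ingredients.

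However, the central step — the Gronwall comparison of the evolution equations for $1/g_n(\vk)$ and $1/g_m(\vk)$ — does not close, and the reason is not a matter of technicalities. A Gronwall argument bounds $\sup_{|t|\le T}\|1/g_n-1/g_m\|$ by the integral of the forcing multiplied by $\exp$ of the time-integrated Lipschitz constant. Here the Lipschitz constant involves quantities like $\|\langle x\rangle^{-\beta}/g(\vk)\|_{H^1}$, which Proposition~\ref{P:L56} controls only in $L^p(d\PP)$; it is an almost-surely unbounded random variable with, at best, polynomial moments. Its exponential is not integrable against $d\PP$, so the Gronwall bound cannot be closed in expectation, which is what \eqref{E:qnq} requires. (This is the same phenomenon that, as the introduction emphasizes, forbids direct local-wellposedness arguments in the white noise setting: any constant depending on a facet of the data must, by ergodicity, be essentially arbitrary.) There is also a structural difficulty you do not address: \eqref{dt1/g} for $1/g_n(\vk)$ involves $g_n$ evaluated at the energies $k_n$ and $\bar k_n$, so subtracting the equations at levels $n$ and $m$ produces forcing terms like $g_n(k_n)/g_n(\vk)-g_m(k_m)/g_m(\vk)$ that are not controlled by $1/g_n(\vk)-1/g_m(\vk)$ alone; the Gronwall differential inequality is not self-contained.

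The paper's Proposition~\ref{P:12main} circumvents both problems with a crucial trick you have omitted: because the $\H_k$ flows commute and each preserves white noise, one may write $q_{2k}(t)=e^{tJ\nabla[\H_{2k}-\H_k]}q_k(t)$ and then, since $q_k(t)$ is again white noise distributed, reduce the comparison of $1/g$ at two different $k$'s \emph{at the same time $t$} to the action of a single \emph{difference flow} for time $t$ on fresh white noise. Under this difference flow, white noise is preserved, so $\E\{\|\langle x\rangle^{-\beta}\partial_t(1/g(\vk))\|_{H^{-2}}^2\}$ is independent of $t$, and the fundamental theorem of calculus plus Minkowski gives the $\sup_{|t|\le T}$-bound directly as $T$ times a fixed-time expectation — with no Gronwall and no exponential of anything random. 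The cancellation you describe then takes place inside the computation of $\partial_t(1/g(\vk))$ for the difference flow, organized exactly along the lines you sketch. Without this commutation-and-invariance reduction, the argument does not go through.
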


While the early part of this section is devoted to the proof of this theorem, we also include a further discussion on the nature of our solutions and two results, namely, Corollaries~\ref{C:intrinsic} and~\ref{C:KdV group}, that illustrate important virtues of these solutions.

The path to proving Theorem~\ref{T:ktoinfinity} is as follows: we first control the both the diagonal Green's function and its reciprocal and then use Lemma~\ref{L:diffeo} to deduce control on the solutions $q_n$ themselves.  As a first step, we treat $1/g_n$. 

\begin{proposition}\label{P:12main} Fix $k\in \{2^n e^{i\pi/8}:\,  n\in\mathbb{N}\}$ and let $\vk$ be strictly admissible so that $|\vk|\leq \frac12|k|$.  Then for $\beta>18$,
\begin{equation} \label{L55-sum}
 \sup_{t\in[-T,T]}\E\biggl\{ \Bigl\|\langle x\rangle^{-\beta}\Bigl[\tfrac{1}{2g(q_{2k}(t);\vk)} - \tfrac{1}{2g(q_k(t);\vk)}\Bigr]\Bigr\|_{H^{-2}}^2 \biggr\} \lesssim_{|\vk|,T}|k|^{-1}. 
\end{equation}
\end{proposition}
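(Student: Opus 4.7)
The plan is to exploit the identity of initial data $q_{2k}(0)=q_k(0)=q^0$, which gives $D(0)=0$, and then control the time integral of the difference of the flow derivatives. Applying \eqref{dt1/g} with hot parameter $2k$ to $q_{2k}$ and with hot parameter $k$ to $q_k$ yields
\[
D(t) = \int_0^t \partial_x[\mathcal B_{2k}(s) - \mathcal B_k(s)]\, ds,\qquad
\mathcal B_\ell := \tfrac{\ell^2+\bar\ell^2}{g_\ell(\vk)} - \tfrac{2\ell^5 g_\ell(\ell)}{(\ell^2-\vk^2) g_\ell(\vk)} - \tfrac{2\bar\ell^5 g_\ell(\bar\ell)}{(\bar\ell^2-\vk^2) g_\ell(\vk)},
\]
where $g_\ell(\,\cdot\,):=g(\,\cdot\,;q_\ell(s))$. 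Commuting $\langle x\rangle^{-\beta}$ past $\partial_x$ (the weight derivative costs only a bounded multiplier) and using boundedness of $\partial_x: H^{-1}\to H^{-2}$ together with Minkowski, the task reduces to showing
\[
\sup_{s\in[-T,T]}\E\bigl\{\|\langle x\rangle^{-\beta}[\mathcal B_{2k}(s)-\mathcal B_k(s)]\|_{H^{-1}}^2\bigr\}^{1/2}\lesssim_{|\vk|,T}|k|^{-1/2}.
\]

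The principal algebraic manoeuvre is to reveal the cancellation in $\mathcal B_\ell$ as $|\ell|\to\infty$. Writing $\tilde g_\ell(\ell):=g_\ell(\ell)-\tfrac{1}{2\ell}$, the identity $2\ell^3 g_\ell(\ell)=\ell^2+2\ell^3\tilde g_\ell(\ell)$ allows one to rearrange exactly
\[
\mathcal B_\ell = -\tfrac{\vk^2 c(\ell,\vk)}{g_\ell(\vk)} - \tfrac{2\ell^5\tilde g_\ell(\ell)}{(\ell^2-\vk^2)g_\ell(\vk)} - \tfrac{2\bar\ell^5\tilde g_\ell(\bar\ell)}{(\bar\ell^2-\vk^2)g_\ell(\vk)},
\]
with $c(\ell,\vk)=\ell^2/(\ell^2-\vk^2)+\bar\ell^2/(\bar\ell^2-\vk^2)=2+O(|\ell|^{-2})$. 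Splitting $\tilde g_\ell(\ell)$ via \eqref{g-to-quadratic} into a linear-in-$q$ part $A_\ell(x):=-\tfrac{1}{4\ell^2}\int e^{-2\ell|x-y|}q_\ell(y)\,dy$ and a quadratic-in-$q$ part $B_\ell(x):=\langle\delta_x, R_0(\ell)q_\ell R(\ell)q_\ell R_0(\ell)\delta_x\rangle$, a Fourier-space computation in the spirit of Lemma~\ref{L:HS} and of the proof of Proposition~\ref{P:L53} identifies the leading contribution of the $A_\ell$ term as $-q_\ell/2$ modulo a remainder of size $|\ell|^{-1/2}$ in weighted $H^{-1}$ under $L^2(d\PP)$; the $B_\ell$ term is then treated by substituting the Neumann expansion $R(\ell)=R_0(\ell)-R_0(\ell)q_\ell R(\ell)$ and estimating each piece via the Hilbert--Schmidt machinery combined with the multiscale resolvent bounds of Section~\ref{S:multiscale}. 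Altogether this produces a decomposition
\[
\mathcal B_\ell = -\tfrac{2\vk^2-q_\ell}{g_\ell(\vk)} + \mathcal R_\ell,\qquad
\sup_{s\in[-T,T]}\E\bigl\{\|\langle x\rangle^{-\beta}\mathcal R_\ell(s)\|_{H^{-1}}^2\bigr\}\lesssim_{|\vk|,T}|\ell|^{-1}.
\]

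Since $q_{2k}(s)$ and $q_k(s)$ both obey the white-noise law (Proposition~\ref{P:666}(b)), the triangle inequality applied to $\mathcal R_{2k}-\mathcal R_k$ produces an acceptable contribution of size $|k|^{-1/2}$. For the leading $\ell$-independent functional, we invoke the diffeomorphism \eqref{E:q from g} to express $q$ locally in $U:=1/(2g(\vk))$ and its derivatives: a short computation gives $q\cdot 2U=-U''+\tfrac32 (U')^2/U+2U^3-2\vk^2 U$, whence $-(2\vk^2-q)/g(\vk)=\Phi(U):=-U''+\tfrac32 (U')^2/U+2U^3-6\vk^2 U$. Thus
\[
\mathcal B_{2k}-\mathcal B_k = \Phi(U_{2k})-\Phi(U_k) + \mathcal R_{2k}-\mathcal R_k,
\]
and a Taylor expansion of $\Phi$ about $U_k$ bounds the $\Phi$-difference in weighted $H^{-1}$ by a polynomial in $D$ and its derivatives whose coefficients are controlled in $L^p(d\PP)$ via Proposition~\ref{P:666}(c). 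A self-referential Gronwall argument in the weighted $H^{-2}$-norm of $D$, using the dispersive linear-Airy structure $\partial_t D=-D'''-6\vk^2 D'+\text{(nonlinear)}$ to absorb the highest derivatives of $D$ into the linear evolution, then closes the estimate.

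The main obstacle is twofold. First, establishing the $|\ell|^{-1/2}$ decay of the quadratic-in-$q$ contribution to $\mathcal R_\ell$: a direct application of Proposition~\ref{P:L53} yields only $O(1)$ in weighted norm, so one must exploit the Neumann expansion of $R(\ell)$ and compute the resulting Gaussian moment integrals sharply, using the identity $\E\{|\hat q(\xi)|^2\}=(2\pi)^{-1}$ from \eqref{931}. Second, controlling the $\Phi$-difference despite the appearance of derivatives up to order three on $D$; this is handled by combining the a priori $C_t^\gamma H_{\mathrm{loc}}^\sigma$ control on $U_\ell$ from Proposition~\ref{P:666}(c) with the dispersive mapping properties of the linearized Airy operator in weighted Sobolev spaces.
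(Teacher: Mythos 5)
Your proposal diverges fundamentally from the paper's argument at the very first step and, as written, contains a gap I do not see how to close.

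The paper does not merely use that $q_{2k}(0)=q_k(0)$; its key manoeuvre is that the $\mathcal H_{2k}$ and $\mathcal H_k$ flows \emph{commute}, so that
$q_{2k}(t) = e^{tJ\nabla[\mathcal H_{2k}-\mathcal H_k]}\, q_k(t)$
exactly.  Since white noise is invariant under the $\mathcal H_k$ flow, the quantity to be estimated becomes the time increment from $\tau=0$ to $\tau=t$ of $\tfrac{1}{2g(\tilde q(\tau);\vk)}$ for a \emph{single stationary process} $\tilde q(\tau)= e^{\tau J\nabla[\mathcal H_{2k}-\mathcal H_k]}q$ with $q$ white noise.  The fundamental theorem of calculus plus Minkowski then bound the left-hand side by $T$ times the ($\tau$-independent!) $L^2(d\PP)$ size of $\partial_\tau\tfrac1{2g}$, so \emph{no Gronwall argument is needed}.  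Crucially, $\partial_\tau\tfrac1{2g}$ under the \emph{difference} flow is the difference of two copies of \eqref{dt1/g} evaluated at the \emph{same} underlying $q$, which produces a secondary cancellation: the linear-in-$q$ piece is the Fourier multiplier $(2k)^2R_0(4k)-k^2R_0(2k) = -3k^2\partial^2 R_0(2k)R_0(4k)$, whose symbol is $O(\xi^2/|k|^2)$ near the origin, whereas each summand alone is $O(1)$ there.

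You instead run a Gronwall loop on $D=U_{2k}-U_k$, with the ``large'' part of $\mathcal B_{2k}-\mathcal B_k$ being $\Phi(U_{2k})-\Phi(U_k)$.  Your algebra identifying $\Phi(U)=-U''+\tfrac32(U')^2/U+2U^3-6\vk^2 U$ from \eqref{E:q from g} is correct, and the decomposition $\mathcal B_\ell=\Phi(U_\ell)+\mathcal R_\ell$ with $\mathcal R_\ell$ small in weighted $H^{-1}$ is plausible. But the Gronwall step does not close: $\partial_t D$ contains $-D'''$ together with further derivatives of $D$, and in a weighted $H^{-2}$ energy identity the commutators of $\partial^3$ with $\langle x\rangle^{-2\beta}$ (and with $(1-\partial^2)^{-2}$) produce terms that require control of $\|\langle x\rangle^{-\beta}D\|_{H^\sigma}$ for $\sigma>-2$.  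Proposition~\ref{P:666}(c) controls these norms only to size $O(1)$, which is precisely what you are \emph{not} allowed to pay; the quantity you need to bound, $\|\langle x\rangle^{-\beta}D\|_{H^{-2}}\lesssim |k|^{-1/2}$, is the conclusion, not an available input.  Appealing to ``dispersive mapping properties of the linearized Airy operator in weighted Sobolev spaces'' does not resolve this: the Airy group does not contract weighted Sobolev norms, and the weight commutators do not gain any power of $|k|$.  The commutativity/stationarity reduction is therefore not a cosmetic shortcut but the mechanism that makes the proposition tractable, and without it the estimate becomes circular.
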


\begin{proof}  In view of Proposition~\ref{P:666}, it suffices to prove the result in finite volume with a bound independent of $L\in 2^\mathbb{N}$.  Henceforth, we will regard $L$ as fixed and suppress the dependence of $q$ and $g$ on $L$.

As the $\H_k$ flows commute on $H^{-1}(\R/2L\Z)$, we may write
\[
q_{2k}(t) = e^{tJ\nabla[\mathcal{H}_{2k}-\mathcal{H}_k]}q_{k}(t).
\] 
Using invariance of white noise under the $\H_k$ flow, we may therefore replace
\[
\tfrac{1}{2g(q_{2k}(t);\vk)}-\tfrac{1}{2g(q_{k}(t);\vk)}\qtq{by}\tfrac{1}{2g(t;\vk)}-\tfrac{1}{2g(0;\vk)}
\]
in \eqref{L55-sum}, where $g(t;\vk):=g(q(t),\vk)$ and
\[
q(t):=e^{tJ\nabla[\H_{2k}-\H_k]}q,\quad \text{with $q$ white noise distributed.}
\]
Applying the fundamental theorem of calculus and Minkowski's inequality, we get
\begin{align*}
\sup_{|t|\leq T}\E\Bigl\{ \bigl\|\langle x\rangle^{-\beta}\bigl[ \tfrac{1}{2g(t;\vk)}-\tfrac{1}{2g(0;\vk)}\bigr]\bigr\|_{H^{-2}}^2 \Bigr\} & \lesssim \E\biggl\{ \biggl[\int_{-T}^T \bigl\|\langle x\rangle^{-\beta}\partial_t \tfrac{1}{g(t;\vk)}\bigr\|_{H^{-2}}\,dt\biggr]^2\biggr\}\\
& \lesssim T \sup_{t\in[-T,T]}\E\bigl\{ \bigl\|\langle x\rangle^{-\beta}\partial_t \tfrac{1}{g(t;\vk)}\bigr\|_{H^{-2}}^2\bigr\} .
\end{align*}
As white noise is invariant under the difference flow, the expectation here is actually independent of $t\in[-T,T]$.  With this in mind, we suppress the $t$ dependence in what follows.

Using \eqref{dt1/g}, we now write
\begin{align}
\partial_t \tfrac{1}{2g(\vk)}& = \Bigl\{\tfrac1{g(\vk)}\Bigl( -2(2k)^3[g(2k)-\tfrac{1}{4k}]+2k^3[g(k)-\tfrac{1}{2k}]\Bigr) \label{57}  \\
& \quad +\tfrac{1}{g(\vk)}\Bigl( -2(2\bar  k)^3[g(2\bar k)-\tfrac{1}{4\bar k}]+2\bar k^3[g(\bar k)-\tfrac{1}{2\bar k}]\Bigr) \label{58} \\
& \quad - 2\tfrac{(2k)^3\vk^2}{(2k)^2-\vk^2}  \tfrac1{g(\vk)} \bigl[g(2k)-\tfrac{1}{4k}\bigr] + 2\tfrac{k^3\vk^2}{k^2-\vk^2}\tfrac{1}{g(\vk)}\bigl[g(k)-\tfrac{1}{2k}\bigr] \label{59}\\
& \quad - 2\tfrac{(2\bar k)^3\vk^2}{(2\bar k)^2-\vk^2} \tfrac1{g(\vk)}\bigl[ g(2\bar k)-\tfrac{1}{4\bar k}\bigr] + 2\tfrac{\bar k^3\vk^2}{\bar k^2-\vk^2}\tfrac1{g(\vk)}\bigl[g(\bar k)-\tfrac{1}{2\bar k}\bigr]\label{510} \\
&\quad  + \left[\tfrac{3k^2{\vk^4}}{(k^2-\vk^2)(4k^2-\vk^2)}+\tfrac{3\bar k^2 {\vk^4}}{(\bar k^2-\vk^2)(4\bar k^2-\vk^2)}\right]\tfrac{1}{g(\vk)}\Bigr\}'. \label{511} 
\end{align}
It remains to estimate the contribution of each of these terms. In what follows, implicit constants are allowed to depend on $\vk$. 

We start with \eqref{59} and \eqref{510}.  By symmetry, it suffices to consider the second term in \eqref{59}. Using \eqref{L53-1.5} and \eqref{L56-3.5}, we estimate this contribution by 
\begin{align*}
&\lesssim_{|\vk|} |k|\E\bigl\{\bigl\|\langle x\rangle^{-\beta}\tfrac1{g(\vk)}\bigl[g(k)-\tfrac{1}{2k}\bigr]\bigr\|_{L^2}^2\bigr\}\\
& \lesssim_{|\vk|} |k| \E\bigl\{\|\langle x\rangle^{-\beta/2}\tfrac{1}{g(\vk)}\|_{L^4}^4\bigr\}^{\frac12} 
	\E\bigl\{\|\langle x\rangle^{-\beta/2}[g(k)-\tfrac{1}{2k}]\|_{L^4}^4\bigr\}^{\frac12}
\lesssim_{|\vk|} |k|^{-4}.
\end{align*} 

Using Cauchy--Schwarz and \eqref{L56-3.5}, we estimate the contribution of \eqref{511} by
\begin{align*}
\lesssim_{|\vk|}|k|^{-2}\E\bigl\{ \|\langle x\rangle^{-\beta}\tfrac{1}{g(\vk)}\|_{L^2}^2\bigr\}\lesssim_{|\vk|} |k|^{-2}.
\end{align*}

It remains to treat \eqref{57} and \eqref{58}.  By symmetry, it suffices to consider \eqref{57}.  Using \eqref{g-to-quadratic}, we write
\begin{align}
-(2k)^3[g(2k)-\tfrac{1}{4k}] + k^3[g(k)-\tfrac{1}{2k}]& = [(2k)^2R_0(4k)-k^2R_0(2k)]q \label{512}\\
& \quad - (2k^3)\langle \delta_x, R_0(2k)qR(2k)qR_0(2k)\delta_x\rangle \label{513}\\
& \quad + k^3\langle \delta_x, R_0(k) q R(k) q R_0(k) \delta_x\rangle. \label{514} 
\end{align}

We first consider \eqref{512}.  By an explicit computation, \eqref{L56-1.5}, and H\"older's inequality, we estimate the contribution of \eqref{512} by
\begin{align*}
&\lesssim \E\bigl\{\bigl\|\langle x\rangle^{-\beta}\tfrac{1}{g(\vk)}[(2k)^2R_0(4k)-k^2R_0(2k)]q\bigr\|_{H^{-1}}^2 \bigr\} \\
& \lesssim |k|^4 [\E\{ \| \langle x\rangle^{2-\beta}\tfrac{1}{g(\vk)}\|_{H^1}^4\}]^{\frac12}  [\E\{ \|\langle x\rangle^{-2} \partial^2 R_0(2k)R_0(4k)q\|_{H^{-1}}^4\}]^{\frac12} \\
& \lesssim_{|\vk|} |k|^4  [\E\{ \|\langle x\rangle^{-2} \partial R_0(2k)R_0(4k)q\|_{L^2}^4\}]^{\frac12}.
\end{align*}
As $q$ is white noise distributed, a computation using \eqref{ONB rep} shows
\begin{align*}
\E\bigl\{\| \langle x\rangle^{-2}\partial R_0(2k)R_0(4k)q\|_{L^2}^2\bigr\} &= \| \langle x\rangle^{-2}\partial R_0(2k)R_0(4k)\|_{\HS}^2 \lesssim |k|^{-5},
\end{align*}
and hence by Lemma~\ref{L3},
\[
\E\bigl\{ \|\langle x\rangle^{-2}\partial R_0(2k)R_0(4k)q\|_{L^2}^4\bigr\}^{\frac12} \lesssim |k|^{-5}.
\]
Thus the contribution of \eqref{512} is controlled by $|k|^{-1}$, which is acceptable. 

It remains to treat the contributions of \eqref{513} and \eqref{514}.  By symmetry, it suffices to consider \eqref{514}.  We expand to one order higher in $q$ and estimate it by
\begin{align}\label{514-1}
&\lesssim |k|^6\E\bigl\{\bigl\|\langle x\rangle^{-\beta}\tfrac1{g(\vk)}\langle \delta_x, R_0(k)qR(k)qR_0(k)\delta_x\rangle\bigr\|_{L^2}^2 \bigr\} \notag\\
& \lesssim |k|^6 \E\biggl\{\| \langle x\rangle^{-\beta/2} \tfrac{1}{g(\vk)}\|_{L^4}^2\biggl[  \| \langle x\rangle^{-\beta/2}\langle\delta_x,R_0(k)qR_0(k)qR_0(k)\delta_x\rangle\|_{L^4}^2\notag\\
& \quad\quad\quad\quad\qquad\quad+ \| \langle x\rangle^{-\beta/2}\langle \delta_x,R_0(k)qR_0(k)qR(k)qR_0(k)\delta_x\rangle\|_{L^4}^2\biggr]\biggr\}\notag \\
& \lesssim_{|\vk|}|k|^6 \Bigl[ \E\bigl\{\bigl \|\langle x\rangle^{-\beta/2}\langle \delta_x, R_0(k)qR_0(k)qR_0(k)\delta_x\rangle\bigr\|_{L^4}^4\bigr\}\notag\\
& \quad\quad\quad \qquad+ \E\bigl\{\bigl\| \langle x\rangle^{-\beta/2}\langle \delta_x,R_0(k)qR_0(k)qR(k)qR_0(k)\delta_x\rangle\bigr\|_{L^4}^4\bigr\}\Bigr]^{\frac12} .
\end{align}

We estimate
\begin{align*}
| \langle \delta_0, R_0(k)qR_0(k)qR_0(k)\delta_0\rangle| & \lesssim \|qR_0(k)\delta_0\|_{H_{\kappa}^{-1}}^2 \|R_0(k)\|_{H_{\kappa}^{-1}\to H_{\kappa}^1} 
\lesssim \|qR_0(k)\delta_0\|_{H_{\kappa}^{-1}}^2
\end{align*}
where, as before, $\kappa=|k|$.  As $q$ is white noise distributed, we may use \eqref{weighted-wn} followed by Lemma~\ref{L3} to obtain
\[
\E\bigl\{ \|qR_0(k)\delta_0\|_{H_\kappa^{-1}}^p\} \lesssim_p |k|^{-2p}
\] 
for any $1\leq p<\infty$.  Thus, using translation invariance of white noise, we deduce
\begin{align}
\E\bigl\{\| \langle x\rangle^{-\beta/2}\langle\delta_x,R_0(k)qR_0(k)qR_0(k)\delta_x\rangle\|_{L^4}^4\bigr\} & \lesssim \E\{ \|qR_0(k)\delta_0\|_{H_{\kappa}^{-1}}^{8}\}\| \langle x\rangle^{-\beta/2}\|_{L^4}^4   \notag\\
& \lesssim |k|^{-16}. \label{E:MLA}
\end{align}

To handle the contribution of the final term in \eqref{514-1}, we use Lemma~\ref{P:11}, \eqref{E:C:q op}, and \eqref{E:9-2} to estimate
\begin{align*}
\E\bigl\{ |&\langle \delta_0,R_0(k)qR_0(k)qR(k)qR_0(k)\delta_0\rangle|^4\bigr\} \\
& \lesssim \E\bigl\{ \|\langle x\rangle^4\delta_0\|_{H_{\kappa}^{-1}}^{8}  \| \langle x\rangle^{-4} R_0(k) \langle x\rangle^4\|_{H_{\kappa}^{-1}\to H_{\kappa}^1}^8
	\|\langle x\rangle^{-2} R_0(k) \langle x\rangle^{2}\|_{H_\kappa^{-1}\to H_\kappa^1}^4 \\
&\hspace*{13em} \times  \|\langle x\rangle^{-2}q\|_{H_{\kappa}^{1}\to H_{\kappa}^{-1}}^{12} \|R(k)\langle x\rangle^{-2}\|_{H_{\kappa}^{-1}\to H_{\kappa}^1}^4 \bigr\} \\
& \lesssim |k|^{-4}|k|^{-12} \lesssim |k|^{-16}. 
\end{align*}
By translation invariance of white noise, we get
\begin{align}
\E\{\|\langle x\rangle^{-\beta/2}\langle \delta_x,R_0(k)qR_0(k)qR(k)qR_0(k)\delta_x\rangle\|_{L^4}^4\} &\lesssim |k|^{-16} \| \langle x\rangle^{-\beta/2}\|_{L^4}^4 \notag\\
&\lesssim  |k|^{-16}. \label{E:MLB}
\end{align}

Thus, continuing from \eqref{514-1}, we deduce 
\[
|k|^6\E\bigl\{\bigl\|\langle x\rangle^{-\beta}\tfrac1{g(\vk)}\langle \delta_x, R_0(k)qR(k)qR_0(k)\delta_x\rangle\bigr\|_{L^2}^2 \bigr\} \lesssim_{|\vk|} |k|^{-2},
\]
which is acceptable.  This completes the proof of the proposition.\end{proof}

Proposition~\ref{P:12main} provides the key input to complete the

\begin{proof}[Proof of Theorem~\ref{T:ktoinfinity}]
Throughout this proof, we write $g_n(t; \vk)=g(q_n(t),\vk)$.  Combining \eqref{L55-sum} and \eqref{666c}, we see that
\begin{align*}
\lim_{n\to\infty} \sup_{|t|\leq T} \E\biggl\{ \Bigl( \sum_{m=n}^\infty \bigl\|\langle x\rangle^{-\beta}\bigl[\tfrac{1}{g_n(t; \vk)} - \tfrac{1}{g_m(t;\vk)}\bigr]\bigr\|_{H^1} \Bigr)^2\biggr\}=0.
\end{align*}
Thus, invoking \eqref{666c} again, we may apply Lemma~\ref{L:KCT'} to deduce that there is a process $\frac1g$ so that for $\beta>18$,
\begin{align}\label{end1}
\lim_{n\to \infty}\E\bigl\{\bigl\|\langle x\rangle^{-\beta} \bigl[\tfrac1{g_n(\vk)}-\tfrac1{g(\vk)}\bigr]\bigr\|_{C_t([-T,T];H^1)}^p\bigr\}=0.
\end{align}

In view of \eqref{666c} and the identity 
$$
g_n-g_m = g_ng_m\bigl[\tfrac1{g_m}- \tfrac{1}{g_n} \bigr],
$$
it now follows that $g_n(\vk)\to g(\vk)$ in the same topology:
\begin{align}\label{end2}
\lim_{n\to \infty}\E\bigl\{\bigl\|\langle x\rangle^{-\beta} \bigl[g_n(\vk) - g(\vk)\bigr]\bigr\|_{C_t([-T,T];H^1)}^p\bigr\}=0,
\end{align}
provided $\beta>30$.

A posteriori, we may relax the condition on $\beta$ in \eqref{end1} and \eqref{end2} to merely $\beta>6$, by interpolating with \eqref{666c}.

We now turn our attention to $q_n$.  Recall from Proposition~\ref{P:666}(d) that
$$
q_n = \bigl[\tfrac{g_n'(\vk)}{2g_n(\vk)}\bigr]' + \bigl[\tfrac{g_n'(\vk)}{2g_n(\vk)}\bigr]^2 +  \bigl[\tfrac{1}{4g_n^2(\vk)} - \vk^2 \bigr].
$$
As both $g_n$ and $\frac1{g_n}$ are convergent in the senses \eqref{end1} and \eqref{end2}, respectively, for $\beta >6$, it follows that $q_n$ are convergent in the sense \eqref{E:qnq} for $\beta>24$.

It remains to show that $q(t)$ is white noise distributed.  This follows immediately from the characterization \eqref{Minlos defn}, \eqref{E:qnq}, dominated convergence, and the fact that each $q_n(t)$ is white noise distributed (see Proposition~\ref{P:666}).
\end{proof}

In \cite{KV} it is shown that $C_t(\R; H_x^{-1}(\R))$ solutions to KdV constructed as the (unique!) limit of Schwartz solutions are in fact distributional solutions. Nevertheless, the uniqueness question for distributional solutions remains open at this time.  In the torus setting, it is currently unknown if the Kappeler--Topalov solutions (constructed in \cite{MR2267286}) are distributional solutions in any sense.  The work \cite{Christ'05} of Christ gives strong evidence that distributional solutions may not be unique at negative regularity.

The time integration inherent in the definition of distributional solutions is essential in making any sense of the KdV equation \eqref{KdV} for such irregular data, due to the impossibility of defining $q(t,x)^2$ pointwise in time.  It is natural to ask if the solutions constructed in Theorem~\ref{T:ktoinfinity} are distributional solutions.  We believe that they are; nevertheless, verifying this seems to be a challenging problem.   Before presenting what additional properties of our solutions we can prove at this time (namely Corollaries~\ref{C:intrinsic} and~\ref{C:KdV group}) we would like to devote a little space to sharing our current perspective on this topic.

Essential to showing that the solutions to KdV constructed in \cite{KV} are distributional solutions is the local smoothing effect, which guarantees that
\begin{equation}\label{LS}
\iint \frac{q(t,x)^2}{\langle t\rangle^2\langle x\rangle^2}\,dx\,dt < \infty.
\end{equation}
This immediately yields that the nonlinearity can be interpreted as a space-time distribution.

White noise solutions cannot obey \eqref{LS}, even under stronger space-time localization. Indeed, by invariance of white noise measure, the local $L^2_x$ norm is almost surely infinite at every time.  The intuitive picture here is analogous to the failure of local smoothing for periodic initial data (i.e., the torus).

A naive explanation for the divergence of the square is that it originates in low frequencies and will be subdued by the derivative outside the nonlinearity. Indeed, the most obvious way to remove the divergence in expectation is to Wick-order the nonlinearity, which will not affect the dynamics precisely due to this derivative.  However, the underlying divergence of the square of white noise is much more severe than that.  Indeed, choosing trigonometric series in the representation \eqref{ONB rep} of white noise on the circle, one immediately sees that the average value of the zero Fourier mode diverges, while that of other modes remains zero.  However, the fluctuations of all modes diverge (the probability laws lose tightness)! 

Nevertheless, we still believe that our solutions are distributional solutions.  The primary source of this hope comes from a subtly different dispersive effect, which by analogy with local smoothing, we would call the local averaging effect.  A simple computation with the Airy equation reveals its underlying phenomenology:  Let us write $q(t) = e^{-t\partial^3} q^0$ for the solution of the Airy equation with initial data $q^0$, which is white noise distributed.  Then $q(t)$ is a Gaussian process with covariance
$$
C(t,x;s,y):=\E\{ q(t,x) q(s,y) \} = (3|t-s|)^{-1/3} \Ai\Bigl( (3[t-s])^{-1/3} [x-y]\Bigr),
$$
where $\Ai$ denotes the Airy function.  It is now reasonably easy to verify that for any Schwartz function $\psi:\R^2\to\R$,
$$
\iint :q(t,x)^2: \psi(t,x)\,dt\,dx,
$$
where colons represent Wick ordering, yields a well-defined random variable with second moment
$$
2  \iint\iint \psi(t,x) C(t,x;s,y)^2 \psi(s,y)\,dy\,ds\,dx\,dt  <\infty.
$$
See \cite[Theorem~I.3]{Pphi2}.  The intuitive explanation behind this is that the time integration averages out some of the divergence by exploiting the rapid oscillation in time of waves of high wave number.

One word of warning seems appropriate at this point, namely, that the local averaging effect of the $\mathcal H_k$ flows is too weak to make sense of the square of these solutions (with white noise initial data) as space-time distributional solutions in the above sense.  Therein, we already see the first major obstacle to be overcome in treating the question of distributional  solutions by the methods of this paper.
 
Nevertheless, we will demonstrate that the solutions constructed in Theorem~\ref{T:ktoinfinity} are solutions in at least one intrinsic sense.  By `intrinsic' we mean determined by the solution itself, independent of any limiting procedures used to construct it.  The idea stems from Proposition~3.1 in \cite{KV} and as such, let us first articulate it in the setting of $C_t^{ } H^{-1}_x$ solutions on the line and on the circle:

\begin{definition}
We say that $q\in C_t^{ } H^{-1}_x$ is a \emph{green solution} to \eqref{KdV} if for all $\vk>0$ sufficiently large, the associated diagonal Green's function $g(t,x;q,\vk)$ obeys
\begin{align}\label{green solution}
\tfrac 1{2g(t; \vk)}- \tfrac 1{2g(0;\vk)} =  \Bigl(\int_0^t \tfrac {q(s)}{g(s;\vk)} - \tfrac{2\vk^2}{g(s;\vk)} \, ds\Bigr)'  .
\end{align}
Note that this is inherently meaningful as an identity of $C_t^{ } H^{-2}_x$ functions. 
\end{definition}

The arguments of \cite{KV} give immediately that the solutions constructed therein are green solutions.  It is also our belief that green solutions are unique in that setting; however, this is a challenging open problem (cf. \cite{Christ'05,MR0988885}).  Nevertheless, we do know that green solutions are unique and coincide with classical notions of solution already for $q\in C_t^{ }L^2_x$.  More explicitly, sending $\vk\to\infty$ in \eqref{green solution}, one finds that green solutions are distributional solutions, which are then unique by the results of \cite{MR2789490,YZhou}.

As our next result, we observe that the solutions constructed in Theorem~\ref{T:ktoinfinity} are green solutions in the sense natural to this problem; once again, the almost sure uniqueness of such solutions is an interesting open problem.

\begin{corollary}[Intrinsic solutions]\label{C:intrinsic}
Let $q(t)$ denote the ensemble of solutions constructed in Theorem~\ref{T:ktoinfinity}, let $\vk$ be strictly admissible, and let $g(t)=g(t,x;\vk, q(t))$.  Then the couple $(q(t),g(t))$ obeys \eqref{green solution} for all times $t\in\R$ (as an equality of tempered distributions), excepting perhaps a set of initial data of probability zero. 
\end{corollary}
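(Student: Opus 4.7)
The plan is to start from the differential equation \eqref{dt1/g} satisfied by $g_n(t;\vk)$ under the $\H_{k_n}$ flow (Proposition~\ref{P:666}(d)) and pass to the limit $n\to\infty$ after integrating in $t$ and rewriting the right-hand side so as to expose $\tfrac{q}{g(\vk)} - \tfrac{2\vk^2}{g(\vk)}$ as its leading term. Using \eqref{g-to-quadratic} to expand $g_n(s;k_n) - \tfrac{1}{2k_n} = -\tfrac{1}{k_n}R_0(2k_n)q_n(s) + \langle\delta_x,R_0(k_n)q_n(s)R(k_n)q_n(s)R_0(k_n)\delta_x\rangle$ and using the elementary identity $k^2 - \tfrac{k^4}{k^2-\vk^2} = -\tfrac{k^2\vk^2}{k^2-\vk^2}$, the integrated form of \eqref{dt1/g} becomes
\[
\tfrac{1}{2g_n(t;\vk)} - \tfrac{1}{2g_n(0;\vk)} = \Bigl(\int_0^t [A_n(s)+B_n(s)+C_n(s)]\,ds\Bigr)',
\]
where (writing $\mathrm{c.c.}$ for the complex conjugate)
\[
A_n(s) = -\tfrac{k_n^2\vk^2}{k_n^2-\vk^2}\tfrac{1}{g_n(s;\vk)}+\mathrm{c.c.},\quad B_n(s) = \tfrac{2k_n^4}{k_n^2-\vk^2}\tfrac{R_0(2k_n)q_n(s)}{g_n(s;\vk)}+\mathrm{c.c.},
\]
and $C_n(s)$ collects the quadratic-in-$q_n$ remainder $-\tfrac{2k_n^5}{k_n^2-\vk^2}\langle\delta_x, R_0(k_n)q_nR(k_n)q_nR_0(k_n)\delta_x\rangle/g_n(s;\vk)$ and its conjugate.

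The first step is to identify the pointwise-in-$s$ limits of $A_n$ and $B_n$. Since $\tfrac{k_n^2\vk^2}{k_n^2-\vk^2}\to\vk^2$, and multiplication by $1/g_n(s;\vk)$ converges (in weighted $H^1$) to multiplication by $1/g(s;\vk)$ by \eqref{666b}, one obtains $A_n(s)\to -\tfrac{2\vk^2}{g(s;\vk)}$ in weighted $H^1$, uniformly on $[-T,T]$ and in $L^p(d\PP)$. For $B_n$, the Fourier multiplier $\tfrac{2k_n^4}{k_n^2-\vk^2}R_0(2k_n)$ is uniformly bounded on $H^{-1}$ and has symbol $\tfrac{2k_n^4}{(k_n^2-\vk^2)(\xi^2+4k_n^2)}$ converging pointwise to $\tfrac12$; by dominated convergence it therefore converges strongly as an operator $H^{-1}\to H^{-2}$ to $\tfrac12\,\mathrm{Id}$. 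Combining this with the $L^p(d\PP;C_t H^{-1}_{\mathrm{loc}})$ convergence of $q_n$ from Theorem~\ref{T:ktoinfinity} and of $1/g_n(\vk)$ from \eqref{666b}, plus the algebra property of $H^1$ in one dimension, yields $B_n(s)\to \tfrac{q(s)}{g(s;\vk)}$ in weighted $H^{-2}$, uniformly on $[-T,T]$, in $L^p(d\PP)$.

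The main obstacle is to show that $C_n$ makes a negligible contribution; this is precisely the type of estimate already developed in the proof of Proposition~\ref{P:12main}. Exploiting the fact (Theorem~\ref{T:ktoinfinity}(b)) that $q_n(s)$ is white-noise distributed for every $s$, the same chain of bounds \eqref{514-1}--\eqref{E:MLB}---using Lemma~\ref{P:11}, \eqref{E:C:q op}, \eqref{E:9-2}, and the sharp reciprocal bound \eqref{L56-1.5}---yields, for $\beta>6$ and some $\eta>0$,
\[
\sup_{|s|\leq T}\,\E\bigl\{\|\langle x\rangle^{-\beta} C_n(s)\|_{L^2}^p\bigr\}\lesssim_{|\vk|,T,p} |k_n|^{-\eta}.
\]
Integrating in $s$ and differentiating in $x$, the contribution of $C_n$ to the right-hand side of the integrated \eqref{dt1/g} tends to $0$ in $L^p(d\PP;C_t H^{-2}_{\mathrm{loc}})$. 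Together with the $L^p$ convergence of the left-hand side (Theorem~\ref{T:ktoinfinity} and~\eqref{666b}) and of $A_n+B_n$, this produces the identity \eqref{green solution} in $L^p(d\PP;C_t H^{-2}_{\mathrm{loc}})$. Extracting an almost-sure subsequence and invoking the continuity in $t$ of both sides (from Proposition~\ref{P:666}(c) applied to $q$ and to $1/g(\vk)$) upgrades the equality to hold simultaneously for all $t\in\R$ outside a single null set of initial data, completing the proof.
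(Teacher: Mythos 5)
Your proposal is correct and follows essentially the same route as the paper: both integrate the $\H_{k_n}$ identity \eqref{dt1/g}, expand $g_n(k_n)-\tfrac1{2k_n}$ via \eqref{g-to-quadratic}, identify the limiting integrand of \eqref{green solution}, and control the remainder by reusing the bounds \eqref{E:MLA}--\eqref{E:MLB} established in the proof of Proposition~\ref{P:12main}. A few small corrections: the reference for $1/g_n(\vk)\to 1/g(\vk)$ as $n\to\infty$ should be \eqref{end1} rather than \eqref{666b} (which governs $L\to\infty$ at fixed $k$); the invariance of white noise under the $\H_{k_n}$ flow is Proposition~\ref{P:666}(b), not a part ``(b)'' of Theorem~\ref{T:ktoinfinity}; and the multiplier $\tfrac{2k_n^4}{k_n^2-\vk^2}R_0(2k_n)\to\tfrac12\,\mathrm{Id}$ in fact converges in operator \emph{norm} from $H^{-1}$ to $H^{-2}$ (with rate $|k_n|^{-1}$), which is what your uniform-in-time $L^p(d\PP)$ conclusion genuinely requires, since mere strong convergence would not transfer uniformly across $t$.
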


\begin{proof}
Recall \eqref{6:dt1/gt}, which is merely a rewriting of \eqref{dt1/g}.  In view of this identity obeyed by solutions of the $\mathcal H_k$ flow, we may recover \eqref{green solution} for solutions of the KdV equation employing \eqref{E:qnq}, \eqref{end1}, and by verifying that (after excluding a set of initial data of probability zero) 
\begin{equation}\label{green convergence}
\bigl\| \langle x\rangle^{-\beta} \bigl\{ 4k_n^3\bigl[ \tfrac{1}{2k_n} - g_n(t)\bigr]  - q_n(t)\bigr\} \bigr\|_{L^2([-T,T]; H^{-1})} \to 0
\end{equation}
as $n\to \infty$ for every $T>0$ and some $\beta>0$.  Here $q_n$ and $g_n$ denote the solution of the $\H_{k_n}$ flow and its associated diagonal Green's function, as earlier in this section.

By the resolvent identity and direct simplification of the term linear in $q$, 
\begin{align*}
4k_n^3\bigl[ \tfrac{1}{2k_n} - g_n(t)\bigr]  - q_n(t) &= \partial^2 R_0(2k_n) q_n - 4k_n^3\langle\delta_x, R_0(k_n) q_n R_0(k_n) q_n R_0(k_n) \delta_x\rangle \\
&\quad + 4k_n^3\langle\delta_x, R_0(k_n) q_n R_0(k_n) q_n R(k_n) q_n R_0(k_n) \delta_x\rangle.
\end{align*}
The second and third terms in this expansion have already been estimated in a satisfactory manner; see \eqref{E:MLA} and \eqref{E:MLB}, respectively.  We turn our attention to the first term:
\begin{align*}
\E\bigl\{ \|  \langle x\rangle^{-\beta} \partial^2 R_0(2k_n) q_n \bigr\|_{H^{-1}}^2\bigr\}
	&\lesssim  \E\bigl\{ \|  \langle x\rangle^{-\beta} \partial R_0(2k_n) q_n \bigr\|_{L^2}^2\bigr\} \\
&\lesssim  \| \langle x\rangle^{-\beta} \partial R_0(2k_n) \|_{\HS}^2 \lesssim |k_n|^{-1}.
\end{align*}
Putting everything together, we deduce that
\begin{equation*}
\E\Bigl\{ \bigl\| \langle x\rangle^{-\beta} \bigl\{ 4k_n^3\bigl[ \tfrac{1}{2k_n} - g_n(t)\bigr]  - q_n(t)\bigr\} \bigr\|_{H^{-1}(\R)}^2\Bigr\} \lesssim |k_n|^{-1},
\end{equation*}
for $\beta$ sufficiently large.  The conclusion \eqref{green convergence} that we seek now follows by Fubini and the Borel--Cantelli lemma.
\end{proof}

Next, we would like to confirm that the solutions of KdV we have constructed have the group property.  What we show here is slightly weaker than what we proved for the $\mathcal H_k$ flow; nevertheless, it suffices for the questions of immediate interest to us.  In particular, the realization of the Koopman operators as a strongly continuous unitary group is precisely what is needed for the standard proof of the mean ergodic theorem and also suffices to characterize weak/strong mixing; see \cite{RS1}, for example.

For more sophisticated questions one may wish to realize the Koopman operators as coming from an honest one-parameter group in a concrete Polish space.  (By `honest' here, we mean that the group property holds without exceptional null sets.)  This can be done by invoking a classical theorem of Mackey (see \cite[Appendix~B]{Zimmer}).

\begin{corollary}[Group property for KdV]\label{C:KdV group}
The Koopman operators
$$
U(t):L^2(d\mu)\to L^2(d\mu) \qtq{via} [U(t)f](q^0) = f(q(t))
$$
form a strongly continuous one-parameter unitary group.  Here $d\mu$ denotes white-noise measure on $W$ endowed with the Borel $\sigma$ algebra.
\end{corollary}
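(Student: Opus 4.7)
The plan is to realize $U(t)$ as the strong operator limit of the Koopman operators $U_n(t)f(q^0) := f(q_n(t))$ associated to the $\H_{k_n}$ flows constructed in Theorem~\ref{T:invariance-Hk-line}, and then to transfer each of the asserted properties from the approximating sequence to the limit. The weight $\beta$ defining $W$ will be taken large enough (e.g.\ $\beta>24$) that the convergence statement of Theorem~\ref{T:ktoinfinity} delivers convergence in the $W$-norm. Measurability of $q^0\mapsto q(t)$ as a $W$-valued map is inherited from its construction as an almost sure limit of the measurable maps $q^0\mapsto q_n(t)$, and the invariance of white noise under KdV (Theorem~\ref{T:ktoinfinity}) immediately furnishes the isometry $\|U(t)f\|_{L^2(d\mu)}=\|f\|_{L^2(d\mu)}$ for all $f\in L^2(d\mu)$ and $t\in\R$.

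First I would establish strong operator convergence $U_n(t)\to U(t)$ on $L^2(d\mu)$ for each fixed $t$.  For $f\in C_b(W;\C)$, the convergence $q_n(t)\to q(t)$ in $W$ in probability (an immediate consequence of Theorem~\ref{T:ktoinfinity}) combined with boundedness of $f$ yields $f(q_n(t))\to f(q(t))$ in $L^2(d\mu)$ by uniform integrability. Density of $C_b(W)$ in $L^2(d\mu)$ (valid since $W$ is Polish and $\mu$ is a Borel probability measure), together with the uniform isometry bound $\|U_n(t)\|=\|U(t)\|=1$, then extends strong convergence to all $f\in L^2(d\mu)$ by a standard three-$\eps$ argument.

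Next I would leverage Corollary~\ref{C:semigroup}, which says each $U_n$ is a strongly continuous unitary group on $L^2(d\mu)$, to propagate the group property to $U$.  Given $f\in L^2(d\mu)$ and $s,t\in\R$, write
\begin{equation*}
U_n(t)U_n(s)f = U_n(t)\bigl[U_n(s)f-U(s)f\bigr] + U_n(t)U(s)f.
\end{equation*}
The first summand tends to $0$ in $L^2(d\mu)$ by isometry of $U_n(t)$ and the strong convergence $U_n(s)f\to U(s)f$; the second converges to $U(t)U(s)f$ by strong convergence of $U_n(t)$ on the fixed vector $U(s)f$.  Since $U_n(t)U_n(s)=U_n(t+s)\to U(t+s)f$ in $L^2(d\mu)$, we obtain $U(t+s)=U(t)U(s)$.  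As $U(0)f(q^0)=f(q^0)$ gives $U(0)=I$, this shows that $U(-t)$ is a two-sided inverse of $U(t)$, so each $U(t)$ is a bijective isometry of $L^2(d\mu)$, hence unitary.

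Finally, strong continuity at $t=0$ follows by the same pattern: for $f\in C_b(W)$, the almost sure continuity of $t\mapsto q(t)$ in $W$ (inherited from the $C_t$ convergence in Theorem~\ref{T:ktoinfinity} together with the $C_t$ property already enjoyed by each $q_n$) gives $f(q(t))\to f(q^0)$ pointwise a.s., which is promoted to $L^2(d\mu)$ by bounded convergence; density and isometry extend this to all of $L^2(d\mu)$.  Strong continuity at a general time $t_0$ follows from the identity $U(t_0+h)-U(t_0)=U(t_0)[U(h)-I]$ and isometry of $U(t_0)$.  The main substantive input in this plan is essentially already provided by Theorem~\ref{T:ktoinfinity}; the subsequent functional-analytic steps are routine, with the only minor subtlety being the need to choose the exponent $\beta$ in the definition of $W$ sufficiently large to match the weighted convergence of that theorem.
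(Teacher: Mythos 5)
Your proof is correct, and it takes a genuinely different — and arguably cleaner — route than the paper's. Both arguments rest on the same two pillars: Corollary~\ref{C:semigroup} (the exact group property for the $\mathcal H_{k_n}$ flows) and Theorem~\ref{T:ktoinfinity} (convergence plus measure-preservation). The paper, however, works at the level of scalar expectations: it aims to show $\E\{|\langle\psi,\Phi(t)\circ\Phi(s)q^0\rangle - \langle\psi,\Phi(t+s)q^0\rangle|\}=0$ for test functions $\psi$, and to control the composition $\Phi(t)\circ\Phi_n(s)$ versus $\Phi(t)\circ\Phi(s)$ (where $\Phi(t)$ is a priori only measurable) it invokes Lusin's and Tietze's theorems to replace $q^0\mapsto\langle\psi,\Phi(t)q^0\rangle$ by a bounded continuous function $F_\eps$, and it passes to a further subsequence of the $k_n$ to upgrade $L^p(d\PP)$ convergence to almost-sure convergence. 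You instead lift everything to the operator level: you first prove strong operator convergence $U_n(t)\to U(t)$ (via $C_b(W)$, boundedness, convergence in probability from Theorem~\ref{T:ktoinfinity}, and a density/$3\eps$ argument), and then the group identity $U(t+s)=U(t)U(s)$ drops out by pure operator algebra from the exact identity $U_n(t)U_n(s)=U_n(t+s)$ and isometry. This sidesteps both the Lusin--Tietze approximation and the passage to a subsequence, since convergence in probability already suffices. The isometry and strong continuity at $t=0$ (hence everywhere, via $U(t_0+h)-U(t_0)=U(t_0)[U(h)-I]$) are handled the same way in both arguments — density of $C_b(W)$, measure-preservation, and dominated/bounded convergence — so the real difference is confined to the verification of the group law. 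In short: the paper's argument is more concrete and self-contained at the level of expectations; yours abstracts one level up and is more streamlined. Both are valid.
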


\begin{proof}
From Theorem~\ref{T:ktoinfinity}, we know that our flow preserves white noise measure and that the trajectories are almost surely continuous in the Hilbert space $W$ defined in \eqref{W defn}, but now with $\beta>24$.  (Note that the standard theorems of measure theory apply on $W$ because it is completely metrizable and separable; see \cite[\S17]{Kechris}).

By the measure-preserving property and the density of bounded continuous functions in $L^2(d\PP)$, strong continuity of the Koopman operators follows from the almost-sure continuity of trajectories and the dominated convergence theorem.  The measure-preserving property also immediately guarantees that the Koopman operators are isometries.  Unitarity will follow once we verify the group property:
$$
U(t) U(s)  = U(t+s)  \quad\text{for any fixed $s,t\in\R$.}
$$
This in turn will follow if we show
\begin{align}\label{groupy}
\E\bigl\{ \bigl| \langle\psi,\Phi(t)\circ\Phi(s) q^0\rangle - \langle\psi,\Phi(t+s) q^0\rangle \bigr| \bigr\} = 0
\end{align}
for fixed $s,t\in \R$ and fixed Schwartz function $\psi$.  Here $\Phi:\R\times W\to W$ denotes the data-to-solution map constructed  (on a set of full probability)  in Theorem~\ref{T:ktoinfinity}.  The fact that $\Phi$ was constructed as the a.e. limit of a sequence of one-parameter groups $\Phi_n$, namely the $\mathcal H_{k_n}$ flows, will be pertinent to completing the present proof.  In truth, we have only explicitly proved $L^p(d\PP;W)$ convergence along the original sequence of parameters $k_n$; thus, we should now to pass to a subsequence $k_n\to\infty$ so as to guarantee almost sure convergence.  (Actually, with more attention to the minute details, one could avoid passing to a subsequence, but this is of no consequence.)

As the map $q^0\mapsto \langle\psi,\Phi(t) q^0\rangle \in L^2(d\PP)$, for each $\eps>0$, Lusin's and Tietze's theorems combine to guarantee the existence of an $F_\eps:W\to\R$ that is continuous, bounded, and satisfies
$$
\E\bigl\{ \bigl|\langle\psi,\Phi(t) q^0\rangle - F_\eps( q^0) \bigr| \bigr\} \leq \eps.
$$
In view of Corollary~\ref{C:semigroup}, we have also have 
$$
\langle\psi,\Phi_n(t)\circ\Phi_n(s) q^0\rangle - \langle\psi,\Phi_n(t+s) q^0\rangle \equiv 0.
$$
Thus, recalling also that $\Phi$ and $\Phi_n$ are measure preserving at any fixed time,
\begin{align*}
\text{LHS\eqref{groupy}} &\leq  \E\bigl\{ \bigl| \langle\psi,\Phi(t)\circ\Phi(s) q^0\rangle - \langle\psi,\Phi(t)\circ\Phi_n(s) q^0\rangle \bigr| \bigr\} \\
&\quad + \E\bigl\{ \bigl| \langle\psi,\Phi(t)\circ\Phi_n(s) q^0\rangle - \langle\psi,\Phi_n(t)\circ\Phi_n(s) q^0\rangle \bigr| \bigr\} \\
&\quad + \E\bigl\{ \bigl| \langle\psi,\Phi_n(t+s) q^0\rangle - \langle\psi,\Phi(t+s) q^0\rangle \bigr| \bigr\}  \\
&\leq 2\eps + \E\bigl\{ \bigl| F_\eps\circ\Phi(s)(q^0) - F_\eps\circ\Phi_n(s) (q^0) \bigr| \bigr\} \\
&\quad + \E\bigl\{ \bigl| \langle\psi,\Phi(t) q^0\rangle - \langle\psi,\Phi_n(t) q^0\rangle \bigr| \bigr\} \\
&\quad + \E\bigl\{ \bigl| \langle\psi,\Phi_n(t+s) q^0\rangle - \langle\psi,\Phi(t+s) q^0\rangle \bigr| \bigr\}  \\
&\leq 2\eps + o(1) \quad\text{as $n\to\infty$.}
\end{align*}
Note that these last three terms converge to zero by virtue of Theorem~\ref{T:ktoinfinity}, the continuity of $F_\eps$, and Lemma~\ref{L:same conv}.  As $\eps>0$ was arbitrary, this completes the proof of \eqref{groupy} and hence that of Corollary~\ref{C:KdV group}.
\end{proof}

Corollary~\ref{C:KdV group} also allows us to rigorously formulate our loftiest ambition with regard to the model discussed in this paper: 

\begin{conjecture}\label{mixing conjecture}
The KdV flow is mixing, namely, as $t\to\infty$, the operators $U(t)$ converge to the projection onto constants in the weak operator topology.
\end{conjecture}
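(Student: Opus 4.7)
The plan is to reduce mixing to a spatial decorrelation estimate and then establish decorrelation by coupling with the independence structure of white noise. Since $U(t)$ is a strongly continuous unitary group on $L^2(d\mu)$ (Corollary~\ref{C:KdV group}), by density and unitarity it suffices to verify $\E\{F(q(t))\overline{G(q^0)}\} \to \E\{F\}\,\overline{\E\{G\}}$ for bounded continuous cylinder functions $F(q)=\phi(\langle\varphi_1,q\rangle,\dots,\langle\varphi_m,q\rangle)$ and $G$ of analogous form, with all test functions supported in a common interval $[-A,A]$.

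First I would exploit the spatial independence encoded in Remark~\ref{R:cov}: for each $R \gg A$, construct an auxiliary sample $\tilde q^0$ which agrees with $q^0$ outside $[-R,R]$ and is drawn independently inside $[-R,R]$ via the orthogonal decomposition~\eqref{ONB rep}. Then $G(q^0)$ and $F(\tilde q(t))$ are exactly independent for every $t$, so mixing reduces to
\begin{equation*}
\lim_{R\to\infty}\ \limsup_{t\to\infty}\ \E\bigl\{\bigl|F(\Phi(t)q^0)-F(\Phi(t)\tilde q^0)\bigr|\bigr\} = 0.
\end{equation*}
Qualitatively, this asserts that the restriction of $q(t)$ to $[-A,A]$ has asymptotically vanishing dependence on the initial data on $[-R,R]$.

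To attack this coupling estimate I would pass through the $\mathcal{H}_k$ flow, for which the weighted bounds of Lemma~\ref{T:L0-to-infinity} and Section~\ref{S:singlescale} already encode an effective propagation speed of order $|k|^2$; a Gronwall argument in a weight concentrated near $[-A,A]$ should give $\|1_{[-A,A]}[q_k(t)-\tilde q_k(t)]\|_{H^{-1}} \lesssim \exp(C|k|^2 t - c(R-A)^\alpha)$ on a high-probability event. A cleaner variant would be to work at the level of the diagonal Green's function: by Lemma~\ref{L:diffeo} it is equivalent to decouple $g(\vk)$ rather than $q$ itself, and the intrinsic relation~\eqref{green solution} combined with the volume-independent multiscale bounds of Section~\ref{S:multiscale} offers a more regular arena for the coupling argument, in close analogy with our convergence proofs in Sections~\ref{S:L} and the proof of Theorem~\ref{T:ktoinfinity}.

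The main obstacle is the order-of-limits problem: the bound above is useful only when $(R-A)^\alpha \gg |k|^2 t$, while Theorem~\ref{T:ktoinfinity} requires $k\to\infty$ to actually recover KdV at any fixed positive time. This tension parallels the absence of local smoothing for ergodic initial data and is the essential reason that mixing has been proved nonlinearly only in settings with strict finite speed of propagation, as in McKean's treatment of the sinh-Gordon equation~\cite{McKean}. A potentially decisive alternative would exploit complete integrability: by Minami's theorem~\cite{MR1003604} the Lax operator with white noise potential has pure point spectrum, so heuristically KdV acts in action-angle coordinates as independent angular translations at rates $4\kappa_j^2$, and mixing of this angular process on a random infinite-dimensional torus would transfer back to mixing of the $q$-dynamics. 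Making this rigorous, however, requires developing an inverse spectral transform for white noise potentials and is itself a program of comparable significance to the present paper.
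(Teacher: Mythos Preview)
The statement is a \emph{conjecture}, not a theorem; the paper does not prove it and explicitly presents it as a challenging open problem (see the discussion in the introduction: ``This seems a very challenging problem''). There is therefore no proof in the paper to compare your proposal against.

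Your write-up is itself not a proof but a strategy sketch, and you are candid about this. The two obstacles you name are genuine and align with the paper's own assessment. The order-of-limits tension --- the $\mathcal H_k$ flows have effective speed $\mathcal O(|k|^2)$, while recovering KdV requires $k\to\infty$ --- is precisely why the methods of Sections~\ref{S:L} and~\ref{S:KdV} do not extend to give mixing: the Gronwall-type bound you propose, roughly $\exp(C|k|^2 t - c(R-A)^\alpha)$, becomes useless once $k\to\infty$ at fixed $t>0$, and the paper offers no replacement. The introduction makes the same point in noting that the only nonlinear mixing result available (McKean on sinh-Gordon) relies essentially on finite speed of propagation. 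Your action-angle alternative via Minami's localization theorem is an interesting heuristic, but as you concede, it would require an inverse spectral transform for white noise potentials that does not exist; this is well beyond the scope of the paper and is not a proof.
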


\end{document}